\newtheorem{theorem}{Theorem}[section]
\newtheorem{proposition}[theorem]{Proposition}
\newtheorem{lemma}[theorem]{Lemma}
\theoremstyle{definition}
\newtheorem{definition}[theorem]{Definition}
\newtheorem{assumption}[theorem]{Assumption}
\Crefname{assumption}{Assumption}{Assumptions}
\newcommand{\innerp}[1]{\left\langle {#1} \right\rangle }
\newcommand{\norm}[1]{\left\| {#1} \right\| }
\newcommand{\abs}[1]{\left | {#1} \right |}
\newcommand{\zkh}[1]{\left[ {#1} \right]}
\newcommand{\hkh}[1]{\left\{ {#1} \right\}}
\newcommand{\kh}[1]{\left( {#1} \right)}
\DeclareMathOperator*{\argmin}{arg\,min}
\DeclareMathOperator*{\argmax}{arg\,max}
\newcommand{\todo}[1]{{#1}}
\newcommand{\revise}[1]{{#1}}
\definecolor{comblue}{RGB}{2,0,255}
\newcommand{\mdp}{\mathcal{M}}
\newcommand{\mdps}{\mathcal{S}}
\newcommand{\mdpa}{\mathcal{A}}
\newcommand{\sqrta}{\sqrt{\left| \mathcal{A} \right|}}
\newcommand{\Exp}[1]{\mathbb{E}\zkh{#1}}
\definecolor{Gray}{gray}{0.85}
\begin{document}

%

%
\runningauthor{Yan Yang, Bin Gao, Ya-xiang Yuan}

\twocolumn[

\aistatstitle{Bilevel Reinforcement Learning via the Development of Hyper-gradient without Lower-Level Convexity}

\aistatsauthor{ Yan Yang\,\textsuperscript{$1$}\,\textsuperscript{$2$}\qquad Bin Gao\,\textsuperscript{$1$}\qquad  Ya-xiang Yuan\,\textsuperscript{$1$}}
\vspace{0.3cm}
\aistatsaddress{\textsuperscript{$1$}\,Academy of Mathematics and Systems Science, Chinese Academy of Sciences \\\textsuperscript{$2$}\,{University of Chinese Academy of Sciences}}]


\begin{abstract}
Bilevel reinforcement learning (RL), which features intertwined two-level problems, has attracted growing interest recently. The inherent non-convexity of the lower-level RL problem is, however, to be an impediment to developing bilevel optimization methods. By employing the fixed point equation associated with the regularized RL, we characterize the hyper-gradient via fully first-order information, thus circumventing the assumption of lower-level convexity. This, remarkably, distinguishes our development of hyper-gradient from the general AID-based bilevel frameworks since we take advantage of the specific structure of RL problems. Moreover, we design both model-based and model-free bilevel reinforcement learning algorithms, facilitated by access to the fully first-order hyper-gradient. Both algorithms enjoy the convergence rate $\mathcal{O}\kh{\epsilon^{-1}}$. \todo{To extend the applicability, a stochastic version of the model-free algorithm is proposed, along with results on its iteration and sample complexity.} In addition, numerical experiments demonstrate that the hyper-gradient indeed serves as an integration of exploitation and exploration.
\end{abstract}

\section{INTRODUCTION}\label{sec:intro}
Bilevel optimization, aiming to solve problems with a hierarchical structure, achieves success in~a wide range of machine learning applications, e.g., hyper-parameter optimization~\citep{franceschi2017forward,franceschi2018bilevel,ye2023difference}, meta-learning~\citep{bertinetto2019meta}, computer vision \citep{liu2021CVBiO}, neural architecture search~\citep{liu2019darts,wang2022zarts}, adversarial training~\citep{wang2021adver}, reinforcement learning~\citep{chakraborty2024parl,shen2024bilevelRL,thoma2024HPGD}, data poisoning \citep{liu2024poisoning}. To address the bilevel optimization problems, a line of research has emerged recently~\citep{ghadimi2018bsa,liu2020generic,liu2021value,ji2022will,hu2023cdb,kwon2023f2sa,shen2023penalty,hao2024Unbounded,yao2024Hessianfree}. Generally, the upper-level (resp. the lower-level) problem optimizes the decision taken by the leader (resp. the follower), which exhibits potential for handling complicated decision-making processes such as Markov decision processes (MDPs).

Reinforcement learning (RL) \citep{sutton2018reinforcement,szepesvari2022reinforcement} serves as an effective way of learning to make sequential decisions in MDPs, and has seen plenty of applications \citep{silver2017go,berner2019dota,mirhoseini2021graph,miki2022robotics,sun2024haisor}. The central task of RL is to find the optimal policy that maximizes the expected cumulative rewards in an MDP. Bilevel RL enriches the framework of RL by considering a two-level problem: the follower solves a standard RL problem within an environment parameterized by the decision variable taken by the leader; meanwhile, the leader optimizes the decision variable based on the response policy from the lower level. Recently, bilevel RL has gained increasing attention in practice, including RL from human feedback, \citep{christiano2017rlhf}, inverse RL \citep{brown2019IRL}, and reward shaping \citep{hu2020rewardshaping}. 

\begin{table*}[t]
    \setlength{\tabcolsep}{5pt}
    \centering
    {
    \caption{Comparison among bilevel reinforcement learning algorithms.}
    \label{tab:comparision_BiRL}
    \vspace{0.1cm}
    \begin{threeparttable}
        \begin{tabular}{lcllc}
            \toprule
            { Algorithm } & {Deter. or Stoch.} & { Conv. Rate } & { Inner Iter. } & {Oracle}\\
            \midrule
            PARL \citep{chakraborty2024parl} & Deter.  & $\mathcal{O}(\epsilon^{-1})$ & $\mathcal{O}\kh{\log{\epsilon^{-1}}}$ & 1st$+$2nd \\
            PBRL \citep{shen2024bilevelRL} & Deter. & $\mathcal{O}(\lambda\epsilon^{-1})$  & $\mathcal{O}\kh{\log{\lambda^2\epsilon^{-1}}}$  & 1st\\
            HPGD \citep{thoma2024HPGD} & Stoch. & $\mathcal{O}(\epsilon^{-2})$  & $\mathcal{O}\kh{\log{\epsilon^{-1}}}$  & 1st\\
            M-SoBiRL (this work) & Deter. & $\mathcal{O}(\epsilon^{-1})$ & $\mathcal{O}\kh{1}$ & 1st\\           
            SoBiRL (this work) & Deter. & $\mathcal{O}(\epsilon^{-1})$ & $\mathcal{O}\kh{\log{\epsilon^{-1}}}$ & 1st \\
            Stoc-SoBiRL (this work) & Stoch. & $\widetilde{\mathcal{O}}(\epsilon^{-1.5})$ & $\mathcal{O}\kh{\log{\epsilon^{-1}}}$ & 1st \\
            \bottomrule	
        \end{tabular}
    \end{threeparttable}
    }
\end{table*}

In this paper, we focus on the bilevel reinforcement learning problem:
\begin{equation}\label{eq:informal_biRL}
    \begin{array}{cl}
    \min\limits_{x \in \mathbb{R}^n}& \phi(x):=f\left(x,\pi^*(x)\right)
    \\
    \mathrm{s.\,t.}& \pi^*(x) \in \argmin\limits_{\pi \in \Pi} g(x,\pi),    
    \end{array}
\end{equation}
where $\Pi$ is the policy set of interest, the upper-level function~$f$ is defined on~$\mathbb{R}^{n}\times\Pi$, the univariate function $\phi$~is called the \emph{hyper-objective}, and the gradient of $\phi(x)$~is referred to as the \emph{hyper-gradient} \citep{pedregosa2016hyperparameter,grazzi2020FPBiO,yang2023accelerating,chen2023hyper} if it exists. The approximate implicit differentiation~{(AID)} based method which resorts to the hyper-gradient has become flourishing recently~\citep{ghadimi2018bsa,ji2021stocbio,arbel2022amortized,dagreou2022soba,liu2023average,hu2024contextualstoc,gao2024lancbio}. Specifically, in each \emph{outer iteration} of the AID-based method, one implements an inexact hyper-gradient descent step $x_{k+1}=x_k-\beta\widetilde{\nabla} \phi (x_k)$, where the estimator $\widetilde{\nabla} \phi (x_k)$ of the hyper-gradient is obtained with the help of a few \emph{inner iterations}. We consider the extension of AID-based methods to the bilevel reinforcement learning problem~\eqref{eq:informal_biRL}. Note that AID-based methods depend on the lower-level strong convexity~\citep{khanduri2021sustain,huang2022enhancedbregman,li2022fsla} or uniform Polyak--\L{}ojasiewicz~{(PL)} condition~\citep{huang2023momentum,chakraborty2024parl} to ensure the existence of the hyper-gradient. However, the lower-level problem in \eqref{eq:informal_biRL}---always an RL problem---is inherently non-convex even with strongly-convex regularization \citep{agarwal2020tabular,lan2023mirror}, and only the non-uniform PL condition has been established \citep{mei2020softmax}, which renders $\nabla \phi$ ambiguous, as stated in~\citep{shen2024bilevelRL}. 

\revise{Recently, \citet{chen2022adaptivemodel} assumed the convexity of the hyper-objective and proposed a deterministic algorithm for the bilevel RL problem.}
\citet{chakraborty2024parl} presented an AID-based framework, assuming that the lower-level problem satisfies the uniform PL condition and the Hessian non-singularity condition. \citet{shen2024bilevelRL} proposed a penalty-based bilevel RL algorithm to bypass the requirement of lower-level convexity by constructing two penalty functions. The convergence rate relies on the penalty parameter $\lambda$ that is at least the order of~${\epsilon^{-0.5}}$. \todo{\citet{thoma2024HPGD} designed a stochastic bilevel RL method, achieving the convergence rate $\mathcal{O}(\epsilon^{-2})$. In this paper, we develop model-based and model-free bilevel RL algorithms and extend the model-free algorithm to stochastic settings, all of which are provable and exhibit an enhanced convergence rate, without the lower-lower convex assumption; see \cref{tab:comparision_BiRL} for a detailed comparison with the existing works, where the notation $\widetilde{\mathcal{O}}(\cdot)$ hides logarithmic terms of $\epsilon^{-1}$.}

{\textbf{Contributions}} The main contributions are summarized as follows.

Firstly, we characterize the hyper-gradient of bilevel RL problem via fully first-order information and unveil its properties by investigating the fixed point equation associated with the entropy-regularized RL problem \citep{nachum2017bridging,geist2019regRLtheory,yang2019regsparse}, which extends the spirits in \citep{christianson1994fixedpoints,grazzi2020FPBiO,grazzi2021stochasticFPBiO,grazzi2023FPBiO}.

Secondly, understanding the hyper-gradient enables us to construct its estimators, upon which we devise a model-based bilevel RL algorithm, M-SoBiRL, together with a model-free version, SoBiRL. \todo{For broader applicability, we extend SoBiRL in stochastic settings by designing a sampling scheme to estimate the hyper-gradient and a framework Stoc-SoBiRL aided with a momentum technique to accommodate the stochastic hyper-gradient estimator}. Specifically, the implementation only requires first-order oracles, which circumvents complicated second-order queries in general AID-based bilevel methods. 

Finally, we offer an analysis to illustrate the efficiency of amortizing the hyper-gradient approximation through outer iterations in M-SoBiRL, i.e., it enjoys the convergence rate $\mathcal{O}(\epsilon^{-1})$ with the inner iteration number $N=\mathcal{O}(1)$ independent of the solution accuracy $\epsilon$. In the model-free scenario, an enhanced convergence property is also established. \todo{Moreover, investigating the statistical properties of Stoc-SoBiRL, we show its iteration complexity $\widetilde{\mathcal{O}}(\epsilon^{-1.5})$ and sample complexity $\widetilde{\mathcal{O}}(\epsilon^{-3.5})$; refer to \cref{tab:comparision_BiRL} for a detailed comparison. To the best of our knowledge, it is the first sample complexity result established for bilevel RL problems. In addition, a synthetic experiment verifies the convergence of M-SoBiRL, and the favorable performance of the proposed SoBiRL is validated on Atari games and Mujoco simulations, which implies that the hyper-gradient is an aggregation of exploitation and exploration.}

\section{RELATED WORK}
The introduction to related bilevel optimization methods can be found in~\cref{sec:related_bio}.

\textbf{Entropy-regularized Reinforcement Learning}  Entropy regularization is commonly considered in the RL community. Specifically, the goal of entropy-regularized RL is to maximize the expected reward augmented with the policy entropy, thereby boosting both task success and behavior stochasticity. It facilitates exploration and robustness~\citep{ziebart2010modeling,haarnoja2017softlearning,haarnoja2018sac,lan2021lanquantum}, smoothens the optimization landscape~\citep{ahmed2019impact}, and enhances the convergence property~\citep{mnih2016A3C,cen2022fastNPG,zhan2023mirror,lan2023mirror,yang2023accelerating,li2024preconditionmdp}. Moreover, the policy optimality condition under entropy-regularized setting is equivalent to the \emph{softmax temporal value consistency} in~\citep{nachum2017bridging}. Therefore, the term \emph{soft} is prefixed to quantities in this scenario, also resonating with the concept in~\citep{sutton2018reinforcement}, where ``soft'' means that the policy ensures positive probabilities across all state-action pairs.

\textbf{Bilevel Reinforcement Learning} Extensive reinforcement learning applications hold the bilevel structure, e.g., reward shaping \citep{sorg2010rewarddesign,zheng2018intrinsicrewards,hu2020rewardshaping,devidze2022explorationguided,gupta2023behavioralign}, offline reward correction \citep{li2023rewardcorrection}, preference-based RL \citep{christiano2017rlhf,lee2021pebble,saha2023duelingrl,shen2024bilevelRL}, apprenticeship learning \citep{arora2021apprenticeship}, Stackleberg games \citep{fiez2020implicitstack,song2023nash,shen2024bilevelRL}, etc. In terms of provable bilevel reinforcement learning frameworks, \citet{chakraborty2024parl} proposed a policy alignment algorithm demonstrating performance improvements, \citet{shen2024bilevelRL} designed a penalty-based method, \todo{and \citet{thoma2024HPGD} investigated the problem in the stochastic setting.} However, the research related to the hyper-gradient in bilevel RL remains constrained.

\section{PROBLEM FORMULATION}\label{sec:pro_formulation}
In this section, we will introduce the setting of entropy-regularized Markov decision processes~\citep{williams1991entropy,ziebart2010modeling,nachum2017bridging,haarnoja2017softlearning}, and the formulation of bilevel reinforcement learning. Moreover, two specific instances of bilevel RL are introduced. The notation is listed in \cref{sec:notations}.

\subsection{Entropy-regularized MDPs}\label{sec:reg_RL}
{\textbf{Discounted Infinite-horizon MDPs}} An MDP is characterized by a~tuple~$\mdp_\tau=(\mdps,\mdpa,P,r,\gamma,\tau)$ with $\mdps$ and $\mdpa$ serving as the state space and the action space, respectively. In this paper, we restrict the focus to the tabular setting, where both $\mdps$ and $\mdpa$ are finite,~i.e.,~$\abs{\mdps}<+\infty$ and $\abs{\mdpa}<+\infty$. Furthermore, $P\in\mathbb{R}^{\abs{\mdps}\abs{\mdpa}\times\abs{\mdps}}$ is the transition matrix with $P_{sas^\prime}=P(s^\prime|s,a)$ representing the transition probability from state $s$ to state $s^\prime$ under action~$a$. The vector $r\in\mathbb{R}^{\abs{\mdps}\abs{\mdpa}}$ specifies the reward $r_{sa}$ received when action~$a$~is carried out at state $s$. Note that $\gamma \in (0,1)$ is the discount factor, adjusting the importance of immediate versus future reward, and the \emph{temperature parameter} $\tau\ge0$ balances regularization and reward.

{\textbf{Soft Value Function and Q-function}} A policy $\pi\in\mathbb{R}^{\abs{\mdps}\abs{\mdpa}}$ provides an action selection rule, that is, for any $s\in \mdps,\ a\in \mdpa$, $\pi_{sa}=\pi(a|s)$ is the probability of performing action~$a$ at state~$s$, and we denote~$\pi_s\in \mathbb{R}^{\abs{\mdpa}}$ to be the corresponding distribution over~$\mdpa$ at state~$s$, which means $\pi_s$ belongs to the probability simplex $\Delta(\mdpa):=\{x\in\mathbb{R}^{\abs{\mdpa}}:x_i\ge 0,\ \sum_{i=1}^{\abs{\mdpa}}x_i=1\}$. Collecting $\pi_s$ over $s\in\mdps$ defines $\Delta^{\abs{\mdps}}(\mdpa):=\{\pi=(\pi_s)_{s\in\mdps}\in\mathbb{R}^{\abs{\mdps}\abs{\mdpa}}:\pi_s\in\Delta(\mdpa)\text{ for }s\in\mdps\}$ as the feasible set of policies. For simplicity, the above two sets are abbreviated as $\Delta$ and $\Delta^{\abs{\mdps}}$, respectively. A policy $\pi$ induces a $P^\pi\in\mathbb{R}^{\abs{\mdps}\times\abs{\mdps}}$, measuring the transition probability from one state to another, i.e., $P^\pi_{ss^\prime}=\sum_a\pi_{sa}P_{sas^\prime}$. To promote stochasticity and discourage premature convergence to sub-optimal policies \citep{ahmed2019impact}, it commonly resorts to the entropy function $h(\cdot)$,
\begin{equation*}
    h(\pi_s):=-\sum_{a\in\mdpa}\pi_{sa}\log{\pi_{sa}},\ \text{ for }\pi_s\in\mathbb{R}_+^{\abs{\mdpa}}\cap \Delta,
\end{equation*}
where $\mathbb{R}_+^{m}:=\{z\in\mathbb{R}^{m}:z_i>0\}$. Consequently, the \emph{soft value function}~$V^\pi\in\mathbb{R}^{\abs{\mdps}}$ represents the expected discounted reward augmented with policy entropy, i.e., 
\begin{equation*}
    V^\pi_s:=\mathbb{E}\Big[\sum_{t=0}^\infty \gamma^t \kh{r_{s_ta_t}+\tau h(\pi_{s_t})}\ \big |\ s_0=s,\pi\Big],
\end{equation*}
for $s\in\mdps$, where the expectation is taken over the trajectory~$(s_0,a_0\sim\pi(\cdot|{s_0}),s_1\sim P(\cdot |s_0,a_0),\ldots)$. Given the initial state distribution $\rho\in \mathbb{R}_+^{\abs{\mdps}}\cap\Delta\kh{{\mdps}}$, the objective of RL is to find the optimal policy $\pi\in\Delta^{\abs{\mdps}}$ that solves the problem:
\begin{equation}\label{eq:reg_exp_mdp}
    \max_{\pi\in\Delta^{\abs{\mdps}}}\ \ \  V^\pi(\rho) := \mathbb{E}_{s\sim\rho}\zkh{V^\pi_s}.
\end{equation}
The {soft Q-function} $Q^\pi\in\mathbb{R}^{\abs{\mdps}\abs{\mdpa}}$ associated with policy $\pi$, couples with~$V^\pi$ in the following fashion,
\begin{align}
    \forall(s, a) \in \mathcal{S} \times \mathcal{A}: Q^\pi_{sa} \!&=r_{sa}+\gamma \mathbb{E}_{s^{\prime} \sim P(\cdot | s, a)}\left[V^\pi_{s^{\prime}}\right],    \label{eq:V_to_Q}
    \\
    \forall s \in \mathcal{S}:  V^\pi_s & =\mathbb{E}_{a \sim \pi(\cdot | s)}\left[-\tau \log \pi(a | s)+Q^\pi_{sa}\right] .  \nonumber
\end{align}
It is worth noting that the optimization problem \eqref{eq:reg_exp_mdp} admits a unique \emph{optimal soft policy} $\pi^*$ independent of~$\rho$~\citep{nachum2017bridging}. The corresponding \emph{optimal soft value
function} (resp. \emph{optimal soft Q-function}) is denoted by $V^*:=V^{\pi^*}$ (resp. $Q^*:=Q^{\pi^*}$). If necessary, we add a subscript to clarify the environment in which the policy is evaluated, e.g., $V^\pi_{\mathcal{M}_\tau}$ and $Q^\pi_{\mathcal{M}_\tau}$.

\subsection{Bilevel RL Formulation}
The single-level reinforcement learning task in ~\cref{sec:reg_RL} trains an agent with fixed rewards. In~contrast, this work considers the scenario where the reward function is parameterized by a decision variable, a task dubbed ``policy alignment'' in~\citep{chakraborty2024parl}. Generally, we parameterize the reward $r(x)\in\mathbb{R}^{\abs{\mdps}\abs{\mdpa}}$ with~$x\in \mathbb{R}^n$, which results in a parameterized MDP $\mdp_\tau(x)=(\mdps,\mdpa,P,r(x),\gamma,\tau)$. In parallel with~\cref{sec:reg_RL}, given $\mdp_\tau(x)$, we define the soft value function $V^\pi(x)$, the soft Q-function $Q^{\pi}(x)$, the objective $V_{\mathcal{M} _{\tau}\left( x \right)}^{\pi}\left( \rho \right) $, the optimal soft policy $\pi^*(x)$, the optimal soft value-function $V^*(x)$, and the optimal soft Q-function $Q^*(x)$, associated with $x$.

Consequently, we formulate the bilevel reinforcement learning problem:
\begin{equation}\label{eq:standar_biRL}
    \begin{array}{cl}
    \min\limits_{x \in \mathbb{R}^n}& \phi(x):=f\left(x,\pi^*(x)\right)
    \\
    \mathrm{s.\,t.}& \pi^*(x) = \argmin\limits_{\pi \in \Delta^{\abs{\mdpa}}} -V_{\mathcal{M} _{\tau}\left( x \right)}^{\pi}\left( \rho \right) ,    
    \end{array}
\end{equation}
where the upper-level~$f$ is defined on~$\mathbb{R}^{n}\times\mathbb{R}^{\abs{\mdps}\abs{\mdpa}}$.

\subsection{Applications of Bilevel RL}\label{sec:biRL_app}
We introduce two examples unified in the formulation~\eqref{eq:standar_biRL}. To make a distinction between environments at two levels, we denote the upper-level MDP by $\bar{\mathcal{M}}_{\bar{\tau}} = \{\mdps, \mdpa, \bar{P}, \bar{r}, \bar{\gamma}, \bar{\tau}\}$ with an initial state distribution $\bar{\rho}$, and the lower-level MDP by $\mathcal{M}_{\tau}(x)$ with $\rho$. \revise{More applications can be found in~\cref{sec:append_app}.}

\textbf{Reward Shaping} In reinforcement learning, the reward function acts as the guiding signal to motivate agents to achieve specified goals. However, in many cases, the rewards are sparse, which impedes the policy learning, or are partially incorrect, which leads to inaccurate policies. To this end, from the perspective of bilevel RL, it is advisable to shape an auxiliary reward function $r(x)$ at the lower level for efficient agent training, while maintaining the original environment at the upper level to align with the initial task evaluation~\citep{hu2020rewardshaping}, which is established as
\begin{equation*}
    \begin{aligned}
        \min _{x\in\mathbb{R}^n}-V_{\bar{\mathcal{M}}_{\bar{\tau}}}^{\pi ^*\left( x \right)}\left( \bar{\rho} \right),\quad \mathrm{s.\,t.}\ \pi ^*\left( x \right) =\argmin _{\pi \in \Delta^{\abs{\mdpa}}}-V_{\mathcal{M} _{\tau}\left( x \right)}^{\pi}\left( \rho \right).
    \end{aligned}
\end{equation*}

\textbf{Reinforcement Learning from Human Feedback (RLHF)} The target of RLHF is to learn the intrinsic reward function that incorporates expert knowledge, from simple labels only containing human preferences. Drawing from the original framework \citep{christiano2017rlhf}, \citet{chakraborty2024parl} and \citet{shen2024bilevelRL} have formulated it in a bilevel form, which optimizes a policy under $r(x)$ at the lower level, and adjusts $x$ to align the preference predicted by the reward model $r(x)$ with the true labels at the upper level.
\begin{equation*}
    \begin{array}{cl}
    \min \limits_{x\in\mathbb{R}^n} & \mathbb{E} _{y,d_1,d_2\sim \rho \left( d;\pi ^*\left( x \right) \right)}\left[ l_h\left( d_1,d_2,y;x \right)  \right]
    \\
    \mathrm{s.\,t.}& \pi ^*\left( x \right) =\argmin\limits_{\pi \in \Delta^{\abs{\mdpa}}}-V_{\mathcal{M} _{\tau}\left( x \right)}^{\pi}\left( \rho \right),  
    \end{array}
\end{equation*}
where each trajectory $d_i=\{\kh{s_h^i,a_h^i}\}_{h=0}^{H-1}$ ($i=1,2$) is sampled from the distribution~$\rho\kh{d;\pi^*(x)}$ generated by the policy $\pi^*(x)$ in the upper-level~$\bar{\mathcal{M}}_{\bar{\tau}}$, i.e., 
\begin{align*}
    P(d_i)=&\bar{\rho}\kh{s_0^i}\zkh{\Pi_{h=0}^{H-2}\pi^*(x)\kh{a_h^i|s_h^i}\bar{P}\kh{s_{h+1}^i|s_h^i,a_h^i}}
    \\
    &\times\pi^*(x)\kh{a_{H-1}^i|s_{H-1}^i},
\end{align*}
and the preference label $y\in\{0,1\}$, indicating preference for $d_1$ over $d_2$, obeys human feedback distribution $y\sim D_{human}(y| d_1,d_2)$. Moreover, $l_h$ is the binary cross-entropy loss, $l_h\left( d_1,d_2,y;x \right) = -y\log P\left( d_1\succ d_2;x \right)-\left( 1-y \right) \log P\left( d_2\succ d_1 ; x\right)$, with the preference probability $P\kh{d_1\succ d_2;x}$ built by the Bradley--Terry model; see~\eqref{eq:BT_model} in the appendix.

\section{MODEL-BASED SOFT BILEVEL REINFORCEMENT LEARNING}\label{sec:MSoBiRL}
Addressing the bilevel RL problem \eqref{eq:standar_biRL} is challenging from two aspects: 1)~analyzing the properties of $\pi^*(x)$, which has not been well studied but plays~a~crucial role in AID-based methods; 2)~characterizing the hyper-gradient $\nabla \phi(x)$ which remains unclear. To this end, we take advantage of the specific structure of entropy-regularized RL to investigate $\pi^*(x)$ and identify~$\nabla \phi(x)$. As a result, we propose a model-based algorithm to solve the bilevel RL problem \eqref{eq:standar_biRL}.

Recall that the optimal soft quantities adhere to the following softmax temporal value consistency conditions~\citep{nachum2017bridging}: $\forall(s, a) \in \mathcal{S} \times \mathcal{A}$,
\begin{footnotesize}
\begin{align}
    \!\!\pi_{sa}^*(x)
    \!&=\!{\exp \left\{ Q_{sa}^{*}\left( x \right) /\tau \right\}}\big/\big({\sum_{a^{\prime}}{\exp \left( Q_{sa^{\prime}}^{*}\left( x \right) /\tau \right)}}\big), \label{eq:consis_policy}
    \\
    \!\!V_{s}^{*}\left( x \right)\! &=\!\tau \log\!\left( \!\sum\limits_a{\exp \left( \frac{r_{sa}\left( x \right) +\gamma \sum_{s^{\prime}}{P_{sas^{\prime}}V_{s^{\prime}}^{*}\left( x \right)}}{\tau} \right)}\!\!\right)\!\label{eq:consis_v}
    \\
    &=\!\tau \log \left( \sum\nolimits_a{\exp \left( Q_{sa}^{*}\left( x \right) /\tau \right)} \right).  \label{eq:comply_V}
\end{align}    
\end{footnotesize}

By differentiating \eqref{eq:consis_policy} and incorporating \eqref{eq:V_to_Q}, we assemble $\{\nabla\pi_{sa}^*(x)\}$ in matrix form and obtain
\begin{equation}\label{eq:pi_v_nabla}
    \nabla \pi ^*(x)={\tau^{-1}}\mathrm{diag}\left( \pi^*(x)\right) \left( \nabla r(x)-{U}\nabla V^*\left( x \right) \right) 
\end{equation}
with $U\in\mathbb{R}^{\abs{\mdps}\abs{\mdpa}\times\abs{\mdps}}$ defined by $U_{sas^\prime}:=1-\gamma P_{sas^\prime}$ for $s=s^\prime$ and $U_{sas^\prime}:=-\gamma P_{sas^\prime}$, otherwise. Hence, differentiating $\pi^*(x)$ boils down to considering the \emph{implicit differentiation} $\nabla V ^*(x)$. In view of~\eqref{eq:consis_v}, we notice that $V^*(x)$~is a fixed point related to the mapping $\varphi$,
\begin{align*}
    \varphi(x,v):\ \mathbb{R}^{n}\times\mathbb{R}^{\abs{\mdps}} &\longrightarrow \mathbb{R}^{\abs{\mdps}}
    \\
    (x,v)&\longmapsto\tau \log\!\kh{\!\exp\!\kh{\frac{r(x)+\gamma{P}v}{\tau}}\cdot{\bf 1}\!},
\end{align*}
where $\log(\cdot)$ and $\exp(\cdot)$ are element-wise operations, and ${\bf 1}\in\mathbb{R}^{\abs{\mdps}}$ is an all-ones vector. The structure of $\varphi(\cdot,\cdot)$, in consequence, can facilitate the characterization of $\nabla V ^*(x)$, as outlined in the following proposition.
\begin{proposition}\label{pro:gradient_V}
    For any $x\in\mathbb{R}^n$, $\varphi(x,\cdot)$ is a contraction mapping, i.e., $\|\nabla_v \varphi(x,v)\|_{\infty}=\gamma<1$, and the matrix ${I-\nabla_{v}\varphi(x,v)}$ is invertible. Consequently, $V^*(x)$ is the unique fixed point of $\varphi(x,\cdot)$, with a well-defined derivative $\nabla V^*(x)$, given by
    \begin{equation*}\label{eq:inverse_Jac_product}
         \nabla V^*(x) = \kh{I-\nabla_{v}\varphi(x,V^*(x))}^{-1}\nabla_{x}\varphi(x,V^*(x)).
    \end{equation*}    
    Additionally, $\nabla_v \varphi\kh{x,V^*(x)}$ coincides with the $\gamma$-scaled transition matrix induced by the optimal soft policy $\pi^*(x)$, i.e., $\nabla_{v}\varphi\kh{x,V^*(x)} = \gamma P^{\pi^*(x)}$.
\end{proposition}

Combining the above proposition with \eqref{eq:pi_v_nabla}, we identify the hyper-gradient $\nabla\phi (x)$ to the problem~\eqref{eq:standar_biRL}; \todo{its expression is delayed in \eqref{eq:exact_hypergrad2} of \cref{sec:ana_MSoBiRL}}. In this manner, we unveil the hyper-gradient in the context of bilevel RL, by harnessing the softmax temporal value consistency and derivatives of a fixed-point equation.

In light of the characterization of $\nabla\phi$, a model-based soft bilevel reinforcement learning algorithm,~called M-SoBiRL, is proposed in~\cref{alg:M-SoBiRL}. In summary, the $k$-th outer iteration performs an inexact hyper-gradient descent step on $x_k$, with the aid of auxiliary iterates $\kh{\pi_k,Q_k,V_k,w_k}$. Specifically, given $x_{k+1}$, the inner iterations aim to (approximately) solve the lower-level entropy-regularized MDP problem. To this end, the \emph{soft policy iteration} studied by \citet{haarnoja2018sac,cen2022fastNPG} inspires us to define the following soft Bellman optimality operator $\mathcal{T} _{\mathcal{M} _{\tau}\left( x \right)}:\mathbb{R}^{\abs{\mdps}\abs{\mdpa}} \to\mathbb{R}^{\abs{\mdps}\abs{\mdpa}}$ associated with $x\in\mathbb{R}^n$,
\begin{small}
 \begin{equation*}
    \mathcal{T} _{\mathcal{M} _{\tau}\left( x \right)}(Q)(s,a):=r_{sa}(x)+\gamma {\mathbb{E}}\left[ \tau \log \left( \left\| \exp \left( Q_{s^{\prime},\cdot} /\tau \right) \right\| _1 \right) \right],
\end{equation*}   
\end{small}
\!\!where the expectation is taken with respect to $s^{\prime}\sim {P(\cdot \mid s,a)}$. Applying this operator iteratively leads to the optimal soft Q-value function, with a linear convergence rate in theory (see \cref{sec:Bellman}). Therefore, $N$ inner iterations are invoked in line~7-9 of \cref{alg:M-SoBiRL} to estimate $Q^*\kh{x_{k+1}}$, and the warm-start strategy is adopted in line~10 of \cref{alg:M-SoBiRL} to initialize the next outer iteration with historical information, where it amortizes the computation of $Q^*(x)$ through the outer iterations. Additionally, in order to recover $\pi_{k+1}$ from $Q_{k+1}$, we take into account the consistency condition~\eqref{eq:consis_policy} which involves the softmax mapping as follows,
\begin{align*}
    \texttt{softmax}:\ \ \mathbb{R}^{\abs{\mdps}\abs{\mdpa}} &\longrightarrow\mathbb{R}^{\abs{\mdps}\abs{\mdpa}}
    \\
    \theta &\longmapsto \kh{\frac{\exp \left( \theta \left( s,a \right) \right)}{\sum_{a^{\prime}}{\exp \left( \theta \left( s,a^{\prime} \right) \right)}}}_{sa}.
\end{align*}
It follows from \eqref{eq:consis_policy} that $\pi_{k+1}=\texttt{softmax}\kh{Q_{k+1}/\tau}$.
As for the approximation of the hyper-gradient~\eqref{eq:exact_hypergrad2} which requires an inverse matrix vector product, we denote
\begin{equation*}
A_k=\kh{I-\gamma P^{\pi_k}}^{\top},\quad b_k=U^\top\operatorname{diag}\kh{\pi_k}\nabla_\pi f(x_k,\pi_k),
\end{equation*}
and employ $w_k$ to track the quantity $A_k^{-1}b_k$ in a similar principle of amortization. Concretely, $w_k$ is regarded as the (approximate) solution of the least squares problem, $\min_{w\in \mathbb{R}^{\abs{\mdps}}}\frac{1}{2}\norm{A_kw-b_k}^2$, and implements one gradient descent step (line~4 of \cref{alg:M-SoBiRL}) in each outer iteration. Furthermore, elements of $Q_k$ are assembled to estimate the soft value function:
\begin{equation}\label{eq:V_from_Q}
    \!\forall s \in \mathcal{S}:\  V_k\left( s \right) =\tau \log \left(\sum\nolimits_a{\exp \left( Q_k\left( s,a \right) /\tau \right)} \right),\!
\end{equation}
which complies with the form of~\eqref{eq:comply_V}. Finally, collecting the iterates $\left(x_k,\pi_k,V_k,w_{k}\right)$ yields the model-based hyper-gradient estimator $\widehat{\nabla}\phi$, i.e.,
\begin{small}
\begin{equation*}
    \begin{aligned}
         \!\widehat{\nabla} \phi \left(x_k,\pi_k,V_k,w_{k}\right):=&\,\nabla_x f\left(x_k, \pi_k\right) \!+\! \frac{1}{\tau} {\nabla r(x_k)}^\top\!\operatorname{diag}
         (\pi_k)
         \\
         &\times\!\nabla_\pi f(x_k,\pi_k)\!-\!\frac{1}{\tau}\nabla_{x}\varphi(x_k,V_k)^\top\!w_k.
    \end{aligned}
\end{equation*}    
\end{small}

\section{MODEL-FREE SOFT BILEVEL REINFORCEMENT LEARNING}\label{sec:SoBiRL}
In many real-world applications, agents lack access to the accurate model $P$ of the environment, underscoring the need for efficient model-free algorithms \citep{schulman2015TRPO,schulman2016GAE,lillicrap2016DDPG,mnih2016A3C,schulman2017PPO,haarnoja2018sac}. When $P$ is unknown, two obstacles appear from the hyper-gradient computation~\eqref{eq:exact_hypergrad2}. Specifically, it explicitly relies on the black-box transition matrix~$P$ to compute $\nabla V^*(x)$, i.e.,
\begin{equation}\label{eq:nabla_V_star}
    \nabla V^*(x) = \kh{I-\gamma P^{\pi^*(x)}}^{-1}\nabla_{x}\varphi(x,V^*(x)),
\end{equation}
and the computation involves complicated large-scale matrix multiplications. To circumvent them, we demonstrate how to derive the hyper-gradient in an expectation, which allows us to estimate $\nabla \phi(x)$ via sampling fully first-order information. Following these developments, we offer a model-free soft bilevel reinforcement learning algorithm.

In the model-free scenario, we concentrate on the upper-level function
\begin{equation}\label{eq:upper_level_f}
    f\left( x,\pi \right) =\mathbb{E} _{d_i\sim \rho \left( d;\pi  \right)}\left[ l\left( d_1,d_2,\ldots,d_I;x \right) \right],
\end{equation}
where each trajectory $d_i=\{\kh{s_h^i,a_h^i}\}_{h=0}^{H-1}$ is independently sampled from the trajectory distribution~$\rho\kh{d;\pi}$ generated by the policy $\pi$ in the upper-level~$\bar{\mathcal{M}}_{\bar{\tau}}$, and $l(\cdot;x)$ associated with~$x$ is a function of trajectories. Notice that it incorporates the applications in \cref{sec:biRL_app}, with $I=1$, $H=1$, $l=-Q^{\pi^*(x)}_{\bar{\mdp}_{\bar{\tau}}}\kh{s_0^1,a_0^1}$ for reward shaping, and $I=2$, finite $H$, $l=\mathbb{E}_y\zkh{l_h\kh{d_1,d_2,y;x}}$ for RLHF.

We extend \cref{pro:gradient_V} to absorb the transition matrix $P$ in \eqref{eq:nabla_V_star} into an expectation, with the result that the implicit differentiations $\nabla V^*(x)$ and $\nabla Q^*(x)$ can be estimated by sampling the reward gradient under the policy $\pi^*(x)$.
\begin{proposition}
    For any $x\in\mathbb{R}^n$, $s\in\mdps$, and $a\in\mdpa$, 
    \begin{align*}
         \nabla V_s^*(x) &= \mathbb{E}\big[ \sum_{t=0}^\infty \gamma^t\nabla r_{s_t,a_t}(x)\ \big |\ s_0=s, \pi^*(x)\big],
         \\
          \nabla Q_{sa}^*(x) &= \mathbb{E}\big[ \sum_{t=0}^\infty \gamma^t\nabla r_{s_t,a_t}(x)\ \big |\ s_0\!=\!s,a_0\!=\!a,\pi^*(x) \big].
    \end{align*}       
\end{proposition}
Nevertheless, it is still intractable to construct the ${\abs{\mdps}\times n}$ matrix $\nabla V^*(x)$ based on the above element-wise calculation, not to mention the large-scale matrix multiplications in \eqref{eq:exact_hypergrad2}. To bypass these matrix computations, we resort to the ``log probability trick'' \citep{sutton2018reinforcement} and the consistency condition~\eqref{eq:consis_policy}. The subsequent proposition confirms that we can evaluate the hyper-gradient by interacting with the environment and collecting fully first-order information. 
\begin{proposition}[Hyper-gradient]\label{pro:modelfree_hypergrad_chara}
    The hyper-gradient $\nabla \phi(x)$ of the bilevel reinforcement learning problem \eqref{eq:standar_biRL} with upper-level function~\eqref{eq:upper_level_f} can be computed by
    \begin{align}
        \nabla \phi(x) =& \ \mathbb{E} _{d_i\sim \rho \left( d;\pi ^*\left( x \right) \right)}\left[ \nabla l\left( d_1,d_2,\ldots,d_I;x \right) \right]   \label{eq:mf_hypergrad1}
        \\
        &+\tau ^{-1}\mathbb{E} _{d_i\sim \rho \left( d;\pi ^*\left( x \right) \right)} \Big [ l\left( d_1,d_2,\ldots,d_I;x \right) \nonumber
        \\
        & \Big( \sum_i{\sum_h{\nabla \left( Q_{s_{h}^{i}a_{h}^{i}}^{*}\left( x \right) -V_{s_{h}^{i}}^{*}\left( x \right) \right)}} \Big ) \Big].     \label{eq:mf_hypergrad2}
    \end{align}
\end{proposition}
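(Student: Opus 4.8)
The plan is to combine the chain rule for the hyper-objective $\phi(x)=f(x,\pi^*(x))$ with the policy-gradient (log-probability) trick, and then to invoke the softmax consistency condition~\eqref{eq:consis_policy} to convert the resulting log-policy gradients into the first-order value quantities $\nabla Q^*$ and $\nabla V^*$. I would start from the chain-rule decomposition already established in the paper,
\[
\nabla\phi(x)=\nabla_x f(x,\pi^*(x))+\nabla\pi^*(x)^\top\nabla_\pi f(x,\pi^*(x)),
\]
and treat the two terms separately. For the first term, note that with $\pi$ held fixed the trajectory law $\rho(d;\pi)$ in~\eqref{eq:upper_level_f} carries no explicit $x$-dependence, so differentiation under the expectation gives $\nabla_x f(x,\pi)=\mathbb{E}_{d_i\sim\rho(d;\pi)}[\nabla_x l(d_1,\dots,d_I;x)]$; evaluating at $\pi=\pi^*(x)$ reproduces \eqref{eq:mf_hypergrad1} immediately.

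The substantive work is the second term. First I would apply the log-derivative identity to $f(x,\pi)=\mathbb{E}_{d_i\sim\rho(d;\pi)}[l(d_1,\dots,d_I;x)]$, yielding $\nabla_\pi f(x,\pi)=\mathbb{E}_{d_i\sim\rho(d;\pi)}[l(d_1,\dots,d_I;x)\,\nabla_\pi\log\rho(d;\pi)]$. Because the trajectory density factorizes, with its initial-state and transition factors independent of $\pi$, the score collapses to $\nabla_\pi\log\rho(d;\pi)=\sum_i\sum_h\nabla_\pi\log\pi(a_h^i\mid s_h^i)$, and in the tabular setting $\nabla_\pi\log\pi_{sa}=e_{sa}/\pi_{sa}$ is supported on the single coordinate $(s,a)$. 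Contracting with the transpose Jacobian (whose $(s,a)$-th column is $\nabla\pi^*_{sa}(x)$, consistent with \eqref{eq:pi_v_nabla}) then gives $\nabla\pi^*(x)^\top e_{sa}/\pi^*_{sa}(x)=\nabla\pi^*_{sa}(x)/\pi^*_{sa}(x)=\nabla\log\pi^*_{sa}(x)$, the total derivative of the log-policy with respect to $x$.

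The final step eliminates the policy in favor of value functions. Taking the logarithm of \eqref{eq:consis_policy} gives $\log\pi^*_{sa}(x)=\tau^{-1}(Q^*_{sa}(x)-V^*_s(x))$, hence $\nabla\log\pi^*_{sa}(x)=\tau^{-1}\nabla(Q^*_{sa}(x)-V^*_s(x))$; substituting this into the contracted score turns the second term into exactly \eqref{eq:mf_hypergrad2}, which closes the argument. Equivalently, one may differentiate $\phi(x)=\mathbb{E}_{d_i\sim\rho(d;\pi^*(x))}[l(d_1,\dots,d_I;x)]$ in a single pass, splitting the $x$-dependence between the integrand $l$ and the sampling law $\rho(d;\pi^*(x))$; this route bypasses the explicit Jacobian $\nabla\pi^*(x)$ but relies on the same two ingredients, the log-derivative trick and the consistency condition.

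I expect the main obstacle to be the rigorous justification of the log-derivative manipulation rather than any delicate algebra. One must verify that gradient and expectation may be interchanged (differentiation under the integral sign / dominated convergence over the trajectory space, which is finite in branching but unbounded in horizon for the RLHF instance) and track carefully that only the policy factors in the factorized density contribute to the score function. The accompanying dimension bookkeeping---confirming that $\nabla_\pi\log\pi_{sa}$ is the correctly normalized sparse vector and that contracting it with $\nabla\pi^*(x)^\top$ recovers precisely the total $x$-derivative of $\log\pi^*_{sa}$---is the place where care is most needed.
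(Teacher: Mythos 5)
Your proposal is correct and is essentially the paper's own argument: the paper likewise writes $\phi(x)$ as the finite sum $\sum_{(d_1,\dots,d_I)} l(d_1,\dots,d_I;x)\,\Pi_{i=1}^I P(d_i;\pi^*(x))$, splits the derivative into the integrand term \eqref{eq:mf_hypergrad1} and the distribution term, applies the log-probability trick so that only the policy factors of the factorized trajectory density survive, and finishes with the consistency condition \eqref{eq:consis_policy} to replace $\nabla\log\pi^*_{s_h^i a_h^i}(x)$ by $\tau^{-1}\nabla\left(Q^*_{s_h^i a_h^i}(x)-V^*_{s_h^i}(x)\right)$ --- exactly the two ingredients driving your argument, with your main route merely making the Jacobian $\nabla\pi^*(x)$ explicit before it collapses to $\nabla\log\pi^*_{sa}(x)$, and your ``single-pass'' variant being verbatim the paper's proof. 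The analytic caveat you flag is moot in this setting: the horizon $H$ in \eqref{eq:upper_level_f} is finite and $\mathcal{S},\mathcal{A}$ are finite, so the expectation is a finite sum and the gradient--expectation interchange requires no dominated-convergence justification.
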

Essentially, the first term \eqref{eq:mf_hypergrad1} in the hyper-gradient contributes to decreasing the upper-level function, and the second term~\eqref{eq:mf_hypergrad2} aggregates the gradient information transmitted from the lower level. Assembling these two directions constructs the hyper-gradient, and updating the upper-level variable $x$ along $-\nabla \phi(x)$ will lead to a first-order stationary point (see \cref{the:sobirl}). 

In line with the above propositions, to facilitate the $k$-th (inexact) hyper-gradient descent step, the construction of a hyper-gradient estimator $\widetilde{\nabla}\phi(x_k,\pi_k)$ is devided into two steps: 1)~evaluating the implicit differentiations based on $\pi_k$,
    \begin{align*}
         \widetilde{\nabla} V_s(x_k,\pi_k) &= \mathbb{E}\big[ \sum_{t=0}^\infty \gamma^t\nabla r_{s_t,a_t}(x_k)\,\big |\,s_0=s, \pi_k\big],
         \\
          \widetilde{\nabla} Q_{sa}(x_k,\pi_k) &= \mathbb{E}\big[ \sum_{t=0}^\infty \gamma^t\nabla r_{s_t,a_t}(x_k)\,\big |\,s_0\!=\!s,a_0\!=\!a,\pi_k\big];
    \end{align*}  
2)~absorbing these components into the upper-level sampling process induced by $\pi_k$,
\begin{align}
    \widetilde{\nabla} \phi(x_k,\pi_k) =&\ \mathbb{E} _{d_i\sim \rho \left( d;\pi_k \right)}\left[ \nabla l\left( d_1,d_2,\ldots,d_I;x_k \right) \right]  \label{eq:free_hyper_esti}
    \\
    &+\tau ^{-1}\mathbb{E} _{d_i\sim \rho \left( d;\pi_k \right)}\Big[ l\left( d_1,d_2,\ldots,d_I;x_k \right) \nonumber
    \\
    & \Big( \sum_i{\sum_h{\widetilde{\nabla} \left( Q_{s_{h}^{i}a_{h}^{i}}-V_{s_{h}^{i}}\right)\kh{x_k,\pi_k}}} \Big) \Big].     \nonumber
\end{align}
Consequently, we propose a model-free soft bilevel reinforcement learning algorithm, called SoBiRL, which is outlined in~\cref{alg:SoBiRL}. In the $k$-th outer iteration, we search for an approximate optimal soft policy $\pi_k$ satisfying ${\norm{{\pi}_k-\pi^*(x_k)}^2_2\le \epsilon}$, which can be achieved by executing $\mathcal{O}\kh{\log\epsilon^{-1}}$ iterations of the policy mirror descent algorithm~\citep{lan2023mirror,zhan2023mirror}. Then, we utilize $\widetilde{\nabla}\phi(x_k,\pi_k)$, whose estimation only involves first-order oracles, for an inexact hyper-gradient descent~step.
\begin{algorithm}[htbp]
    \caption{Model-Free Soft Bilevel Reinforcement Learning Algorithm (SoBiRL)}
    \label{alg:SoBiRL}
    \begin{algorithmic}[1]
        \REQUIRE iteration number $K$, step size $\beta$, initialization $x_1, \pi_0$, required accuracy $\epsilon$
        \FOR{$k = 1, \dots, K$}
            \STATE Solve the lower-level problem with the initialization $\pi_{k-1}$ to get ${\pi}_k$ with $\norm{{\pi}_k-\pi^*_k}_2^2\le \epsilon$\!\!\!\!\!\!\!
            \STATE Compute $\widetilde{\nabla}\phi(x_k,\pi_k)$
            \STATE Implement an inexact hyper-gradient descent step $x_{k+1} = x_k - \beta \widetilde{\nabla}\phi(x_k,\pi_k)$
        \ENDFOR
        \ENSURE $(x_{K+1},\pi_{K+1})$
    \end{algorithmic}
\end{algorithm}

\section{THE STOCHASTIC EXTENSION}\label{sec:ext_stocsobirl}
The preceding sections establish bilevel methods through the lens of deterministic optimization. However, in practical implementations of reinforcement learning, quantities are typically approximated via sampling data, e.g., $\widetilde{\nabla}\phi(x_k,\pi_k)$ defined in an expected form. As a result, the sampling process introduces stochasticity to the algorithm. Therefore, to enrich the developed framework SoBiRL both practically and theoretically, it is essential to extend it to stochastic settings. In detail, we approach this in two steps. (1)~Designing a scheme to estimate the hyper-gradient $\nabla\phi$ by sampling trajectories and then characterizing the resulting stochasticity, i.e., the bias and variance. (2)~Replacing the hyper-gradient in SoBiRL with its stochastic counterpart and incorporating a momentum technique to accommodate the stochasticity, leading to our stochastic variant, Stoc-SoBiRL.

In the stochastic scenario, we still focus on the problem formulation \eqref{eq:standar_biRL} with the upper-level function \eqref{eq:upper_level_f} as discussed in \cref{sec:SoBiRL}. Denote the trajectory tuple $(d_1,d_2,\ldots,d_I)$ as $\bm{d}$ for simplicity. In the $k$-th outer iteration, to estimate $\nabla\phi$, an expectation with respect to the random variable $\bm{d}$, we can generate $M$ independent tuples $\bm{d}^m:=(d^m_1,d^m_2,\ldots,d^m_I)$ for $m=1,2,\ldots,M$, evaluate corresponding quantities along each $\bm{d}^m$ and average them. To this end, it is necessary to tackle the terms $\nabla Q^*$ and $\nabla V^*$ in \eqref{eq:mf_hypergrad2}. Assuming access to a
generative model \citep{kearns1998finite,li2024generativemodel}, from any initial state-action pair $(s,a)$, we sample $J$ independent trajectories of length $T$ by implementing $\pi_k$, i.e., for $j=1,2,\ldots,J$,
\begin{small}
\begin{equation*}
\xi _{k}^{j}( s,a) :=\{( s_{0}^{j}=s,a_{0}^{j}=a);(s_{1}^{j},a_{1}^{j} ) ,\ldots,( s_{T-1}^{j},a_{T-1}^{j})\}.    
\end{equation*}
\end{small}
\!\!\!Collecting all these random variables yields $\bm{\xi}_k:=\{\xi _{k}^{j}\left( s,a \right) , j=1,\ldots,J,s\in \mathcal{S} ,a\in \mathcal{A} \}$, and the estimator for $\nabla Q^*$ can be constructed as follows.
\begin{equation*}
    \bar{\nabla}Q_{sa}^{\bm{\xi }_k}\left( x_k,\pi _k \right) :=\frac{1}{J}\sum_{j=1}^{J}{\sum_{t=0}^{T-1}{\gamma ^t\nabla r_{s_{t}^{j}a_{t}^{j}}\left( x_k \right)}}.
\end{equation*}
The random variable $\bm{\zeta}_k$ and the associated estimator $\bar{\nabla}V^{\bm{\zeta }_k}$ for $\nabla V^*$ are constructed similarly. Consequently, denoting $\bm{D}_k:=(\bm{d}_k^1,\bm{d}_k^2,\ldots,\bm{d}_k^M)$, we obtain the stochastic hyper-gradient:
\begin{align}
    &\bar{\nabla}\phi \left( \bm{D}_k,\bm{\xi }_k,\bm{\zeta }_k;x_k,\pi _k \right) =\frac{1}{M}\sum_{m=1}^M{\nabla l\left( \bm{d}_k^m;x_k \right)} +\frac{1}{\tau M} \nonumber
    \\
    &\times\!\sum_{m=1}^M{l\left( \bm{d}^m_k;x_k \right)}\sum_i\sum_h{\bar{\nabla}\!\left( Q_{s_{h}^{m,i}a_{h}^{m,i}}^{\bm{\xi }_k}-V_{s_{h}^{m,i}}^{\bm{\zeta }_k} \right)}( x_k,\pi _k ),   \nonumber
\end{align}
which is abbreviated as $\bar{\nabla}\phi _k$ in the following discussion. The sampling scheme is outlined in \cref{alg:samplingprocess}. 

Momentum techniques are known to be beneficial for reducing variance and accelerating algorithms \citep{cutkosky2019momentum}. To this end, we maintain a momentum-instructed $h_k$ in the $k$-th outer iteration,
\begin{equation*}
    h_k\! = \! \bar{\nabla}\phi_k+ (1-\mu_k)(h_{k-1}-\bar{\nabla}\phi(\bm{D}_k,\bm{\xi}_k,\bm{\zeta}_k;x_{k-1},\pi_{k-1})),
\end{equation*}
which tracks $\bar{\nabla}\phi_k$ via current and historical hyper-gradient estimates. Using $h_k$ to update the upper-level $x_k$, we obtain the stochastic \cref{alg:StocSoBiRL}, Stoc-SoBiRL.

\begin{figure*}[tbp]
\centering
\begin{minipage}{0.49\textwidth}
    \centering
    \includegraphics[width=1\linewidth]{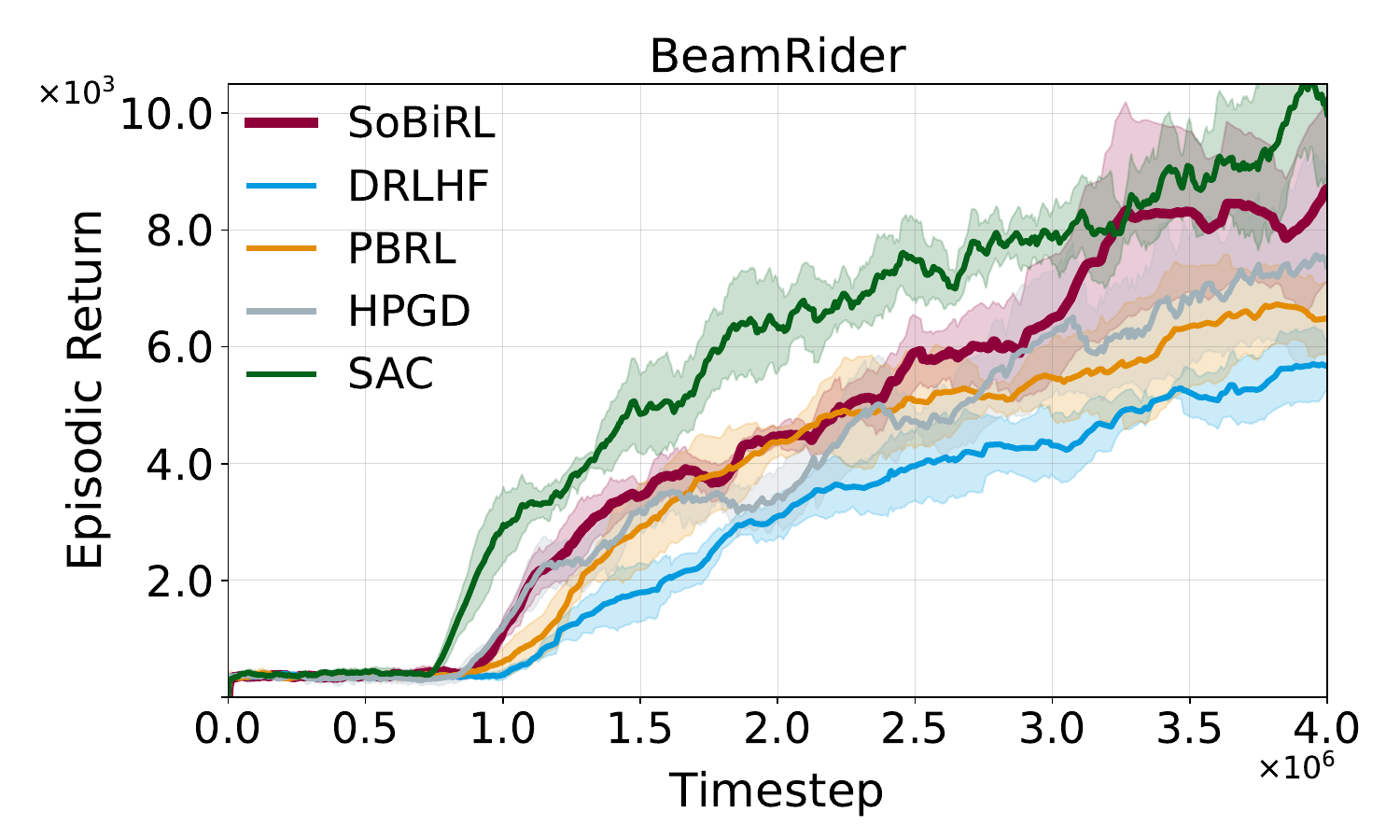}
\end{minipage}
\,
\begin{minipage}{0.49\textwidth}
    \centering
    \includegraphics[width=1\linewidth]{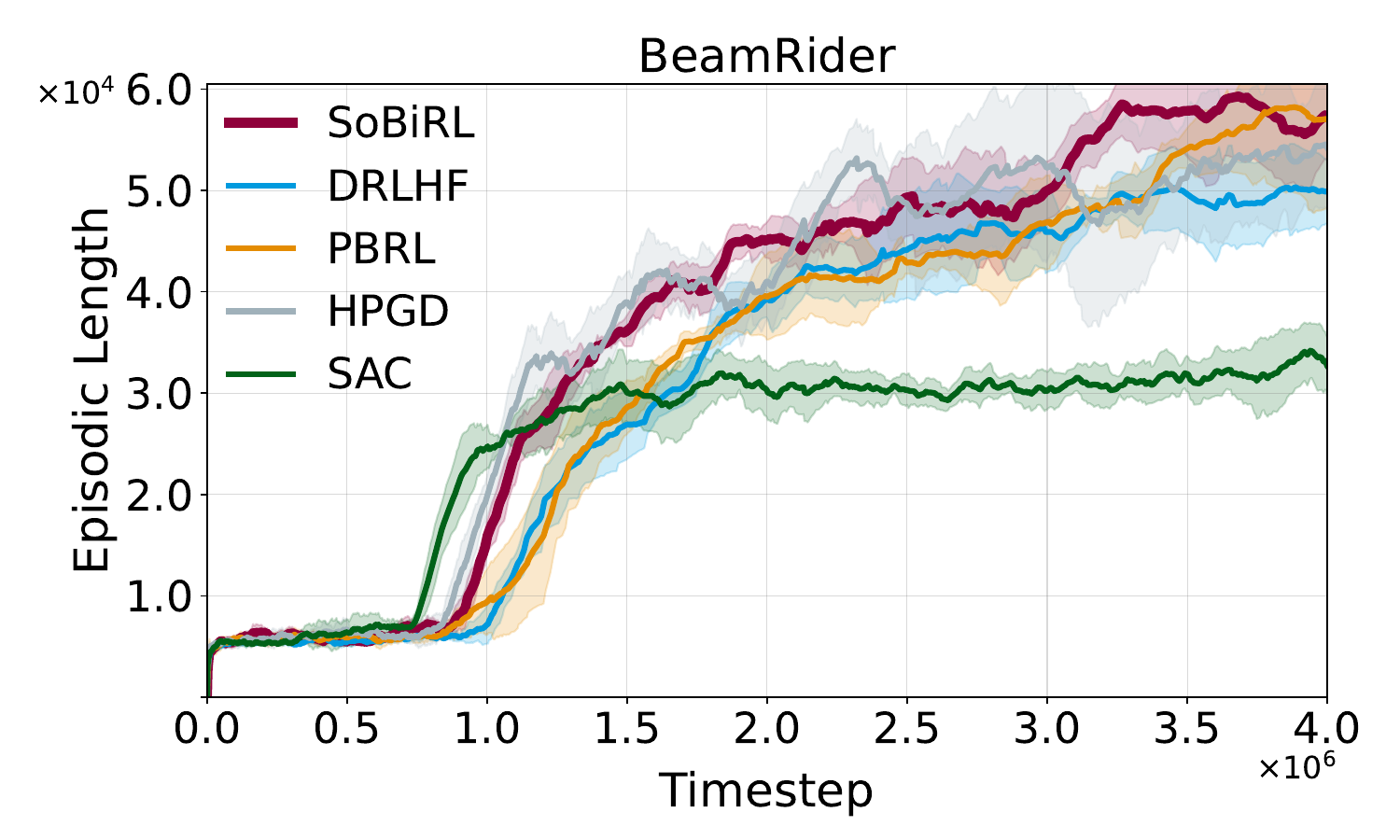}
\end{minipage}
\caption{Comparison of algorithms on the Atari game, BeamRider, evaluated by the ground-truth reward. Each bilevel algorithm collects a total of $3000$ trajectory pairs. The running average over~$15$ consecutive episodes is adopted for the presentation, and results are averaged over $5$ seeds.}
\label{fig:beamrider}
\end{figure*}

\section{THEORETICAL ANALYSIS}\label{sec:theory}
In this section, we prove global convergence and give the iteration and sample complexity for the proposed algorithms. The analysis requires only the regularity conditions---specifically, boundedness and Lipschitz continuity---of the first-order information. This distinguishes from general AID-based methods, which require additional smoothness assumptions regarding second-order derivatives \citep{ghadimi2018bsa,chen2021closing,khanduri2021sustain,dagreou2022soba,ji2022will}. The properties of $\varphi$, $\pi^*(x)$, and $V^*(x)$ are outlined in \cref{sec:general_smooth}. Moreover, it is worth emphasizing that the analysis for M-SoBiRL is different from that for SoBiRL. Thus, we organize their properties and the detailed proofs in \cref{sec:ana_MSoBiRL} and \cref{sec:ana_SoBiRL}. \todo{The analysis for Stoc-SoBiRL is referred to \cref{sec:stocsobirl}.}

\begin{assumption}\label{assu:f} 
    In the model-based scenario \eqref{eq:standar_biRL}, $f$ is continuously differentiable. The gradient~$\nabla f(x,\pi)$ is $L_{f}$-Lipschitz continuous, i.e., for any {$x_1,x_2\in\mathbb{R}^n,\pi_1,\pi_2\in\Delta^{\abs{\mdps}}$}, $\norm{\nabla f(x_1,\pi_1) - \nabla f(x_2,\pi_2)}_2\le L_f\kh{\norm{x_1-x_2}_2 + \norm{\pi_1-\pi_2}_\infty}$. \!\!Additionally, the boundedness condition holds, $\norm{\nabla_\pi f(x,\pi^*(x))}_2\!\leq\! C_{f\pi}$.
\end{assumption}

\begin{assumption}\label{assu:l} 
    In the model-free scenario with upper-level function~\eqref{eq:upper_level_f}, denote $\bm{d}=(d_1,d_2,\ldots,d_I)$. $l(\bm{d};x)$ parameterized by $x$ is a function of $I$ trajectories,  each consisting of $H$ steps. It is bounded, i.e., $\abs{l\kh{\bm{d};x}}\le C_l$, and is continuously differentiable with respect to $x$. $l(\bm{d};x)$ is $L_{l}$-Lipschitz continuous, i.e., for any {$x_1,x_2\in\mathbb{R}^n$} and $\bm{d}$, $\norm{l(\bm{d};x_1) - l(\bm{d};x_2) }_2 \le L_l\| x_1-x_2\|_2.$ Moreover, $\nabla l(\bm{d};x)$ is $L_{l1}$-Lipschitz continuous, i.e., for any {$x_1,x_2\in\mathbb{R}^n$} and $\bm{d}$, $\norm{\nabla l(\bm{d};x_1)-\nabla l(\bm{d};x_2) }_2\le L_{l1}\norm{x_1\!-\!x_2}_2$.
\end{assumption}
Assumptions~\ref{assu:f} and~\ref{assu:l} specify the requirements for the upper-level problems in model-based and model-free scenarios, respectively. Generally, the boundedness and Lipschitz continuity assumptions related to $f$ are standard in bilevel optimization \citep{ghadimi2018bsa,ji2021stocbio,arbel2022amortized,chen2022stable,li2022fsla}.
\begin{assumption}\label{assu:r}
    The reward is bounded by $C_r$, i.e., $\abs{r_{sa}(x)}\le C_r$, and the gradient is bounded by $C_{rx}$, i.e., $\norm{\nabla r_{sa}(x)}_2\le C_{rx}$. Additionally, $r_{sa}(x)$~is $L_r$-Lipschitz smooth, i.e., for any $x_1,x_2\in\mathbb{R}^n$ and $(s,a)\in\mdps\times\mdpa$, $\norm{\nabla r_{sa}(x_1) - \nabla r_{sa}(x_2)}_2\le L_r\norm{x_1-x_2}_2$.
\end{assumption}
This assumption on the parameterized reward function is common in RL \citep{zhang2020globalPG,lan2022rewardgrad,chakraborty2024parl,shen2024bilevelRL}.

In the model-based scenario, the following theorem reveals that the proposed algorithm M-SoBiRL benefits from amortizing the hyper-gradient approximation in which it enjoys the $\mathcal{O}\kh{\epsilon^{-1}}$ convergence rate, and the inner iteration number $N$ can be selected as a constant independent of the accuracy~$\epsilon$. 
\begin{theorem}[Model-based]\label{the:M_sobirl}
    Under Assumptions~\ref{assu:f} and~\ref{assu:r}, in M-SoBiRL, we can choose constant step sizes~$\beta,\eta$, and the inner iteration number $N\sim\mathcal{O}(1)$. Then the iterates $\{x_k\}$~satisfy
    \begin{equation*}
        \frac{1}{K}\sum_{k=1}^K \norm{\nabla \phi(x_k)}_2^2 = \mathcal{O}\kh{K^{-1}}.
    \end{equation*}
    Detailed parameter setting is referred to \cref{the:proof_MSoBiRL}.
\end{theorem}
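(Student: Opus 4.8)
The plan is to cast M-SoBiRL as an inexact hyper-gradient descent scheme and control the accumulated error through a descent-lemma argument. First I would establish that the hyper-objective $\phi$ is $L_\phi$-smooth for some constant $L_\phi$ depending on the regularity constants in Assumptions~\ref{assu:f} and~\ref{assu:r}; this follows from the explicit form~\eqref{eq:exact_hypergrad2} together with \cref{pro:gradient_V}, since $\pi^*(x)$ and $V^*(x)$ are smooth (the contraction $\|\nabla_v\varphi\|_\infty=\gamma<1$ guarantees $(I-\gamma P^{\pi^*(x)})^{-1}$ is uniformly bounded, so the inverse-matrix-vector product is Lipschitz in $x$). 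Applying the descent lemma to $x_{k+1}=x_k-\beta\widehat{\nabla}\phi(x_k,\pi_k,V_k,w_k)$ yields
\begin{equation*}
    \phi(x_{k+1})\le\phi(x_k)-\frac{\beta}{2}\norm{\nabla\phi(x_k)}_2^2+\frac{\beta}{2}\norm{\widehat{\nabla}\phi(x_k,\pi_k,V_k,w_k)-\nabla\phi(x_k)}_2^2+\mathcal{O}(\beta^2),
\end{equation*}
so that after telescoping and dividing by $K$, the averaged squared gradient norm is controlled by the initial gap plus the averaged estimation error $\frac{1}{K}\sum_k e_k^2$, where $e_k:=\norm{\widehat{\nabla}\phi(x_k,\cdot)-\nabla\phi(x_k)}_2$.

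The core of the argument is bounding $e_k$ in terms of three quantities: the policy error $\norm{\pi_k-\pi^*(x_k)}$, the value error $\norm{V_k-V^*(x_k)}$, and the least-squares tracking error $\norm{w_k-A_k^{-1}b_k}$. Comparing $\widehat{\nabla}\phi$ with~\eqref{eq:exact_hypergrad2} term by term and using the Lipschitz/boundedness assumptions, I expect $e_k\lesssim\norm{\pi_k-\pi^*(x_k)}+\norm{w_k-w_k^*}$, with the $V_k$ error absorbed since $V_k$ is a deterministic function of $Q_k$ (via~\eqref{eq:V_from_Q}) and $\pi_k=\texttt{softmax}(Q_k/\tau)$ is the softmax of the same $Q_k$. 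Thus the whole error reduces to tracking how $Q_k$ approaches $Q^*(x_k)$ and how $w_k$ approaches $w_k^*=A_k^{-1}b_k$.

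The main obstacle, and the heart of the amortization analysis, is showing that the inner error $\delta_k:=\norm{Q_k-Q^*(x_k)}$ stays uniformly bounded (and on average small) despite using only $N=\mathcal{O}(1)$ Bellman iterations per outer step. The key is a contraction-plus-drift recursion: the $N$ applications of the $\gamma$-contractive soft Bellman operator $\mathcal{T}_{\mdp_\tau(x_{k+1})}$ shrink the error by a factor $\gamma^N$, while the change of target from $Q^*(x_k)$ to $Q^*(x_{k+1})$ introduces a drift bounded by $L_{Q}\norm{x_{k+1}-x_k}_2=L_Q\beta\norm{\widehat{\nabla}\phi}\lesssim\beta(1+\delta_k+\text{(}w\text{-error)})$ (using smoothness of $x\mapsto Q^*(x)$). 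This produces a coupled recursion of the form
\begin{equation*}
    \delta_{k+1}\le\gamma^N\delta_k+C\beta\kh{1+\delta_k+\norm{w_k-w_k^*}},
\end{equation*}
and an analogous contraction-plus-drift recursion for the $w$-tracking error, where the gradient-descent step on the least squares $\tfrac12\norm{A_kw-b_k}^2$ contracts by a factor depending on $\xi$ and the conditioning of $A_k^\top A_k$ (uniformly controlled because $\|(I-\gamma P^{\pi_k})^{-1}\|$ is bounded). The delicate step is to choose $\beta$ small enough that the combined contraction factor $\gamma^N+C\beta$ remains strictly below one, thereby guaranteeing a uniform bound on $\delta_k$ and $\norm{w_k-w_k^*}$ and, crucially, that their \emph{averages} are $\mathcal{O}(\beta)$; plugging this back into the descent inequality with $\beta\sim\mathcal{O}(1/\sqrt K)$ (or a constant step absorbed into the $\mathcal{O}(1/K)$ rate) closes the argument. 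I would finish by selecting the constants so the per-iteration error contribution matches the $\mathcal{O}(\beta^2)$ descent slack, yielding $\frac1K\sum_k\norm{\nabla\phi(x_k)}_2^2=\mathcal{O}(1/K)$.
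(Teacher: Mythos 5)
Your skeleton---smoothness of $\phi$, a descent-lemma step with an inexact hyper-gradient, and coupled contraction-plus-drift recursions for the $Q$- and $w$-tracking errors---matches the paper's strategy in \cref{sec:ana_MSoBiRL}. But the final assembly has a genuine gap that prevents you from reaching $\mathcal{O}(1/K)$ with \emph{constant} step sizes, which is what the theorem asserts. The problem is the point where you bound the drift by $C\beta\left(1+\delta_k+\|w_k-w_k^*\|\right)$, i.e., where you replace $\|\widehat{\nabla}\phi_k\|$ by a constant plus error terms (and similarly the unqualified $\mathcal{O}(\beta^2)$ slack in your descent inequality). With that bookkeeping, the recursion $\delta_{k+1}\le\gamma^N\delta_k+C\beta(1+\cdots)$ settles at a steady-state floor $\delta_k=\Theta(\beta)$, so the averaged squared estimation error is $\Theta(\beta^2)$, and telescoping yields only $\frac{1}{K}\sum_k\|\nabla\phi(x_k)\|_2^2\lesssim\frac{1}{K\beta}+\beta^2$ (plus a $\beta$-order term from the descent slack). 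No choice of $\beta$ recovers $\mathcal{O}(1/K)$ from such a bound: constant $\beta$ leaves an error floor, $\beta\sim K^{-1/2}$ gives $\mathcal{O}(K^{-1/2})$, and the optimal $\beta\sim K^{-1/3}$ gives $\mathcal{O}(K^{-2/3})$. Your parenthetical ``or a constant step absorbed into the $\mathcal{O}(1/K)$ rate'' is exactly the step that does not go through.

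The device the paper uses to close this hole is to never bound $\|\widehat{\nabla}\phi_k\|$ by a constant. In \eqref{eq:delta_Q} and \eqref{eq:delta_w} the drift terms are kept proportional to $\beta^2\|\widehat{\nabla}\phi_k\|_2^2$, and in the descent step \eqref{eq:phi_descent} the inner product is expanded via the polarization identity $\langle\widehat{\nabla}\phi_k,\nabla\phi(x_k)\rangle=\frac{1}{2}\left(\|\widehat{\nabla}\phi_k\|^2+\|\nabla\phi(x_k)\|^2-\|\widehat{\nabla}\phi_k-\nabla\phi(x_k)\|^2\right)$, which produces a \emph{negative} term $-\left(\frac{\beta}{2}-\frac{L_\phi^M}{2}\beta^2\right)\|\widehat{\nabla}\phi_k\|_2^2$. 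One then forms the merit function $\mathcal{L}_k=\phi(x_k)+\zeta_Q\|Q_k-Q_k^*\|_\infty^2+\zeta_w\|w_k-w_k^*\|_2^2$ and collects all terms: every $\|\widehat{\nabla}\phi_k\|^2$ contribution (including those coming from the drifts of the $Q$- and $w$-recursions) carries a combined coefficient $I_{\widehat{\phi}}$ that is made negative by constant choices of $\beta,\xi,N$ as in \eqref{eq:parameters}, and likewise the coefficients $I_Q,I_w$ of the tracking errors are made negative. This gives the exact telescoping inequality $\mathcal{L}_{k+1}-\mathcal{L}_k\le-\frac{\beta}{2}\|\nabla\phi(x_k)\|_2^2$ with no residual floor, hence $\mathcal{O}(1/K)$ with constant step sizes and $N=\mathcal{O}(1)$. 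Without this Lyapunov-style cancellation (or an equivalent mechanism), your recursion-averaging argument cannot deliver the claimed rate.
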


Coupled with \cref{pro:modelfree_hypergrad_chara}, the next proposition marks a substantial development to clarify the Lipschitz property of the hyper-gradient.
\begin{proposition}
    Under~\cref{assu:r}, we have $\norm{\nabla \phi(x_1) - \nabla \phi(x_2)}_2\le L_{\phi}\norm{x_1-x_2}_2$ for any $x_1,x_2\in\mathbb{R}^n$, with $L_\phi$ specified in \cref{pro:hypergrad_Lipz}.
\end{proposition}

\begin{theorem}[Model-free]\label{the:sobirl}
    Under Assumptions~\ref{assu:l} and~\ref{assu:r} and given the accuracy $\epsilon$ in \cref{alg:SoBiRL}, we can set the constant~$\beta < \frac{1}{2L_\phi}$, and then the iterates $\{x_k\}$~satisfy
    \begin{equation*}
        \frac{1}{K}\sum_{k=1}^K{\left\| \nabla \phi \left( x_k \right) \right\|_2 ^2}\le \frac{\phi \left( x_1 \right) -\phi ^*}{K\left( \frac{\beta}{2}-\beta ^2L_{\phi} \right)}+\frac{1+2\beta L_{\phi}}{1-2\beta L_{\phi}}L^2_{\widetilde{\phi }}\ \epsilon\,,
    \end{equation*}
    where $\phi^*$ is the minimum of $\phi(x)$, and $L_\phi,\ L_{\widetilde{\phi }}$ are specified in Propositions~\ref{pro:hypergrad_Lipz} and \ref{pro:lipz_hp_estimator}, {respectively}.
\end{theorem}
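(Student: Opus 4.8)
The plan is to treat \cref{alg:SoBiRL} as an inexact gradient-descent scheme on the hyper-objective $\phi$ and run the standard descent-lemma telescoping argument, with the inexactness controlled by the lower-level policy accuracy $\epsilon$. Two ingredients are already available: the $L_\phi$-smoothness of $\phi$, i.e. $\|\nabla\phi(x_1)-\nabla\phi(x_2)\|\le L_\phi\|x_1-x_2\|$ from \cref{pro:hypergrad_Lipz}, and a bound on the hyper-gradient estimation error. For the latter I would first note that, by \cref{pro:modelfree_hypergrad_chara}, the estimator \eqref{eq:free_hyper_esti} reduces exactly to $\nabla\phi(x_k)$ when $\pi_k=\pi^*(x_k)$; hence the Lipschitz estimate of \cref{pro:lipz_hp_estimator} gives $\|\widetilde{\nabla}\phi(x_k,\pi_k)-\nabla\phi(x_k)\|_2\le L_{\widetilde{\phi}}\|\pi_k-\pi^*(x_k)\|_2$, and the stopping rule $\|\pi_k-\pi^*(x_k)\|_2^2\le\epsilon$ in line~2 then yields the per-step error control $\|e_k\|^2\le L_{\widetilde{\phi}}^2\,\epsilon$, where $e_k:=\widetilde{\nabla}\phi(x_k,\pi_k)-\nabla\phi(x_k)$.

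Next I would apply the descent lemma to the update $x_{k+1}=x_k-\beta\widetilde{\nabla}\phi(x_k,\pi_k)$. Writing $\widetilde{\nabla}\phi(x_k,\pi_k)=\nabla\phi(x_k)+e_k$ and expanding the quadratic gives
\[ \phi(x_{k+1})\le\phi(x_k)-\Big(\beta-\tfrac{L_\phi\beta^2}{2}\Big)\|\nabla\phi(x_k)\|^2-\beta(1-L_\phi\beta)\langle\nabla\phi(x_k),e_k\rangle+\tfrac{L_\phi\beta^2}{2}\|e_k\|^2. \]
The crux is to absorb the indefinite cross term $-\beta(1-L_\phi\beta)\langle\nabla\phi(x_k),e_k\rangle$ into the remaining two. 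I would invoke a weighted Young inequality $-\langle a,b\rangle\le\tfrac{\mu}{2}\|a\|^2+\tfrac{1}{2\mu}\|b\|^2$ with the weight $\mu=\tfrac{1+\beta L_\phi}{1-\beta L_\phi}$ tuned so that the residual coefficient of $\|\nabla\phi(x_k)\|^2$ becomes exactly $\tfrac{\beta}{2}-\beta^2L_\phi=\tfrac{\beta}{2}(1-2\beta L_\phi)$. The hypothesis $\beta<\tfrac{1}{2L_\phi}$ is precisely what keeps this coefficient strictly positive and $1-2\beta L_\phi>0$. Collecting the leftover $\|e_k\|^2$ contributions and bounding them above by $\tfrac{\beta}{2}(1+2\beta L_\phi)\|e_k\|^2$ produces the one-step inequality
\[ \Big(\tfrac{\beta}{2}-\beta^2L_\phi\Big)\|\nabla\phi(x_k)\|^2\le\phi(x_k)-\phi(x_{k+1})+\tfrac{\beta}{2}(1+2\beta L_\phi)\,L_{\widetilde{\phi}}^2\,\epsilon. \]

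Finally I would sum over $k=1,\dots,K$, telescope the right-hand side to $\phi(x_1)-\phi(x_{K+1})\le\phi(x_1)-\phi^*$, divide by $K\big(\tfrac{\beta}{2}-\beta^2L_\phi\big)$, and simplify the error coefficient via $\tfrac{\beta}{2}(1+2\beta L_\phi)\big/\big(\tfrac{\beta}{2}-\beta^2L_\phi\big)=\tfrac{1+2\beta L_\phi}{1-2\beta L_\phi}$, which reproduces the stated bound. I expect the genuine obstacle to lie in the error-control step rather than in the recursion: establishing that the sampled estimator \eqref{eq:free_hyper_esti} coincides with the exact hyper-gradient at $\pi^*(x_k)$ and admits the clean Lipschitz dependence on the policy (\cref{pro:lipz_hp_estimator}) is where the structure of the reward-gradient sums and the $\log$-probability term must be handled carefully. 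Once that estimate is in hand, the remainder is the routine inexact-gradient telescoping sketched above, and the only quantitative care needed is the Young-weight bookkeeping required to land exactly on the advertised constants.
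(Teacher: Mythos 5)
Your proposal is correct and follows essentially the same route as the paper's proof: the per-step error control $\|\widetilde{\nabla}\phi(x_k,\pi_k)-\nabla\phi(x_k)\|_2^2\le L_{\widetilde{\phi}}^2\,\epsilon$ from \cref{pro:lipz_hp_estimator} together with the stopping rule, the descent lemma using the $L_\phi$-smoothness of \cref{pro:hypergrad_Lipz}, and telescoping. The only difference is bookkeeping in handling the cross term: the paper keeps the full estimator in the quadratic and uses $\abs{\innerp{a,b}}\le\frac{1}{2}\kh{\norm{a}^2+\norm{b}^2}$ and $\frac{1}{2}\norm{a+b}^2\le\norm{a}^2+\norm{b}^2$, whereas you expand the square first and apply a weighted Young inequality with $\mu=\frac{1+\beta L_\phi}{1-\beta L_\phi}$; both variants land on the identical coefficients $\frac{\beta}{2}-\beta^2L_\phi$ and $\frac{\beta}{2}+\beta^2L_\phi=\frac{\beta}{2}\kh{1+2\beta L_\phi}$ and hence the same final bound.
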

Combined with~\cref{tab:comparision_BiRL}, the convergence property in~\cref{the:sobirl} implies that SoBiRL realizes better iteration complexity than PBRL \citep{shen2023penalty} with the same computation cost at each outer and inner iteration. Moreover, SoBiRL attains the convergence rate of the same order as PARL \citep{chakraborty2024parl}, but only employs first-order oracles and gets rid of the convexity assumption on the lower-level problem.

\todo{In the context of stochastic bilevel RL, the distribution of samples $(\bm{D}_k,\bm{\xi}_k,\bm{\zeta}_k)$ relies on the variable $\pi_k$, which differs from related work employing momentum techniques \citep{cutkosky2019momentum,khanduri2021sustain,huang2023momentum}. To circumvent this misalignment in Stoc-SoBiRL, we give the statistical properties (bias and variance of the stochastic hyper-gradient) in \cref{sec:stoc_estimator} and the convergence analysis in \cref{sec:conver_stocsobirl}.}
\begin{theorem}[Stochastic]
    Under Assumptions~\ref{assu:l} and~\ref{assu:r}, we can choose appropriate sampling configurations $M\sim\mathcal{O}(K^{4/3}),\,J\sim\mathcal{O}(1),\,T\sim\mathcal{O}(\log K)$.
    Then the iterates $\{x_k\}$ generated by \cref{alg:StocSoBiRL} satisfy
    \begin{equation*}
        \frac{1}{K}\sum_{k=1}^K{\mathbb{E} \left[ \left\| \nabla \phi (x_k) \right\|_2 ^2 \right]}=\mathcal{O} \left( \frac{\log K}{K^{2/3}} \right) =\widetilde{\mathcal{O} }\left( K^{-\frac{2}{3}} \right).
    \end{equation*}
     Detailed parameter setting is given in \cref{the:convergent_StocSoBiRL}.
\end{theorem}
Enlightened by the momentum acceleration, the average norm square of the hyper-gradient achieves $\epsilon$ within $\widetilde{\mathcal{O}}(\epsilon^{-1.5})$ outer iterations. Therefore, the samples required by the upper-level leader are of the order $\widetilde{\mathcal{O}}(\epsilon^{-3.5})$, which can be alleviated in practice, as the leader can leverage vectorized environments \citep{makoviychuk2isaac} within each outer iteration; \revise{see \cref{sec:distributed} for further discussion.}

\section{EXPERIMENTS}\label{sec:exp}
\vspace{-0.3cm}
In this section, we conduct RLHF experiments on Atari games and Mujoco environments to validate the efficiency of SoBiRL, and one synthetic experiment to verify the convergence of M-SoBiRL. The experiment details and parameter settings are provided in \cref{sec:details_exp}. We have made the code available on \revise{\href{https://github.com/UCAS-YanYang/SoBiRL}{https://github.com/UCAS-YanYang/SoBiRL}}.

\begin{table*}[t]
\caption{Comparison of algorithms on the Atari games and Mujoco simulations, evaluated by the ground-truth reward. Each bilevel algorithm collects a total of 3000 trajectory pairs. The results are recorded after appropriate timesteps and averaged over 5 seeds.}
\label{tab:overall_exp}
\begin{center}
\begin{footnotesize}
\setlength{\tabcolsep}{2.5pt}
\begin{tabular}{lcccccc}
\toprule 
Algorithm  & BeamRider (4M )  & Seaquest  (1M ) & SpaceInvaders (1M ) & HalfCheetah (1M ) & Walker2d (1M ) & Hopper (500k )  \\
\midrule
SoBiRL      & $ 8459.7\pm1989.1$ &  $ 746.7\pm16.4$  & $595.6\pm23.8$           & $ 4283.6\pm363.0$   & $ 2996.0\pm\phantom{0}97.4$ & $ 2812.4\pm217.5$ \\
DRLHF            &   $6124.5\pm\phantom{0}433.1$    &    $561.9\pm38.0$    & $483.9\pm27.3$           & $3466.7\pm424.2$       & $2389.4\pm101.8$    & $2455.8\pm457.2$     \\
PBRL    &   $6798.2\pm\phantom{0}784.0$    &    $651.2\pm24.6$    & $554.0\pm33.4$           & $4022.8\pm311.8$       & $2257.8\pm127.9$    & $2799.1\pm155.5$     \\
HPGD             &   $7503.9\pm1863.6$   &    $728.1\pm57.5$    & $ 601.8\pm36.0$       & $3975.8\pm476.1$       & $2487.5\pm223.8$    & $2803.3\pm354.5$     \\
SAC &   $9780.0\pm1877.4$   &    $891.1\pm31.3$    & $783.8\pm56.7$           & $5769.9\pm765.3$       & $4552.5\pm\phantom{0}23.3$     & $2207.6\pm\phantom{0}18.4$      \\
\bottomrule
\end{tabular}
\end{footnotesize}
\end{center}
\end{table*}

In RLHF experiments, we compare SoBiRL \revise{with the bilevel algorithms DRLHF \citep{christiano2017rlhf}, PBRL \citep{shen2024bilevelRL}, HPGD \cite{thoma2024HPGD}, and a baseline algorithm SAC \citep{haarnoja2018sac,haarnoja2018sacauto}. All the bilevel solvers harness deep neural networks to predict rewards, while the baseline SAC receives ground-truth rewards for training.} The results are reported in Figures \ref{fig:beamrider}, \ref{fig:rlhf2}, \ref{fig:Mujoco}, \revise{and \cref{tab:overall_exp}}, showing that even in the face of unknown rewards, the performance of SoBiRL is on a par with the baseline SAC. Additionally, SoBiRL achieves a higher episodic return than the compared methods within the same timesteps. Another interesting observation is that, taking BeamRider for example, compared to the ground truth, the preference prediction based on the reward model of DRLHF achieves an accuracy of approximately $85\%$, while it hovers around $54\%$ with the reward model of SoBiRL. This stems from different update rules. Specifically, DRLHF alternates between learning a policy given the parameterized reward $r(x)$ and minimizing~$\mathbb{E}\zkh{l_h}$ based on collecting trajectories, which decouples the task into two separate phases. It aligns preferences, but overlooks the exploration for rewards promoting better policy. Nevertheless, SoBiRL glues the two levels via the hyper-gradient ${\nabla}{\phi}$, with the first term~\eqref{eq:mf_hypergrad1} exploiting the preference information to align the reward predictor and the second term~\eqref{eq:mf_hypergrad2} exploring the implicit reward to unearth a more favorable policy. Therefore, SoBiRL produces superior results in reward prediction, although it exhibits lower accuracy in alignment of trajectory preference.

Regarding M-SoBiRL, the results of a synthetic experiment are shown in \cref{fig:test_M}. The curves exhibit the benign convergence property of the proposed algorithm.

\subsection*{Acknowledgments}
Bin Gao was supported by the Young Elite Scientist Sponsorship Program by CAST. Ya-xiang Yuan was supported by the National Natural Science Foundation of China (grant No. 12288201). The authors are grateful to the Program and Area Chairs and Reviewers for their detailed and valuable comments and suggestions.

\bibliography{lib}

\newpage
\onecolumn
\aistatstitle{Bilevel Reinforcement Learning via the Development of Hyper-gradient without Lower-Level Convexity: \\ Supplementary Materials}

\appendix

\section{RELATED WORK IN BILEVEL OPTIMIZATION}\label{sec:related_bio}
By the implicit function theorem, the approximate implicit differentiation (AID) based method regards the optimal lower-level solution as~a function of the upper-level variable, which yields the hyper-gradient to instruct the upper-level update. It implements alternating (inexact) gradient descent steps between two levels~\citep{ghadimi2018bsa,ji2021stocbio,chen2022stable,dagreou2022soba,li2022fsla,hong2023ttsa}. In addition, generalizing the lower-level strong convexity, the studies~\citep{grazzi2020FPBiO,grazzi2021stochasticFPBiO,grazzi2023FPBiO} focused on the bilevel problem where the lower-level problem is formulated as a fixed-point equation. Moreover, a line of research has been dedicated to the bilevel problem with a nonconvex lower-level objective~\citep{liu2021towards,shen2023penalty,huang2023momentum,lu2023slm,liu2024moreau}. Specifically, \citet{shen2023penalty}~proposed a penalty-based method to bypass the requirement of lower-level convexity. In~\citep{huang2023momentum}, a momentum-based approach was designed to solve the bilevel problem with a lower-level function satisfying the PL condition. \citet{liu2024moreau}~utilized the Moreau envelope based reformulation to provide a single-loop algorithm for nonconvex-nonconvex bilevel optimization. In stochastic settings, bilevel algorithms have been developed in \citep{ghadimi2018bsa,ji2021stocbio,khanduri2021sustain,chen2022stable}.
 
\section{VECTOR AND MATRIX NOTATION}\label{sec:notations}
Throughout this paper, we adhere to the following conventions of vector and matrix notation.
\begin{itemize}[label=$\bullet$,leftmargin=*]
  \item For any vector $q\in\mathbb{R}^{\abs{\mdps}\abs{\mdpa}}$, we denote its component associated with $s\in\mdps$ as $q_s\in\mathbb{R}^{\abs{\mdpa}}$ or $q(s,\cdot)\in\mathbb{R}^{\abs{\mdpa}}$ in the following way,
    \begin{equation*}
        \forall\,(s,a)\in\mdps\times\mdpa,\quad q_s(a)=q(s,a)=q_{sa}.
    \end{equation*}
  Additionally, for a vector $v\in\mathbb{R}^{\abs{\mdps}}$, we use the notation of entry $v_s=v(s)$ interchangeably for convenience of exposition.
  \item $P\in\mathbb{R}^{\abs{\mdps}\abs{\mdpa}\times\abs{\mdps}}$, $r\in\mathbb{R}^{\abs{\mdps}\abs{\mdpa}}$, $\pi\in\mathbb{R}^{\abs{\mdps}\abs{\mdpa}}$: the transition matrix, the reward function, and the policy.
  \item $P^\pi\in\mathbb{R}^{\abs{\mdps}\times\abs{\mdps}}$: the transition matrix induced by~$\pi$. 
  \item $V^\pi\in\mathbb{R}^{\abs{\mdps}}$, $Q^\pi\in\mathbb{R}^{\abs{\mdps}\abs{\mdpa}}$: the soft value functions  associated with~$\pi$.
  \item $V^*(x)\in\mathbb{R}^{\abs{\mdps}}$, $Q^*(x)\in\mathbb{R}^{\abs{\mdps}\abs{\mdpa}}$, $\pi^*(x)\in\mathbb{R}^{\abs{\mdps}\abs{\mdpa}}$: the optimal soft value functions and the optimal soft policy given $r(x)$.
  \item $\nabla\phi(x)\in\mathbb{R}^n$: the hyper-gradient.
  \item $\nabla_x \varphi(x,v) \in\mathbb{R}^{\abs{\mdps}\times n}$, $\nabla_v \varphi(x,v) \in\mathbb{R}^{\abs{\mdps}\times \abs{\mdps}}$: the partial derivatives of $\varphi(x,v)$.
  \item $\nabla_x f(x,\pi) \in\mathbb{R}^n$, $\nabla_\pi f(x,\pi) \in\mathbb{R}^{\abs{\mdps}\abs{\mdpa}}$: the partial derivatives of $f(x,\pi)$.
  \item $\nabla r(x)\in\mathbb{R}^{\abs{\mdps}\abs{\mdpa}\times n}$: the derivative of $r(x)$. 
  \item $\nabla V^*(x)\in\mathbb{R}^{\abs{\mdps}\times n}$, $\nabla Q^*(x)\in\mathbb{R}^{\abs{\mdps}\abs{\mdpa}\times n}$, $\nabla\pi^*(x)\in\mathbb{R}^{\abs{\mdps}\abs{\mdpa}\times n}$: the implicit differentiations.
\end{itemize}

\section{PRELIMINARIES ON NUMERICAL LINEAR ALGEBRA}
\revise{We first revisit some basics for non-negative matrices and Markov chains; see \citep{seneta2006nonnegative}.}
\begin{proposition}\label{prop:I_gamma}
    Given a matrix $B\in\mathbb{R}^{m\times m}$ satisfying $\norm{B}_{\infty}\le\gamma$, then the matrix $I-B$ is invertible with the magnitudes of all its eigenvalues located within the interval $[1-\gamma,1+\gamma]$.
\end{proposition}
\begin{proof}
Denote the entries of $B$ and $I-B$ by $\{b_{ij}\}_{i.j=1}^m$ and $\{l_{ij}\}_{i.j=1}^m$, respectively. Set $\lambda\in\mathbb{C}$ as an eigenvalue of $I-B$, and consider the Gershgorin circle theorem, for $i=1,2,\ldots,m$,
\begin{equation}\label{eq:yuanpan}
    \abs{\lambda-l_{ii}}\le \sum_{j=1,j\neq i}^m \abs{l_{ij}}.
\end{equation}
Incorporating $l_{ii}=1- b_{ii}$ and $l_{ij}=- b_{ij}$ into \eqref{eq:yuanpan}, we obtain
\begin{equation*}
    \abs {\lambda - \kh{1- b_{ii}}} \le \sum_{j=1,j\neq i}^m \abs{b_{ij}},
\end{equation*}
The absolute value inequality leads to
\begin{equation*}
    -\sum_{j=1,j\neq i}^m \abs{b_{ij}} \le \abs{\lambda - 1} - \abs{b_{ii}} \le \sum_{j=1,j\neq i}^m \abs{b_{ij}},
\end{equation*}
which completes the proof since
\begin{equation*}
    \max_{i=1,2,\ldots,m} \sum_{j=1}^m \abs{b_{ij}} = \norm{B}_{\infty} = \gamma.
\end{equation*}
\end{proof}

\begin{definition}\label{def:tranmatrix}
    A matrix $P\in\mathbb{R}^{m\times m}=(p_{ij})$ is defined to be a transition matrix if all entries are non-negative and each row sums to $1$, i.e., for any $i=1,2,\ldots,m$,
    \begin{equation*}
        \sum_{j} p_{ij} = 1.
    \end{equation*}
\end{definition}

\begin{proposition}\label{pro:tranmatrix}
A transition matrix $P\in\mathbb{R}^{m\times m}=(p_{ij})$ always has an eigenvalue of $1$, while all the remaining eigenvalues have absolute values less than or equal to $1$.
\end{proposition}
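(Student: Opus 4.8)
The plan is to treat the two assertions separately, since each rests on a distinct elementary fact about $P$, and neither requires machinery beyond what was already used for Proposition~\ref{prop:I_gamma}.

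First I would establish that $1$ is an eigenvalue by exhibiting an explicit eigenvector. Because $P$ is a transition matrix, Definition~\ref{def:tranmatrix} guarantees that every row sums to $1$, which is exactly the statement that $P\mathbf{1}=\mathbf{1}$, where $\mathbf{1}\in\mathbb{R}^m$ is the all-ones vector. Since $\mathbf{1}\neq 0$, it is a valid eigenvector and $1$ is an eigenvalue, settling the first claim.

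For the bound on the remaining eigenvalues, I would invoke the spectral radius inequality against the induced $\infty$-norm. The non-negativity of the entries together with the unit row sums yields $\norm{P}_{\infty}=\max_i\sum_j\abs{p_{ij}}=\max_i\sum_j p_{ij}=1$, and since the spectral radius is dominated by any induced matrix norm, $\rho(P)\le\norm{P}_{\infty}=1$. Hence every eigenvalue $\lambda\in\mathbb{C}$ satisfies $\abs{\lambda}\le 1$, so the eigenvalue $1$ found above is of maximal modulus. Alternatively, and in the same spirit as the proof of Proposition~\ref{prop:I_gamma}, the modulus bound follows directly from the Gershgorin circle theorem: each eigenvalue lies in a disc centered at $p_{ii}$ of radius $\sum_{j\neq i}\abs{p_{ij}}=1-p_{ii}$, whence $\abs{\lambda}\le\abs{\lambda-p_{ii}}+p_{ii}\le(1-p_{ii})+p_{ii}=1$.

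I do not anticipate a genuine obstacle here, as the statement is a standard spectral property of stochastic matrices. The only point requiring mild care is that eigenvalues may be complex, so the modulus argument must be phrased over $\mathbb{C}$; both the spectral-radius bound and the Gershgorin estimate accommodate this automatically, which is why I would favor one of those two routes over any attempt to argue with real eigenvectors.
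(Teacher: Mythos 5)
Your proof is correct, and it is worth noting that the paper itself states \cref{pro:tranmatrix} without any proof, treating it as a standard fact about stochastic matrices; so there is no in-paper argument to compare against, and your write-up supplies the missing justification. Your two-part argument is the standard one: the unit row sums give $P\mathbf{1}=\mathbf{1}$ directly, so $1$ is an eigenvalue, and the modulus bound follows either from $\rho(P)\le\norm{P}_\infty=1$ or from the Gershgorin estimate $\abs{\lambda}\le\abs{\lambda-p_{ii}}+p_{ii}\le 1$. Both variants are sound, and both fit the toolkit the paper already uses for the neighboring \cref{prop:I_gamma} (Gershgorin discs, induced $\infty$-norms); the Gershgorin variant has the minor virtue of reusing that proposition's argument almost verbatim, while the spectral-radius variant is the one the paper implicitly invokes later in \cref{pro:inverse_nonnegative} when it asserts $\rho(\gamma P)=\gamma<1$. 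Your caution about phrasing the bound over $\mathbb{C}$ rather than via real eigenvectors is also the right instinct, since stochastic matrices can certainly have complex spectrum.
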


\begin{definition}
    A matrix $X=(x_{ij})\in\mathbb{R}^{m\times m}$ is called non-negative if all the entries are equal to or greater than zero, i.e.,
    \begin{equation*}
    x_{ij} \ge 0,\ \text{ for }1\le i,j\le m.
    \end{equation*}
\end{definition}

\begin{definition}
    A matrix $A\in\mathbb{R}^{m \times m}$ is an $M$-matrix if it can be expressed in the form $A=\mu I-B$, where $B=\left(b_{i j}\right)$ is non-negative, and $\mu\ge 0$ is equal to or greater than the moduli of any eigenvalue of $B$.
\end{definition}

\begin{proposition}\label{pro:inverse_nonnegative}
    Given a constant $\gamma\in(0,1)$ and a transition matrix $P\in\mathbb{R}^{m\times m}=(p_{ij})$, then the matrix $A=I-\gamma P$ is an $M$-matrix with the moduli of every eigenvalue in $[1-\gamma,1+\gamma]$, Additionally, the inverse of $A$ is
    \begin{equation*}
        A^{-1} = \sum_{t=0}^\infty \gamma^t P^t,
    \end{equation*}
    which is a non-negative matrix and holds all diagonal entries greater than or equal to $1$. 
\end{proposition}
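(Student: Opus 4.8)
The plan is to dispatch the three assertions---the $M$-matrix structure, the eigenvalue localization, and the explicit Neumann-series form of $A^{-1}$ together with its sign pattern---in sequence, leaning on the two linear-algebra facts already recorded in \cref{prop:I_gamma,pro:tranmatrix}. First I would verify the $M$-matrix claim straight from the definition. Writing $A = I - \gamma P = \mu I - B$ with $\mu = 1$ and $B = \gamma P$, the matrix $B$ is non-negative because $P$ is a transition matrix and $\gamma > 0$. For the spectral-radius condition $\mu \ge \rho(B)$, I would invoke \cref{pro:tranmatrix}: every eigenvalue of $P$ has modulus at most $1$, hence every eigenvalue of $\gamma P$ has modulus at most $\gamma \le 1 = \mu$. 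This exhibits $A$ in the required form and settles that it is an $M$-matrix.

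Next, for the eigenvalue localization and invertibility, I would compute $\norm{\gamma P}_{\infty} = \gamma \norm{P}_{\infty} = \gamma$, using that each row of $P$ sums to $1$ with non-negative entries. Then \cref{prop:I_gamma}, applied with the matrix $\gamma P$ playing the role of $B$ there, immediately yields that $I - \gamma P$ is invertible and that the moduli of all its eigenvalues lie in $[1-\gamma, 1+\gamma]$.

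For the explicit inverse, I would establish the Neumann series. Since $\norm{(\gamma P)^t}_{\infty} \le \norm{\gamma P}_{\infty}^t = \gamma^t \to 0$, the partial sums $S_N = \sum_{t=0}^N \gamma^t P^t$ are Cauchy and converge; the telescoping identity $(I - \gamma P)S_N = I - (\gamma P)^{N+1}$ then passes to the limit to give $(I - \gamma P)\sum_{t=0}^\infty \gamma^t P^t = I$, so $A^{-1} = \sum_{t=0}^\infty \gamma^t P^t$. Non-negativity is then immediate: each $\gamma^t P^t$ is a product of non-negative factors, hence entrywise non-negative, and a convergent sum of non-negative matrices stays non-negative. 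Finally, isolating the $t=0$ term $I$ shows that every diagonal entry of $A^{-1}$ equals $1$ plus a sum of non-negative quantities, hence is at least $1$.

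The bulk of this argument is routine once the preceding propositions are in hand; the only point demanding genuine care is the convergence of the Neumann series and the interchange of limit with the telescoping identity, which I would justify rigorously through the geometric bound $\norm{(\gamma P)^t}_{\infty} \le \gamma^t$ on the operator norm of the iterates.
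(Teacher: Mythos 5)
Your proof is correct and follows essentially the same route as the paper's: both invoke \cref{prop:I_gamma} (via $\norm{\gamma P}_\infty=\gamma$) for invertibility and the eigenvalue localization, both use \cref{pro:tranmatrix} to control the spectrum of $\gamma P$, and both conclude with the Neumann series $\sum_{t=0}^\infty \gamma^t P^t$ and the $t=0$ term for non-negativity and the diagonal bound. Your write-up is in fact slightly more complete than the paper's, since you explicitly verify the $M$-matrix definition and justify the series identity by the telescoping partial sums, steps the paper leaves implicit.
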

\begin{proof}
    Combined with Definition~\ref{def:tranmatrix} which reveals $\norm{P}_{\infty} = 1$ and Proposition~\ref{prop:I_gamma}, it follows that any $\lambda\in\mathbb{C}$ as an eigenvalue of $A$ satisfies $\abs{\lambda}\in[1-\gamma,1+\gamma]$, yielding that $A$ is invertible. Since~${\rho(\gamma P)=\gamma<1}$ by Proposition~\ref{pro:tranmatrix}, the infinite series $\sum_{t=0}^\infty \gamma^t P^t$ converges and coincides with the inverse of $A$, that is,
    \begin{equation}\label{eq:inverse_series}
        A^{-1} = \sum_{t=0}^\infty \gamma^t P^t.
    \end{equation}
    Coupled with $P$ being non-negative and $\kh{\gamma P}^0=I$,  \eqref{eq:inverse_series}~implies that $A^{-1}$ is non-negative with all diagonal entries greater than or equal to $1$.
\end{proof}

The next proposition serves as a technical tool in the later analysis.
\begin{proposition}\label{pro:lognorm}
    For any $\pi_1,\pi_2\in \mathbb{R}_+^\mathbb{\abs{\mdps}\abs{\mdpa}}\cap\Delta^{\abs{\mdps}}$, the vector norms satisfy
    \begin{align*}
        \norm{\pi_1-\pi_2}_\infty&\le\norm{\log \pi_1-\log\pi_2}_\infty,
        \\
        \norm{\pi_1-\pi_2}_2&\le\norm{\log \pi_1-\log\pi_2}_2.
    \end{align*}
\end{proposition}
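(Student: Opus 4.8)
The plan is to reduce both norm inequalities to a single scalar inequality applied entrywise. The crucial structural fact is that, since $\pi_1,\pi_2\in\mathbb{R}_+^{\abs{\mdps}\abs{\mdpa}}\cap\Delta^{\abs{\mdps}}$, every component $\pi_i(s,a)$ is a strictly positive probability, hence lies in the interval $(0,1]$. On this interval the logarithm is non-contractive, which is precisely what the two inequalities encode.

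First I would prove the scalar claim: for any $p,q\in(0,1]$, $\abs{p-q}\le\abs{\log p-\log q}$. By the mean value theorem applied to $\log$ on the interval between $p$ and $q$, there exists $\xi$ strictly between $p$ and $q$ (hence $\xi\in(0,1]$) with $\log p-\log q=\xi^{-1}(p-q)$. Since $\xi\le 1$ gives $\xi^{-1}\ge 1$, taking absolute values yields $\abs{\log p-\log q}=\xi^{-1}\abs{p-q}\ge\abs{p-q}$, as desired.

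With the scalar inequality in hand, both vector bounds follow immediately by applying it entrywise. For the $\ell_\infty$ bound, I would write $\norm{\pi_1-\pi_2}_\infty=\max_{s,a}\abs{\pi_1(s,a)-\pi_2(s,a)}\le\max_{s,a}\abs{\log\pi_1(s,a)-\log\pi_2(s,a)}=\norm{\log\pi_1-\log\pi_2}_\infty$. For the $\ell_2$ bound, summing the squared scalar inequalities over all $(s,a)$ gives $\norm{\pi_1-\pi_2}_2^2=\sum_{s,a}\abs{\pi_1(s,a)-\pi_2(s,a)}^2\le\sum_{s,a}\abs{\log\pi_1(s,a)-\log\pi_2(s,a)}^2=\norm{\log\pi_1-\log\pi_2}_2^2$, and taking square roots finishes the proof.

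There is no genuine obstacle here beyond recognizing the role of the domain restriction: the whole argument hinges on every entry being at most $1$, which forces the mean-value derivative $\xi^{-1}$ to be at least $1$ and thus makes $\log$ expand distances. Were the entries allowed to exceed $1$, the inequality would reverse on that region, so the simplex constraint is essential rather than cosmetic.
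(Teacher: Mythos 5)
Your proof is correct and takes essentially the same route as the paper: both reduce the statement to the scalar inequality $\abs{a-b}\le\abs{\log a-\log b}$ for entries in $(0,1]$ and then verify it by elementary calculus --- the paper via monotonicity of $G(z)=z-\log z$ on $(0,1)$, you via the mean value theorem applied to $\log$, which are interchangeable one-line arguments resting on the same fact that $\abs{(\log)'}\ge 1$ there. Your explicit entrywise passage to the $\ell_\infty$ and $\ell_2$ norms simply spells out what the paper leaves implicit after its ``it suffices to show'' reduction.
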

\begin{proof}
    It suffices to show for any $0<a,b<1$, $\abs{a-b}\le \abs{\log a-\log b}$. Without loss of generality, suppose that $0<b\le a<1$. Consider the function $G\left( z \right) :=z-\log z$, with the derivative
    \begin{equation*}
        G^{\prime}\left( z \right) =1-\frac{1}{z}<0,\quad\text{for}\ z\in(0,1),
    \end{equation*}
    which means $G(z)$ is monotonically decreasing on $(0,1)$. It leads to $a-\log a\le b-\log b$, equivalently, $a-b\le \log a-\log b$.
\end{proof}

\section{PROOF IN SECTION~\ref{sec:MSoBiRL}}
The structure of $\varphi(\cdot,\cdot)$ facilitates the characterization of $\nabla V ^*(x)$, as outlined in the following proposition.
\begin{proposition}
    For any $x\in\mathbb{R}^n$, $\varphi(x,\cdot)$ is a contraction map, i.e., $\|\nabla_v \varphi(x,v)\|_{\infty}=\gamma<1$, and the matrix ${I-\nabla_{v}\varphi(x,v)}$ is invertible. Consequently, $V^*(x)$ is the unique fixed point of $\varphi(x,\cdot)$, with a well-defined derivative $\nabla V^*(x)$, given by
    \begin{equation}\label{eq:hyper_result1}
         \nabla V^*(x) = \kh{I-\nabla_{v}\varphi(x,V^*(x))}^{-1}\nabla_{x}\varphi(x,V^*(x)).
    \end{equation}    
    Additionally, $\nabla_v \varphi\kh{x,V^*(x)}$ coincides with the $\gamma$-scaled transition matrix induced by the optimal soft policy $\pi^*(x)$, i.e.,
    \begin{equation}\label{eq:hyper_result2}
        \nabla_{v}\varphi\kh{x,V^*(x)} = \gamma P^{\pi^*(x)}.
    \end{equation}    
\end{proposition}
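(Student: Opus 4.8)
The plan is to prove every claim by first computing $\nabla_v \varphi(x,v)$ in closed form and reading off its structure. Writing $z_{sa} := \tau^{-1}\kh{r_{sa}(x) + \gamma \sum_{s'} P_{sas'} v_{s'}}$ so that $\varphi(x,v)_s = \tau \log\kh{\sum_a e^{z_{sa}}}$, I would differentiate the log-sum-exp with respect to $v_{s'}$ to obtain
\begin{equation*}
    [\nabla_v \varphi(x,v)]_{ss'} = \gamma \sum_a w_{sa}(x,v)\, P_{sas'}, \qquad w_{sa}(x,v) := \frac{e^{z_{sa}}}{\sum_{a'} e^{z_{sa'}}},
\end{equation*}
where the weights $w_{sa}$ form a softmax distribution, hence are nonnegative and satisfy $\sum_a w_{sa} = 1$.

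First I would establish the contraction property together with the exact value of the $\infty$-norm. Summing the absolute row entries and using that both $\{w_{sa}\}_a$ and each row $\{P_{sas'}\}_{s'}$ of the transition matrix sum to one gives $\sum_{s'} \abs{[\nabla_v \varphi]_{ss'}} = \gamma \sum_a w_{sa} \sum_{s'} P_{sas'} = \gamma$ for every $s$, so $\norm{\nabla_v \varphi(x,v)}_\infty = \gamma < 1$. The mean value inequality then makes $\varphi(x,\cdot)$ a $\gamma$-contraction on $\mathbb{R}^{\abs{\mdps}}$, and Proposition~\ref{prop:I_gamma} immediately yields invertibility of $I - \nabla_v \varphi(x,v)$ with eigenvalue moduli in $[1-\gamma, 1+\gamma]$.

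Next, existence and uniqueness of the fixed point follows from the Banach fixed-point theorem, and the softmax temporal consistency condition \eqref{eq:consis_v} identifies $V^*(x)$ as that fixed point. For the derivative formula I would apply the implicit function theorem to $F(x,v) := v - \varphi(x,v)$: since $\nabla_v F = I - \nabla_v \varphi$ is invertible at $v = V^*(x)$, the map $x \mapsto V^*(x)$ is differentiable with $\nabla V^*(x) = \kh{I - \nabla_v \varphi(x,V^*(x))}^{-1} \nabla_x \varphi(x,V^*(x))$, which is \eqref{eq:hyper_result1}.

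Finally, to obtain \eqref{eq:hyper_result2} I would evaluate the Jacobian at $v = V^*(x)$. There the relation \eqref{eq:V_to_Q} gives $r_{sa}(x) + \gamma \sum_{s'} P_{sas'} V_{s'}^*(x) = Q_{sa}^*(x)$, so $z_{sa} = Q_{sa}^*(x)/\tau$ and the softmax weights collapse to the optimal policy via the consistency condition \eqref{eq:consis_policy}, i.e.\ $w_{sa}(x,V^*(x)) = \pi_{sa}^*(x)$. Substituting into the Jacobian and using $P^\pi_{ss'} = \sum_a \pi_{sa} P_{sas'}$ gives $[\nabla_v\varphi(x,V^*(x))]_{ss'} = \gamma \sum_a \pi_{sa}^*(x) P_{sas'} = \gamma P^{\pi^*(x)}_{ss'}$. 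The computation is otherwise routine; the only delicate point is ensuring the $\infty$-norm equals $\gamma$ \emph{exactly} rather than merely being bounded by it, which hinges on the exact normalization of both the softmax weights and the rows of $P$ leaving no slack in the row-sum estimate.
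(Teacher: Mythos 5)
Your proposal is correct and follows essentially the same route as the paper's proof: explicit computation of the Jacobian of the log-sum-exp showing the softmax-weighted row sums equal $\gamma$ exactly, invertibility of $I-\nabla_v\varphi$ via the Gershgorin-based \cref{prop:I_gamma}, the implicit function theorem applied to $F(x,v)=v-\varphi(x,v)$ for the derivative formula, and evaluation at the fixed point using \eqref{eq:V_to_Q} and \eqref{eq:consis_policy} to collapse the softmax weights to $\pi^*(x)$. The only cosmetic differences are that you name the Banach fixed-point theorem and the mean value inequality explicitly where the paper leaves them implicit.
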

\begin{proof}
    Consider the mapping $F:\mathbb{R}^n\times\mathbb{R}^{\abs{\mdps}}\to\mathbb{R}^{\abs{\mdps}}$ defined by
    \begin{equation*}
        F(x,v):= v - \varphi(x,v).
    \end{equation*}
    Then the partial derivative of $F(x,v)$ with respect to $v$ reads $I-\nabla_v\varphi(x,v)$. Computing
    \begin{equation}\label{eq:phi_grad_v}
        \left( \nabla _v\varphi \left( x,v \right) \right) _{ss^{\prime}} = \frac{\partial \varphi _s\left( x,v \right)}{\partial v_{s^{\prime}}}=\gamma \frac{\sum_a{P_{sas^{\prime}}\exp \left( \tau ^{-1}\left( r_{sa}\left( x \right) +\gamma \sum_{s^{\prime\prime}}{P_{sas^{\prime\prime}}v_{s^{\prime\prime}}} \right) \right)}}{\sum_a{\exp \left( \tau ^{-1}\left( r_{sa}\left( x \right) +\gamma \sum_{s^{\prime\prime}}{P_{sas^{\prime\prime}}v_{s^{\prime\prime}}} \right) \right)}}
    \end{equation}
    leads to
    \begin{equation*}
    \norm{\nabla_v\varphi(x,v)}_\infty = \max_s \sum_{s^{\prime}}{\left| \left( \nabla _v\varphi \left( x,v \right) \right) _{ss^{\prime}} \right|} = \max_s \sum_{s^\prime} \frac{\gamma \sum_a P _{a s s^{\prime}} \exp \left(\tau^{-1}\left(r^x_{s a}+\gamma \sum_{s^{\prime \prime}} P _{a s s^{\prime \prime}} v_{s^{\prime \prime}}\right)\right)}{\sum_a \exp \left(\tau^{-1}\left(r^x_{s a}+\gamma \sum_{s^{\prime \prime}} P _{a s s^{\prime \prime}} v_{s^{\prime \prime}}\right)\right)} = \gamma,
    \end{equation*}
    which means for any $x\in\mathbb{R}^n$, $\varphi(x,\cdot)$ is a contraction mapping, admiting a unique $V^*(x)$ such that $F(x,V^*(x))=0$. Additionally, by applying \cref{prop:I_gamma}, we can derive that $\nabla_v F(x,v)$ is invertible for any $(x,v)\in \mathbb{R}^n\times\mathbb{R}^{\abs{\mdps}}$. Consequently, the implicit function theorem implies that there exists a differentiable function $V^*:\mathbb{R}^n\to\mathbb{R}^{\abs{\mdps}}$ such that, for any $v\in\mathbb{R}^{\abs{\mdps}}$ and $x\in\mathbb{R}^n$,
    \begin{equation*}
        F(x,V^*(x)) = V^*(x) - \varphi(x,V^*(x)) = {\bf 0},
    \end{equation*}
    with the derivative $\nabla V^*(x)$ satisfying
    \begin{equation*}
        \nabla V^*(x) - \nabla_x\varphi(x,V^*(x)) - \nabla_v \varphi(x,V^*(x)) \nabla V^*(x) = {\bf 0}.
    \end{equation*}
    Rearranging this yields the result \eqref{eq:hyper_result1}. On the other hand, take \eqref{eq:phi_grad_v} at $\kh{x,V^*(x)}$,
    \begin{align}
        \left( \nabla _v\varphi \left( x,v \right) \right) _{ss^{\prime}}|_{v=V^*\left( x \right)}&=\gamma \frac{\sum_a{P_{sas^{\prime}}\exp \left( \tau ^{-1}\left( r_{sa}\left( x \right) +\gamma \sum_{s^{\prime\prime}}{P_{sas^{\prime\prime}}V_{s^{\prime\prime}}^{*}\left( x \right)} \right) \right)}}{\sum_a{\exp \left( \tau ^{-1}\left( r_{sa}\left( x \right) +\gamma \sum_{s^{\prime\prime}}{P_{sas^{\prime\prime}}V_{s^{\prime\prime}}^{*}\left( x \right)} \right) \right)}}  \nonumber
        \\
        &=\gamma \frac{\sum_a{P_{sas^{\prime}}\exp \left( \tau ^{-1}Q_{sa}^{*}\left( x \right) \right)}}{\exp \left( \tau ^{-1}V_{s}^{*}\left( x \right) \right)} \label{eq:phi_grad_v_1}
        \\
        &=\gamma \sum_a{P_{sas^{\prime}}\exp \left( \tau ^{-1}\left( Q_{sa}^{*}\left( x \right) -V_{s}^{*}\left( x \right) \right) \right)}    \nonumber
        \\
        &=\gamma \sum_a{P_{sas^{\prime}}\pi _{sa}^{*}\left( x \right)}  \label{eq:phi_grad_v_2},
    \end{align}
    where \eqref{eq:phi_grad_v_1} comes from \eqref{eq:V_to_Q} and \eqref{eq:consis_v}, and \eqref{eq:phi_grad_v_2} follows considering \eqref{eq:consis_policy}. Consequently, revisiting the definition of $P^{\pi^*(x)}$ yields the conclusion $\nabla_{v}\varphi\kh{x,V^*(x)} = \gamma P^{\pi^*(x)}$.
\end{proof}

\section{PROOFS IN SECTION~\ref{sec:SoBiRL}}
In this section, we provide proofs of the propositions in \cref{sec:SoBiRL}. The following proposition indicates that the implicit differentiations $\nabla V ^*(x)$ and $\nabla Q ^*(x)$ can be estimated by sampling the reward gradient under the policy~$\pi^*(x)$.
\begin{proposition}\label{pro:first_order_nablaV}
    For any $x\in\mathbb{R}^n$, $s\in\mdps$, and $a\in\mdpa$, 
    \begin{align*}
         \nabla V_s^*(x) &= \mathbb{E}\left[ \sum_{t=0}^\infty \gamma^t\nabla r_{s_t,a_t}(x)\ \Big |\ s_0=s, \pi^*(x),\mdp_\tau\kh{x} \right],
         \\
          \nabla Q_{sa}^*(x) &= \mathbb{E}\left[ \sum_{t=0}^\infty \gamma^t\nabla r_{s_t,a_t}(x)\ \Big |\ s_0=s,a_0=a,\pi^*(x),\mdp_\tau\kh{x} \right].
    \end{align*}       
\end{proposition}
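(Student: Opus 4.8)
The plan is to start from the closed-form hyper-gradient of \cref{pro:gradient_V}, namely $\nabla V^*(x) = \kh{I-\gamma P^{\pi^*(x)}}^{-1}\nabla_x\varphi(x,V^*(x))$, and to turn both factors into objects with a probabilistic interpretation along trajectories generated by $\pi^*(x)$. First I would compute $\nabla_x\varphi(x,v)$ directly from the definition of $\varphi$: differentiating $\varphi_s(x,v)=\tau\log\kh{\sum_a\exp(\tau^{-1}(r_{sa}(x)+\gamma\sum_{s'}P_{sas'}v_{s'}))}$ with respect to $x$ and evaluating at $v=V^*(x)$ yields, via the consistency conditions~\eqref{eq:consis_v} and~\eqref{eq:consis_policy}, that the $s$-th row of $\nabla_x\varphi(x,V^*(x))$ equals $\sum_a\pi^*_{sa}(x)\nabla r_{sa}(x)=\mathbb{E}_{a\sim\pi^*(\cdot|s)}\zkh{\nabla r_{sa}(x)}$, precisely the one-step expected reward gradient under the optimal soft policy.

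Next I would expand the resolvent using the Neumann series guaranteed by \cref{pro:inverse_nonnegative}, $\kh{I-\gamma P^{\pi^*(x)}}^{-1}=\sum_{t=0}^\infty\gamma^t\kh{P^{\pi^*(x)}}^t$, so that
\[
\nabla V^*_s(x)=\sum_{t=0}^\infty\gamma^t\sum_{s'}\zkh{\kh{P^{\pi^*(x)}}^t}_{ss'}\mathbb{E}_{a'\sim\pi^*(\cdot|s')}\zkh{\nabla r_{s'a'}(x)}.
\]
The crux is then to recognize $[\kh{P^{\pi^*(x)}}^t]_{ss'}$ as the $t$-step state-occupancy probability of reaching $s'$ from $s_0=s$ under $\pi^*(x)$; combining this with the inner conditional action expectation collapses the double sum over $(s',a')$ into $\mathbb{E}\zkh{\nabla r_{s_ta_t}(x)\mid s_0=s,\pi^*(x),\mdp_\tau(x)}$, which delivers the first claimed identity. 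The interchange of the infinite sum with the expectation, and the convergence of the series itself, are justified by the uniform bound $\norm{\nabla r_{sa}(x)}_2\le C_{rx}$ from \cref{assu:r} together with the geometric factor $\gamma^t$, via dominated convergence.

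For the $Q$-function identity I would differentiate the coupling relation~\eqref{eq:V_to_Q}, obtaining $\nabla Q^*_{sa}(x)=\nabla r_{sa}(x)+\gamma\sum_{s'}P_{sas'}\nabla V^*_{s'}(x)$, and substitute the trajectory representation just established for $\nabla V^*_{s'}$. The leading term $\nabla r_{sa}(x)$ matches the deterministic $t=0$ summand under the conditioning $s_0=s,\,a_0=a$, while the discounted sum over $s'$ weighted by $P_{sas'}$ reindexes the $t\ge1$ summands of a trajectory that starts from $(s,a)$ and thereafter follows $\pi^*(x)$; regrouping gives the second identity. I expect the main obstacle to be the bookkeeping in this reindexing step---carefully verifying that the matrix-power occupancy measures agree with the law of the Markov chain along trajectories, and handling the shift between the $V$-conditioning (on $s_0$ only) and the $Q$-conditioning (on the pair $(s_0,a_0)$)---rather than any analytic difficulty, since all convergence and interchange issues reduce to the same geometric domination.
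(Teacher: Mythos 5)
Your proposal follows essentially the same route as the paper's proof: differentiate $\varphi$ in $x$ and evaluate at $V^*(x)$ to obtain the policy-weighted reward gradient $\sum_a \pi^*_{sa}(x)\nabla r_{sa}(x)$, expand $\kh{I-\gamma P^{\pi^*(x)}}^{-1}$ as the Neumann series $\sum_{t=0}^\infty \gamma^t \kh{P^{\pi^*(x)}}^t$ via \cref{pro:inverse_nonnegative}, identify the matrix powers with $t$-step occupancy probabilities to collapse the sums into the trajectory expectation, and then obtain the $Q$-identity by differentiating the coupling relation \eqref{eq:V_to_Q}. Your explicit dominated-convergence justification for interchanging the infinite sum and expectation (using $\norm{\nabla r_{sa}(x)}_2\le C_{rx}$) is a small addition in rigor that the paper leaves implicit, but the argument is the same.
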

\begin{proof}
    Compute the partial derivative
    \begin{equation*}
        \frac{\partial \varphi _s\left( x,v \right)}{\partial x_i}=\frac{\sum_a{\partial _{x_i}r_{sa}\left( x \right) \exp \left( \tau ^{-1}\left( r_{sa}\left( x \right) +\gamma \sum_{s^{\prime\prime}}{P_{sas^{\prime\prime}}v_{s^{\prime\prime}}} \right) \right)}}{\sum_a{\exp \left( \tau ^{-1}\left( r_{sa}\left( x \right) +\gamma \sum_{s^{\prime\prime}}{P_{sas^{\prime\prime}}v_{s^{\prime\prime}}} \right) \right)}},
    \end{equation*}
    and substitute $v=V^*(x)$ into it,
    \begin{equation}\label{eq:phi_grad_x_1}
        \nabla _x\varphi _s\left( x,V^*\left( x \right) \right) = \sum_a{\nabla r_{sa}\left( x \right) \pi _{sa}^{*}\left( x \right)},
    \end{equation}
    where we adopt the similar derivation as \eqref{eq:phi_grad_v_2}. Applying \cref{pro:inverse_nonnegative} to \eqref{eq:hyper_result2}, we obtain \begin{equation}\label{eq:phi_grad_v_3}
       \left( I-\nabla _v\varphi \left( x,V^*\left( x \right) \right) \right) ^{-1}=\sum_{t=0}^{\infty}{\left( \gamma P^{\pi ^*\left( x \right)} \right) ^t}.
    \end{equation}
    Employ the subscript $s\in\mdps$ to denote the corresponding row of a matrix. It follows from \eqref{eq:hyper_result1}, \eqref{eq:phi_grad_x_1} and \eqref{eq:phi_grad_v_3} that
    \begin{align*}
        \nabla V_{s}^{*}\left( x \right) =&\left( I-\nabla _v\varphi \left( x,V^*(x) \right) \right) _{s}^{-1}\nabla _x\varphi \left( x,V^*\left( x \right) \right) 
        \\
        =&\sum_{t=0}^{\infty}{\left( \gamma P^{\pi ^*\left( x \right)} \right) _{s}^{t}}\nabla _x\varphi \left( x,V^*\left( x \right) \right) 
        \\
        =&\sum_{t=0}^{\infty}{\gamma ^t\sum_{s^{\prime}}{P\left( s_t=s^{\prime}|s_0=s,\pi ^*\left( x \right) \right)}}\sum_a{\nabla r_{s^{\prime}a}\left( x \right) \pi _{s^{\prime}a}^{*}\left( x \right)}
        \\
        =&\sum_{t=0}^{\infty}{\gamma ^t}\sum_{s^{\prime},a}{P\left( s_t=s^{\prime},a_t=a|s_0=s,\pi ^*\left( x \right) \right) \nabla r_{s^{\prime}a}\left( x \right)}
        \\
        =&\ \mathbb{E} \left[ \sum_{t=0}^{\infty}{\gamma ^t\nabla _xr_{s_ta_t}\left( x \right)}\,\,| s_0=s,\pi ^*\left( x \right) ,\mathcal{M} _{\tau}\left( x \right) \right],
    \end{align*}
    where $P\left( s_t=s^{\prime},a_t=a|s_0=s,\pi ^*\left( x \right) \right) $ is the probability of taking action $a$ at state $s^\prime$ at time $t$, given that the process starts from state $s_0=s$ in the MDP $\mdp_\tau(x)$. Differentiating both sides of 
    \begin{equation*}
         Q_{sa}^{*}\left( x \right) = r_{sa}\left( x \right) +\gamma \sum_{s^{\prime}}{P_{sas^{\prime}} V_{s^{\prime}}^{*}\left( x \right)}
    \end{equation*}
    with respect to~$x$, we have
    \begin{align*}
        \nabla Q_{sa}^{*}\left( x \right) =&\nabla r_{sa}\left( x \right) +\gamma \sum_{s^{\prime}}{P_{sas^{\prime}}\nabla V_{s^{\prime}}^{*}\left( x \right)}
        \\
        =&\ \mathbb{E} \left[ \sum_{t=0}^{\infty}{\gamma ^t\nabla _xr_{s_ta_t}\left( x \right)}\,\,| s_0=s,a_0=a,\pi ^*\left( x \right) ,\mathcal{M} _{\tau}\left( x \right) \right] .
    \end{align*}
\end{proof}

Subsequently, the next proposition characterizes the hyper-gradient via fully first-order information.
\begin{proposition}
    The hyper-gradient $\nabla \phi(x)$ of the bilevel reinforcement learning problem \eqref{eq:standar_biRL} with the upper-level function~\eqref{eq:upper_level_f} can be computed by
    \begin{align*}
        \nabla \phi(x) =&\ \mathbb{E} _{d_i\sim \rho \left( d;\pi ^*\left( x \right) \right)}\left[ \nabla l\left( d_1,d_2,\ldots,d_I;x \right) \right]   
        \\
        &+\tau ^{-1}\mathbb{E} _{d_i\sim \rho \left( d;\pi ^*\left( x \right) \right)}\left[ l\left( d_1,d_2,\ldots,d_I;x \right) \left( \sum_i{\sum_h{\nabla \left( Q_{s_{h}^{i}a_{h}^{i}}^{*}\left( x \right) -V_{s_{h}^{i}}^{*}\left( x \right) \right)}} \right) \right].    
    \end{align*}
\end{proposition}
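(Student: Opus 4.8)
The plan is to differentiate the composed hyper-objective $\phi(x)=f(x,\pi^*(x))=\mathbb{E}_{d_i\sim\rho(d;\pi^*(x))}[l(d_1,\ldots,d_I;x)]$ directly rather than through the matrix $\nabla\pi^*(x)$. In the tabular, finite-horizon regime the space of trajectory tuples is finite, so $\phi$ is a finite sum $\sum_{d_1,\ldots,d_I}\big(\prod_i P(d_i;\pi^*(x))\big)\,l(d_1,\ldots,d_I;x)$ of smooth functions, and the gradient may be taken term by term. Two sources of $x$-dependence appear: the explicit one inside $l(\cdot;x)$, and the implicit one carried by the sampling law $\rho(d;\pi^*(x))$ through $\pi^*(x)$. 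Differentiating the explicit part yields $\mathbb{E}_{d_i\sim\rho(d;\pi^*(x))}[\nabla l]$, which is precisely the first term \eqref{eq:mf_hypergrad1}.

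For the implicit part, the plan is to invoke the log-probability trick: writing $\nabla_x\prod_i P(d_i;\pi^*(x)) = \big(\prod_i P(d_i;\pi^*(x))\big)\sum_i\nabla_x\log P(d_i;\pi^*(x))$, its contribution becomes $\mathbb{E}_{d_i\sim\rho(d;\pi^*(x))}\big[l\sum_i\nabla_x\log P(d_i;\pi^*(x))\big]$. I would then expand $\log P(d_i;\pi^*(x))$ along the single-trajectory likelihood and observe that the initial distribution $\bar\rho$ and the upper-level transition $\bar P$ carry no $x$-dependence, so only the policy factors survive, giving $\nabla_x\log P(d_i;\pi^*(x))=\sum_{h=0}^{H-1}\nabla_x\log\pi^*(x)(a_h^i|s_h^i)$.

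The key step is to rewrite this policy score via the softmax temporal value consistency \eqref{eq:consis_policy}, which gives $\log\pi^*_{sa}(x)=\tau^{-1}(Q^*_{sa}(x)-V^*_s(x))$ and hence $\nabla_x\log\pi^*(x)(a_h^i|s_h^i)=\tau^{-1}\nabla(Q^*_{s_h^i a_h^i}(x)-V^*_{s_h^i}(x))$. Substituting back produces exactly the second term \eqref{eq:mf_hypergrad2}, and summing the two contributions completes the identity. Combined with \cref{pro:first_order_nablaV}, which expresses $\nabla Q^*$ and $\nabla V^*$ as sampled discounted reward gradients, this renders the whole hyper-gradient estimable from fully first-order information.

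I would also note that this derivation is consistent with the general chain rule $\nabla\phi(x)=\nabla_x f+\nabla\pi^*(x)^\top\nabla_\pi f$: the direct route simply packages $\nabla\pi^*(x)^\top\nabla_\pi f$ into the score-function expectation, bypassing the explicit matrix $\nabla\pi^*(x)$. The only point demanding care is the interchange of differentiation with summation/expectation when the sampling law itself depends on $x$; in the finite tabular, finite-horizon setting this is immediate, but I would flag it explicitly so that the log-trick is rigorously justified rather than merely formal. Beyond this bookkeeping no genuine obstacle arises, since the consistency condition does the heavy lifting of converting the policy gradient into first-order $Q^*$–$V^*$ information.
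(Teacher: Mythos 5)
Your proposal is correct and follows essentially the same route as the paper's own proof: expanding $\phi(x)$ as a finite sum over trajectory tuples, splitting the gradient into the explicit $\nabla l$ term and the score-function term via the log-probability trick, noting that $\bar\rho$ and $\bar P$ are $x$-independent so only the policy factors contribute, and converting $\nabla\log\pi^*_{sa}(x)$ into $\tau^{-1}\nabla\bigl(Q^*_{sa}(x)-V^*_s(x)\bigr)$ through the softmax consistency condition \eqref{eq:consis_policy}. Your extra remark on justifying the interchange of differentiation and summation in the finite tabular setting is a reasonable refinement of the same argument, not a departure from it.
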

\begin{proof}
Recall that
\begin{equation*}
    P(d_i;\pi ^*\left( x \right) )=\bar{\rho}\kh{s_0^i}\zkh{\Pi_{h=0}^{H-2}\pi ^*_{s_h^i,a_h^i}\left( x \right) \bar{P}\kh{s_{h+1}^i|s_h^i,a_h^i}}\pi^*_{s_{H-1}^i,a_{H-1}^i}(x),
\end{equation*}
and 
\begin{equation*}
    \phi(x) = \mathbb{E} _{d_i\sim \rho \left( d;\pi ^*\left( x \right) \right)}\left[ l\left( d_1,d_2,\ldots,d_I;x \right) \right] =\sum_{\left( d_1,\ldots,d_I \right)}{l\left( d_1,d_2,\ldots,d_I;x \right) \Pi _{i=1}^{I}}P\left( d_i;\pi ^*\left( x \right) \right).
\end{equation*}
It follows that
\begin{align*}
    &\nabla \phi \left( x \right)
    \\
    =&\sum_{\left( d_1,\ldots,d_I \right)}{\nabla l\left( d_1,d_2,\ldots,d_I;x \right) \Pi _{i=1}^{I}}P\left( d_i;\pi ^*\left( x \right) \right) +\sum_{\left( d_1,\ldots,d_I \right)}{l\left( d_1,d_2,\ldots,d_I;x \right) \nabla \Pi _{i=1}^{I}}P\left( d_i;\pi ^*\left( x \right) \right) 
    \\
    =&\ \mathbb{E} _{d_i\sim \rho \left( d;\pi ^*\left( x \right) \right)}\left[ \nabla l\left( d_1,d_2,\ldots,d_I;x \right) \right] +\mathbb{E} _{d_i\sim \rho \left( d;\pi ^*\left( x \right) \right)}\left[ l\left( d_1,d_2,\ldots,d_I;x \right) \nabla \log \Pi _{i=1}^{I}P\left( d_i;\pi ^*\left( x \right) \right) \right] 
    \\
    =&\ \mathbb{E} _{d_i\sim \rho \left( d;\pi ^*\left( x \right) \right)}\left[ \nabla l\left( d_1,d_2,\ldots,d_I;x \right) \right] +\mathbb{E} _{d_i\sim \rho \left( d;\pi ^*\left( x \right) \right)}\left[ l\left( d_1,d_2,\ldots,d_I;x \right) \nabla \left( \sum_i{\log P\left( d_i;\pi ^*\left( x \right) \right)} \right) \right] 
    \\
    =&\ \mathbb{E} _{d_i\sim \rho \left( d;\pi ^*\left( x \right) \right)}\left[ \nabla l\left( d_1,d_2,\ldots,d_I;x \right) \right] +\mathbb{E} _{d_i\sim \rho \left( d;\pi ^*\left( x \right) \right)}\left[ l\left( d_1,d_2,\ldots,d_I;x \right) \left( \sum_i{\sum_h{\nabla \log \pi _{s_{h}^{i}a_{h}^{i}}^{*}\left( x \right)}} \right) \right] 
    \\
    =&\ \mathbb{E} _{d_i\sim \rho \left( d;\pi ^*\left( x \right) \right)}\left[ \nabla l\left( d_1,d_2,\ldots,d_I;x \right) \right] 
    \\
    &+\tau ^{-1}\mathbb{E} _{d_i\sim \rho \left( d;\pi ^*\left( x \right) \right)}\left[ l\left( d_1,d_2,\ldots,d_I;x \right) \left( \sum_i{\sum_h{\nabla \left( Q_{s_{h}^{i}a_{h}^{i}}^{*}\left( x \right) -V_{s_{h}^{i}a_{h}^{i}}^{*}\left( x \right) \right)}} \right) \right], 
\end{align*}
where the first equality comes from the chain rules, the second equality is obtained by ``log probability tricks'' in REINFORCE \citep{sutton2018reinforcement}, and the last equality takes advantege of \eqref{eq:consis_policy}.
\end{proof}

\section{PROPERTIES OF $V^*(x)$ AND $\pi^*(x)$}\label{sec:general_smooth}
At the beginning, we uncover the Lipschitz properties of $V^*(x)$, based on the results of Propositions \ref{pro:first_order_nablaV} and \ref{pro:QV_Lipz}. The proof of \cref{pro:QV_Lipz} is deferred to \cref{sec:ana_SoBiRL}, since it needs more analytical tools, and this section primarily concentrates on the properties of $V^*(x)$ and $\pi^*(x)$. It is worth stressing that all results in this section only rely on \cref{assu:r}.

\begin{lemma}\label{lem:vsmooth1}
    Under \cref{assu:r}, $V^*(x)$ is ${L_V}$-Lipschitz continuous with $L_V=\frac{\sqrt{\left| \mathcal{S} \right|}C_{rx}}{1-\gamma}$, i.e., for any $x_1,x_2\in\mathbb{R}^{n}$,
    \begin{equation*}
        \norm{V^*(x_1)-V^*(x_2)}_2\le L_V\norm{x_1-x_2}_2.
    \end{equation*}
\end{lemma}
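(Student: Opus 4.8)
The plan is to bound the Lipschitz constant of $V^*(x)$ by exploiting the integral representation of $\nabla V^*_s(x)$ furnished by \cref{pro:first_order_nablaV}. Since $V^*$ is differentiable (as established in the proposition that characterizes $\nabla V^*(x)$ via the fixed-point equation), the cleanest route is to bound $\norm{\nabla V^*(x)}_2$ uniformly over $x$ and then invoke the mean value inequality $\norm{V^*(x_1)-V^*(x_2)}_2 \le \sup_x \norm{\nabla V^*(x)}_{2}\,\norm{x_1-x_2}_2$.

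First I would fix a state $s$ and write, using \cref{pro:first_order_nablaV},
\begin{equation*}
    \norm{\nabla V^*_s(x)}_2 = \norm{\mathbb{E}\left[ \sum_{t=0}^\infty \gamma^t \nabla r_{s_t,a_t}(x)\ \Big|\ s_0=s,\pi^*(x),\mdp_\tau(x)\right]}_2.
\end{equation*}
Then I would move the norm inside the expectation and the sum by the triangle inequality (Jensen), giving
\begin{equation*}
    \norm{\nabla V^*_s(x)}_2 \le \mathbb{E}\left[\sum_{t=0}^\infty \gamma^t \norm{\nabla r_{s_t,a_t}(x)}_2\right] \le \sum_{t=0}^\infty \gamma^t C_{rx} = \frac{C_{rx}}{1-\gamma},
\end{equation*}
where the middle bound uses the gradient bound $\norm{\nabla r_{sa}(x)}_2 \le C_{rx}$ from \cref{assu:r} and the fact that the probabilities defining the expectation sum to one at each $t$. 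This controls each row of the matrix $\nabla V^*(x) \in \mathbb{R}^{\abs{\mdps}\times n}$.

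Next I would convert the per-row bound into a bound on the full matrix norm. Since $\nabla V^*(x)$ has $\abs{\mdps}$ rows, each of Euclidean norm at most $C_{rx}/(1-\gamma)$, I would bound the spectral (or Frobenius) norm via $\norm{\nabla V^*(x)}_2 \le \norm{\nabla V^*(x)}_F = \big(\sum_{s}\norm{\nabla V^*_s(x)}_2^2\big)^{1/2} \le \sqrt{\abs{\mdps}}\,\frac{C_{rx}}{1-\gamma} = L_V$, which matches the claimed constant $L_V = \frac{\sqrta C_{rx}}{1-\gamma}$. Applying the mean value inequality along the segment joining $x_1$ and $x_2$ (legitimate because $V^*$ is continuously differentiable) then yields $\norm{V^*(x_1)-V^*(x_2)}_2 \le L_V \norm{x_1-x_2}_2$. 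I anticipate the only mildly delicate point is the passage from the element-wise/row-wise gradient bound to the operator-norm bound with the precise constant $\sqrta$; handling this via the Frobenius norm makes the dependence on $\abs{\mdps}$ transparent and avoids any subtlety about which matrix norm the statement intends. Everything else is a routine geometric-series estimate driven entirely by \cref{assu:r}, consistent with the remark that this section relies only on that assumption.
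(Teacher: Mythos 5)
Your proposal is correct and follows essentially the same route as the paper's proof: bound each row $\nabla V^*_s(x)$ by $\frac{C_{rx}}{1-\gamma}$ via the geometric-series estimate from \cref{pro:first_order_nablaV}, pass to the Frobenius norm to pick up the $\sqrt{\abs{\mdps}}$ factor, and conclude via the mean value inequality (a step the paper leaves implicit). The only blemish is notational: in two places you write the constant with $\sqrt{\abs{\mdpa}}$ where you mean $\sqrt{\abs{\mdps}}$; your displayed computation has it right.
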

\begin{proof}
    Combining the characterization from \cref{pro:first_order_nablaV}
    \begin{equation*}
        \nabla V_{s}^{*}(x)=\mathbb{E} \left[ \sum_{t=0}^{\infty}{\gamma ^t}\nabla r_{s_t,a_t}(x) | s_0=s,\pi ^*(x),\mathcal{M} _{\tau}\left( x \right) \right], 
    \end{equation*}
    and the boundedness of $\nabla r(x)$ revealed by \cref{assu:r} yields
    \begin{equation*}
        \left\| \nabla V_{s}^{*}(x) \right\| _2\le \frac{C_{rx}}{1-\gamma},\quad\text{for}\ s\in\mdps.
    \end{equation*}
    Put differently, the $2$-norm of each row vector in $\nabla V^*(x)$ does not exceed $\frac{C_{rx}}{1-\gamma}$, which means
    \begin{equation*}
        \left\| \nabla V^*(x) \right\| _2\le \left\| \nabla V^*(x) \right\| _F\le \frac{\sqrt{\left| \mathcal{S} \right|}C_{rx}}{1-\gamma}.
    \end{equation*}
    Here, $\norm{\cdot}_F$ is the Frobenius norm of a matrix. Setting $L_V=\frac{\sqrt{\left| \mathcal{S} \right|}C_{rx}}{1-\gamma}$ completes the proof.
\end{proof}

The Lipschitz continuity of $\pi^*(x)$ is connected to that of the value functions by the consistency condition \eqref{eq:consis_policy}.
\begin{proposition}\label{lem:pismooth}
    Under \cref{assu:r}, $\pi^*(x)$ is ${L_\pi}$-Lipschitz continuous, i.e., for any $x_1,x_2\in\mathbb{R}^{n}$,
    \begin{equation*}
        \norm{\pi^*(x_1)-\pi^*(x_2)}_2\le\norm{\log\pi^*(x_1)-\log\pi^*(x_2)}_2\le L_\pi\norm{x_1-x_2},
    \end{equation*}
    where $L_{\pi}=\frac{2\sqrt{\left| \mathcal{S} \right|\left| \mathcal{A} \right|}C_{rx}}{\tau \left( 1-\gamma \right)}$.
\end{proposition}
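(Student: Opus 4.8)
The plan is to reduce the Lipschitz continuity of $\pi^*$ to that of the soft value functions via the consistency condition \eqref{eq:consis_policy}. In logarithmic form this condition reads $\log\pi^*_{sa}(x)=\tau^{-1}(Q^*_{sa}(x)-V^*_s(x))$, so a Lipschitz estimate for $\log\pi^*$ follows once we control $Q^*$ and $V^*$. The first inequality $\|\pi^*(x_1)-\pi^*(x_2)\|_2\le\|\log\pi^*(x_1)-\log\pi^*(x_2)\|_2$ is immediate from \cref{pro:lognorm}: the softmax form of \eqref{eq:consis_policy} makes every entry of $\pi^*(x)$ strictly positive and $\pi^*(x)\in\Delta^{\abs{\mdps}}$, so the hypothesis of that proposition is met. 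It therefore remains only to bound $\|\log\pi^*(x_1)-\log\pi^*(x_2)\|_2$.

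Next I would establish a Lipschitz bound for $Q^*$ mirroring \cref{lem:vsmooth1}. \cref{pro:first_order_nablaV} writes $\nabla Q^*_{sa}(x)$ as a discounted expectation of reward gradients; combining the triangle inequality (and Jensen for the expectation) with $\|\nabla r_{sa}(x)\|_2\le C_{rx}$ from \cref{assu:r} and summing the geometric series gives $\|\nabla Q^*_{sa}(x)\|_2\le \frac{C_{rx}}{1-\gamma}$ uniformly in $x$. Since this holds for all $x$, each scalar component $Q^*_{sa}$ is $\frac{C_{rx}}{1-\gamma}$-Lipschitz, and aggregating over the $\abs{\mdps}\abs{\mdpa}$ coordinates yields $\|Q^*(x_1)-Q^*(x_2)\|_2\le \frac{\sqrt{\abs{\mdps}\abs{\mdpa}}\,C_{rx}}{1-\gamma}\|x_1-x_2\|_2$. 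For $V^*$ I simply quote \cref{lem:vsmooth1}, namely $\|V^*(x_1)-V^*(x_2)\|_2\le \frac{\sqrt{\abs{\mdps}}\,C_{rx}}{1-\gamma}\|x_1-x_2\|_2$.

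Then I would expand the target norm coordinate-wise,
\begin{equation*}
\|\log\pi^*(x_1)-\log\pi^*(x_2)\|_2^2=\tau^{-2}\sum_{s,a}\bigl[(Q^*_{sa}(x_1)-Q^*_{sa}(x_2))-(V^*_s(x_1)-V^*_s(x_2))\bigr]^2,
\end{equation*}
apply the elementary inequality $(a-b)^2\le 2a^2+2b^2$, and observe that summing the $V^*$-term over $(s,a)$ contributes an extra factor $\abs{\mdpa}$ since $V^*_s$ does not depend on $a$. Substituting the two bounds from the previous step gives $\|\log\pi^*(x_1)-\log\pi^*(x_2)\|_2^2\le \frac{4\abs{\mdps}\abs{\mdpa}C_{rx}^2}{\tau^2(1-\gamma)^2}\|x_1-x_2\|_2^2$, whose square root is precisely $L_\pi=\frac{2\sqrt{\abs{\mdps}\abs{\mdpa}}\,C_{rx}}{\tau(1-\gamma)}$.

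The argument is essentially bookkeeping once the reduction through \eqref{eq:consis_policy} is in place, and I do not expect any genuine difficulty; the only step requiring care is the tracking of dimensional factors—specifically the extra $\abs{\mdpa}$ arising when the per-state bound on $V^*$ is lifted to a sum over state-action pairs, together with the factor $2$ from splitting the difference $Q^*-V^*$—so that the final constant lands exactly on $L_\pi$ rather than a looser multiple of it.
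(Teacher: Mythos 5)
Your proposal is correct and follows essentially the same route as the paper: both reduce the claim through the log-form of the consistency condition \eqref{eq:consis_policy} to Lipschitz bounds on $Q^*$ and $V^*$ obtained from the gradient bounds in \cref{pro:first_order_nablaV}, and both finish with \cref{pro:lognorm}. The only difference is bookkeeping order—the paper applies a per-component triangle inequality ($|\log\pi^*_{sa}(x_1)-\log\pi^*_{sa}(x_2)|\le \frac{2C_{rx}}{\tau(1-\gamma)}\|x_1-x_2\|_2$) and then aggregates over the $\abs{\mdps}\abs{\mdpa}$ entries, whereas you aggregate into vector norms first and combine via $(a-b)^2\le 2a^2+2b^2$ while tracking the $\abs{\mdpa}$ multiplicity of the $V^*$ term—and both yield exactly $L_\pi$.
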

\begin{proof}
For any $s\in\mdps$, $a\in\mdpa$, from the results of \cref{pro:first_order_nablaV},
    \begin{align*}
    \left\| \nabla Q_{sa}^{*}(x) \right\| _2\le \frac{C_{rx}}{1-\gamma}\quad\text{and}\quad\left\| \nabla V_{s}^{*}(x) \right\| _2\le \frac{C_{rx}}{1-\gamma},
    \end{align*}
and the consistency condition \eqref{eq:consis_policy},
\begin{equation*}
    \log \pi _{sa}^{*}(x)=\tau ^{-1}\left( Q_{sa}^{*}\left( x \right) -V_{s}^{*}\left( x \right) \right) ,
\end{equation*}
it follows that
\begin{align*}
    \left\| \log \pi _{sa}^{*}(x_1)-\log \pi _{sa}^{*}(x_2) \right\| \le& \tau ^{-1}\left( \left\| Q_{sa}^{*}\left( x_1 \right) -Q_{sa}^{*}\left( x_2 \right) \right\| _2+\left\| V_{s}^{*}\left( x_1 \right) -V_{s}^{*}\left( x_2 \right) \right\|_2 \right) 
    \\
    \le& \frac{2C_{rx}}{\tau \left( 1-\gamma \right)}\left\| x_1-x_2 \right\| _2.
\end{align*}
The shape of $\pi\in\mathbb{R}^{\abs{\mdps}\abs{\mdpa}}$ implies
\begin{equation*}
    \left\| \log \pi ^*(x_1)-\log \pi ^*(x_2) \right\| _2\le \frac{2\sqrt{\left| \mathcal{S} \right|\left| \mathcal{A} \right|}C_{rx}}{\tau \left( 1-\gamma \right)}\left\| x_1-x_2 \right\| _2.
\end{equation*}
Applying \cref{pro:lognorm} achieves the conclusion.
\end{proof}

We derive the Lipschitz constant $L_{V1}$ in \cref{pro:QV_Lipz} focusing on the smoothness of each $V_s^*(x)$, which lays a foundation for the subsequent proposition. The following Lipschitz constant $L^M_{V1}$ is noted with the superscript $M$ to emphasize we consider the continuity of the whole matrix $\nabla V^*(x)$ here.
\begin{proposition}
Under~\cref{assu:r}, $V^*(x)$ is $L^M_{V1}$-Lipschitz smooth, i.e., for any $x_1,x_2\in\mathbb{R}^n$,
\begin{equation*}
\norm{\nabla V^*(x_1) - \nabla V^*(x_2)}_2\le L^M_{V1}\norm{x_1-x_2}_2,
\end{equation*}
where $L_{V1}^{M}=\sqrt{\left| \mathcal{S} \right|}\left( \frac{(1+\gamma)C_{rx}L_\pi\sqrta}{\left( 1-\gamma \right) ^2}+\frac{L_r}{1-\gamma}\right)$.    
\end{proposition}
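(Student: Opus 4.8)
The plan is to lift the per-state Lipschitz-smoothness estimate to the full matrix $\nabla V^*(x)\in\mathbb{R}^{|\mathcal{S}|\times n}$, exactly mirroring the aggregation argument already used for the $L_V$ bound in \cref{lem:vsmooth1}. The key observation is that \cref{pro:QV_Lipz} supplies the per-state constant $L_{V1}=\frac{(1+\gamma)C_{rx}L_\pi\sqrt{|\mathcal{A}|}}{(1-\gamma)^2}+\frac{L_r}{1-\gamma}$ with $\norm{\nabla V_s^*(x_1)-\nabla V_s^*(x_2)}_2\le L_{V1}\norm{x_1-x_2}_2$ for every $s\in\mathcal{S}$, so that the matrix constant should come out as the pure scaling $L^M_{V1}=\sqrt{|\mathcal{S}|}\,L_{V1}$.

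First I would bound the spectral norm of the matrix difference by its Frobenius norm, using $\norm{M}_2\le\norm{M}_F$ for any matrix $M$. Expanding the Frobenius norm as the sum over states of the squared row-norms $\norm{\nabla V_s^*(x_1)-\nabla V_s^*(x_2)}_2^2$, I would substitute the per-state estimate from \cref{pro:QV_Lipz} into each summand. Summing the $|\mathcal{S}|$ identical bounds and extracting the square root gives $\norm{\nabla V^*(x_1)-\nabla V^*(x_2)}_2\le\sqrt{|\mathcal{S}|}\,L_{V1}\norm{x_1-x_2}_2$, which is precisely the claimed constant $L^M_{V1}$.

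This aggregation step is entirely routine; the substantive content is packaged inside \cref{pro:QV_Lipz}, whose proof is deferred to \cref{sec:ana_SoBiRL}, and that is where the main obstacle lies. Establishing the per-state constant $L_{V1}$ requires controlling two separate sources of variation in the series $\nabla V_s^*(x)=\mathbb{E}[\sum_{t}\gamma^t\nabla r_{s_ta_t}(x)\mid s_0=s,\pi^*(x),\mathcal{M}_\tau(x)]$ as $x$ moves: the direct dependence through $\nabla r_{sa}(x)$, handled by the $L_r$-smoothness of the reward and contributing the $\frac{L_r}{1-\gamma}$ term; and the indirect dependence through the discounted state-action occupancy measure induced by $\pi^*(x)$, controlled by the policy-Lipschitz constant $L_\pi$ from \cref{lem:pismooth} together with the reward-gradient bound $C_{rx}$, contributing the $\frac{(1+\gamma)C_{rx}L_\pi\sqrt{|\mathcal{A}|}}{(1-\gamma)^2}$ term. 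Bounding the perturbation of the occupancy measure, equivalently of $(I-\gamma P^{\pi^*(x)})^{-1}$, in a manner that produces the geometric factor $(1-\gamma)^{-2}$ is the delicate part; once that per-state estimate is in hand, the present proposition follows immediately from the Frobenius-norm aggregation described above.
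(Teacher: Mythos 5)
Your proposal is correct and follows essentially the same route as the paper: both invoke \cref{pro:QV_Lipz} for the per-state bound with constant $L_{V1}$ and then aggregate over rows via $\norm{\cdot}_2\le\norm{\cdot}_F$ to obtain $L^M_{V1}=\sqrt{\abs{\mdps}}\,L_{V1}$. Your sketch of the two error sources inside \cref{pro:QV_Lipz} (reward smoothness giving $\frac{L_r}{1-\gamma}$, policy/occupancy drift giving $\frac{(1+\gamma)C_{rx}L_\pi\sqrt{\abs{\mdpa}}}{(1-\gamma)^2}$) also matches the paper's deferred proof.
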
\label{lem:vsmooth2}
\begin{proof}
    \cref{pro:QV_Lipz} implies that for any $s\in\mdps$,
    \begin{align*}
        \left\| \nabla V_{s}^{*}(x_1)-\nabla V_{s}^{*}(x_2) \right\| _2&\le L_{V1}\left\| x_1-x_2 \right\| _2,
    \end{align*}
    with $L_{V1}=\frac{(1+\gamma)C_{rx}L_\pi\sqrta}{\left( 1-\gamma \right) ^2}+\frac{L_r}{1-\gamma}$. The property of matrix norms, $\norm{A}_2\le\norm{A}_F$ leads to
    \begin{equation*}
        \left\| \nabla V^*(x_1)-\nabla V^*(x_2) \right\| _2\le \left\| \nabla V^*(x_1)-\nabla V^*(x_2) \right\| _F\le \sqrt{\left| \mathcal{S} \right|}L_{V1}\left\| x_1-x_2 \right\| _2.
    \end{equation*}
    In this way, we can choose
    \begin{equation*}
        L_{V1}^{M}=\sqrt{\left| \mathcal{S} \right|}L_{V1}=\sqrt{\left| \mathcal{S} \right|}\left( \frac{(1+\gamma)C_{rx}L_\pi\sqrta}{\left( 1-\gamma \right) ^2}+\frac{L_r}{1-\gamma} \right) .
    \end{equation*}
\end{proof}

\section{ANALYSIS OF M-SOBIRL}\label{sec:ana_MSoBiRL}
\begin{algorithm}[htbp]
    \caption{Model-based soft bilevel reinforcement learning algorithm (M-SoBiRL)}
    \label{alg:M-SoBiRL}
    \begin{algorithmic}[1]
        \REQUIRE outer and inner iteration numbers $K,N$, step sizes $\beta,\eta$, initialization $x_0, \pi_0, Q_{0},w_0$
        \FOR{$k = 1, \dots, K$}
            \STATE $A_k=\kh{I-\gamma P^{\pi_k}}^{\top}$,\ $b_k=U^\top\operatorname{diag}\kh{\pi_k}\nabla_\pi f(x_k,\pi_k)$
            \STATE Compute $V_k$ by \eqref{eq:V_from_Q}
            \STATE $w_{k} = w_{k-1} - \eta\kh{\kh{A_k^\top A_k}w_{k-1}-A_k^\top b_k}$
            \STATE $x_{k+1} = x_k - \beta \widehat{\nabla} \phi \left(x_k,\pi_k,V_k,w_{k}\right)$
            \STATE $Q_{k,0}=Q_k$
            \FOR{$n = 1, \dots, N$}
                \STATE $Q_{k,n}=\mathcal{T}_{\mdp_\tau(x_{k+1})}(Q_{k,n-1})$
            \ENDFOR
            \STATE $Q_{k+1}=Q_{k,N}$, $\pi_{k+1}=\texttt{softmax}\kh{Q_{k+1}/\tau}$ 
        \ENDFOR
        \ENSURE $(x_{K+1},\pi_{K})$
    \end{algorithmic}
\end{algorithm}
Enlightened by the hyper-gradient, 
\begin{align}
    \nabla \phi(x)=&\ \nabla_x f(x,\pi^*(x)) +  \nabla \pi^*(x)^\top \nabla_\pi f(x,\pi^*(x)) \nonumber
    \\
    =&\ \nabla_x f(x,\pi^*(x)) + \tau^{-1} \nabla r(x)^\top \operatorname{diag}\kh{\pi^*(x)} \nabla_\pi f(x,\pi^*(x))   \nonumber  
    \\
    &-\tau^{-1}\nabla_{x}\varphi\kh{x,V^*(x)}^\top \kh{I- \gamma P^{\pi^*(x)}}^{-\top} U^\top\operatorname{diag}\kh{\pi^*(x)}\nabla_\pi f(x,\pi^*(x)), \label{eq:exact_hypergrad2}
\end{align}
we propose the model-based algorithm, M-SoBiRL; see \cref{alg:M-SoBiRL}. \revise{Note that in practice, the special structure of RL will facilitate the computation. Specifically, the matrix $A_k=(I-\gamma P^{\pi_k})^\top$ is usually sparse in RL, and the calculation of $b_k=U^\top\mathrm{diag}(\pi_k)\nabla_\pi f(x_k,\pi_k)$ also involves a sparse constant matrix $U$ and a diagonal $\mathrm{diag}(\pi_k)$. Therefore, the multiplications $A^\top_k A_k$ and $A_k^\top b_k$ for updating $w_k$ in line 4 can be accelerated by sparse multiplication oracles, the cost of which is denoted as $c_{\mathrm{sparse}}$. Additionally, evaluating $\hat{\nabla}\phi$ and updating $x$ in line~5 cost $\mathcal{O}(|\mathcal{S}||\mathcal{A}|n)$. Moreover, the other lines in \cref{alg:M-SoBiRL} totally cost $\mathcal{O}(|\mathcal{S}||\mathcal{A}|)$ since they are element-wise operations on $Q_k$ or $V_k$. In summary, the computation cost of each outer iteration is $\mathcal{O}(|\mathcal{S}||\mathcal{A}|n+c_{\mathrm{sparse}})$.}

Next, we focus on the convergence analysis for M-SoBiRL, starting with a short proof sketch for guidance.
\subsection{Proof Sketch of \cref{the:M_sobirl}}
 Denote $\pi _{k}^{*}:=\pi ^*\left( x_k \right)$, $Q_{k}^{*}:=Q^*\left( x_k \right) $, $V_{k}^{*}:=V^*\left( x_k \right) $, and
\begin{equation*}
    w_{k}^{*}:=\left( I- \gamma P^{\pi^*_k} \right) ^{-\top}U^{\top}\mathrm{diag}\left( \pi ^*_k \right) \nabla _{\pi}f(x_k,\pi ^*_k ).
\end{equation*}
The proof is structured into four main steps.
\paragraph{Step1: Preliminary Properties}
 \hspace*{\fill} \\
 \cref{sec:basic_lip} studies the Lipschitz and boundedness properties of the quantities related to \cref{alg:M-SoBiRL}, laying a foundation for the following analysis.
\paragraph{Step2: Upper-bounding the Residual $\norm{w_{k}-w_{k}^*}_2$}
 \hspace*{\fill} \\
 \cref{sec:converge_w} bounds the term $\norm{w_{k}-w_{k}^*}_2$,
\begin{align*}
    \left\| w_k-w_{k}^{*} \right\| _{2}^{2}\le&\left( 1-\frac{\eta \left( 1-\gamma \right) ^2}{2} \right) \left\| w_{k-1}-w_{k-1}^{*} \right\| _{2}^{2}+\eta ^2C_{\sigma \pi}^{2}\left( \frac{1}{\left( 1-\gamma \right) ^2}+4 \right) \left\| \pi _k-\pi _{k}^{*} \right\| _{\infty}^{2}
    \\
    &+2L_{w}^{2}\left( 1+\frac{1}{\eta \left( 1-\gamma \right) ^2} \right) \left\| x_k-x_{k-1} \right\| _{2}^{2}+4\eta ^2\left| \mathcal{S} \right|^2\left( 1+\gamma \right) ^2\left\| w_{k-1}-w_{k-1}^{*} \right\| _{2}^{2},
\end{align*}
where the coefficient $\left( 1-\frac{\eta \left( 1-\gamma \right) ^2}{2} \right)$ implies its descent property, and the constants $C_{\sigma \pi},L_w$ will be specified later.

\paragraph{Step3: Upper-bounding the Errors $\norm{Q_k-Q_k^*}_\infty$}
 \hspace*{\fill} \\
 \cref{sec:Bellman} measures the quality of the policy evaluation, $\norm{Q_k-Q_k^*}_\infty$, which can be bounded by
\begin{align*}
    \left\| Q_{k}-Q_{k}^{*} \right\| _{\infty}\le \gamma ^N\left( \left\| Q_{k-1}-Q_{k-1}^{*} \right\| _{\infty}+\frac{C_{rx}}{1-\gamma}\left\| x_{k}-x_{k-1} \right\| _2 \right),
\end{align*}
with the contraction factor $\gamma^N$ responsible for the convergence property.

\paragraph{Step4: Assembling the Estimations above and Achieving the Conclusion}
 \hspace*{\fill} \\
Considering the merit function 
\begin{equation*}
    \mathcal{L} _k:=\phi \left( x_k \right)+\zeta_w\norm{w_k-w^*_k}^2_2 + \zeta _Q\norm{Q_k-Q^*_k}_2^2,
\end{equation*}
where the coefficients $\zeta_w$ and $\zeta_Q$ are to be determined later, \cref{sec:converge_MSoBiRL} evaluates $\mathcal{L} _{k+1}-\mathcal{L} _k$ to reveal the decreasing property of $\mathcal{L} _k$. As a result, it proves that M-SoBiRL enjoys the convergence rate $\mathcal{O}(\epsilon^{-1})$ with the inner iteration number $N=\mathcal{O}(1)$ independent of the solution accuracy $\epsilon$. 

\subsection{Lipschitz Properties of Quantities Related to \cref{alg:M-SoBiRL}}\label{sec:basic_lip}
In this subsection, we study the Lipschitz and boundedness properties of the quantities related to \cref{alg:M-SoBiRL}, setting the groundwork for convergence analysis.
\begin{lemma}\label{lem:Ppi_Lip}
    $P^\pi$ is $\sqrt{\left| \mathcal{A} \right|}$-Lipschitz continuous with respect to $\pi$, i.e., for any $\pi^1,\pi^2$,
    \begin{equation*}
        \left\| P^{\pi ^1}-P^{\pi ^2} \right\| _2\le \sqrt{\left| \mathcal{A} \right|}\left\| \pi ^1-\pi ^2 \right\| _2
    \end{equation*}
\end{lemma}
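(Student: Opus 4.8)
The plan is to reduce the spectral-norm Lipschitz bound to an elementary row-wise estimate via the Frobenius norm. First I would recall the definition $P^\pi_{ss'}=\sum_a \pi_{sa}P_{sas'}$, so that the difference matrix $D:=P^{\pi^1}-P^{\pi^2}\in\mathbb{R}^{\abs{\mdps}\times\abs{\mdps}}$ has entries $D_{ss'}=\sum_a\kh{\pi^1_{sa}-\pi^2_{sa}}P_{sas'}$. Since the spectral norm is dominated by the Frobenius norm, $\norm{D}_2\le\norm{D}_F$, it suffices to control $\norm{D}_F^2=\sum_s\norm{D_{s\cdot}}_2^2$ state by state, where $D_{s\cdot}$ denotes the $s$-th row viewed as a vector over next states.

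For a fixed state $s$, the key observation is that the $s$-th row of $D$ is a linear combination of the transition rows, namely $D_{s\cdot}=\sum_a\kh{\pi^1_{sa}-\pi^2_{sa}}P_{sa\cdot}$, where each $P_{sa\cdot}=P(\cdot\mid s,a)$ is a probability distribution over $\mdps$ and hence satisfies $\norm{P_{sa\cdot}}_2\le\norm{P_{sa\cdot}}_1=1$. Applying the triangle inequality and then Cauchy--Schwarz over the action index then yields
\begin{equation*}
    \norm{D_{s\cdot}}_2\le\sum_a\abs{\pi^1_{sa}-\pi^2_{sa}}\norm{P_{sa\cdot}}_2\le\sum_a\abs{\pi^1_{sa}-\pi^2_{sa}}\le\sqrta\,\norm{\pi^1_s-\pi^2_s}_2 .
\end{equation*}

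Squaring, summing over $s$, and using the block decomposition $\sum_s\norm{\pi^1_s-\pi^2_s}_2^2=\norm{\pi^1-\pi^2}_2^2$ of the $\ell_2$ norm on $\mathbb{R}^{\abs{\mdps}\abs{\mdpa}}$ gives $\norm{D}_F^2\le\abs{\mdpa}\,\norm{\pi^1-\pi^2}_2^2$, and taking square roots delivers the claim $\norm{P^{\pi^1}-P^{\pi^2}}_2\le\sqrta\,\norm{\pi^1-\pi^2}_2$. There is no serious obstacle here; the only point demanding care is the origin of the factor $\sqrta$, which enters precisely at the Cauchy--Schwarz passage from the $\ell_1$ to the $\ell_2$ norm over the $\abs{\mdpa}$ actions, together with the choice to bound the spectral norm by the Frobenius norm rather than attempting a direct operator-norm argument on a matrix whose rows, unlike those of $P^\pi$ itself, need not form probability vectors.
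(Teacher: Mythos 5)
Your proof is correct and follows essentially the same route as the paper's: a row-wise bound of $\sqrt{\left| \mathcal{A} \right|}\left\| \pi^1_s-\pi^2_s \right\|_2$ for each state, followed by domination of the spectral norm by the Frobenius norm. The only (immaterial) difference is where Cauchy--Schwarz enters: you apply the triangle inequality over actions and use $\left\| P_{sa\cdot} \right\|_2\le\left\| P_{sa\cdot} \right\|_1=1$ before the $\ell_1$-to-$\ell_2$ comparison, while the paper applies Cauchy--Schwarz entrywise and then uses $P_{sas'}^2\le P_{sas'}$; both yield the identical constant.
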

\begin{proof}
    The element of $P^{\pi ^1}-P^{\pi ^2}$ in position $(s,s^\prime)$ is $\sum_a{\left( \pi _{sa}^{1}-\pi _{sa}^{2} \right) P_{sas^{\prime}}}$. Consider the $2$-norm of the $s$-th row in $P^{\pi ^1}-P^{\pi ^2}$,
    \begin{align*}
        &\sum_{s^{\prime}}{\left( \sum_a{\left( \pi _{sa}^{1}-\pi _{sa}^{2} \right) P_{sas^{\prime}}} \right)}^2
        \\
        \le& \sum_{s^{\prime}}{\left( \sum_a{\left( \pi _{sa}^{1}-\pi _{sa}^{2} \right) ^2} \right)}\left( \sum_a{P_{sas^{\prime}}^{2}} \right) 
        \\
        \le& \left( \sum_a{\left( \pi _{sa}^{1}-\pi _{sa}^{2} \right) ^2} \right) \sum_{s^{\prime}}{\left( \sum_a{P_{sas^{\prime}}} \right)}
        \\
        =& \left| \mathcal{A} \right|\left\| \pi _{s}^{1}-\pi _{s}^{2} \right\| _{2}^{2},
    \end{align*}
    which leads to
    \begin{equation*}
        \left\| P^{\pi ^1}-P^{\pi ^2} \right\| _2\le \left\| P^{\pi ^1}-P^{\pi ^2} \right\| _F\le \sqrt{\left| \mathcal{A} \right|}\left\| \pi ^1-\pi ^2 \right\| _2.
    \end{equation*}
\end{proof}

\begin{lemma}\label{lem:K_bound}    $U\in\mathbb{R}^{\abs{\mdps}\abs{\mdpa}\times\abs{\mdps}}$ has full column rank, and 
\begin{equation*}
    {\norm{U}_2\in \zkh{\sqrt{\abs{\mdpa}}(1+\gamma), \sqrt{\abs{\mdps}\abs{\mdpa}(1+\gamma)}}}.
\end{equation*}
\end{lemma}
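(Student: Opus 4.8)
The plan is to treat the three claims—full column rank, the upper bound, and the lower bound on $\norm{U}_2$—separately.

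\textbf{Full column rank.} Write $U=E-\gamma P$, where $E\in\mathbb{R}^{\abs{\mdps}\abs{\mdpa}\times\abs{\mdps}}$ is the selection matrix with $E_{(s,a),s'}=\delta_{ss'}$ and $P\in\mathbb{R}^{\abs{\mdps}\abs{\mdpa}\times\abs{\mdps}}$ is the transition matrix. I would show $\ker U=\{{\bf 0}\}$ by a diagonal-dominance argument in the spirit of \cref{prop:I_gamma}: if $Uv={\bf 0}$ then $v_s=\gamma\sum_{s'}P_{sas'}v_{s'}$ for every $(s,a)$; evaluating at an index $s$ attaining $\norm{v}_\infty$ and using $P\ge 0$ together with $\sum_{s'}P_{sas'}=1$ gives $\norm{v}_\infty\le\gamma\norm{v}_\infty$, so $v={\bf 0}$ because $\gamma<1$. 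Hence $U$ has trivial kernel.

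\textbf{Upper bound.} I would use $\norm{U}_2\le\norm{U}_F$ and bound each row. The squared Euclidean norm of the $(s,a)$-th row is
\begin{equation*}
\sum_{s'}\kh{\delta_{ss'}-\gamma P_{sas'}}^2=1-2\gamma P_{sas}+\gamma^2\sum_{s'}P_{sas'}^2\le 1+\gamma^2\le 1+\gamma,
\end{equation*}
after discarding the nonpositive term $-2\gamma P_{sas}$, bounding $\sum_{s'}P_{sas'}^2\le\kh{\sum_{s'}P_{sas'}}^2=1$, and using $\gamma^2\le\gamma$ on $(0,1)$. Summing over all $\abs{\mdps}\abs{\mdpa}$ rows gives $\norm{U}_F^2\le\abs{\mdps}\abs{\mdpa}(1+\gamma)$, hence the stated upper bound.

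\textbf{Lower bound.} To obtain $\norm{U}_2\ge\sqrt{\abs{\mdpa}}(1+\gamma)$ I would lower-bound the spectral norm by a Rayleigh quotient $\norm{Uv}_2/\norm{v}_2$ with a judicious $v\in\mathbb{R}^{\abs{\mdps}}$, exploiting $(Uv)_{(s,a)}=v_s-\gamma\sum_{s'}P_{sas'}v_{s'}$. The target is a direction along which, in every row, the diagonal part $v_s$ and the discounted-transition part $-\gamma(Pv)_{(s,a)}$ reinforce one another, so each entry has magnitude about $1+\gamma$; because each state contributes $\abs{\mdpa}$ rows, collecting them over a state's block produces the factor $\abs{\mdpa}$ under the square root. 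As a model instance, for a two-state kernel that swaps the states the sign-alternating choice $v=(1,-1)$ gives $\abs{(Uv)_{(s,a)}}=1+\gamma$ in all rows, so $\norm{Uv}_2/\norm{v}_2=(1+\gamma)\sqrt{\abs{\mdpa}}$ and the bound holds with equality.

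\textbf{Main obstacle.} The lower bound is where the real difficulty lies, because the $(1+\gamma)$ factor does not come out of naive choices: $v={\bf 1}$ yields $(Uv)_{(s,a)}=1-\gamma$, where the diagonal and transition parts \emph{cancel}, giving only $(1-\gamma)\sqrt{\abs{\mdpa}}$, and the trace estimate $\norm{U}_2^2\ge\abs{\mdps}^{-1}\norm{U}_F^2$ likewise delivers only the $(1-\gamma)$ order. Reaching $(1+\gamma)$ hinges on producing a \emph{reinforcing} test direction—one for which $v_s$ and $(Pv)_{(s,a)}$ carry opposite signs in each row—and the crux I expect to require the most care is guaranteeing the existence of such a direction for a general transition kernel rather than only for the bipartite swap example above.
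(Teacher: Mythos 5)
Your first two parts are correct and take slightly different routes from the paper. For full column rank, the paper fixes an action $a$, forms the block $U_a = I - \gamma P_a$ (whose rows are rows of $U$), and invokes the Gershgorin estimate of \cref{prop:I_gamma} to conclude $U_a$ is invertible; your direct kernel argument ($Uv=\mathbf{0}$ forces $\norm{v}_\infty\le\gamma\norm{v}_\infty$, hence $v=\mathbf{0}$) is equivalent and more elementary. For the upper bound, the paper bounds each block via $\norm{U_a}_2\le\sqrt{\norm{U_a}_1\norm{U_a}_\infty}\le\sqrt{\abs{\mdps}(1+\gamma)}$ and sums $U^\top U=\sum_a U_a^\top U_a$, whereas you bound $\norm{U}_F$ row by row; both yield $\sqrt{\abs{\mdps}\abs{\mdpa}(1+\gamma)}$, and your route avoids the mixed-norm inequality altogether.

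The lower bound is a genuine gap in your proposal---you exhibit a reinforcing direction only for the two-state swap kernel and leave the general case open---but your diagnosis of the obstacle is exactly right, and in fact it cannot be overcome: the claimed inequality $\norm{U}_2\ge\sqrt{\abs{\mdpa}}(1+\gamma)$ is false for general $P$. Take $P_{sas'}=\delta_{ss'}$ (every action stays at the current state). Then each row $(s,a)$ of $U$ equals $(1-\gamma)e_s^\top$, so $U^\top U=\abs{\mdpa}(1-\gamma)^2 I$ and $\norm{U}_2=(1-\gamma)\sqrt{\abs{\mdpa}}$: the cancellation you observed with $v=\mathbf{1}$ occurs in every row simultaneously, and no reinforcing direction exists. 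The paper's own proof fails at precisely the point you anticipated: \cref{prop:I_gamma} places all eigenvalue moduli of $U_a$ in $\zkh{1-\gamma,1+\gamma}$, and since $\norm{U_a}_2$ dominates each eigenvalue modulus this gives only $\norm{U_a}_2\ge 1-\gamma$; the paper nevertheless concludes $\norm{U_a}_2\ge 1+\gamma$, which would require an eigenvalue of modulus $1+\gamma$ to exist---something Gershgorin does not supply and the self-loop example refutes ($U_a=(1-\gamma)I$). What survives is the tight bound $\norm{U}_2\ge(1-\gamma)\sqrt{\abs{\mdpa}}$, obtained for instance from $\norm{Ue_s}_2^2\ge\sum_a\kh{1-\gamma P_{sas}}^2\ge\abs{\mdpa}(1-\gamma)^2$, which your $v=\mathbf{1}$ computation also delivers. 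Fortunately, the subsequent analysis (bounding $b_k^*$, $L_\vartheta$, etc.) uses only the full-rank claim and the upper bound on $\norm{U}_2$, so the erroneous lower endpoint is inconsequential for the paper's results.
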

\begin{proof}
Define that for any $\ a\in\mdpa$, $U_a = I-\gamma P_a$, where $P_a\in\mathbb{R}^{\abs{\mdps}\times\abs{\mdps}}$ with $P_a(s,s^\prime) = P_{sas^\prime}$. By the triangle inequality, $\norm{U_a}_\infty\in\zkh{1-\gamma.1+\gamma}$. Subsequently, the property of the matrix norms, $\norm{A}_2\le\sqrt{\norm{A}_1\norm{A}_\infty}$, produces
\begin{equation}\label{eq:U_singular_bound_up}
    \norm{U_a}_2\le \sqrt{\norm{U_a}_1\norm{U_a}_\infty}\le\sqrt{\abs{\mdps}(1+\gamma)}.
\end{equation}
Applying \cref{prop:I_gamma} to $U_a$ deduces that the magnitude of every eigenvalue of $U_a$ belongs to $[1-\gamma,1+\gamma]$. The property that $\norm{U_a}_2$ is greater than the magnitude of any of its eigenvalues leads to
\begin{equation}\label{eq:U_singular_bound_low}
    \norm{U_a}_2\ge 1+\gamma.
\end{equation}
The definition of $U_a$ reveals that the $\abs{\mdps}$ rows of the matrix $U_a$ are distributed in $U$ with a spacing of $\abs{\mdpa}$. In this way, $U$ has full column rank since $U_a$ is invertible. Consider the singular value of $U\in\mathbb{R}^{\abs{\mdps}\abs{\mdpa}\times\abs{\mdps}}$. Specifically,
\begin{equation*}
U^\top U=\kh{\sum_{a\in\mdpa}\kh{\sum_{s^\prime\in\mdps}U_{a}^\top(s^\prime)U_{a}(s^\prime)}}=\kh{\sum_{a\in\mdpa}U_a^\top U_a},
\end{equation*}
where $U_{a}(s^\prime)$ denotes the $s^\prime$-th row of $U_{a}$. Combining~\eqref{eq:U_singular_bound_up} with \eqref{eq:U_singular_bound_low}, it follow that
\begin{equation*}
    \norm{U}_2\in \zkh{\sqrt{\abs{\mdpa}}(1+\gamma), \sqrt{\abs{\mdps}\abs{\mdpa}(1+\gamma)}}.
\end{equation*}
\end{proof}

\begin{lemma}
    Define
\begin{equation*}
    \vartheta \left( x \right) :=U^{\top}\mathrm{diag}\left( \pi ^*\left( x \right) \right) \nabla _{\pi}f(x,\pi ^*\left( x \right) ).
\end{equation*}
    Under Assumptions~\ref{assu:f} and \ref{assu:r}, $\vartheta \left( x \right) $ is $L_\vartheta$-Lipschitz continuous, i.e., for any $x_1,x_2\in\mathbb{R}^n$,
    \begin{equation*}
        \left\| \vartheta \left( x_1 \right) -\vartheta \left( x_2 \right) \right\| _2 \le L_\vartheta\left\| x_1-x_2 \right\| _2,
    \end{equation*}
    where $L_\vartheta = \sqrt{\abs{\mdps}\abs{\mdpa}(1+\gamma)} \left( L_f\left( 1+L_{\pi} \right) +C_{f\pi}L_{\pi} \right) $.
\end{lemma}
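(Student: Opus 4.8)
The plan is to exploit the factorization of $\vartheta$ into the constant matrix $U^\top$ times the vector-valued map $g(x):=\operatorname{diag}\kh{\pi^*(x)}\nabla_\pi f(x,\pi^*(x))$, so that $\vartheta(x_1)-\vartheta(x_2)=U^\top\kh{g(x_1)-g(x_2)}$ and hence $\norm{\vartheta(x_1)-\vartheta(x_2)}_2\le\norm{U}_2\,\norm{g(x_1)-g(x_2)}_2$. Since \cref{lem:K_bound} already supplies $\norm{U}_2\le\sqrt{\abs{\mdps}\abs{\mdpa}(1+\gamma)}$, the whole task reduces to showing that $g$ is $\kh{L_f(1+L_\pi)+C_{f\pi}L_\pi}$-Lipschitz; Assumption~\ref{assu:r} enters only indirectly, through the constant $L_\pi$ furnished by \cref{lem:pismooth}.

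First I would split $g(x_1)-g(x_2)$ by the standard add-and-subtract device,
\begin{align*}
g(x_1)-g(x_2)=&\ \operatorname{diag}\kh{\pi^*(x_1)}\kh{\nabla_\pi f(x_1,\pi^*(x_1))-\nabla_\pi f(x_2,\pi^*(x_2))}
\\
&+\operatorname{diag}\kh{\pi^*(x_1)-\pi^*(x_2)}\nabla_\pi f(x_2,\pi^*(x_2)).
\end{align*}
For the first summand I would use that $\pi^*(x_1)$ is a probability vector, so $\operatorname{diag}\kh{\pi^*(x_1)}$ has operator $2$-norm at most $1$; then Assumption~\ref{assu:f} bounds $\norm{\nabla_\pi f(x_1,\pi^*(x_1))-\nabla_\pi f(x_2,\pi^*(x_2))}_2$ by $L_f\kh{\norm{x_1-x_2}_2+\norm{\pi^*(x_1)-\pi^*(x_2)}_\infty}$, and \cref{lem:pismooth} together with $\norm{\cdot}_\infty\le\norm{\cdot}_2$ turns the policy gap into $L_\pi\norm{x_1-x_2}_2$, contributing $L_f(1+L_\pi)\norm{x_1-x_2}_2$. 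For the second summand I would note $\norm{\operatorname{diag}\kh{\pi^*(x_1)-\pi^*(x_2)}}_2=\norm{\pi^*(x_1)-\pi^*(x_2)}_\infty\le L_\pi\norm{x_1-x_2}_2$ and invoke the bound $\norm{\nabla_\pi f(x_2,\pi^*(x_2))}_2\le C_{f\pi}$ from Assumption~\ref{assu:f}, yielding $C_{f\pi}L_\pi\norm{x_1-x_2}_2$. Summing the two pieces and multiplying by $\norm{U}_2$ gives precisely $L_\vartheta$.

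The estimates are all routine triangle-inequality and submultiplicativity arguments; the only delicate point is the norm bookkeeping. The assumed Lipschitz modulus of $\nabla f$ is stated with the policy measured in $\norm{\cdot}_\infty$, whereas \cref{lem:pismooth} controls $\pi^*$ in $\norm{\cdot}_2$, so I must consistently insert $\norm{\cdot}_\infty\le\norm{\cdot}_2$; I also rely on the identity $\norm{\operatorname{diag}(u)}_2=\norm{u}_\infty$ and on the fact that the operator $2$-norm of $\operatorname{diag}\kh{\pi^*(x_1)}$ is bounded by $1$ because its entries are probabilities in $[0,1]$. Handling these norm conversions uniformly is what makes the constant collapse to the stated $L_\vartheta=\sqrt{\abs{\mdps}\abs{\mdpa}(1+\gamma)}\kh{L_f(1+L_\pi)+C_{f\pi}L_\pi}$.
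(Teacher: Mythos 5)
Your proposal is correct and follows essentially the same route as the paper's proof: factor out $U^\top$ via the bound $\norm{U}_2\le\sqrt{\abs{\mdps}\abs{\mdpa}(1+\gamma)}$ from \cref{lem:K_bound}, apply the same add-and-subtract decomposition of $\operatorname{diag}\kh{\pi^*(x)}\nabla_\pi f(x,\pi^*(x))$, and control the two pieces with \cref{assu:f} and the Lipschitz constant $L_\pi$ from \cref{lem:pismooth}. Your explicit handling of the $\norm{\cdot}_\infty\le\norm{\cdot}_2$ conversion is the only point the paper leaves implicit.
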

\begin{proof}
    Given the boundedness of $\nabla_\pi f(x,\pi^*(x))$ from Assumption~\ref{assu:f} and the boundedness of~$\norm{U}_2$ derived in~\cref{lem:K_bound}, it follows that
    \begin{align*}
        &\left\| \vartheta \left( x_1 \right) -\vartheta \left( x_2 \right) \right\| _2
        \\
        \le&\sqrt{\abs{\mdps}\abs{\mdpa}(1+\gamma)} \left\| \mathrm{diag}\left( \pi _{1}^{*} \right) \nabla _{\pi}f(x_1,\pi _{1}^{*})-\mathrm{diag}\left( \pi _{2}^{*} \right) \nabla _{\pi}f(x_2,\pi _{2}^{*}) \right\| _2
        \\
        \le& \sqrt{\abs{\mdps}\abs{\mdpa}(1+\gamma)} \left\| \mathrm{diag}\left( \pi _{1}^{*} \right) \left( \nabla _{\pi}f(x_1,\pi _{1}^{*})-\nabla _{\pi}f(x_2,\pi _{2}^{*}) \right) \right\| _2
        \\
        &+\sqrt{\abs{\mdps}\abs{\mdpa}(1+\gamma)}\left\| \left( \mathrm{diag}\left( \pi _{1}^{*} \right) -\mathrm{diag}\left( \pi _{2}^{*} \right) \right) \nabla _{\pi}f(x_2,\pi _{2}^{*}) \right\| _2
        \\
        \le& \sqrt{\abs{\mdps}\abs{\mdpa}(1+\gamma)} \left( L_f\left( \left\| x_1-x_2 \right\| _2+\left\| \pi _{1}^{*}-\pi _{2}^{*} \right\| _{\infty} \right) +C_{f\pi}\left\| \pi _{1}^{*}-\pi _{2}^{*} \right\| _{\infty} \right) 
        \\
        \le& \sqrt{\abs{\mdps}\abs{\mdpa}(1+\gamma)} \left( L_f\left( 1+L_{\pi} \right) +C_{f\pi}L_{\pi} \right) \left\| x_1-x_2 \right\| _2.
    \end{align*}
\end{proof}

Consequently, we concentrate on the Lipschitz property of the hyper-objective $\phi(x)$, which plays a vital role in analyzing the convergence properties of \cref{alg:M-SoBiRL}. The superscript $M$ is appended to the Lipschitz constant, $L^M_\phi$ to make a distinction from the constant used in the model-free scenario.
\begin{proposition}\label{pro:L_M_phi}
Under Assumptions~\ref{assu:f} and~\ref{assu:r}, $\nabla \phi(x)$ is $L^M_\phi$-Lipschitz continuous, i.e., for any $x_1,x_2\in\mathbb{R}^n$,
\begin{equation*}
\norm{\nabla\phi(x_1) - \nabla\phi(x_2)}_2\le L^M_\phi\norm{x_1-x_2}_2,
\end{equation*}
where $L^M_\phi=\mathcal{O}\kh{\frac{\left| \mathcal{S} \right|^{{3}/{2}}\left| \mathcal{A} \right|^{{3}/{2}}}{\left( 1-\gamma \right) ^3}}$ is specified in \eqref{eq:L_phi_M}.
\end{proposition}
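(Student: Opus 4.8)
The plan is to start from the closed form of the hyper-gradient in \eqref{eq:exact_hypergrad2} and split it into the three summands
\[
T_1(x)=\nabla_x f(x,\pi^*(x)),\quad T_2(x)=\tau^{-1}\nabla r(x)^\top\operatorname{diag}(\pi^*(x))\nabla_\pi f(x,\pi^*(x)),
\]
\[
T_3(x)=-\tau^{-1}\nabla_x\varphi(x,V^*(x))^\top\bigl(I-\gamma P^{\pi^*(x)}\bigr)^{-\top}\vartheta(x),
\]
where $\vartheta(x)=U^\top\operatorname{diag}(\pi^*(x))\nabla_\pi f(x,\pi^*(x))$ is exactly the quantity shown $L_\vartheta$-Lipschitz in the preceding lemma. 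Since each $T_i$ is a product of several matrix/vector factors, the whole proof rests on the elementary product rule for Lipschitz maps: if $A(\cdot),B(\cdot)$ are uniformly bounded and Lipschitz, then so is $A(\cdot)B(\cdot)$, with constant $\|A\|_{\mathrm{bd}}L_B+L_A\|B\|_{\mathrm{bd}}$. Thus $L^M_\phi\le L_{T_1}+L_{T_2}+L_{T_3}$, and the task reduces to collecting a uniform bound and a Lipschitz constant for every building block.

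First I would assemble the building blocks. The factors $\nabla_x f$ and $\nabla_\pi f$ are $L_f$-Lipschitz jointly in $(x,\pi)$ by \cref{assu:f}; composing with the $L_\pi$-Lipschitz map $\pi^*(x)$ from \cref{lem:pismooth} makes $x\mapsto\nabla f(x,\pi^*(x))$ Lipschitz, which handles $T_1$ directly and supplies the factor $\nabla_\pi f(x,\pi^*(x))$ (bounded by $C_{f\pi}$) in $T_2,T_3$. The factors $\nabla r(x)$ (bounded by $C_{rx}$, $L_r$-Lipschitz by \cref{assu:r}) and $\operatorname{diag}(\pi^*(x))$ (entries in $[0,1]$, $L_\pi$-Lipschitz via \cref{lem:pismooth}) then give $L_{T_2}$ by a three-fold application of the product rule. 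For $T_3$ I would additionally use: the explicit form \eqref{eq:phi_grad_x_1}, $\nabla_x\varphi_s(x,V^*(x))=\sum_a\nabla r_{sa}(x)\pi^*_{sa}(x)$, which is a probability-weighted average of reward gradients, hence bounded row-wise by $C_{rx}$ (so $\|\nabla_x\varphi\|_2\le\sqrt{|\mathcal{S}|}\,C_{rx}$ by the Frobenius bound) and Lipschitz with constant $\mathcal{O}(L_r+C_{rx}\sqrt{|\mathcal{A}|}L_\pi)$ per row after splitting the difference into a reward-gradient part and a policy part.

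The hard part is the Lipschitz continuity of the middle factor $(I-\gamma P^{\pi^*(x)})^{-\top}$ in the operator $2$-norm. I would proceed via the resolvent identity $A^{-1}-B^{-1}=A^{-1}(B-A)B^{-1}$ with $A=I-\gamma P^{\pi^*(x_1)}$, $B=I-\gamma P^{\pi^*(x_2)}$, so that the difference equals $\gamma\,A^{-1}(P^{\pi^*(x_1)}-P^{\pi^*(x_2)})B^{-1}$. This needs three ingredients: a uniform bound on $\|(I-\gamma P^\pi)^{-1}\|_2$, obtained by converting the clean $\infty$-norm estimate $\|(I-\gamma P^\pi)^{-1}\|_\infty\le(1-\gamma)^{-1}$ from \cref{pro:inverse_nonnegative} into the $2$-norm and thereby paying a $\sqrt{|\mathcal{S}|}$ factor; the $\sqrt{|\mathcal{A}|}$-Lipschitz continuity of $P^\pi$ in $\pi$ from \cref{lem:Ppi_Lip}; and the $L_\pi$-Lipschitz continuity of $\pi^*(x)$ from \cref{lem:pismooth}. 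Stacking these produces a Lipschitz constant of order $\sqrt{|\mathcal{S}|}\sqrt{|\mathcal{A}|}L_\pi/(1-\gamma)^2$ for the inverse, where each $(1-\gamma)^{-1}$ comes from one resolvent and the $\sqrt{|\mathcal{S}|}$ from the norm conversion; I expect tracking these powers of $(1-\gamma)^{-1}$ and the dimension factors to be the only genuinely delicate bookkeeping.

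Finally I would combine the three estimates. The term $T_3$ dominates: its constant multiplies the bound $\sqrt{|\mathcal{S}|}C_{rx}$ on $\nabla_x\varphi$, the bound $\sqrt{|\mathcal{S}||\mathcal{A}|(1+\gamma)}\,C_{f\pi}$ on $\vartheta$ (from \cref{lem:K_bound}), and the inverse-factor constant above, while $L_\pi=\mathcal{O}\bigl(\sqrt{|\mathcal{S}||\mathcal{A}|}/(1-\gamma)\bigr)$ from \cref{lem:pismooth} injects a further $\sqrt{|\mathcal{S}||\mathcal{A}|}/(1-\gamma)$. Collecting the dominant contributions yields $L^M_\phi=\mathcal{O}\bigl(|\mathcal{S}|^{3/2}|\mathcal{A}|^{3/2}/(1-\gamma)^3\bigr)$, matching the claimed order; writing out the explicit constant gives \eqref{eq:L_phi_M}.
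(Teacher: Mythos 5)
Your three-term decomposition and your handling of $T_1$, $T_2$ coincide with the paper's proof, but your treatment of $T_3$ has a genuine quantitative gap: it cannot reach the constant that the proposition asserts. The problem is that you bound the middle factor $(I-\gamma P^{\pi^*(x)})^{-\top}$ as a standalone matrix in the operator $2$-norm. Every such bound necessarily costs a factor $\sqrt{\abs{\mdps}}$, and this is not bookkeeping slack: for $P^{\pi}$ whose rows all equal the same point mass, $\norm{(I-\gamma P^{\pi})^{-1}}_2$ grows like $\gamma\sqrt{\abs{\mdps}}/(1-\gamma)$, so no dimension-free $2$-norm bound on the inverse exists. Consequently the product-rule term $\norm{\nabla_x\varphi}_2\cdot L_M\cdot \norm{\vartheta}_2$ in your plan pays $\sqrt{\abs{\mdps}}$ twice --- once in $\norm{\nabla_x\varphi}_2\le \sqrt{\abs{\mdps}}C_{rx}$ and once inside your $L_M=\mathcal{O}\kh{\sqrt{\abs{\mdps}\abs{\mdpa}}\,L_\pi/(1-\gamma)^2}$. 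Substituting $L_\pi=\mathcal{O}\kh{\sqrt{\abs{\mdps}\abs{\mdpa}}/(1-\gamma)}$ from \cref{lem:pismooth} and $\norm{\vartheta}_2\le\sqrt{\abs{\mdps}\abs{\mdpa}(1+\gamma)}C_{f\pi}$, this single term is already $\mathcal{O}\kh{\abs{\mdps}^{2}\abs{\mdpa}^{3/2}/(1-\gamma)^{3}}$, which exceeds the $L^M_\phi$ of \eqref{eq:L_phi_M} by a factor $\sqrt{\abs{\mdps}}$. So your final sentence claiming the order $\abs{\mdps}^{3/2}\abs{\mdpa}^{3/2}/(1-\gamma)^3$ does not follow from the estimates you set up: what you prove is Lipschitz continuity with a constant that is $\sqrt{\abs{\mdps}}$ too large, hence not the stated proposition.

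The missing idea is the one the paper's proof is built on: never bound the inverse in isolation. The paper groups the inverse with $\nabla_x\varphi$, writing the third term as $\tau^{-1}\nabla V^*(x)^\top\vartheta(x)$ with $\nabla V^*(x)=(I-\gamma P^{\pi^*(x)})^{-1}\nabla_x\varphi(x,V^*(x))$, and controls $\nabla V^*$ row by row. Since $(I-\gamma P^{\pi^*})^{-1}=\sum_{t\ge 0}\gamma^t (P^{\pi^*})^t$ is a non-negative operator whose rows sum to $1/(1-\gamma)$ (\cref{pro:inverse_nonnegative}), each row of $\nabla V^*$ is a discounted average of reward gradients --- exactly the expectation representation $\nabla V^*_s(x)=\mathbb{E}\zkh{\sum_t\gamma^t\nabla r_{s_ta_t}(x)}$ of \cref{pro:first_order_nablaV}. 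This yields the per-row bound $C_{rx}/(1-\gamma)$ (\cref{lem:vsmooth1}) and, via the distributional-drift argument of \cref{pro:QV_Lipz}, the per-row Lipschitz constant $L_{V1}$, so that $\sqrt{\abs{\mdps}}$ is paid exactly once, in the Frobenius conversion $L^M_{V1}=\sqrt{\abs{\mdps}}L_{V1}$; the paper then concludes with the two-term split $\nabla V^*(x_1)^\top\kh{\vartheta(x_1)-\vartheta(x_2)}+\kh{\nabla V^*(x_1)-\nabla V^*(x_2)}^\top\vartheta(x_2)$, which needs no norm of the inverse at all. If you wish to keep your resolvent identity, the repair is the same in spirit: after expanding $(I-\gamma P^{\pi^*(x_1)})^{-1}-(I-\gamma P^{\pi^*(x_2)})^{-1}$, immediately recombine one inverse with $\nabla_x\varphi$ into $\nabla V^*$ and control the remaining inverse through its row-sum (averaging) structure rather than its spectral norm; only then do the dimension factors collapse to the claimed $\abs{\mdps}^{3/2}\abs{\mdpa}^{3/2}/(1-\gamma)^3$.
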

\begin{proof}
    Revisit the expression of $\nabla \phi(x)$ and incorporate the notation $\vartheta(x)$,
    \begin{align}
        \!\!\nabla \phi(x)  =& \nabla_x f(x,\pi^*(x)) + {\tau^{-1}} \Big ( \nabla r(x)-U\nabla V^*\left( x \right) \Big )^\top \operatorname{diag}\kh{\pi^*(x)} \nabla_\pi f(x,\pi^*(x))    \nonumber
        \\
        =&\nabla_x f(x,\pi^*(x)) + {\tau^{-1}}\nabla r(x)^\top\operatorname{diag}\kh{\pi^*(x)} \nabla_\pi f(x,\pi^*(x)) - {\tau^{-1}}\nabla V^*\left( x \right)^\top\vartheta(x).   \label{eq:M_decom_hypergrad}
    \end{align}
    Consider the first term in \eqref{eq:M_decom_hypergrad},
    \begin{align}
        &\left\| \nabla _xf(x_1,\pi ^*(x_1))-\nabla _xf(x_2,\pi ^*(x_2)) \right\| _2    \nonumber
        \\
        \le&\ L_f\left( \left\| x_1-x_2 \right\| _2+\left\| \pi ^*(x_1)-\pi ^*(x_2) \right\| _2 \right)     \nonumber
        \\
        \le& \left( L_f+L_fL_{\pi} \right) \left\| x_1-x_2 \right\| _2. \label{eq:term_1_Lip_hyper}
    \end{align}
       Subsequent analysis involves the Lipschitz property of the second term,
    \begin{align}
        &\left\| \nabla r(x_1)^{\top}\mathrm{diag}\left( \pi ^*(x_1) \right) \nabla _{\pi}f(x_1,\pi ^*(x_1))-\nabla r(x_2)^{\top}\mathrm{diag}\left( \pi ^*(x_2) \right) \nabla _{\pi}f(x_2,\pi ^*(x_2)) \right\| _2    \nonumber
        \\
        \le& \left\| \nabla r(x_1)^{\top}\mathrm{diag}\left( \pi ^*(x_1) \right) \nabla _{\pi}f(x_1,\pi ^*(x_1))-\nabla r(x_2)^{\top}\mathrm{diag}\left( \pi ^*(x_1) \right) \nabla _{\pi}f(x_1,\pi ^*(x_1)) \right\| _2    \nonumber
        \\
        &+\left\| \nabla r(x_2)^{\top}\mathrm{diag}\left( \pi ^*(x_1) \right) \nabla _{\pi}f(x_1,\pi ^*(x_1))-\nabla r(x_2)^{\top}\mathrm{diag}\left( \pi ^*(x_2) \right) \nabla _{\pi}f(x_1,\pi ^*(x_1)) \right\| _2   \nonumber
        \\
        &+\left\| \nabla r(x_2)^{\top}\mathrm{diag}\left( \pi ^*(x_2) \right) \nabla _{\pi}f(x_1,\pi ^*(x_1))-\nabla r(x_2)^{\top}\mathrm{diag}\left( \pi ^*(x_2) \right) \nabla _{\pi}f(x_2,\pi ^*(x_2)) \right\| _2   \nonumber
        \\
        \le&\ \sqrt{\left| \mathcal{S} \right|\left| \mathcal{A} \right|}L_rC_{f\pi}\left\| x_1-x_2 \right\| _2+\sqrt{\left| \mathcal{S} \right|\left| \mathcal{A} \right|}C_{rx}C_{f\pi}L_{\pi}\left\| x_1-x_2 \right\| _2  \nonumber
        \\
        &+\ \sqrt{\left| \mathcal{S} \right|\left| \mathcal{A} \right|}C_{rx}\left( L_f+L_fL_{\pi} \right) \left\| x_1-x_2 \right\| _2 \nonumber
        \\
        =&\ \sqrt{\left| \mathcal{S} \right|\left| \mathcal{A} \right|}\left( L_rC_{f\pi}+C_{rx}C_{f\pi}L_{\pi}+C_{rx}\left( L_f+L_fL_{\pi} \right) \right) \left\| x_1-x_2 \right\| _2   \label{eq:term_2_Lip_hyper}.
    \end{align}
    Similarly, the third term in \eqref{eq:M_decom_hypergrad} satisfy
    \begin{align}
        &\left\| \nabla V^*\left( x_1 \right) ^{\top}\vartheta (x_1)-\nabla V^*\left( x_2 \right) ^{\top}\vartheta (x_2) \right\| _2 \nonumber
        \\
        \le&\left\| \nabla V^*\left( x_1 \right) ^{\top}\vartheta (x_1)-\nabla V^*\left( x_1 \right) ^{\top}\vartheta (x_2) \right\| _2+\left\| \nabla V^*\left( x_1 \right) ^{\top}\vartheta (x_2)-\nabla V^*\left( x_2 \right) ^{\top}\vartheta (x_2) \right\| _2 \nonumber
        \\
        \le&\ \frac{\sqrt{\left| \mathcal{S} \right|}C_{rx}}{1-\gamma}L_{\vartheta}\left\| x_1-x_2 \right\| _2+\sqrt{\left| \mathcal{S} \right|\left| \mathcal{A} \right|\left( 1+\gamma \right)}C_{f\pi}L_{V1}^{M}\left\| x_1-x_2 \right\| _2    \nonumber
        \\
        =& \left( \frac{\sqrt{\left| \mathcal{S} \right|}C_{rx}}{1-\gamma}L_{\vartheta}+\sqrt{\left| \mathcal{S} \right|\left| \mathcal{A} \right|\left( 1+\gamma \right)}C_{f\pi}L_{V1}^{M} \right) \left\| x_1-x_2 \right\| _2.  \label{eq:term_3_Lip_hyper}
    \end{align}
    Collecting \eqref{eq:term_1_Lip_hyper}, \eqref{eq:term_2_Lip_hyper} and \eqref{eq:term_3_Lip_hyper} concludes that $\nabla \phi(x)$ is $L_\phi^M$-Lipschitz continuous with
    \begin{align}
        L_{\phi}^{M}=&\ L_f+L_fL_{\pi}+\sqrt{\left| \mathcal{S} \right|\left| \mathcal{A} \right|}\left( L_rC_{f\pi}+C_{rx}C_{f\pi}L_{\pi}+C_{rx}\left( L_f+L_fL_{\pi} \right) \right)    \nonumber
        \\
        &+\frac{\sqrt{\left| \mathcal{S} \right|}C_{rx}}{1-\gamma}L_{\vartheta}+\sqrt{\left| \mathcal{S} \right|\left| \mathcal{A} \right|\left( 1+\gamma \right)}C_{f\pi}L_{V1}^{M}    \nonumber
        \\
        =&\ \mathcal{O}\kh{\frac{\left| \mathcal{S} \right|^{\frac{3}{2}}\left| \mathcal{A} \right|^{\frac{3}{2}}}{\left( 1-\gamma \right) ^3}},  \label{eq:L_phi_M}
    \end{align}
    where \eqref{eq:L_phi_M} comes from the expressions of $L_\pi$, $L_\vartheta$ and $L^M_{V1}$.
\end{proof}

\subsection{Convergence Property of $w_k$}\label{sec:converge_w}
Define
\begin{equation*}
    A_{k}^{*}:=\left( I- \gamma P^{\pi^*(x_k)} \right) ^{\top},\quad \text{and}\quad b_{k}^{*} := \vartheta(x_k).
\end{equation*}
In \cref{alg:M-SoBiRL}, the goal of $w_k$ is to track $w_{k}^{*}=\left( A_{k}^{*} \right) ^{-1}b_{k}^{*}$ through outer iterations. This section delves into the descent property of $\norm{w_k-w_k^*}_2$. Initially, we prove that the exact quantities $\left\| b_{k}^{*} \right\| _2$ and $\left\| w_{k}^{*} \right\| _2$ are uniformly bounded.

\begin{lemma}
    Under Assumptions~\ref{assu:f} and \ref{assu:r}, it holds that
    \begin{align*}
        \left\| b_{k}^{*} \right\| _2 &\le \sqrt{\left| \mathcal{S} \right|\left| \mathcal{A} \right|\left( 1+\gamma \right)}C_{f\pi},
        \\
        \left\| w_{k}^{*} \right\| _2 &\le \frac{\sqrt{\left| \mathcal{S} \right|\left| \mathcal{A} \right|\left( 1+\gamma \right)}}{1-\gamma}C_{f\pi}.
    \end{align*}
\end{lemma}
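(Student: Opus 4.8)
The plan is to derive both bounds by repeated application of operator-norm submultiplicativity, establishing the bound on $\norm{b_k^*}_2$ first and then feeding it into $w_k^* = \kh{A_k^*}^{-1}b_k^*$. The only inputs needed are the already-proved estimate on $\norm{U}_2$ in \cref{lem:K_bound}, the boundedness of $\nabla_\pi f$ from \cref{assu:f}, and the Neumann-series structure of $\kh{I-\gamma P^{\pi^*_k}}^{-1}$ from \cref{pro:inverse_nonnegative}.

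First I would handle $b_k^* = U^\top\operatorname{diag}\kh{\pi^*_k}\nabla_\pi f(x_k,\pi^*_k)$ by splitting the three factors,
\[
\norm{b_k^*}_2 \le \norm{U^\top}_2\,\norm{\operatorname{diag}\kh{\pi^*_k}}_2\,\norm{\nabla_\pi f(x_k,\pi^*_k)}_2 .
\]
Here $\norm{U^\top}_2=\norm{U}_2\le\sqrt{\abs{\mdps}\abs{\mdpa}(1+\gamma)}$ by the upper bound in \cref{lem:K_bound}; the diagonal entries of $\operatorname{diag}\kh{\pi^*_k}$ are action probabilities lying in $[0,1]$, so its spectral norm is at most $1$; and $\norm{\nabla_\pi f(x_k,\pi^*_k)}_2\le C_{f\pi}$ by \cref{assu:f}. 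Multiplying the three factors reproduces the stated bound on $\norm{b_k^*}_2$ exactly.

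For $w_k^*=\kh{A_k^*}^{-1}b_k^*$ I would write $\norm{w_k^*}_2\le \norm{\kh{A_k^*}^{-1}}_2\,\norm{b_k^*}_2$ and aim to show $\norm{\kh{A_k^*}^{-1}}_2\le \tfrac{1}{1-\gamma}$, so that the factor $\tfrac{1}{1-\gamma}$ is simply appended to the previous estimate. Since $A_k^*=\kh{I-\gamma P^{\pi^*_k}}^\top$ and $P^{\pi^*_k}$ is a transition matrix (\cref{def:tranmatrix}), \cref{pro:inverse_nonnegative} gives the convergent series $\kh{I-\gamma P^{\pi^*_k}}^{-1}=\sum_{t\ge0}\gamma^t\kh{P^{\pi^*_k}}^t$, whose transpose is $\kh{A_k^*}^{-1}=\sum_{t\ge0}\gamma^t\kh{\kh{P^{\pi^*_k}}^\top}^t$; bounding this series termwise against $\sum_{t\ge0}\gamma^t=\tfrac{1}{1-\gamma}$ would close the argument, each stochastic power contributing a factor at most $1$.

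The main obstacle is precisely this last norm control. The spectral norm of a row-stochastic matrix can exceed $1$, so the termwise bound $\norm{\kh{\kh{P^{\pi^*_k}}^\top}^t}_2\le 1$ cannot be invoked verbatim; the clean contraction instead lives in the $\ell_1$/$\ell_\infty$ norms, where $\norm{\kh{P^{\pi^*_k}}^\top}_1=\norm{P^{\pi^*_k}}_\infty=1$. I would therefore either carry the estimate in the $\ell_1$ norm, using the column-stochasticity of $\kh{P^{\pi^*_k}}^\top$ together with the contraction factor $\gamma$ from \cref{pro:inverse_nonnegative}, or pass through $\norm{A}_2\le\sqrt{\norm{A}_1\norm{A}_\infty}$; this is the delicate quantitative step on which the exact constant $\tfrac{\sqrt{\abs{\mdps}\abs{\mdpa}(1+\gamma)}}{1-\gamma}C_{f\pi}$ hinges, and the point where one must be careful not to incur a spurious dimensional factor.
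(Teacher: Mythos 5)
Your bound on $\norm{b_k^*}_2$ is correct and is exactly the paper's argument: submultiplicativity combined with $\norm{U}_2\le\sqrt{\abs{\mdps}\abs{\mdpa}(1+\gamma)}$ from \cref{lem:K_bound}, $\norm{\operatorname{diag}\kh{\pi_k^*}}_2\le1$, and $\norm{\nabla_\pi f(x_k,\pi_k^*)}_2\le C_{f\pi}$ from \cref{assu:f}.

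The second bound, however, is where your proposal has a genuine gap, and you flag it yourself: you reduce everything to the inequality $\norm{\kh{A_k^*}^{-1}}_2\le\frac{1}{1-\gamma}$ and never establish it. This step cannot be completed, because the inequality is false. Take $\abs{\mdps}=2$ and an MDP in which every state--action pair transitions to the second state, so that $P^{\pi_k^*}=\left(\begin{smallmatrix}0&1\\0&1\end{smallmatrix}\right)$; then
\begin{equation*}
\kh{I-\gamma P^{\pi_k^*}}^{-1}=\begin{pmatrix}1&\frac{\gamma}{1-\gamma}\\0&\frac{1}{1-\gamma}\end{pmatrix},
\end{equation*}
whose second column already has Euclidean norm $\frac{\sqrt{1+\gamma^2}}{1-\gamma}>\frac{1}{1-\gamma}$, and $\kh{A_k^*}^{-1}$ is the transpose of this matrix, hence has the same spectral norm. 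With $m$ states all transitioning to a single state, $\norm{\kh{A_k^*}^{-1}}_2$ grows like $\frac{\gamma\sqrt{m}}{1-\gamma}$. Accordingly, both of your fallback routes do exactly what you fear: the $\ell_1$ route and the mixed-norm bound $\norm{A}_2\le\sqrt{\norm{A}_1\norm{A}_\infty}$ (row sums of $\sum_{t\ge0}\gamma^t\kh{P^{\pi_k^*}}^t$ equal $\frac{1}{1-\gamma}$, but column sums are only bounded by $\frac{\abs{\mdps}}{1-\gamma}$) each cost a factor $\sqrt{\abs{\mdps}}$, yielding only $\norm{w_k^*}_2\le\frac{\sqrt{\abs{\mdps}}\sqrt{\abs{\mdps}\abs{\mdpa}(1+\gamma)}}{1-\gamma}C_{f\pi}$, weaker than the statement.

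For what it is worth, the paper's own proof breaks at precisely this point: it asserts $\norm{\kh{A_k^*}^{-1}}_2\le\frac{1}{\norm{A_k^*}_2}\le\frac{1}{1-\gamma}$, but the first inequality is backwards (one always has $\norm{A^{-1}}_2\ge\frac{1}{\norm{A}_2}$), and the intended reasoning conflates the smallest eigenvalue modulus of $A_k^*$ (which \cref{pro:inverse_nonnegative} does bound below by $1-\gamma$) with its smallest singular value; these coincide only for normal matrices, and $A_k^*=\kh{I-\gamma P^{\pi_k^*}}^\top$ is not normal. A proof of the stated order must exploit the structure of $b_k^*$ rather than bound $\norm{\kh{A_k^*}^{-1}}_2$ in isolation: setting $M=\sum_{t\ge0}\gamma^t\kh{P^{\pi_k^*}}^t$, one has $w_k^*=\kh{UM}^\top\operatorname{diag}\kh{\pi_k^*}\nabla_\pi f(x_k,\pi_k^*)$, and the action-block of $UM$ associated with $a\in\mdpa$ equals $\kh{I-\gamma P_a}M=I-\gamma\kh{P_a-P^{\pi_k^*}}M$, whose rows have $\ell_1$-norm at most $1+2\gamma\cdot\frac{1}{1-\gamma}=\frac{1+\gamma}{1-\gamma}$; summing squared row norms over all $\abs{\mdps}\abs{\mdpa}$ rows gives $\norm{UM}_2\le\norm{UM}_F\le\sqrt{\abs{\mdps}\abs{\mdpa}}\,\frac{1+\gamma}{1-\gamma}$ and hence $\norm{w_k^*}_2\le\sqrt{\abs{\mdps}\abs{\mdpa}}\,\frac{1+\gamma}{1-\gamma}\,C_{f\pi}$, which is the claimed bound up to a benign factor $\sqrt{1+\gamma}\le\sqrt{2}$. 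Neither your proposal nor the paper's proof reaches this; your diagnosis of where the difficulty lies is accurate, but the proof remains incomplete.
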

\begin{proof}
    Taking into account $\norm{U}_2\le \sqrt{\left| \mathcal{S} \right|\left| \mathcal{A} \right|\left( 1+\gamma \right)} $, $\pi^*\in\Delta^{\abs{\mdps}}$, and $\nabla_\pi f(x,\pi^*(x))\le C_{f\pi}$, we can obtain
    \begin{align*}
        \left\| b_{k}^{*} \right\| _2 = &\norm{U^{\top}\mathrm{diag}\left( \pi _{k}^{*} \right) \nabla _{\pi}f(x_k,\pi _{k}^{*})}
        \\
        \le & \norm{U}_2 \norm{\mathrm{diag}\left( \pi _{k}^{*} \right)}_2 \norm{\nabla _{\pi}f(x_k,\pi _{k}^{*})}_2
        \\
        \le &\ \sqrt{\left| \mathcal{S} \right|\left| \mathcal{A} \right|\left( 1+\gamma \right)}C_{f\pi},
    \end{align*}
    where $\pi^*_k=\pi^*(x_k)$. Additionally, denote the eigenvalue of $A_k^*$ with the smallest moduli by $\lambda_{min}\kh{A_k^*}$. \cref{pro:inverse_nonnegative} reveals $\lambda_{min}\kh{A_k^*}\ge 1-\gamma$. In this way, $\norm{A_k^*}_2\ge \lambda_{min}\kh{A_k^*}$ and
    \begin{equation*}
        \left\| \left( A_{k}^{*} \right) ^{-1} \right\| _2\le \frac{1}{\left\| A_{k}^{*} \right\| _2}\le \frac{1}{1-\gamma}.
    \end{equation*}
    So, it completes the proof by
    \begin{equation*}
        \left\| w_{k}^{*} \right\| _2=\left\| \left( A_{k}^{*} \right) ^{-1}b_{k}^{*} \right\| _2\le \left\| \left( A_{k}^{*} \right) ^{-1} \right\| _2\left\| b_{k}^{*} \right\| _2\le \frac{\sqrt{\left| \mathcal{S} \right|\left| \mathcal{A} \right|\left( 1+\gamma \right)}}{1-\gamma}C_{f\pi}.
    \end{equation*}
\end{proof}

\begin{lemma}\label{lem:pre_for_w}
    Under Assumptions~\ref{assu:f} and \ref{assu:r}, the following inequalities hold in \cref{alg:M-SoBiRL}.
    \begin{align*}
        \left\| A_{k}^{*}-A_{k-1}^{*} \right\| _2 \le&\ \sqrt{\left| \mathcal{A} \right|}L_{\pi}\left\| x_k-x_{k-1} \right\| _2,
        \\
        \left\| A_k-A_{k}^{*} \right\| _2 \le&\ \sqrt{\left| \mathcal{A} \right|}\left\| \pi _k-\pi _{k}^{*} \right\| _2,
        \\
        \left\| A_k \right\| _2\le &\ \sqrt{\left| \mathcal{S} \right|\left( 1+\gamma \right)},   
        \\
       \left\| A_{k}^{*} \right\| _2\le &\ \sqrt{\left| \mathcal{S} \right|\left( 1+\gamma \right)},
        \\
        \left\| A_{k}^{\top}A_k-\left( A_{k}^{*} \right) ^{\top}A_{k}^{*} \right\| _2\le &\ 2\sqrt{\left| \mathcal{S} \right|\left| \mathcal{A} \right|\left( 1+\gamma \right)}\left\| \pi _k-\pi _{k}^{*} \right\| _2,
        \\
        \left\| b_k-b_{k}^{*} \right\| _2\le&\ \sqrt{\left| \mathcal{S} \right|\left| \mathcal{A} \right|\left( 1+\gamma \right)}\left( L_f+C_{f\pi} \right) \left\| \pi _k-\pi _{k}^{*} \right\| _{\infty}.
    \end{align*}
\end{lemma}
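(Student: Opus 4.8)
The plan is to dispatch the six inequalities as a short cascade: the first four are direct spectral-norm estimates for $A_k$ and $A_k^*$, the fifth follows from a telescoping identity combined with those estimates, and the sixth is a product-rule bound for $b_k-b_k^*$. The structural fact I would lean on throughout is that both $P^{\pi_k}$ and $P^{\pi_k^*}$ are row-stochastic, i.e.\ nonnegative with each row summing to one, because $\pi_k,\pi_k^*\in\Delta^{\abs{\mdps}}$ (recall $\pi_k=\texttt{softmax}(Q_k/\tau)$ and $P^{\pi}_{ss'}=\sum_a\pi_{sa}P_{sas'}$); this is precisely what makes the numerical-linear-algebra preliminaries applicable.

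First I would handle the two displacement bounds by writing $A_k^*-A_{k-1}^*=-\gamma\kh{P^{\pi^*(x_k)}-P^{\pi^*(x_{k-1})}}^\top$ and $A_k-A_k^*=-\gamma\kh{P^{\pi_k}-P^{\pi_k^*}}^\top$. Since the spectral norm is transpose-invariant and $\gamma<1$, \cref{lem:Ppi_Lip} immediately gives $\norm{A_k-A_k^*}_2\le\gamma\sqrt{\abs{\mdpa}}\norm{\pi_k-\pi_k^*}_2\le\sqrt{\abs{\mdpa}}\norm{\pi_k-\pi_k^*}_2$, and composing with the $L_\pi$-Lipschitz continuity of $\pi^*$ from \cref{lem:pismooth} yields the first inequality.

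The main obstacle is the uniform bound $\norm{A_k}_2,\norm{A_k^*}_2\le\sqrt{\abs{\mdps}(1+\gamma)}$, which I would extract from the mixed-norm inequality $\norm{M}_2\le\sqrt{\norm{M}_1\norm{M}_\infty}$ applied to $M=I-\gamma P^{\pi_k}$ (using $\norm{A_k}_2=\norm{M}_2$). The row sums of $M$ equal $1+\gamma-2\gamma P^{\pi_k}_{ss}\le 1+\gamma$, so $\norm{M}_\infty\le 1+\gamma$. The delicate point is the column sum $1-\gamma P^{\pi_k}_{s's'}+\gamma\sum_{s\ne s'}P^{\pi_k}_{ss'}$: unlike the rows, the columns of $P^{\pi_k}$ need not sum to one, so I would use $\gamma<1$ and the entrywise bound $P^{\pi_k}_{ss'}\le 1$ to cap the off-diagonal mass by $\abs{\mdps}-1$, giving a column sum $\le 1+\gamma(\abs{\mdps}-1)\le\abs{\mdps}$ and hence $\norm{M}_1\le\abs{\mdps}$. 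The product $(1+\gamma)\abs{\mdps}$ then delivers exactly the advertised constant, and the same argument verbatim gives the bound for $A_k^*$.

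Finally, the fifth inequality follows from $A_k^\top A_k-\kh{A_k^*}^\top A_k^*=A_k^\top(A_k-A_k^*)+(A_k-A_k^*)^\top A_k^*$, submultiplicativity, and the two preceding bounds, producing the factor $(\norm{A_k}_2+\norm{A_k^*}_2)\le 2\sqrt{\abs{\mdps}(1+\gamma)}$ against $\norm{A_k-A_k^*}_2$. For the sixth I would split $b_k-b_k^*=U^\top\bigl[\operatorname{diag}(\pi_k)\kh{\nabla_\pi f(x_k,\pi_k)-\nabla_\pi f(x_k,\pi_k^*)}+\kh{\operatorname{diag}(\pi_k)-\operatorname{diag}(\pi_k^*)}\nabla_\pi f(x_k,\pi_k^*)\bigr]$ and estimate each piece via $\norm{U}_2\le\sqrt{\abs{\mdps}\abs{\mdpa}(1+\gamma)}$ from \cref{lem:K_bound}, $\norm{\operatorname{diag}(\pi_k)}_2\le 1$, $\norm{\operatorname{diag}(\pi_k-\pi_k^*)}_2=\norm{\pi_k-\pi_k^*}_\infty$, together with the $L_f$-Lipschitzness and $C_{f\pi}$-boundedness of $\nabla_\pi f$ from \cref{assu:f}; here one should note that the Lipschitz estimate in \cref{assu:f} carries the $\infty$-norm on the policy argument, which is exactly why the final bound depends on $\norm{\pi_k-\pi_k^*}_\infty$ rather than the $2$-norm. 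Collecting terms gives the factor $(L_f+C_{f\pi})$ and completes the lemma.
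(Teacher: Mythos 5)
Your proposal is correct and follows essentially the same route as the paper's proof: the same mixed-norm bound $\|M\|_2\le\sqrt{\|M\|_1\|M\|_\infty}$ for the spectral bounds on $A_k,A_k^*$, the same use of \cref{lem:Ppi_Lip} and \cref{lem:pismooth} for the displacement bounds, the identical telescoping decomposition $A_k^\top A_k-(A_k^*)^\top A_k^*=A_k^\top(A_k-A_k^*)+(A_k-A_k^*)^\top A_k^*$, and the identical two-term splitting of $b_k-b_k^*$ with \cref{lem:K_bound} and \cref{assu:f}. If anything, you make explicit a detail the paper leaves implicit (the column-sum estimate $\|I-\gamma P^{\pi_k}\|_1\le 1+\gamma(\abs{\mdps}-1)\le\abs{\mdps}$ behind the one-line bound $\|A_k\|_2\le\sqrt{\abs{\mdps}(1+\gamma)}$), which is a welcome clarification rather than a deviation.
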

\begin{proof}
    Drawing on \cref{lem:Ppi_Lip} and the boundedness of $A_k,A_k^*$,
    \begin{equation*}
        \left\| A_k \right\| _2\le \sqrt{\left\| A_k \right\| _1\left\| A_k \right\| _{\infty}}\le \sqrt{\left| \mathcal{S} \right|\left( 1+\gamma \right)},\quad\text{and}\quad \left\| A_{k}^{*} \right\| _2\le \sqrt{\left| \mathcal{S} \right|\left( 1+\gamma \right)},
    \end{equation*}
    we obtain
    \begin{equation*}
        \left\| A_k-A_{k}^{*} \right\| _2\le \sqrt{\left| \mathcal{A} \right|}\left\| \pi _k-\pi _{k}^{*} \right\| _2,
    \end{equation*}
    \begin{equation*}
        \left\| A_{k}^{*}-A_{k-1}^{*} \right\| _2\le \left\| P^{\pi _{k}^{*}}-P^{\pi _{k-1}^{*}} \right\| _2\le \sqrt{\left| \mathcal{A} \right|}\left\| \pi _{k}^{*}-\pi _{k-1}^{*} \right\| _2\le \sqrt{\left| \mathcal{A} \right|}L_{\pi}\left\| x_k-x_{k-1} \right\| _2,
    \end{equation*}
    and
    \begin{align*}
        \left\| A_{k}^{\top}A_k-\left( A_{k}^{*} \right) ^{\top}A_{k}^{*} \right\| _2\le& \left\| A_{k}^{\top}\left( A_k-A_{k}^{*} \right) \right\| _2+\left\| \left( A_k-A_{k}^{*} \right) ^{\top}A_{k}^{*} \right\| _2
        \\
        \le&\ 2\sqrt{\left| \mathcal{S} \right|\left| \mathcal{A} \right|\left( 1+\gamma \right)}\left\| \pi _k-\pi _{k}^{*} \right\| _2.
    \end{align*}
    Moreover, the Lipschitz and boundedness properties related to $\nabla f$ in \cref{assu:f} reveal that
    \begin{align*}
        \left\| b_k-b_{k}^{*} \right\| _2\le &\ \sqrt{\left| \mathcal{S} \right|\left| \mathcal{A} \right|\left( 1+\gamma \right)}\left\| \mathrm{diag}\left( \pi _k \right) \nabla _{\pi}f(x_k,\pi _k)-\mathrm{diag}\left( \pi _{k}^{*} \right) \nabla _{\pi}f(x_k,\pi _{k}^{*}) \right\| _2
        \\
        \le&\ \sqrt{\left| \mathcal{S} \right|\left| \mathcal{A} \right|\left( 1+\gamma \right)}\left\| \mathrm{diag}\left( \pi _k \right) \left( \nabla _{\pi}f(x_k,\pi _k)-\nabla _{\pi}f(x_k,\pi _{k}^{*}) \right) \right\| _2
        \\
        &+ \sqrt{\left| \mathcal{S} \right|\left| \mathcal{A} \right|\left( 1+\gamma \right)}\left\| \left( \mathrm{diag}\left( \pi _k \right) -\mathrm{diag}\left( \pi _{k}^{*} \right) \right) \nabla _{\pi}f(x_k,\pi _{k}^{*}) \right\| _2 
        \\
        \le&\ \sqrt{\left| \mathcal{S} \right|\left| \mathcal{A} \right|\left( 1+\gamma \right)}\left( L_f+C_{f\pi} \right) \left\| \pi _k-\pi _{k}^{*} \right\| _{\infty}.
    \end{align*}
\end{proof}

The following lemma illustrates the descent property of $\norm{w_k-w_k^*}^2_2$
\begin{lemma}\label{lem:descent_w}
Under Assumption~\ref{assu:f} and \ref{assu:r}, the iterates $\{(x_k,\pi_k,w_k)\}$ generated by \cref{alg:M-SoBiRL} satisfy
\begin{align*}
    \left\| w_k-w_{k}^{*} \right\| _{2}^{2}\le&\left( 1-\frac{\eta \left( 1-\gamma \right) ^2}{2} \right) \left\| w_{k-1}-w_{k-1}^{*} \right\| _{2}^{2}+\eta ^2C_{\sigma \pi}^{2}\left( \frac{1}{\left( 1-\gamma \right) ^2}+4 \right) \left\| \pi _k-\pi _{k}^{*} \right\| _{\infty}^{2}
    \\
    &+2L_{w}^{2}\left( 1+\frac{1}{\eta \left( 1-\gamma \right) ^2} \right) \left\| x_k-x_{k-1} \right\| _{2}^{2}+4\eta ^2\left| \mathcal{S} \right|^2\left( 1+\gamma \right) ^2\left\| w_{k-1}-w_{k-1}^{*} \right\| _{2}^{2},
\end{align*}
with
\begin{align*}
    C_{\sigma \pi}&=\frac{2\left| \mathcal{S} \right|^{\frac{3}{2}}\left| \mathcal{A} \right|^{\frac{3}{2}}\left( 1+\gamma \right) C_{f\pi}}{1-\gamma}+\left| \mathcal{S} \right|\left| \mathcal{A} \right|^{\frac{3}{2}}\sqrt{\left( 1+\gamma \right)}C_{f\pi}+\left| \mathcal{S} \right|\sqrt{\left| \mathcal{A} \right|}\left( 1+\gamma \right) \left( L_f+C_{f\pi} \right) ,
    \\
    L_w&=\frac{1}{1-\gamma}\left( L_{\vartheta}+\frac{\left| \mathcal{A} \right|}{1-\gamma}\sqrt{\left| \mathcal{S} \right|\left( 1+\gamma \right)}C_{f\pi}L_{\pi} \right) .
\end{align*}
\end{lemma}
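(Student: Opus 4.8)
The plan is to turn the gradient step of line~4 of \cref{alg:M-SoBiRL} into a one-step recursion for the tracking error $w_k-w_k^*$, split it into a contraction part, a drift part, and a policy-error part, and control each piece with the estimates of \cref{lem:pre_for_w}. As a preliminary I would first record the Lipschitz continuity of the target $w^*(x)=(A^*(x))^{-1}b^*(x)$, which furnishes the constant $L_w$. Writing $w_k^*-w_{k-1}^*=(A_k^*)^{-1}(b_k^*-b_{k-1}^*)+\kh{(A_k^*)^{-1}-(A_{k-1}^*)^{-1}}b_{k-1}^*$ and invoking the resolvent identity $(A_k^*)^{-1}-(A_{k-1}^*)^{-1}=(A_k^*)^{-1}(A_{k-1}^*-A_k^*)(A_{k-1}^*)^{-1}$ together with $\norm{(A_k^*)^{-1}}_2\le 1/(1-\gamma)$, the bound $\norm{A_k^*-A_{k-1}^*}_2\le\sqrt{\abs{\mdpa}}L_\pi\norm{x_k-x_{k-1}}_2$, the Lipschitz estimate $\norm{b_k^*-b_{k-1}^*}_2\le L_\vartheta\norm{x_k-x_{k-1}}_2$, and the uniform bound $\norm{b_{k-1}^*}_2\le\sqrt{\abs{\mdps}\abs{\mdpa}(1+\gamma)}C_{f\pi}$ (all from the preceding lemmas) yields $\norm{w_k^*-w_{k-1}^*}_2\le L_w\norm{x_k-x_{k-1}}_2$ with precisely the stated $L_w$.

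Next, from $w_k=w_{k-1}-\xi A_k^\top(A_kw_{k-1}-b_k)$ and the defining identity $A_k^*w_k^*=b_k^*$, I would add and subtract $\xi A_k^\top A_k w_k^*$ to obtain the exact recursion
\begin{equation*}
w_k-w_k^*=(I-\xi A_k^\top A_k)(w_{k-1}-w_k^*)-\xi A_k^\top\left[(A_k-A_k^*)w_k^*+(b_k^*-b_k)\right].
\end{equation*}
Substituting $w_{k-1}-w_k^*=(w_{k-1}-w_{k-1}^*)+(w_{k-1}^*-w_k^*)$ decomposes the right-hand side into $a+c+e$, where $a=(I-\xi A_k^\top A_k)(w_{k-1}-w_{k-1}^*)$ is the contraction part, $c=(I-\xi A_k^\top A_k)(w_{k-1}^*-w_k^*)$ is the drift part, and $e=-\xi A_k^\top[(A_k-A_k^*)w_k^*+(b_k^*-b_k)]$ is the policy-error part. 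For the contraction I would use $\norm{A_k}_2\le\sqrt{\abs{\mdps}(1+\gamma)}$ from \cref{lem:pre_for_w} and the lower bound $\sigma_{\min}(A_k)\ge 1-\gamma$ (equivalently $\norm{A_k^{-1}}_2\le 1/(1-\gamma)$, already used above to bound $w_k^*$) in the expansion $\norm{(I-\xi A_k^\top A_k)v}_2^2=\norm{v}_2^2-2\xi v^\top A_k^\top A_k v+\xi^2\norm{A_k^\top A_k v}_2^2$ to get
\begin{equation*}
\norm{(I-\xi A_k^\top A_k)v}_2^2\le\left(1-2\xi(1-\gamma)^2+\xi^2\abs{\mdps}^2(1+\gamma)^2\right)\norm{v}_2^2 ,
\end{equation*}
which produces the leading decay on $\norm{a}_2^2$ and the $\xi^2\abs{\mdps}^2(1+\gamma)^2$ remainder. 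The drift obeys $\norm{c}_2\le\norm{w_{k-1}^*-w_k^*}_2\le L_w\norm{x_k-x_{k-1}}_2$ (for $\xi$ small enough that $\norm{I-\xi A_k^\top A_k}_2\le1$), and inserting the four estimates of \cref{lem:pre_for_w} for $\norm{A_k}$, $\norm{A_k-A_k^*}$, $\norm{w_k^*}$, $\norm{b_k-b_k^*}$ and converting $\norm{\cdot}_2$ to $\norm{\cdot}_\infty$ gives $\norm{e}_2\le\xi C_{\sigma\pi}\norm{\pi_k-\pi_k^*}_\infty$ with the stated $C_{\sigma\pi}$.

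Finally I would assemble $\norm{w_k-w_k^*}_2^2=\norm{a+c+e}_2^2$ by Young's inequality, peeling off the $O(1)$-sized drift $c$ with a weight of order $\xi(1-\gamma)^2$ (which is what generates the $2L_w^2\kh{1+1/(\xi(1-\gamma)^2)}$ factor on $\norm{x_k-x_{k-1}}^2$) while keeping the already $\xi$-small error $e$ at the $\xi^2$ scale on $\norm{\pi_k-\pi_k^*}_\infty^2$; the negative margin $-2\xi(1-\gamma)^2$ from the contraction is partially consumed by the cross terms, leaving the coefficient $1-\xi(1-\gamma)^2/2$ and the residual $4\xi^2\abs{\mdps}^2(1+\gamma)^2$ on $\norm{w_{k-1}-w_{k-1}^*}_2^2$. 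The main obstacle is exactly this final bookkeeping coupled with the spectral bound: I must secure $\sigma_{\min}(A_k)\ge 1-\gamma$ so that $I-\xi A_k^\top A_k$ is genuinely contractive (this is where the smallness of $\xi$, of order $1/(\abs{\mdps}(1+\gamma))$, is needed), and then choose the Young weights so that the $O(1)$-scale drift and the $O(\xi)$-scale policy error are routed into the $1/\xi$-weighted $\norm{x_k-x_{k-1}}^2$ term and the $\xi^2$-weighted $\norm{\pi_k-\pi_k^*}_\infty^2$ term respectively, without eroding the $O(\xi)$ decay margin on the previous residual.
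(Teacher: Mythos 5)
Your proposal is correct in substance and is assembled from exactly the same ingredients as the paper's proof: the estimates of \cref{lem:pre_for_w}, the uniform bounds on $b_k^*$ and $w_k^*$, the drift bound $\norm{w_k^*-w_{k-1}^*}_2\le L_w\norm{x_k-x_{k-1}}_2$ (your resolvent-identity derivation is word-for-word the paper's computation leading to \eqref{eq:w_descent_2}), the spectral lower bound $\norm{A_kv}_2\ge(1-\gamma)\norm{v}_2$, and Young's inequality. The organization differs, though. The paper never forms the operator $I-\xi A_k^\top A_k$: it expands $\norm{w_k-w_k^*}_2^2$ by the three-point identity \eqref{eq:w_wstart_decom} centered at $w_{k-1}^*$, then expands $\norm{w_k-w_{k-1}^*}_2^2$ via the update rule and bounds the update direction $\sigma_k=A_k^\top\kh{A_kw_{k-1}-b_k}$ separately (its norm, and its inner product with $w_{k-1}-w_{k-1}^*$, using $\sigma_k^*=0$). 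Your exact recursion $w_k-w_k^*=\kh{I-\xi A_k^\top A_k}\kh{w_{k-1}-w_k^*}-\xi A_k^\top\zkh{\kh{A_k-A_k^*}w_k^*+\kh{b_k^*-b_k}}$ with the contraction/drift/policy-error split is cleaner and makes the contraction manifest; it also yields a smaller admissible $C_{\sigma\pi}$ (your $e$-term generates only the first and third pieces of the paper's constant, without the factor $2$), which is harmless since the lemma asserts an upper bound. What the paper's ordering buys is validity for every $\xi>0$: your step $\norm{c}_2\le\norm{w_{k-1}^*-w_k^*}_2$ needs $\norm{I-\xi A_k^\top A_k}_2\le1$, i.e.\ $\xi\le 2/\kh{\abs{\mdps}(1+\gamma)}$, a restriction absent from the lemma statement (immaterial in context, since \cref{the:proof_MSoBiRL} imposes a far smaller $\xi$).

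Two caveats, both of which you share with the paper rather than introduce. First, $\sigma_{\min}(A_k)\ge 1-\gamma$ does not actually follow from \cref{prop:I_gamma}: Gershgorin controls eigenvalue moduli, not singular values, of the non-symmetric matrix $A_k$, so your contraction bound and the paper's estimate $\innerp{A_k^\top A_kv,v}\ge(1-\gamma)^2\norm{v}_2^2$ rest on the same unproved spectral claim. Second, the "final bookkeeping" you flagged as the main obstacle is genuinely obstructed: absorbing the cross term between the contraction part and the $O(\xi)$-sized policy error into a margin of size $O\kh{\xi(1-\gamma)^2}$ forces a coefficient of order $\xi$, not $\xi^2$, on $\norm{\pi_k-\pi_k^*}_\infty^2$; the paper reaches $\xi^2$ only because its Young step \eqref{eq:ineq_sigma2} silently drops a factor of $\xi$ on the $\norm{w_{k-1}-w_{k-1}^*}_2^2$ term. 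Since $\xi$ is ultimately a constant, an $O(\xi)$ coefficient supports \cref{the:proof_MSoBiRL} just as well, but an honest execution of your plan (or the paper's) proves the lemma with $\xi$ in place of $\xi^2$ in that term.
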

\begin{proof}
    By the identity $\norm{a-b}_2^2=\norm{a}^2+\norm{b}^2-2\innerp{a,b}$, we establish
    \begin{equation}\label{eq:w_wstart_decom}
        \left\| w_k-w_{k}^{*} \right\| _{2}^{2}=\left\| w_k-w_{k-1}^{*} \right\| _{2}^{2}+\left\| w_{k-1}^{*}-w_{k}^{*} \right\| _{2}^{2}-2\left< w_k-w_{k-1}^{*},w_{k}^{*}-w_{k-1}^{*} \right>,
    \end{equation}
    and estimate each member. Denote 
    \begin{equation*}
    \sigma _k:=A_{k}^{\top}A_kw_{k-1}-A_{k}^{\top}b_k,
    \end{equation*}
    which is the update direction of $w_{k-1}$, and correspondingly, use the following notation for reference,
    \begin{equation*}
    \sigma _{k}^{*}:=\left( A_{k}^{*} \right) ^{\top}A_{k}^{*}w_{k}^{*}-\left( A_{k}^{*} \right) ^{\top}b_{k}^{*}.        
    \end{equation*}
    By the update rule of $w$, $w_{k} = w_{k-1} - \eta\sigma_k$, we decompose the first term in \eqref{eq:w_wstart_decom}:
    \begin{equation}
       \left\| w_k-w_{k-1}^{*} \right\| _{2}^{2}=\left\| w_{k-1}-w_{k-1}^{*} \right\| _{2}^{2}+\eta ^2\left\| \sigma _k \right\| _{2}^{2}-2\eta \left< \sigma _k,w_{k-1}-w_{k-1}^{*} \right>.
    \end{equation}
    The identity
    \begin{align*}
        \sigma _k-\sigma _{k}^{*}=&A_{k}^{\top}A_k\left( w_{k-1}-w_{k-1}^{*} \right) +\left( A_{k}^{\top}A_k-\left( A_{k}^{*} \right) ^{\top}A_{k}^{*} \right) w_{k-1}^{*}
        \\
        &+\left( A_{k}^{*}-A_k \right) ^{\top}b_{k}^{*}+A_k\left( b_{k}^{*}-b_k \right) 
    \end{align*}
    and the observation that $\sigma_k^*=0$ yield
    \begin{align}
        \left\| \sigma _k \right\| _2=&\left\| \sigma _k-\sigma _{k}^{*} \right\| _2    \nonumber
        \\
        \le& \norm{A_{k}^{\top}A_k\left( w_{k-1}-w_{k-1}^{*} \right)}_2 + \norm{\left( A_{k}^{\top}A_k-\left( A_{k}^{*} \right) ^{\top}A_{k}^{*} \right) w_{k-1}^{*}}_2     \nonumber
        \\
        &+\norm{\left( A_{k}^{*}-A_k \right) ^{\top}b_{k}^{*}}_2+\norm{A_k\left( b_{k}^{*}-b_k \right) }_2  \nonumber
        \\
        \le& \left| \mathcal{S} \right|\left( 1+\gamma \right) \left\| w_{k-1}-w_{k-1}^{*} \right\| _2+\frac{2\left| \mathcal{S} \right|\left| \mathcal{A} \right|\left( 1+\gamma \right) C_{f\pi}}{1-\gamma}\left\| \pi _k-\pi _{k}^{*} \right\| _2    \nonumber
        \\
        &+\ \left| \mathcal{A} \right|\sqrt{\left| \mathcal{S} \right|\left( 1+\gamma \right)}C_{f\pi}\left\| \pi _k-\pi _{k}^{*} \right\| _2+\left| \mathcal{S} \right|\sqrt{\left| \mathcal{A} \right|}\left( 1+\gamma \right) \left( L_f+C_{f\pi} \right) \left\| \pi _k-\pi _{k}^{*} \right\| _{\infty},   \nonumber
        \\
        \le&\ \left| \mathcal{S} \right|\left( 1+\gamma \right) \left\| w_{k-1}-w_{k-1}^{*} \right\| _2+C_{\sigma \pi}\left\| \pi _k-\pi _{k}^{*} \right\| _{\infty}, \label{eq:ineq_sigma1}
    \end{align}
    with $C_{\sigma \pi}=\frac{2\left| \mathcal{S} \right|^{\frac{3}{2}}\left| \mathcal{A} \right|^{\frac{3}{2}}\left( 1+\gamma \right) C_{f\pi}}{1-\gamma}+\left| \mathcal{S} \right|\left| \mathcal{A} \right|^{\frac{3}{2}}\sqrt{\left( 1+\gamma \right)}C_{f\pi}+\left| \mathcal{S} \right|\sqrt{\left| \mathcal{A} \right|}\left( 1+\gamma \right) \left( L_f+C_{f\pi} \right)$, where the second inequality comes from the results of \cref{lem:pre_for_w}, and the last inequality is obtained by $\left\| \pi _k-\pi _{k}^{*} \right\| _2\le \sqrt{\left| \mathcal{S} \right|\left| \mathcal{A} \right|}\left\| \pi _k-\pi _{k}^{*} \right\| _{\infty}$. Subsequently, we bound the third term in \eqref{eq:w_wstart_decom} in a similar way,
    \begin{align}
        &\left< \sigma _k,w_{k-1}-w_{k-1}^{*} \right> \nonumber
        \\
        =&\left< \sigma _k-\sigma _{k}^{*},w_{k-1}-w_{k-1}^{*} \right>  \nonumber
        \\
        =&\left< A_{k}^{\top}A_k\left( w_{k-1}-w_{k-1}^{*} \right) ,w_{k-1}-w_{k-1}^{*} \right> +\left< \left( A_{k}^{\top}A_k-\left( A_{k}^{*} \right) ^{\top}A_{k}^{*} \right) w_{k-1}^{*},w_{k-1}-w_{k-1}^{*} \right>    \nonumber
        \\
        &+\left< \left( A_{k}^{*}-A_k \right) ^{\top}b_{k}^{*},w_{k-1}-w_{k-1}^{*} \right> +\left< A_k\left( b_{k}^{*}-b_k \right) ,w_{k-1}-w_{k-1}^{*} \right>     \nonumber
        \\
        \ge&\ \left( 1-\gamma \right) ^2\left\| w_{k-1}-w_{k-1}^{*} \right\| _{2}^{2}-C_{\sigma \pi}\left\| \pi _k-\pi _{k}^{*} \right\| _{\infty}\left\| w_{k-1}-w_{k-1}^{*} \right\| _2. \nonumber
    \end{align}
    By Young’s inequality,
    \begin{align*}
        \eta C_{\sigma \pi}\left\| \pi _k-\pi _{k}^{*} \right\| _{\infty}\left\| w_{k-1}-w_{k-1}^{*} \right\| _2\le& \frac{\eta ^2}{2\rho _1}\left\| \pi _k-\pi _{k}^{*} \right\| _{\infty}^{2}+\frac{C_{\sigma \pi}^{2}\rho _1}{2}\left\| w_{k-1}-w_{k-1}^{*} \right\| _{2}^{2}
        \\
        =&\frac{\eta ^2C_{\sigma \pi}^{2}}{2\left( 1-\gamma \right) ^2}\left\| \pi _k-\pi _{k}^{*} \right\| _{\infty}^{2}+\frac{\left( 1-\gamma \right) ^2}{2}\left\| w_{k-1}-w_{k-1}^{*} \right\| _{2}^{2},
    \end{align*}
    where we choose $\rho _1=\frac{\left( 1-\gamma \right) ^2}{C_{\sigma \pi}^{2}}$. It follows that 
    \begin{align}
        \eta \left< \sigma _k,w_{k-1}-w_{k-1}^{*} \right> \ge \frac{\left( 1-\gamma \right) ^2}{2}\left\| w_{k-1}-w_{k-1}^{*} \right\| _{2}^{2}-\frac{\eta ^2C_{\sigma \pi}^{2}}{2\left( 1-\gamma \right) ^2}\left\| \pi _k-\pi _{k}^{*} \right\| _{\infty}^{2}.
        \label{eq:ineq_sigma2}
    \end{align}
    Substituting the inequality \eqref{eq:ineq_sigma2} into \eqref{eq:w_wstart_decom} yields
    \begin{equation}\label{eq:w_descent_1}
        \left\| w_k-w_{k-1}^{*} \right\| _{2}^{2}\le \left( 1-\eta \left( 1-\gamma \right) ^2 \right) \left\| w_{k-1}-w_{k-1}^{*} \right\| _{2}^{2}+\eta ^2\left\| \sigma _k \right\| _{2}^{2}+\frac{\eta ^2C_{\sigma \pi}^{2}}{\left( 1-\gamma \right) ^2}\left\| \pi _k-\pi _{k}^{*} \right\| _{\infty}^{2}.
    \end{equation}
    Then, we bound the term $\left\| w_{k}^{*}-w_{k-1}^{*} \right\| _2$,
    \begin{align}
        &\left\| w_{k}^{*}-w_{k-1}^{*} \right\| _2   \nonumber
        \\
        =&\left\| \left( A_{k}^{*} \right) ^{-1}b_{k}^{*}-\left( A_{k-1}^{*} \right) ^{-1}b_{k-1}^{*} \right\| _2    \nonumber
        \\
        \le& \left\| \left( A_{k}^{*} \right) ^{-1}\left( b_{k}^{*}-b_{k-1}^{*} \right) \right\| _2+\left\| \left( A_{k}^{*} \right) ^{-1}\left( A_{k}^{*}-A_{k-1}^{*} \right) \left( A_{k-1}^{*} \right) ^{-1}b_{k-1}^{*} \right\| _2   \nonumber
        \\
        \le&\ \frac{L_{\vartheta}}{1-\gamma}\left\| x_k-x_{k-1} \right\| _2+\frac{\left| \mathcal{A} \right|}{\left( 1-\gamma \right) ^2}\sqrt{\left| \mathcal{S} \right|\left( 1+\gamma \right)}C_{f\pi}L_{\pi}\left\| x_k-x_{k-1} \right\| _2  \nonumber
        \\
        =&\ L_{w} \left\| x_k-x_{k-1} \right\| _2, \label{eq:w_descent_2}
    \end{align}
    with $L_w = \frac{1}{1-\gamma}\left( L_{\vartheta}+\frac{\left| \mathcal{A} \right|}{1-\gamma}\sqrt{\left| \mathcal{S} \right|\left( 1+\gamma \right)}C_{f\pi}L_{\pi} \right)$. Apply Young's inequality to the inner product term in \eqref{eq:w_wstart_decom},
    \begin{align}
        &-2\left< w_k-w_{k-1}^{*},w_{k}^{*}-w_{k-1}^{*} \right>     \nonumber
        \\
        =&-2\left< w_{k-1}-w_{k-1}^{*},w_{k}^{*}-w_{k-1}^{*} \right> +2\eta \left< \sigma _k,w_{k}^{*}-w_{k-1}^{*} \right>   \nonumber
        \\
        \le&\ \rho _2\left\| w_{k-1}-w_{k-1}^{*} \right\| _{2}^{2}+\frac{1}{\rho _2}\left\| w_{k}^{*}-w_{k-1}^{*} \right\| _{2}^{2}+\eta ^2\left\| \sigma _k \right\| _{2}^{2}+\left\| w_{k}^{*}-w_{k-1}^{*} \right\| _{2}^{2}    \nonumber
        \\
        =&\ \frac{\eta \left( 1-\gamma \right) ^2}{2}\left\| w_{k-1}-w_{k-1}^{*} \right\| _{2}^{2}+\frac{2}{\eta \left( 1-\gamma \right) ^2}\left\| w_{k}^{*}-w_{k-1}^{*} \right\| _{2}^{2} \nonumber
        \\
        &+\eta ^2\left\| \sigma _k \right\| _{2}^{2}+\left\| w_{k}^{*}-w_{k-1}^{*} \right\| _{2}^{2}, \label{eq:w_descent_3}
    \end{align}
    where we take $\,\,\rho _2=\frac{\eta \left( 1-\gamma \right) ^2}{2}$. Assembling \eqref{eq:w_descent_1}, \eqref{eq:w_descent_2} and \eqref{eq:w_descent_3}, we can estimate $\left\| w_k-w_{k}^{*} \right\| _{2}^{2}$ in \eqref{eq:w_wstart_decom},
    \begin{align*}
        \left\| w_k-w_{k}^{*} \right\| _{2}^{2}=&\left\| w_k-w_{k-1}^{*} \right\| _{2}^{2}+\left\| w_{k-1}^{*}-w_{k}^{*} \right\| _{2}^{2}-2\left< w_k-w_{k-1}^{*},w_{k}^{*}-w_{k-1}^{*} \right> 
        \\
        \le& \left( 1-\eta \left( 1-\gamma \right) ^2 \right) \left\| w_{k-1}-w_{k-1}^{*} \right\| _{2}^{2}+\eta ^2\left\| \sigma _k \right\| _{2}^{2}+\frac{\eta ^2C_{\sigma \pi}^{2}}{\left( 1-\gamma \right) ^2}\left\| \pi _k-\pi _{k}^{*} \right\| _{\infty}^{2}
        \\
        &+\left\| w_{k-1}^{*}-w_{k}^{*} \right\| _{2}^{2}+\frac{\eta \left( 1-\gamma \right) ^2}{2}\left\| w_{k-1}-w_{k-1}^{*} \right\| _{2}^{2}
        \\
        &+\frac{2}{\eta \left( 1-\gamma \right) ^2}\left\| w_{k}^{*}-w_{k-1}^{*} \right\| _{2}^{2}+\eta ^2\left\| \sigma _k \right\| _{2}^{2}+\left\| w_{k}^{*}-w_{k-1}^{*} \right\| _{2}^{2}\,\,
        \\
        =&\left( 1-\frac{\eta \left( 1-\gamma \right) ^2}{2} \right) \left\| w_{k-1}-w_{k-1}^{*} \right\| _{2}^{2}+\frac{\eta ^2C_{\sigma \pi}^{2}}{\left( 1-\gamma \right) ^2}\left\| \pi _k-\pi _{k}^{*} \right\| _{\infty}^{2}
        \\
        &+2\left( 1+\frac{1}{\eta \left( 1-\gamma \right) ^2} \right) \left\| w_{k}^{*}-w_{k-1}^{*} \right\| _{2}^{2}+2\eta ^2\left\| \sigma _k \right\| _{2}^{2}
        \\
        \le& \left( 1-\frac{\eta \left( 1-\gamma \right) ^2}{2} \right) \left\| w_{k-1}-w_{k-1}^{*} \right\| _{2}^{2}+\eta ^2C_{\sigma \pi}^{2}\left( \frac{1}{\left( 1-\gamma \right) ^2}+4 \right) \left\| \pi _k-\pi _{k}^{*} \right\| _{\infty}^{2}
        \\
        &+2L_{w}^{2}\left( 1+\frac{1}{\eta \left( 1-\gamma \right) ^2} \right) \left\| x_k-x_{k-1} \right\| _{2}^{2}+4\eta ^2\left| \mathcal{S} \right|^2\left( 1+\gamma \right) ^2\left\| w_{k-1}-w_{k-1}^{*} \right\| _{2}^{2},
    \end{align*}
    where the last inequality is earned by incorporating
    \begin{equation*}
        \left\| \sigma _k \right\| _{2}^{2}\le 2\left| \mathcal{S} \right|^2\left( 1+\gamma \right) ^2\left\| w_{k-1}-w_{k-1}^{*} \right\| _{2}^{2}+2C_{\sigma \pi \,\,}^{2}\left\| \pi _k-\pi _{k}^{*} \right\| _{\infty}^{2}
    \end{equation*}
    and 
    \begin{equation*}
        \left\| w_{k}^{*}-w_{k-1}^{*} \right\| _{2}^{2}\le L_{w}^{2}\left\| x_k-x_{k-1} \right\| _{2}^{2},
    \end{equation*}
    implied in \eqref{eq:ineq_sigma1} and \eqref{eq:w_descent_2}, respectively.
\end{proof}

\subsection{Convergence Properties of $\pi_k$ and $Q_k$}\label{sec:Bellman}
The softmax mapping, plays a significant role in the update rule of $\pi_k$ and $Q_k$, for which we begin this section by introducing some properties of it, following from \citep{cen2022fastNPG}. Given a $\theta\in\mathbb{R}^{\abs{\mdps}\abs{\mdpa}}$, for any $s\in\mdps$, use $\theta_s\in\mathbb{R}^{\abs{\mdpa}}$ to denote a vector with $\theta_{s}(a)=\theta_{sa}=\theta(s,a)$. Recall the softmax mapping:
\begin{align*}
    \texttt{softmax}:\ \ \mathbb{R}^{\abs{\mdps}\abs{\mdpa}} &\longrightarrow\mathbb{R}^{\abs{\mdps}\abs{\mdpa}}
    \\
    \theta &\longmapsto \kh{\frac{\exp \left( \theta \left( s,a \right) \right)}{\sum_{a^{\prime}}{\exp \left( \theta \left( s,a^{\prime} \right) \right)}}}_{sa}.
\end{align*}
Consider the typical component, where $\pi_q\in\mathbb{R}^{\abs{\mdpa}}$ is parameterized by $q\in\mathbb{R}^{\abs{\mdpa}}$,
\begin{equation*}
    \pi _q\left( a \right) =\frac{\exp \left( q\left( a \right) \right)}{\sum_{a^{\prime}}{\exp \left( q\left( a^{\prime} \right) \right)}}.
\end{equation*}
In this way,
\begin{align}
	&\left| \log \left( \left\| \exp \left( q_1 \right) \right\| _1 \right) -\log \left( \left\| \exp \left( q_2 \right) \right\| _1 \right) \right|   \nonumber
    \\
	=&\left| \left. \langle q_1-q_2,\left. \nabla _{\theta}\log \left\| \exp\mathrm{(}q)\parallel _1 \right) \right|_{q=q_c} \right. \rangle \right|   \nonumber
    \\
	\le& \left\| q_1-q_2 \right\| _{\infty}\left\| \left. \nabla _q\log \left\| \exp\mathrm{(}q)\parallel _1 \right) \right|_{q=q_c} \right\| _1   \nonumber
    \\
	=&\left\| q_1-q_2 \right\| _{\infty},  \label{eq:soft_ine_1}
\end{align}
where $q_c$ is a certain convex combination of $q_1$ and $q_2$, and 
\begin{equation*}
    \nabla _q\log \parallel \exp\mathrm{(}q)\parallel _1=\frac{1}{\parallel \exp\mathrm{(}q)\parallel _1}\exp\mathrm{(}q)=\pi _q.
\end{equation*}
It follows that
\begin{align}
    \left\| \log \pi _{q_1}-\log \pi _{q_2} \right\| _{\infty}\le& \left\| q_1-q_2 \right\| _{\infty}+\left| \log \left( \left\| \exp \left( q_1 \right) \right\| _1 \right) -\log \left( \left\| \exp \left( q_2 \right) \right\| _1 \right) \right|   \nonumber
    \\
    \le&\ 2\left\| q_1-q_2 \right\| _{\infty}.  \label{eq:soft_ine_2}
\end{align}

In \cref{alg:M-SoBiRL}, it adopts $Q_k$ to approximate the optimal soft Q-value function of $\pi_k$ dynamically, i.e., the environment $\mdp_\tau(x_k)$ based on which the value function is evaluated varies through the outer iterations. To this end, we will analyze the difference of value functions, incurred by the change of the environment parameterized by $x$. Recall that $\pi _{k}^{*}=\pi ^*\left( x_k \right)$, $Q_{k}^{*}=Q^*\left( x_k \right) $, $V_{k}^{*}=V^*\left( x_k \right) $. 
\begin{lemma}\label{lem:diff_environ}
    Under~\cref{assu:l}, given a policy $\pi$, for any $x_1,x_2\in\mathbb{R}^n$,
    \begin{align*}
        \left\|V^\pi(x_1)-V^\pi(x_2)\right\|_{\infty} \le \left\|Q^\pi(x_1)-Q^\pi(x_2)\right\|_{\infty} \le \frac{C_{rx}}{1-\gamma}\norm{x_1-x_2}_2,
    \end{align*}
\end{lemma}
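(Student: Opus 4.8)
The plan is to work directly from the fixed-policy soft Bellman coupling \eqref{eq:V_to_Q}, specialized to the two parameterized environments $\mathcal{M}_\tau(x_1)$ and $\mathcal{M}_\tau(x_2)$ while holding the policy $\pi$ fixed. For fixed $\pi$ these read $Q^\pi_{sa}(x)=r_{sa}(x)+\gamma\,\mathbb{E}_{s'\sim P(\cdot|s,a)}[V^\pi_{s'}(x)]$ and $V^\pi_s(x)=\mathbb{E}_{a\sim\pi(\cdot|s)}[-\tau\log\pi(a|s)+Q^\pi_{sa}(x)]$. The structural fact that makes everything clean is that the entropy term $-\tau\log\pi(a|s)$ depends only on $\pi$, not on $x$, so it cancels when the two environments are compared at the same policy.

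First I would prove the left inequality. Subtracting the $V$-relation at $x_1$ and $x_2$, the entropy contributions vanish, leaving $V^\pi_s(x_1)-V^\pi_s(x_2)=\sum_a\pi(a|s)\kh{Q^\pi_{sa}(x_1)-Q^\pi_{sa}(x_2)}$. Since $\pi(\cdot|s)$ lies in the simplex, taking absolute values and applying the triangle inequality bounds the left side by the convex combination of $\abs{Q^\pi_{sa}(x_1)-Q^\pi_{sa}(x_2)}$, hence by $\norm{Q^\pi(x_1)-Q^\pi(x_2)}_\infty$; maximizing over $s$ gives $\norm{V^\pi(x_1)-V^\pi(x_2)}_\infty\le\norm{Q^\pi(x_1)-Q^\pi(x_2)}_\infty$.

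Next, for the right inequality I would subtract the $Q$-relation, obtaining $Q^\pi_{sa}(x_1)-Q^\pi_{sa}(x_2)=\kh{r_{sa}(x_1)-r_{sa}(x_2)}+\gamma\,\mathbb{E}_{s'\sim P(\cdot|s,a)}[V^\pi_{s'}(x_1)-V^\pi_{s'}(x_2)]$. The reward difference is controlled by the gradient bound $\norm{\nabla r_{sa}(x)}_2\le C_{rx}$ in \cref{assu:r}, which via the mean value theorem gives $\abs{r_{sa}(x_1)-r_{sa}(x_2)}\le C_{rx}\norm{x_1-x_2}_2$, while the expectation term is bounded by $\gamma\norm{V^\pi(x_1)-V^\pi(x_2)}_\infty$ since $P(\cdot|s,a)$ is a probability distribution. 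Taking the maximum over $(s,a)$ and then substituting the left inequality just established yields the self-referential estimate $\norm{Q^\pi(x_1)-Q^\pi(x_2)}_\infty\le C_{rx}\norm{x_1-x_2}_2+\gamma\norm{Q^\pi(x_1)-Q^\pi(x_2)}_\infty$; rearranging and dividing by $1-\gamma>0$ delivers the claimed bound $\tfrac{C_{rx}}{1-\gamma}\norm{x_1-x_2}_2$.

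The only point requiring a moment of care is the final rearrangement: to move the $\gamma\norm{Q^\pi(x_1)-Q^\pi(x_2)}_\infty$ term across the inequality one needs $\norm{Q^\pi(x_1)-Q^\pi(x_2)}_\infty$ to be finite, which is guaranteed in the finite tabular setting with rewards bounded by $C_r$ (\cref{assu:r}), since then $V^\pi$ and $Q^\pi$ are uniformly bounded. Beyond this, the argument is a direct consequence of the fixed-policy Bellman coupling, with the cancellation of the entropy term being the single observation that keeps the estimates tight.
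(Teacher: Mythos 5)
Your proof is correct and follows essentially the same route as the paper: both subtract the fixed-policy soft Bellman relations at $x_1$ and $x_2$ (with the entropy term cancelling), bound the reward difference by $C_{rx}\norm{x_1-x_2}_2$ via \cref{assu:r}, and close the resulting self-referential inequality by rearranging and dividing by $1-\gamma$. Your added remark that finiteness of $\norm{Q^\pi(x_1)-Q^\pi(x_2)}_\infty$ (guaranteed in the tabular bounded-reward setting) is needed to justify the rearrangement is a detail the paper leaves implicit, and you correctly invoke \cref{assu:r} rather than \cref{assu:l} for the gradient bound.
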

\begin{proof}
By definitions of the soft value functions, for any $s\in\mdps$ and $a\in\mdpa$,
\begin{align}
\left|V^{\pi}_s(x_1)-V^{\pi}_s(x_2)\right| & = {\mathbb{E}_{a\sim \pi\kh{\cdot | s}}}\left[\left(-\tau \log \pi(a | s)+Q^{\pi}_{sa}(x_1)\right)-\left(-\tau \log \pi(a | s)+Q^{\pi}_{sa}(x_2)\right)\right] \nonumber
\\
&\le \left\|Q^\pi(x_1)-Q^\pi(x_2)\right\|_{\infty}, \label{eq:bound_V_diff}
\end{align}
and 
\begin{align}
\abs{Q^\pi_{sa}(x_1)-Q^\pi_{sa}(x_2)} &\le \abs{r_{sa}(x_1)-r_{sa}(x_2)} + \gamma\mathbb{E}_{s^{\prime} \sim P(\cdot | s, a)}\zkh{\left|V^{\pi}_{s^\prime}(x_1)-V^{\pi}_{s^\prime}(x_2)\right|} \label{eq:bound_Q_diff}.
\end{align}
Substituting \eqref{eq:bound_V_diff} into \eqref{eq:bound_Q_diff} yields
\begin{align*}
\left\|Q^\pi(x_1)-Q^\pi(x_2)\right\|_{\infty}\le C_{rx}\norm{x_1-x_2}_2 + \gamma \left\|Q^\pi(x_1)-Q^\pi(x_2)\right\|_{\infty},   
\end{align*}
which leads to the conclusion.
\end{proof}

We present the properties of the operator $\mathcal{T} _{\mathcal{M} _{\tau}\left( x \right)}:\mathbb{R}^{\abs{\mdps}\abs{\mdpa}} \mapsto \mathbb{R}^{\abs{\mdps}\abs{\mdpa}}$, following from \citep{haarnoja2018sac,cen2022fastNPG}.
\begin{proposition}
     The soft Bellman optimality operator $\mathcal{T} _{\mathcal{M} _{\tau}\left( x \right)}:\mathbb{R}^{\abs{\mdps}\abs{\mdpa}} \mapsto \mathbb{R}^{\abs{\mdps}\abs{\mdpa}}$ associated with $x\in\mathbb{R}^n$ satisfies the properties below.
     \begin{itemize}[label=$\bullet$,leftmargin=*]
      \item The optimal soft Q-function $Q^*(x)$ is a fixed point of $\mathcal{T} _{\mathcal{M} _{\tau}\left( x \right)}$, i.e.,
      \begin{equation}\label{eq:soft_opt_Q}
          \mathcal{T} _{\mathcal{M} _{\tau}\left( x \right)}\kh{Q^*(x)} = Q^*(x).
      \end{equation}
      \item $\mathcal{T} _{\mathcal{M} _{\tau}\left( x \right)}$ is a $\gamma$-contraction in the $\ell_{\infty}$ norm, i.e., for any $Q_1, Q_2 \in \mathbb{R}^{|\mathcal{S}||\mathcal{A}|}$, it holds that
      \begin{equation}\label{eq:soft_contract_Q}
          \left\| \mathcal{T} _{\mathcal{M} _{\tau}\left( x \right)}(Q^1)-\mathcal{T} _{\mathcal{M} _{\tau}\left( x \right)}(Q^2) \right\| _{\infty}\le \gamma \left\| Q^1-Q^2 \right\| _{\infty}.
      \end{equation}
      \item With any initial $Q_0$, applying $\mathcal{T} _{\mathcal{M} _{\tau}\left( x \right)}$ repeatedly converges to $Q^*(x)$ linearly, i.e., for any $N\in\mathbb{N}$,
      \begin{equation*}
          \left\| \mathcal{T} _{\mathcal{M} _{\tau}\left( x \right)}^{N}(Q^0)-Q^*\left( x \right) \right\| _{\infty}\le \gamma ^N\left\| Q^0-Q^*\left( x \right) \right\| _{\infty}.
      \end{equation*}
    \end{itemize}
\end{proposition}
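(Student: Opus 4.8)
The plan is to establish the three bullets in order, as the third is a mechanical consequence of the first two. For the fixed-point claim \eqref{eq:soft_opt_Q}, I would simply splice together the two softmax temporal value consistency conditions already available. Equation \eqref{eq:comply_V} reads $V_{s}^*(x)=\tau\log\kh{\sum_a\exp(Q_{sa}^*(x)/\tau)}=\tau\log\kh{\norm{\exp(Q^*(s,\cdot)/\tau)}_1}$, while evaluating the coupling \eqref{eq:V_to_Q} at $\pi^*(x)$ gives $Q_{sa}^*(x)=r_{sa}(x)+\gamma\,\mathbb{E}_{s'\sim P(\cdot\mid s,a)}\zkh{V_{s'}^*(x)}$. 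Substituting the former into the latter produces precisely $Q_{sa}^*(x)=r_{sa}(x)+\gamma\,\mathbb{E}_{s'\sim P(\cdot\mid s,a)}\zkh{\tau\log\norm{\exp(Q^*(s',\cdot)/\tau)}_1}=\mathcal{T}_{\mathcal{M}_\tau(x)}(Q^*(x))(s,a)$, which is \eqref{eq:soft_opt_Q}.

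For the contraction \eqref{eq:soft_contract_Q}, I would subtract the two operator evaluations componentwise. The reward term $r_{sa}(x)$ cancels, leaving $\mathcal{T}_{\mathcal{M}_\tau(x)}(Q^1)(s,a)-\mathcal{T}_{\mathcal{M}_\tau(x)}(Q^2)(s,a)=\gamma\,\mathbb{E}_{s'\sim P(\cdot\mid s,a)}\zkh{\tau\log\norm{\exp(Q^1(s',\cdot)/\tau)}_1-\tau\log\norm{\exp(Q^2(s',\cdot)/\tau)}_1}$. The key step is the log-sum-exp Lipschitz bound \eqref{eq:soft_ine_1}, applied with $q_i=Q^i(s',\cdot)/\tau$, which gives $\tau\abs{\log\norm{\exp(Q^1(s',\cdot)/\tau)}_1-\log\norm{\exp(Q^2(s',\cdot)/\tau)}_1}\le\norm{Q^1(s',\cdot)-Q^2(s',\cdot)}_\infty\le\norm{Q^1-Q^2}_\infty$. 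Since the expectation over $s'$ is a convex combination and is therefore dominated by the supremum of the integrand, and since this bound holds uniformly in $(s,a)$, taking the maximum over $(s,a)$ delivers \eqref{eq:soft_contract_Q}.

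Finally, the linear convergence follows by a short induction on $N$. Using the fixed-point identity from the first bullet to write $\norm{\mathcal{T}_{\mathcal{M}_\tau(x)}^{N}(Q^0)-Q^*(x)}_\infty=\norm{\mathcal{T}_{\mathcal{M}_\tau(x)}^{N}(Q^0)-\mathcal{T}_{\mathcal{M}_\tau(x)}^{N}(Q^*(x))}_\infty$ and invoking the $\gamma$-contraction $N$ times yields $\norm{\mathcal{T}_{\mathcal{M}_\tau(x)}^{N}(Q^0)-Q^*(x)}_\infty\le\gamma^N\norm{Q^0-Q^*(x)}_\infty$. I do not anticipate any genuine obstacle here; the single ingredient requiring attention is invoking the already-derived inequality \eqref{eq:soft_ine_1} with the correct $\tau^{-1}$ scaling so that the factor $\tau$ cancels cleanly, after which every step is routine.
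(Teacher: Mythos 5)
Your proposal is correct and follows essentially the same route as the paper: the fixed-point claim via splicing \eqref{eq:comply_V} into \eqref{eq:V_to_Q} evaluated at $\pi^*(x)$, the $\gamma$-contraction via the log-sum-exp Lipschitz bound \eqref{eq:soft_ine_1} with the $\tau$-scaling cancelling, and linear convergence by iterating the contraction against the fixed point. No gaps; the argument matches the paper's proof step for step.
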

\begin{proof}
    Substitute one of the consistency conditions from~\citep{nachum2017bridging},
    \begin{equation*}
        V_{s}^{*}\left( x \right) =\tau \log \left( \left\| \exp \left( Q_{s}^{*}\left( x \right) /\tau \right) \right\| _1 \right), 
    \end{equation*}
    into the definition of $\mathcal{T} _{\mathcal{M} _{\tau}(x)}$,
    \begin{align*}
        \mathcal{T} _{\mathcal{M} _{\tau}\left( x \right)}(Q^*\left( x \right) )(s,a)=&\ r_{sa}(x)+\gamma \underset{s^{\prime}\sim P(\cdot \mid s,a)}{\mathbb{E}}\left[ \tau \log \left( \left\| \exp \left( Q_{s}^{*}\left( x \right) /\tau \right) \right\| _1 \right) \right] 
        \\
        =&\ r_{sa}\left( x \right) +\gamma \underset{s^{\prime}\sim P(\cdot \mid s,a)}{\mathbb{E}}\left[ V_{s}^{*}\left( x \right) \right] 
        \\
        =&\ Q^*\left( x \right) \left( s,a \right). 
    \end{align*}
    Additionally, for any $Q^1, Q^2 \in \mathbb{R}^{|\mathcal{S}||\mathcal{A}|}$,
    \begin{align*}
        &\left| \mathcal{T} _{\mathcal{M} _{\tau}\left( x \right)}(Q^1)\left( s,a \right) -\mathcal{T} _{\mathcal{M} _{\tau}\left( x \right)}(Q^2)\left( s,a \right) \right|
        \\
        =&\ \gamma \tau \left| \underset{s^{\prime}\sim P(\cdot \mid s,a)}{\mathbb{E}}\left[ \log \left( \left\| \exp \left( Q_{s}^{1}\left( x \right) /\tau \right) \right\| _1 \right) \right] -\underset{s^{\prime}\sim P(\cdot \mid s,a)}{\mathbb{E}}\left[ \log \left( \left\| \exp \left( Q_{s}^{2}\left( x \right) /\tau \right) \right\| _1 \right) \right] \right|
        \\
        \le&\ \gamma \tau \left\| Q^1/\tau -Q^2/\tau \right\| _{\infty}
        \\
        =&\ \gamma \left\| Q^1-Q^2 \right\| _{\infty},
    \end{align*}
    where the inequality follows from \eqref{eq:soft_ine_1}. Combine \eqref{eq:soft_opt_Q} and \eqref{eq:soft_contract_Q},
    \begin{align*}
        \left\| \mathcal{T} _{\mathcal{M} _{\tau}\left( x \right)}^{N}(Q^0)-Q^*\left( x \right) \right\| _{\infty}=&\left\| \mathcal{T} _{\mathcal{M} _{\tau}\left( x \right)}^{N}(Q^0)-\mathcal{T} _{\mathcal{M} _{\tau}\left( x \right)}^{N}Q^*\left( x \right) \right\| _{\infty}
        \\
        \le&\ \gamma \left\| \mathcal{T} _{\mathcal{M} _{\tau}\left( x \right)}^{N-1}(Q^0)-\mathcal{T} _{\mathcal{M} _{\tau}\left( x \right)}^{N-1}Q^*\left( x \right) \right\| _{\infty}
        \\
        \le&\ \gamma ^2\left\| \mathcal{T} _{\mathcal{M} _{\tau}\left( x \right)}^{N-2}(Q^0)-\mathcal{T} _{\mathcal{M} _{\tau}\left( x \right)}^{N-2}Q^*\left( x \right) \right\| _{\infty}
        \\
        \le& \cdots 
        \\
        \le&\ \gamma ^N\left\| Q^0-Q^*\left( x \right) \right\| _{\infty}.
    \end{align*}
\end{proof}

Combining the contraction property of $\mathcal{T}_{\mdp_\tau\kh{x}}$ and \cref{lem:diff_environ} yield
  \begin{align}\label{eq:Q_descent}
      \left\| Q_{k+1}-Q_{k+1}^{*} \right\| _{\infty}\le \gamma ^N\left\| Q_k-Q_{k+1}^{*} \right\| _{\infty}\le \gamma ^N\left( \left\| Q_k-Q_{k}^{*} \right\| _{\infty}+\frac{C_{rx}}{1-\gamma}\left\| x_{k+1}-x_k \right\| _2 \right).
  \end{align}
Revisiting the update rule for $V_k$,
\begin{align*}
    \forall s \in \mathcal{S}:\quad V_k\left( s \right)=&\ \tau \log \left( \sum_a{\exp \left( Q_k\left( s,a \right) /\tau \right)} \right)
    \\
    =&\ \tau \log \left( \norm{Q_k(s)/\tau}_1\right),
\end{align*}
we apply the inequality \eqref{eq:soft_ine_1} and the  consistency condition \eqref{eq:comply_V},
\begin{align*}
    \left| V_k\left( s \right) -V_{s}^{*}\left( x_k \right) \right|=&\ \tau \left| \log \left( \left\| Q_k(s)/\tau \right\| _1 \right) -\log \left( \left\| Q_{k}^{*}(s)/\tau \right\| _1 \right) \right|
    \\
    \le&\ \tau \left\| Q_k(s)/\tau -Q_{k}^{*}(s)/\tau \right\| _{\infty}
    \\
    =&\left\| Q_k(s)-Q_{k}^{*}(s) \right\| _{\infty}.
\end{align*}
In this way, we bound the error term,
\begin{equation}\label{eq:bound_V_error}
    \left\| V_k-V_{k}^{*} \right\| _{\infty}\le \left\| Q_k-Q_{k}^{*} \right\| _{\infty}.
\end{equation}

\subsection{Convergence Analysis of M-SoBiRL}\label{sec:converge_MSoBiRL}
To begin with, some lemmas are provided for measuring the quality of the hyper-gradient estimator~$\widehat{\nabla}\phi$. Following this, we proceed to the convergence analysis of \cref{alg:M-SoBiRL}.

\begin{lemma}\label{lem:pre_for_hatphi}
    Under Assumptions~\ref{assu:f} and \ref{assu:r}, the following inequalities hold in \cref{alg:M-SoBiRL}.
    \begin{align*}
        \left\| \mathrm{diag}\left( \pi _k \right) \nabla _{\pi}f(x_k,\pi _k)-\mathrm{diag}\left( \pi _{k}^{*} \right) \nabla _{\pi}f(x_k,\pi _{k}^{*}) \right\| _2\le& \left( L_f+C_{f\pi} \right) \left\| \pi _k-\pi _{k}^{*} \right\| _{\infty}
        \\
        \left\| \nabla _x\varphi (x_k,V_k)^{\top}w_k-\nabla _x\varphi (x_k,V_{k}^{*})^{\top}w_{k}^{*} \right\| _2\le&\ \sqrt{\left| \mathcal{S} \right|}C_{rx}\left\| w_k-w_{k}^{*} \right\| _2
        \\
        &+\frac{2\left| \mathcal{S} \right|\left| \mathcal{A} \right|^{3/2}\sqrt{\left( 1+\gamma \right)}C_{f\pi}C_{rx}}{\tau \left( 1-\gamma \right)}\left\| V_{k}^{*}-V_k \right\| _{\infty}.
    \end{align*}
\end{lemma}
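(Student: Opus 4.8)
The plan is to prove both inequalities by an ``add-and-subtract'' decomposition followed by the triangle inequality, in the same spirit as the bound on $\norm{b_k-b_k^*}_2$ in \cref{lem:pre_for_w}. For the first inequality, I would insert the cross term $\operatorname{diag}(\pi_k)\nabla_\pi f(x_k,\pi_k^*)$ to split the left-hand side into $\norm{\operatorname{diag}(\pi_k)\kh{\nabla_\pi f(x_k,\pi_k)-\nabla_\pi f(x_k,\pi_k^*)}}_2$ plus $\norm{\kh{\operatorname{diag}(\pi_k)-\operatorname{diag}(\pi_k^*)}\nabla_\pi f(x_k,\pi_k^*)}_2$. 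Since $\pi_k\in\Delta^{\abs{\mdps}}$ has entries in $[0,1]$ we have $\norm{\operatorname{diag}(\pi_k)}_2\le 1$, and the Lipschitz part of \cref{assu:f} (with both $x$-arguments equal to $x_k$) bounds the first summand by $L_f\norm{\pi_k-\pi_k^*}_\infty$. For the second summand, $\norm{\operatorname{diag}(\pi_k)-\operatorname{diag}(\pi_k^*)}_2=\norm{\pi_k-\pi_k^*}_\infty$ combined with $\norm{\nabla_\pi f(x_k,\pi_k^*)}_2\le C_{f\pi}$ gives $C_{f\pi}\norm{\pi_k-\pi_k^*}_\infty$; adding the two yields the factor $L_f+C_{f\pi}$.

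The second inequality is the substantive one. First I would make the structure of $\nabla_x\varphi(x,v)$ explicit: from the partial-derivative formula computed in the proof of \cref{pro:first_order_nablaV}, each row satisfies $\nabla_x\varphi_s(x,v)=\sum_a \widetilde\pi_{sa}(x,v)\,\nabla r_{sa}(x)$, where $\widetilde\pi_{sa}(x,v)$ are the softmax weights of the logits $\tau^{-1}\kh{r_{sa}(x)+\gamma\sum_{s'}P_{sas'}v_{s'}}$, which reduce to $\pi_{sa}^*(x)$ at $v=V^*(x)$ by \eqref{eq:consis_policy}. I would then decompose
\begin{align*}
&\norm{\nabla_x\varphi(x_k,V_k)^\top w_k-\nabla_x\varphi(x_k,V_k^*)^\top w_k^*}_2\\
\le{}& \norm{\nabla_x\varphi(x_k,V_k)}_2\norm{w_k-w_k^*}_2 + \norm{\nabla_x\varphi(x_k,V_k)-\nabla_x\varphi(x_k,V_k^*)}_2\norm{w_k^*}_2 .
\end{align*}
Because each row of $\nabla_x\varphi(x_k,V_k)$ is a convex combination of gradients of norm at most $C_{rx}$, its rows have $2$-norm at most $C_{rx}$, so $\norm{\nabla_x\varphi(x_k,V_k)}_2\le\norm{\nabla_x\varphi(x_k,V_k)}_F\le\sqrt{\abs{\mdps}}\,C_{rx}$, which delivers the first term $\sqrt{\abs{\mdps}}\,C_{rx}\norm{w_k-w_k^*}_2$ of the claim.

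For the remaining factor I must establish the Lipschitz dependence of $\nabla_x\varphi(x,\cdot)$ on $v$, and this is where the main effort lies. Writing the row difference as $\sum_a\kh{\widetilde\pi_{sa}(x,V_k)-\widetilde\pi_{sa}(x,V_k^*)}\nabla r_{sa}(x)$ and using $\norm{\nabla r_{sa}(x)}_2\le C_{rx}$ reduces everything to the $\ell_1$ perturbation of the softmax weights. The logits are $\gamma\tau^{-1}$-Lipschitz in $v$ measured in $\ell_\infty$ (each row of $P$ is non-negative and sums to one), so combining \cref{pro:lognorm} with the softmax estimate \eqref{eq:soft_ine_2} gives $\norm{\widetilde\pi_s(x,V_k)-\widetilde\pi_s(x,V_k^*)}_\infty\le 2\gamma\tau^{-1}\norm{V_k-V_k^*}_\infty$, hence an $\ell_1$ bound with an extra factor $\abs{\mdpa}$; passing from rows to the Frobenius norm contributes $\sqrt{\abs{\mdps}}$, yielding $\norm{\nabla_x\varphi(x_k,V_k)-\nabla_x\varphi(x_k,V_k^*)}_2\le 2\sqrt{\abs{\mdps}}\,\abs{\mdpa}\gamma\tau^{-1}C_{rx}\norm{V_k-V_k^*}_\infty$. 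Finally I would substitute $\norm{w_k^*}_2\le\frac{\sqrt{\abs{\mdps}\abs{\mdpa}(1+\gamma)}}{1-\gamma}C_{f\pi}$ from the earlier lemma; multiplying the constants and using $\gamma\le 1$ recovers exactly $\frac{2\abs{\mdps}\abs{\mdpa}^{3/2}\sqrt{1+\gamma}\,C_{f\pi}C_{rx}}{\tau(1-\gamma)}\norm{V_k^*-V_k}_\infty$. The principal obstacle is the clean control of the softmax-weight perturbation together with correct bookkeeping of the $\abs{\mdps}^{1/2}$, $\abs{\mdpa}$, and $\abs{\mdpa}^{1/2}$ dimension factors so that the stated constant is matched.
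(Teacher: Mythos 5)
Your proposal is correct and follows essentially the same route as the paper: the same add-and-subtract decompositions, the same identification of the rows of $\nabla_x\varphi$ as policy-weighted averages of the reward gradients (the paper's auxiliary softmax policy $\pi^V$), the same bounds $\norm{\nabla_x\varphi(x_k,V_k)}_2\le\sqrt{\abs{\mdps}}C_{rx}$ and $\norm{w_k^*}_2\le\frac{\sqrt{\abs{\mdps}\abs{\mdpa}(1+\gamma)}}{1-\gamma}C_{f\pi}$, and the same use of \cref{pro:lognorm} with \eqref{eq:soft_ine_2} to control the softmax perturbation in $\norm{V_k-V_k^*}_\infty$. The only cosmetic difference is that you bound the row difference directly through the $\ell_1$ norm of the softmax-weight perturbation (keeping the factor $\gamma\le 1$ explicit), whereas the paper routes the same estimate through \cref{lem:byproduct} with $H=I=1$; both produce identical dimension factors and the stated constant.
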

\begin{proof}
    Drawing on the Lipschitz and boundedness properties of $f$ in \cref{assu:f}, one can establish
    \begin{align*}
        &\left\| \mathrm{diag}\left( \pi _k \right) \nabla _{\pi}f(x_k,\pi _k)-\mathrm{diag}\left( \pi _{k}^{*} \right) \nabla _{\pi}f(x_k,\pi _{k}^{*}) \right\| _2
        \\
        \le& \left\| \mathrm{diag}\left( \pi _k \right) \left( \nabla _{\pi}f(x_k,\pi _k)-\nabla _{\pi}f(x_k,\pi _{k}^{*}) \right) \right\| _2+\left\| \left( \mathrm{diag}\left( \pi _k \right) -\mathrm{diag}\left( \pi _{k}^{*} \right) \right) \nabla _{\pi}f(x_k,\pi _{k}^{*}) \right\| _2
        \\
        \le&\ L_f\left\| \pi _k-\pi _{k}^{*} \right\| _{\infty}+C_{f\pi}\left\| \pi _k-\pi _{k}^{*} \right\| _{\infty}
        \\
        =&\left( L_f+C_{f\pi} \right) \left\| \pi _k-\pi _{k}^{*} \right\| _{\infty}.
    \end{align*}
    Compute the partial derivative
    \begin{equation*}
        \frac{\partial \varphi _s\left( x,v \right)}{\partial x_i}=\frac{\sum_a{\partial _{x_i}r_{sa}\left( x \right) \exp \left( \tau ^{-1}\left( r_{sa}\left( x \right) +\gamma \sum_{s^{\prime\prime}}{P_{sas^{\prime\prime}}v_{s^{\prime\prime}}} \right) \right)}}{\sum_a{\exp \left( \tau ^{-1}\left( r_{sa}\left( x \right) +\gamma \sum_{s^{\prime\prime}}{P_{sas^{\prime\prime}}v_{s^{\prime\prime}}} \right) \right)}},
    \end{equation*}
    and substitute $v=V^*(x)$ into it,
    \begin{equation*}
        \nabla _x\varphi _s\left( x,V^*\left( x \right) \right) = \sum_a{\nabla r_{sa}\left( x \right) \pi _{sa}^{*}\left( x \right)}.
    \end{equation*}
    Denote the auxiliary policy generated by softmax parameterization associated with $V$, i.e., for any $(s,a)\in\mdps\times\mdpa$,
    \begin{equation*}
        \pi _{sa}^{V}=\frac{\exp \left( \tau ^{-1}\left( r_{sa}\left( x \right) +\gamma \sum_{s^{\prime\prime}}{P_{sas^{\prime\prime}}V_{s^{\prime\prime}}} \right) \right)}{\sum_a{\exp \left( \tau ^{-1}\left( r_{sa}\left( x \right) +\gamma \sum_{s^{\prime\prime}}{P_{sas^{\prime\prime}}V_{s^{\prime\prime}}} \right) \right)}}.
    \end{equation*}
    In a similar fashion,
    \begin{equation*}
        \nabla _x\varphi _s\left( x,V \right) =\sum_a{\nabla r_{sa}\left( x \right) \pi _{sa}^{V}\left( x \right)}.
    \end{equation*}
    Applying \cref{lem:byproduct} with $H=1,I=1$, we obtain
    \begin{equation*}
        \left\| \nabla _x\varphi _s\left( x,V^*\left( x \right) \right) -\nabla _x\varphi _s\left( x,V \right) \right\| _2\le C_{rx}\sqrta\left\| \pi _{s}^{*}\left( x \right) -\pi _{s}^{V} \right\| _2.
    \end{equation*}
    Collecting all rows of $\nabla_x\varphi(x,v)$ leads to
    \begin{align*}
        \left\| \nabla _x\varphi \left( x,V \right) \right\| _2\le \left\| \nabla _x\varphi \left( x,V \right) \right\| _F\le \sqrt{\left| \mathcal{S} \right|}C_{rx},
    \end{align*}
    and
    \begin{align*}
        &\left\| \nabla _x\varphi \left( x,V^*\left( x \right) \right) -\nabla _x\varphi \left( x,V \right) \right\| _2
        \\
        \le& \left\| \nabla _x\varphi \left( x,V^*\left( x \right) \right) -\nabla _x\varphi \left( x,V \right) \right\| _F
        \\
        \le&\ C_{rx}\sqrta\left\| \pi ^*\left( x \right) -\pi ^V \right\| _2
        \\
        \le&\ \sqrt{\left| \mathcal{S} \right|}\left| \mathcal{A} \right|C_{rx}\left\| \pi ^*\left( x \right) -\pi ^V \right\| _{\infty}
        \\
        \le&\ 2\sqrt{\left| \mathcal{S} \right|}\left| \mathcal{A} \right|\tau ^{-1}C_{rx}\left\| V^*\left( x \right) -V \right\| _{\infty},
    \end{align*}
    where the last inequality results from \eqref{eq:soft_ine_2}. It follows that
    \begin{align*}
        &\left\| \nabla _x\varphi (x_k,V_k)^{\top}w_k-\nabla _x\varphi (x_k,V_{k}^{*})^{\top}w_{k}^{*} \right\| _2
        \\
        \le& \left\| \nabla _x\varphi (x_k,V_k)^{\top}\left( w_k-w_{k}^{*} \right) \right\| _2+\left\| \left( \nabla _x\varphi (x_k,V_k)-\nabla _x\varphi (x_k,V_{k}^{*}) \right) ^{\top}w_{k}^{*} \right\| _2
        \\
        \le& \left\| \nabla _x\varphi (x_k,V_k) \right\| _2\left\| w_k-w_{k}^{*} \right\| _2+\left\| \nabla _x\varphi (x_k,V_k)-\nabla _x\varphi (x_k,V_{k}^{*}) \right\| _2\left\| w_{k}^{*} \right\| _2
        \\
        \le&\ \sqrt{\left| \mathcal{S} \right|}C_{rx}\left\| w_k-w_{k}^{*} \right\| _2+\frac{2\left| \mathcal{S} \right|\left| \mathcal{A} \right|^{3/2}\sqrt{\left( 1+\gamma \right)}C_{f\pi}C_{rx}}{\tau \left( 1-\gamma \right)}\left\| V_{k}^{*}-V_k \right\| _{\infty}.
    \end{align*}    
\end{proof}

In the sequel, we analyze the error introduced by the estimator $\widehat{\nabla}\phi\kh{x,\pi,V,w}$ in approximating the true hyper-gradient $\nabla \phi(x)$, as stated in the following lemma.
\begin{lemma}\label{lem:esti_diff}
    Under Assumptions~\ref{assu:f} and \ref{assu:r}, the hyper-gradient estimator constructed in \cref{alg:M-SoBiRL},
    \begin{equation*}
    \begin{aligned}
         \widehat{\nabla} \phi \left(x_k,\pi_k,V_k,w_{k}\right)=\nabla_x f\left(x_k, \pi_k\right) + \frac{1}{\tau} {\nabla r(x_k)}^\top \operatorname{diag}\kh{\pi_k}\nabla_\pi f(x_k,\pi_k) -\frac{1}{\tau}\nabla_{x}\varphi(x_k,V_k)^\top w_k,
    \end{aligned}
    \end{equation*}
    satisfy
    \begin{align*}
        &\left\| \widehat{\nabla }\phi\left( x_k,\pi _k,V_k,w_k \right) -\nabla \phi \left( x_k \right) \right\| _2
        \\
        \le&\ L_{\widehat{\phi}\pi}\left\| \pi _k-\pi _{k}^{*} \right\| _{\infty} +\sqrt{\left| \mathcal{S} \right|}C_{rx}\left\| w_k-w_{k}^{*} \right\| _2+\frac{2\left| \mathcal{S} \right|\left| \mathcal{A} \right|^{3/2}\sqrt{\left( 1+\gamma \right)}C_{f\pi}C_{rx}}{\tau \left( 1-\gamma \right)}\left\| V_{k}^{*}-V_k \right\| _{\infty}.
    \end{align*}
    with
    \begin{equation*}
        L_{\widehat{\phi}\pi} = L_f+\tau ^{-1}\sqrt{\left| \mathcal{S} \right|\left| \mathcal{A} \right|}C_{rx}\left( L_f+C_{f\pi} \right).
    \end{equation*}
\end{lemma}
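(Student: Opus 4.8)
The plan is to write both the exact hyper-gradient and its estimator in a common three-term form, subtract them term by term, and bound each resulting difference using \cref{assu:f}, \cref{assu:r}, and the two inequalities already established in \cref{lem:pre_for_hatphi}. The first step is to recast the exact hyper-gradient \eqref{eq:M_decom_hypergrad} so that its last term is structurally parallel to the last term of the estimator. Combining \eqref{eq:hyper_result1} and \eqref{eq:hyper_result2} gives $\nabla V^*(x_k)^\top = \nabla_x\varphi(x_k,V^*_k)^\top\kh{I-\gamma P^{\pi^*_k}}^{-\top}$, so that, recalling $A_k^*=\kh{I-\gamma P^{\pi^*_k}}^\top$, $b_k^*=\vartheta(x_k)$, and $w_k^*=\kh{A_k^*}^{-1}b_k^*$,
\begin{equation*}
    \nabla V^*(x_k)^\top \vartheta(x_k) = \nabla_x\varphi(x_k,V^*_k)^\top \kh{A_k^*}^{-1}b_k^* = \nabla_x\varphi(x_k,V^*_k)^\top w_k^*.
\end{equation*}
Substituting this identity into \eqref{eq:M_decom_hypergrad} yields
\begin{equation*}
    \nabla\phi(x_k) = \nabla_x f(x_k,\pi^*_k) + \tau^{-1}\nabla r(x_k)^\top\operatorname{diag}\kh{\pi^*_k}\nabla_\pi f(x_k,\pi^*_k) - \tau^{-1}\nabla_x\varphi(x_k,V^*_k)^\top w_k^*,
\end{equation*}
which is now directly comparable to $\widehat{\nabla}\phi\kh{x_k,\pi_k,V_k,w_k}$.

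Subtracting, the error splits into three differences: the gradient gap $\nabla_x f(x_k,\pi_k)-\nabla_x f(x_k,\pi^*_k)$, the weighted-gradient gap $\tau^{-1}\nabla r(x_k)^\top\zkh{\operatorname{diag}\kh{\pi_k}\nabla_\pi f(x_k,\pi_k)-\operatorname{diag}\kh{\pi^*_k}\nabla_\pi f(x_k,\pi^*_k)}$, and the fixed-point term $\tau^{-1}\zkh{\nabla_x\varphi(x_k,V_k)^\top w_k - \nabla_x\varphi(x_k,V^*_k)^\top w_k^*}$. The first is bounded by $L_f\norm{\pi_k-\pi^*_k}_\infty$ via the Lipschitz continuity of $\nabla f$ in \cref{assu:f}. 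For the second, I would factor out $\norm{\nabla r(x_k)}_2$, bounding it by $\norm{\nabla r(x_k)}_F\le\sqrt{\abs{\mdps}\abs{\mdpa}}C_{rx}$ using the entrywise gradient bound in \cref{assu:r}, and apply the first inequality of \cref{lem:pre_for_hatphi}, producing $\tau^{-1}\sqrt{\abs{\mdps}\abs{\mdpa}}C_{rx}\kh{L_f+C_{f\pi}}\norm{\pi_k-\pi^*_k}_\infty$. The third difference is precisely the quantity estimated by the second inequality of \cref{lem:pre_for_hatphi}, contributing the $\norm{w_k-w_k^*}_2$ and $\norm{V^*_k-V_k}_\infty$ terms.

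Collecting the two $\norm{\pi_k-\pi^*_k}_\infty$ contributions gives the coefficient $L_{\widehat{\phi}\pi}=L_f+\tau^{-1}\sqrt{\abs{\mdps}\abs{\mdpa}}C_{rx}\kh{L_f+C_{f\pi}}$, and the remaining two terms carry over, yielding the stated bound. The main obstacle is really the algebraic reduction in the first paragraph: one must recognize that the implicit-differentiation term $\nabla V^*(x_k)^\top\vartheta(x_k)$ of the true hyper-gradient collapses, via the inverse-Jacobian identity \eqref{eq:hyper_result1}, to the tracked quantity $\nabla_x\varphi(x_k,V^*_k)^\top w_k^*$; once the exact gradient is written in this form, the remaining estimates follow directly from the triangle inequality together with \cref{lem:pre_for_hatphi} and the boundedness assumptions, with no further second-order information required.
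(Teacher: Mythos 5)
Your proposal matches the paper's own proof essentially step for step: the paper likewise rewrites the exact hyper-gradient as $\nabla_x f(x_k,\pi_k^*) + \tau^{-1}\nabla r(x_k)^\top\mathrm{diag}(\pi_k^*)\nabla_\pi f(x_k,\pi_k^*) - \tau^{-1}\nabla_x\varphi(x_k,V_k^*)^\top w_k^*$ (your inverse-Jacobian collapse is exactly how that form is justified), splits the difference into the same three terms, and bounds them with \cref{assu:f}, the bound $\|\nabla r(x_k)\|_2\le\sqrt{|\mathcal{S}||\mathcal{A}|}C_{rx}$, and the two inequalities of \cref{lem:pre_for_hatphi}. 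One minor remark: carrying the $\tau^{-1}$ prefactor on the third term literally would put an extra factor $\tau^{-1}$ on the $\|w_k-w_k^*\|_2$ and $\|V_k^*-V_k\|_\infty$ contributions, a prefactor the paper's own proof silently drops in its final line, so your proof inherits the same cosmetic discrepancy as the paper rather than introducing a new one.
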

\begin{proof}
    Recall the counterpart true hyper-gradient,
    \begin{align*}
        \nabla \phi \left( x_k \right) =\nabla _xf\left( x_k,\pi _{k}^{*} \right) +\tau ^{-1}\left( \nabla r_k \right) ^{\top}\mathrm{diag}\left( \pi _{k}^{*} \right) \nabla _{\pi}f(x_k,\pi _{k}^{*})-\tau ^{-1}\nabla _x\varphi (x_k,V_{k}^{*})^{\top}w_{k}^{*}.
    \end{align*}
    Take into account the difference and substitute the results of \cref{lem:pre_for_hatphi},
    \begin{align*}
        &\left\| \widehat{\nabla }\phi _k\left( x_k,\pi _k,V_k,w_k \right) -\nabla \phi \left( x_k \right) \right\| _2
        \\
        \le& \left\| \nabla _xf\left( x_k,\pi _{k}^{*} \right) -\nabla _xf\left( x_k,\pi _k \right) \right\| _2+\tau ^{-1}\left\| \nabla r_k \right\| _2\left\| \mathrm{diag}\left( \pi _k \right) \nabla _{\pi}f(x_k,\pi _k)-\mathrm{diag}\left( \pi _{k}^{*} \right) \nabla _{\pi}f(x_k,\pi _{k}^{*}) \right\| _2
        \\
        &+\tau ^{-1}\left\| \nabla _x\varphi (x_k,v_k)^{\top}w_k-\nabla _x\varphi (x_k,v_{k}^{*})^{\top}w_{k}^{*} \right\| _2
        \\
        \le&\ L_f\left\| \pi _k-\pi _{k}^{*} \right\| _{\infty}+\tau ^{-1}\sqrt{\left| \mathcal{S} \right|\left| \mathcal{A} \right|}C_{rx}\left( L_f+C_{f\pi} \right) \left\| \pi _k-\pi _{k}^{*} \right\| _{\infty}
        \\
        &+\sqrt{\left| \mathcal{S} \right|}C_{rx}\left\| w_k-w_{k}^{*} \right\| _2+\frac{2\left| \mathcal{S} \right|\left| \mathcal{A} \right|^{3/2}\sqrt{\left( 1+\gamma \right)}C_{f\pi}C_{rx}}{\tau \left( 1-\gamma \right)}\left\| V_{k}^{*}-V_k \right\| _{\infty}
        \\
        =&\left( L_f+\tau ^{-1}\sqrt{\left| \mathcal{S} \right|\left| \mathcal{A} \right|}C_{rx}\left( L_f+C_{f\pi} \right) \right) \left\| \pi _k-\pi _{k}^{*} \right\| _{\infty}
        \\
        &+\sqrt{\left| \mathcal{S} \right|}C_{rx}\left\| w_k-w_{k}^{*} \right\| _2+\frac{2\left| \mathcal{S} \right|\left| \mathcal{A} \right|^{3/2}\sqrt{\left( 1+\gamma \right)}C_{f\pi}C_{rx}}{\tau \left( 1-\gamma \right)}\left\| V_{k}^{*}-V_k \right\| _{\infty}.
    \end{align*}
\end{proof}

Consequently, we arrive at the convergence analysis. Denote $\delta _{Q}^{k}:=\left\| Q_k-Q_{k}^{*} \right\| _{\infty}^{2}$, $\delta _{\pi}^{k}=\left\| \log \pi _k-\log \pi _{k}^{*} \right\| _{\infty}^{2}$ and $\delta _{w}^{k}:=\left\| w_k-w_{k}^{*} \right\| _{2}^{2}$. In this fashion, revisit the established results in previous subsections.

 \begin{itemize}[label=$\bullet$,leftmargin=*]
  \item The definition of \texttt{softmax} and the estimation \eqref{eq:soft_ine_2} reveal that
  \begin{equation}\label{eq:delta_pi}
      \delta _{\pi}^{k}\le \frac{4}{\tau ^2}\delta _{Q}^{k}.
  \end{equation}
    \item Apply~\eqref{eq:delta_pi} on the result of \cref{lem:descent_w},
  \begin{align}
      \delta _{w}^{k+1}\le& \left( 1-\frac{\eta \left( 1-\gamma \right) ^2}{2}+4\eta ^2\left| \mathcal{S} \right|^2\left( 1+\gamma \right) ^2 \right) \delta _{w}^{k}+\frac{4\eta ^2C_{\sigma \pi}^{2}}{\tau ^2}\left( \frac{1}{\left( 1-\gamma \right) ^2}+4 \right) \delta _{Q}^{k+1}    \nonumber
    \\
    &+2L_{w}^{2}\left( 1+\frac{1}{\eta \left( 1-\gamma \right) ^2} \right) \beta ^2\left\| \widehat{\nabla }\phi _k \right\| _{2}^{2}    \label{eq:delta_w}
  \end{align}
  \item Substitute the update rule of $x$ into \eqref{eq:Q_descent},
  \begin{align}
      \delta _{Q}^{k+1}\le 2\gamma ^{2N}\left( \delta _{Q}^{k}+\left( \frac{C_{rx}}{1-\gamma} \right) ^2\beta ^2\left\| \widehat{\nabla }\phi _k \right\| _{2}^{2} \right). \label{eq:delta_Q}
  \end{align}
  \item Incorporate \eqref{eq:bound_V_error},
  \begin{align*}
      \left\| V_k-V_{k}^{*} \right\| _{\infty}\le \left\| Q_k-Q_{k}^{*} \right\| _{\infty}
  \end{align*}
  into the estimation of \cref{lem:esti_diff},
  \begin{align}
      \left\| \widehat{\nabla }\phi _k-\nabla \phi \left( x_k \right) \right\| _{2}^{2}\le&\ 3L_{\widehat{\phi }\pi}^{2}\delta _{\pi}^{k}+3\left| \mathcal{S} \right|C_{rx}^{2}\delta _{w}^{k}+\frac{12\left| \mathcal{S} \right|^2\left| \mathcal{A} \right|^3\left( 1+\gamma \right) C_{f\pi}^{2}C_{rx}^{2}}{\tau ^2\left( 1-\gamma \right) ^2}\delta _{Q}^{k}   \nonumber
    \\
    \le&\ \frac{12}{\tau ^2}\left( L_{\widehat{\phi }\pi}^{2}+\frac{\left| \mathcal{S} \right|^2\left| \mathcal{A} \right|^3\left( 1+\gamma \right) C_{f\pi}^{2}C_{rx}^{2}}{\left( 1-\gamma \right) ^2} \right) \delta _{Q}^{k}+3\left| \mathcal{S} \right|C_{rx}^{2}\delta _{w}^{k}.\label{eq:diff_estimator}
  \end{align}
\end{itemize}

\begin{theorem}\label{the:proof_MSoBiRL}
    Under Assumptions~\ref{assu:f} and~\ref{assu:r}, in \cref{alg:M-SoBiRL}, we can choose constant step sizes~$\beta,\eta$, and the inner iteration number $N\sim\mathcal{O}(1)$. Then the iterates $\{x_k\}$~satisfy
    \begin{equation*}
        \frac{1}{K}\sum_{k=1}^K \norm{\nabla \phi(x_k)}_2^2 = \mathcal{O}\kh{\frac{1}{K}}.
    \end{equation*}
    Detailed parameter setting is listed as \eqref{eq:parameters}.
\end{theorem}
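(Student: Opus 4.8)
The plan is to build a Lyapunov (merit) function that couples the hyper-objective with the two auxiliary error terms already controlled in the preceding subsections, and to show it decreases by a multiple of $\norm{\nabla\phi(x_k)}_2^2$ per outer iteration. Concretely I work with $\mathcal{L}_k := \phi(x_k) + \zeta_w\delta_w^k + \zeta_Q\delta_Q^k$, where $\delta_w^k=\norm{w_k-w_k^*}_2^2$, $\delta_Q^k=\norm{Q_k-Q_k^*}_\infty^2$, and $\zeta_w,\zeta_Q>0$ are weights fixed later. The first ingredient is the descent of $\phi$: since $\nabla\phi$ is $L_\phi^M$-Lipschitz by \cref{pro:L_M_phi}, applying the descent lemma to $x_{k+1}=x_k-\beta\widehat{\nabla}\phi_k$ together with the polarization identity $\innerp{\nabla\phi(x_k),\widehat{\nabla}\phi_k}=\tfrac12\norm{\nabla\phi(x_k)}_2^2+\tfrac12\norm{\widehat{\nabla}\phi_k}_2^2-\tfrac12\norm{\nabla\phi(x_k)-\widehat{\nabla}\phi_k}_2^2$ yields
$$\phi(x_{k+1})-\phi(x_k)\le -\frac{\beta}{2}\norm{\nabla\phi(x_k)}_2^2-\kh{\frac{\beta}{2}-\frac{L_\phi^M\beta^2}{2}}\norm{\widehat{\nabla}\phi_k}_2^2+\frac{\beta}{2}\norm{\widehat{\nabla}\phi_k-\nabla\phi(x_k)}_2^2,$$
after which I replace the estimation error by its bound \eqref{eq:diff_estimator}, a positive combination of $\delta_Q^k$ and $\delta_w^k$.

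Next I assemble the three recursions. I substitute the $Q$-recursion \eqref{eq:delta_Q} into the $w$-recursion \eqref{eq:delta_w} to eliminate the forward term $\delta_Q^{k+1}$, so that $\delta_w^{k+1}$ is bounded purely by $\delta_w^k$, $\delta_Q^k$, and $\norm{\widehat{\nabla}\phi_k}_2^2$. Adding $\zeta_w$ times this $w$-recursion, $\zeta_Q$ times \eqref{eq:delta_Q}, and the $\phi$-descent from the previous step, and collecting coefficients, I obtain
$$\mathcal{L}_{k+1}-\mathcal{L}_k\le -\frac{\beta}{2}\norm{\nabla\phi(x_k)}_2^2+A\norm{\widehat{\nabla}\phi_k}_2^2+B\,\delta_Q^k+C\,\delta_w^k,$$
where $A,B,C$ are explicit in $\beta,\xi,N,\zeta_w,\zeta_Q$ and the problem constants.

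The crux is driving $A,B,C$ all non-positive by ordering the parameter choices carefully. First fix $\xi$ small enough that the $w$-contraction factor $\tfrac{\xi(1-\gamma)^2}{2}-4|\mdps|^2(1+\gamma)^2\xi^2$ is strictly positive, and fix $N$ as a constant (independent of $\epsilon$) large enough that $2\gamma^{2N}<1$ --- this is the only requirement on $N$. Then pick $\zeta_w$ large enough to absorb the $\delta_w^k$ term from the estimator error into the $w$-contraction, making $C\le 0$; afterwards pick $\zeta_Q$ large enough to dominate the residual $\delta_Q^k$ terms (both the estimator-error contribution and the $\zeta_w$-induced cross term) using $1-2\gamma^{2N}>0$, making $B\le 0$. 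With these choices $\zeta_w,\zeta_Q=\mathcal{O}(\beta)$, so the positive contributions to $A$ are all $\mathcal{O}(\beta^3)$ against the dominant $-\tfrac{\beta}{2}$; hence $A\le 0$ once $\beta$ is small. Dropping the non-positive terms leaves the clean descent $\mathcal{L}_{k+1}-\mathcal{L}_k\le -\tfrac{\beta}{2}\norm{\nabla\phi(x_k)}_2^2$, and telescoping over $k=1,\dots,K$ with $\mathcal{L}_{K+1}\ge\phi^*$ gives $\frac{1}{K}\sum_{k=1}^K\norm{\nabla\phi(x_k)}_2^2\le \frac{2(\mathcal{L}_1-\phi^*)}{\beta K}=\mathcal{O}\kh{\frac{1}{K}}$.

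The main obstacle is the circular coupling among the three dynamics: the $w$-error at step $k+1$ depends on the $Q$-error at step $k+1$, while both errors are driven by $\norm{\widehat{\nabla}\phi_k}_2^2$, whose estimation error is itself controlled by the step-$k$ quantities $\delta_Q^k,\delta_w^k$ through \eqref{eq:diff_estimator}. Untangling this requires the forward substitution in the assembly step and, crucially, the sequential parameter ordering (first $\xi$ and $N$, then $\zeta_w$, then $\zeta_Q$, then $\beta$) so that all coefficients become non-positive simultaneously while keeping $N=\mathcal{O}(1)$; this amortization of the inner solve across outer iterations is exactly what distinguishes the rate from schemes needing $N=\mathcal{O}(\log\epsilon^{-1})$.
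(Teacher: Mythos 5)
Your proposal matches the paper's proof essentially step for step: the same merit function $\mathcal{L}_k=\phi(x_k)+\zeta_Q\delta_Q^k+\zeta_w\delta_w^k$, the same descent-lemma-plus-polarization bound on $\phi(x_{k+1})-\phi(x_k)$, the same forward substitution of \eqref{eq:delta_Q} into \eqref{eq:delta_w} together with \eqref{eq:diff_estimator}, the same reduction to making the three collected coefficients non-positive, and the same telescoping. The only (immaterial) bookkeeping difference is in parameter selection: the paper fixes $\zeta_Q=\zeta_w=1$ and ties $\beta=\rho\xi$, which forces the slightly stronger constant bound on $\gamma^{2N}$ in \eqref{eq:parameters}, whereas you take $\zeta_w,\zeta_Q=\mathcal{O}(\beta)$ chosen sequentially and need only $2\gamma^{2N}<1$; both choices give $N=\mathcal{O}(1)$ and the same $\mathcal{O}(1/K)$ rate.
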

\begin{proof}
    We consider the merit function 
    \begin{equation*}
        \mathcal{L} _k:=\phi \left( x_k \right) +\zeta _Q\delta _{Q}^{k}+\zeta _w\delta _{w}^{k},
    \end{equation*}
    where the coefficients $\zeta_Q$ and $\zeta_w$ are to be determined later. By the Lipschitz property of $\phi(x)$ and the update rule of $x$ in \cref{alg:M-SoBiRL},
    \begin{align}
        \phi \left( x_{k+1} \right) \le&\ \phi \left( x_k \right) -\beta \left< \widehat{\nabla }\phi _k,\nabla \phi \left( x_k \right) \right> +\frac{L_{\phi}^{M}}{2}\beta ^2\left\| \widehat{\nabla }\phi _k \right\| _{2}^{2}     \nonumber
        \\
        =&\ \phi \left( x_k \right) -\frac{\beta}{2}\left( \left\| \widehat{\nabla }\phi _k \right\| _{2}^{2}+\left\| \nabla \phi \left( x_k \right) \right\| _{2}^{2}-\left\| \widehat{\nabla }\phi _k-\nabla \phi \left( x_k \right) \right\| _{2}^{2} \right) +\frac{L_{\phi}^{M}}{2}\beta ^2\left\| \widehat{\nabla }\phi _k \right\| _{2}^{2} \nonumber
        \\
        \le&\ \phi \left( x_k \right) -\frac{\beta}{2}\left\| \nabla \phi \left( x_k \right) \right\| _{2}^{2}+\frac{\beta}{2}\left\| \widehat{\nabla }\phi _k-\nabla \phi \left( x_k \right) \right\| _{2}^{2}+\left( \frac{L_{\phi}^{M}}{2}\beta ^2-\frac{\beta}{2} \right) \left\| \widehat{\nabla }\phi _k \right\| _{2}^{2}.  \label{eq:phi_descent}
    \end{align}
    Subsequently, it follows that
    \begin{align}
        \mathcal{L} _{k+1}-\mathcal{L} _k=&\ \phi \left( x_{k+1} \right) +\zeta _Q\delta _{Q}^{k+1}+\zeta _w\delta _{w}^{k+1}-\phi \left( x_k \right) -\zeta _Q\delta _{Q}^{k}-\zeta _w\delta _{w}^{k}      \nonumber
        \\
        \le& -\frac{\beta}{2}\left\| \nabla \phi \left( x_k \right) \right\| _{2}^{2}+\frac{\beta}{2}\left\| \widehat{\nabla }\phi _k-\nabla \phi \left( x_k \right) \right\| _{2}^{2}+\left( \frac{L_{\phi}^{M}}{2}\beta ^2-\frac{\beta}{2} \right) \left\| \widehat{\nabla }\phi _k \right\| _{2}^{2}   \nonumber
        \\
        &+\zeta _Q\delta _{Q}^{k+1}+\zeta _w\delta _{w}^{k+1}-\zeta _Q\delta _{Q}^{k}-\zeta _w\delta _{w}^{k}   \label{eq:L_ine_1}
        \\
        \le& -\frac{\beta}{2}\left\| \nabla \phi \left( x_k \right) \right\| _{2}^{2}+\frac{\beta}{2}\left( \frac{12}{\tau ^2}\left( L_{\widehat{\phi }\pi}^{2}+\frac{\left| \mathcal{S} \right|^2\left| \mathcal{A} \right|^3\left( 1+\gamma \right) C_{f\pi}^{2}C_{rx}^{2}}{\left( 1-\gamma \right) ^2} \right) \delta _{Q}^{k}+3\left| \mathcal{S} \right|C_{rx}^{2}\delta _{w}^{k} \right)    \nonumber
        \\
        &+\left( \frac{L_{\phi}^{M}}{2}\beta ^2-\frac{\beta}{2} \right) \left\| \widehat{\nabla }\phi _k \right\| _{2}^{2}   \nonumber
        \\
        &+2\gamma ^{2N}\left( \zeta _Q+\frac{4\eta ^2C_{\sigma \pi}^{2}}{\tau ^2}\zeta _w\left( \frac{1}{\left( 1-\gamma \right) ^2}+4 \right) \right) \left( \delta _{Q}^{k}+\left( \frac{C_{rx}}{1-\gamma} \right) ^2\beta ^2\left\| \widehat{\nabla }\phi _k \right\| _{2}^{2} \right)    \nonumber
        \\
        &+\zeta _w\left( 1-\frac{\eta \left( 1-\gamma \right) ^2}{2}+4\eta ^2\left| \mathcal{S} \right|^2\left( 1+\gamma \right) ^2 \right) \delta _{w}^{k}   \nonumber
        \\
        &+2L_{w}^{2}\zeta _w\left( 1+\frac{1}{\eta \left( 1-\gamma \right) ^2} \right) \beta ^2\left\| \widehat{\nabla }\phi _k \right\| _{2}^{2}-\zeta _Q\delta _{Q}^{k}-\zeta _w\delta _{w}^{k}   \label{eq:L_ine_2}
        \\
        =&-\frac{\beta}{2}\left\| \nabla \phi \left( x_k \right) \right\| _{2}^{2}+I_Q\delta _{Q}^{k}+I_w\delta _{w}^{k}+I_{\widehat{\phi }}\left\| \widehat{\nabla }\phi _k \right\| _{2}^{2},  \label{eq:L_descent}
    \end{align}
    with
    \begin{align*}
        I_w=&\ \frac{3\beta \left| \mathcal{S} \right|C_{rx}^{2}}{2}+\zeta _w\left( -\frac{\eta \left( 1-\gamma \right) ^2}{2}+4\eta ^2\left| \mathcal{S} \right|^2\left( 1+\gamma \right) ^2 \right) 
        \\
        I_Q=&\ 2\gamma ^{2N}\left( \zeta _Q+\frac{4\eta ^2C_{\sigma \pi}^{2}}{\tau ^2}\zeta _w\left( \frac{1}{\left( 1-\gamma \right) ^2}+4 \right) \right) -\zeta _Q
        \\
        &+\frac{6\beta}{\tau ^2}\left( L_{\widehat{\phi }\pi}^{2}+\frac{\left| \mathcal{S} \right|^2\left| \mathcal{A} \right|^3\left( 1+\gamma \right) C_{f\pi}^{2}C_{rx}^{2}}{\left( 1-\gamma \right) ^2} \right) 
        \\
        I_{\widehat{\phi }}=&\ \frac{L_{\phi}^{M}}{2}\beta ^2+2\beta ^2L_{w}^{2}\zeta _w\left( 1+\frac{1}{\eta \left( 1-\gamma \right) ^2} \right) -\frac{\beta}{2}
        \\
        &+2\gamma ^{2N}\left( \zeta _Q+\frac{4\eta ^2C_{\sigma \pi}^{2}}{\tau ^2}\zeta _w\left( \frac{1}{\left( 1-\gamma \right) ^2}+4 \right) \right) \left( \frac{C_{rx}}{1-\gamma} \right) ^2\beta ^2,
    \end{align*}
    where \eqref{eq:L_ine_1} comes from \eqref{eq:phi_descent} and \eqref{eq:L_ine_2} is the derivation of substituting \eqref{eq:delta_Q}, \eqref{eq:delta_w} and \eqref{eq:diff_estimator}.
    One sufficient condition for $I_Q,I_w,I_{\widehat{\phi}}<0$ is that
    \begin{equation}\label{eq:sufficient}
        \begin{aligned}
                \begin{cases}
        	\zeta _Q=\zeta _w=1,\\
        	\frac{3\beta \left| \mathcal{S} \right|C_{rx}^{2}}{2\eta \left( 1-\gamma \right) ^2}<\frac{1}{2}-4\eta \left| \mathcal{S} \right|^2\frac{\left( 1+\gamma \right) ^2}{\left( 1-\gamma \right) ^2},\\
        	\beta <\frac{\tau ^2}{8}\left( L_{\widehat{\phi }\pi}^{2}+\frac{\left| \mathcal{S} \right|^2\left| \mathcal{A} \right|^3\left( 1+\gamma \right) C_{f\pi}^{2}C_{rx}^{2}}{\left( 1-\gamma \right) ^2} \right) ^{-1},\\
        	\gamma ^{2N}<\frac{1}{8}\left( 1+\frac{4\eta ^2C_{\sigma \pi}^{2}}{\tau ^2}\left( \frac{1}{\left( 1-\gamma \right) ^2}+4 \right) \right) ^{-1},\\
        	\frac{L_{\phi}^{M}}{2}\beta +2\beta L_{w}^{2}\left( 1+\frac{1}{\eta \left( 1-\gamma \right) ^2} \right) +\frac{1}{4}\left( \frac{C_{rx}}{1-\gamma} \right) ^2\beta <\frac{1}{2}.
            \end{cases}      
        \end{aligned}
    \end{equation}
    Set the step size $\beta =\rho \eta $ with $\rho>0$. More precisely, we present the parameter configuration to guarantee \eqref{eq:sufficient}.
    \begin{equation}\label{eq:parameters}
        \begin{aligned}
        \begin{cases}
        	\zeta _Q=\zeta _w=1,\\
        	\eta <\min \left\{ 1,\frac{\left( 1-\gamma \right) ^2}{16\left| \mathcal{S} \right|^2\left( 1+\gamma \right) ^2} \right\} ,\\
        	\rho <\min \left\{ \frac{\left( 1-\gamma \right) ^2}{6\left| \mathcal{S} \right|C_{rx}^{2}},\frac{\left( 1-\gamma \right) ^2}{8L_{w}^{2}} \right\} ,\\
        	\beta <\min \left\{ \frac{\tau ^2}{8}\left( L_{\widehat{\phi }\pi}^{2}+\frac{\left| \mathcal{S} \right|^2\left| \mathcal{A} \right|^3\left( 1+\gamma \right) C_{f\pi}^{2}C_{rx}^{2}}{\left( 1-\gamma \right) ^2} \right) ^{-1},\frac{1}{4}\left( \frac{L_{\phi}^{M}}{2}+2L_{w}^{2}+\frac{1}{4}\left( \frac{C_{rx}}{1-\gamma} \right) ^2 \right) ^{-1} \right\} ,\\
        	\gamma ^{2N}<\frac{1}{8}\left( 1+\frac{4C_{\sigma \pi}^{2}}{\tau ^2}\left( \frac{1}{\left( 1-\gamma \right) ^2}+4 \right) \right) ^{-1},
        \end{cases}     
        \end{aligned}
    \end{equation}
    which means $N\sim\mathcal{O}(1)$ and the step sizes can be chosen as constants. In this way, \eqref{eq:L_descent} implies
    \begin{equation*}
        \mathcal{L} _{k+1}-\mathcal{L} _k\le -\frac{\beta}{2}\left\| \nabla \phi \left( x_k \right) \right\| _{2}^{2}.
    \end{equation*}
    Summing and telescoping it, we have
    \begin{equation*}
        \frac{1}{K}\sum_{k=0}^K{\left\| \nabla \phi \left( x_k \right) \right\| _{2}^{2}}\le \frac{2\mathcal{L} _0}{K\beta}=\frac{2}{K\beta}\left( \phi \left( x_0 \right) +\delta _{Q}^{0}+\delta _{w}^{0} \right) =\mathcal{O} \left( \frac{1}{K} \right) .
    \end{equation*}
\end{proof}

\section{ANALYSIS OF SOBIRL}\label{sec:ana_SoBiRL}
In this section, we prove the convergence of \cref{alg:SoBiRL}, SoBiRL. We outline the proof sketch of \cref{the:sobirl} here. Firstly, we characterize the distributional drift induced by two different policies~$\pi^1,\pi^2$ in \cref{lem:divergence_bound}. Then, the Lipschitz property of the hyper-gradient is clarified in~\cref{pro:hypergrad_Lipz}, and the quality of the hyper-gradient estimator is measured in~\cref{pro:lipz_hp_estimator}. Based on these two results, we arrive at~\cref{the:convergent_SoBiRL}, the convergence analysis of SoBiRL.

The next proposition, generalized from Lemma~1 in \citep{chakraborty2024parl}, considers the distribution $\rho \left( \left( d_1,d_2,\ldots,d_I \right) ;\pi\right)$,  with each trajectory $d_i=\{\kh{s_h^i,a_h^i}\}_{h=0}^{H-1}$ ($i=1,2,\ldots,I$) sampled from the trajectory distribution~$\rho\kh{d;\pi}$, i.e.,
\begin{equation*}
    P(d;\pi )=\bar{\rho}\kh{s_0}\zkh{\Pi_{h=0}^{H-2}\pi\kh{a_h|s_h}\bar{P}\kh{s_{h+1}|s_h,a_h}}\pi\kh{a_{H-1}|s_{H-1}},
\end{equation*}

\begin{lemma}\label{lem:divergence_bound}
    Denote the total variation between distributions by $D_F\kh{\cdot,\cdot}$. For any trajectory tuple $\left( d_1,d_2,\ldots,d_I \right)$, with each trajectory holding a finite horizon $H$, we have
    \begin{align}
        D_F\left( \rho \left( \left( d_1,d_2,\ldots,d_I \right) ;\pi^1 \right) ,\rho \left( \left( d_1,d_2,\ldots,d_I \right) ;\pi ^2 \right) \right) \le \frac{HI}{2}\sqrt{\left| \mathcal{A} \right|}\left\| \pi^1-\pi^2 \right\| _2.   \label{eq:bound_TV}
    \end{align}
    Specifically, for any $x_1,x_2\in\mathbb{R}^n$, take $\pi^1=\pi^*(x_1),\pi^2=\pi^*(x_2)$,
    \begin{align*}
        D_F\left( \rho \left( \left( d_1,d_2,\ldots,d_I \right) ;\pi^*(x_1) \right) ,\rho \left( \left( d_1,d_2,\ldots,d_I \right) ;\pi^*(x_2) \right) \right) \le \frac{HIL_\pi}{2}\sqrt{\left| \mathcal{A} \right|}\left\| x_1-x_2 \right\| _2.
    \end{align*}
\end{lemma}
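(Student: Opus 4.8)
The plan is to peel the bound off in three layers: from the joint $I$-trajectory distribution down to a single trajectory, from a single trajectory down to a per-timestep policy discrepancy, and finally from that discrepancy in $\ell_1$ to the $\ell_2$ norm appearing in the statement. First I would use that the $I$ trajectories are sampled independently, so that $\rho\kh{\kh{d_1,\ldots,d_I};\pi}=\bigotimes_{i=1}^{I}\rho\kh{d;\pi}$ is a product measure. Invoking the standard subadditivity of total variation under products, $D_F\kh{\bigotimes_i\mu_i,\bigotimes_i\nu_i}\le\sum_i D_F\kh{\mu_i,\nu_i}$, and noting that every factor uses the same policy, the claim reduces to the single-trajectory estimate $D_F\kh{\rho\kh{d;\pi^1},\rho\kh{d;\pi^2}}\le\frac{H}{2}\sqrt{\left|\mathcal{A}\right|}\norm{\pi^1-\pi^2}_2$, which I then multiply by $I$.

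Second, for a single trajectory I would write $P(d;\pi)$ as the sequential product in which the initial distribution $\bar{\rho}$ and the environment transitions $\bar{P}\kh{s_{h+1}|s_h,a_h}$ are shared between $\pi^1$ and $\pi^2$, while only the $H$ policy factors $\pi\kh{a_h|s_h}$ differ. Introducing hybrid path measures that employ $\pi^1$ for the first $j$ action steps and $\pi^2$ thereafter and telescoping over $j$, the shared kernels cancel in each consecutive difference, leaving the chain-rule bound
\begin{equation*}
    D_F\kh{\rho\kh{d;\pi^1},\rho\kh{d;\pi^2}}\le\sum_{h=0}^{H-1}\mathbb{E}_{s_h}\zkh{D_F\kh{\pi^1\kh{\cdot|s_h},\pi^2\kh{\cdot|s_h}}}.
\end{equation*}

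Finally, I would control each summand uniformly in the state. Using $D_F(p,q)=\frac{1}{2}\norm{p-q}_1$ together with Cauchy--Schwarz on the $\left|\mathcal{A}\right|$-dimensional action vector, $\norm{\pi^1_s-\pi^2_s}_1\le\sqrt{\left|\mathcal{A}\right|}\norm{\pi^1_s-\pi^2_s}_2\le\sqrt{\left|\mathcal{A}\right|}\norm{\pi^1-\pi^2}_2$, where the last step holds because $\norm{\pi^1-\pi^2}_2^2=\sum_s\norm{\pi^1_s-\pi^2_s}_2^2$. Since this bound is independent of $s$, each of the $H$ terms is at most $\frac{1}{2}\sqrt{\left|\mathcal{A}\right|}\norm{\pi^1-\pi^2}_2$, which yields the single-trajectory estimate and, after the factor $I$, inequality \eqref{eq:bound_TV}. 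The corollary is then immediate: setting $\pi^1=\pi^*(x_1)$ and $\pi^2=\pi^*(x_2)$ and invoking the $L_\pi$-Lipschitz continuity of $\pi^*$ from \cref{lem:pismooth} replaces $\norm{\pi^1-\pi^2}_2$ by $L_\pi\norm{x_1-x_2}_2$.

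The main obstacle is the telescoping step for the path measure. I must set up the hybrid measures carefully, verify that the shared environment kernels and the initial distribution cancel exactly in each consecutive difference (so that only the policy-selection steps survive), and confirm that the residual at step $j$ is precisely $\mathbb{E}_{s_j}\zkh{D_F\kh{\pi^1\kh{\cdot|s_j},\pi^2\kh{\cdot|s_j}}}$ under the relevant marginal. The remaining pieces---product subadditivity, Cauchy--Schwarz, and the Lipschitz corollary---are routine.
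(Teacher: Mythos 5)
Your proposal is correct and follows essentially the same route as the paper's proof: the paper likewise reduces to the product-measure subadditivity of total variation for the $I$ trajectories, obtains the per-timestep bound $\sum_{h=0}^{H-1}\mathbb{E}_{s\sim\rho_h}\left[D_F\left(\pi^1_{\cdot|s},\pi^2_{\cdot|s}\right)\right]$ via mixed policies that switch from one policy to the other at a single timestep (your telescoping over hybrid measures is the same argument, unrolled rather than recursive), and finishes with $D_F=\frac{1}{2}\ell_1$, Cauchy--Schwarz, and the $L_\pi$-Lipschitz continuity of $\pi^*$ from \cref{lem:pismooth}. No gaps.
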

\begin{proof}
    Firstly, we focus on the situation $I=1$, i.e., the one-trajectory distribution $\rho \left( d;\pi \right)$. Note that the total variation is the $F$-divergence induced by $F(x)=\frac{1}{2}\abs{x-1}$. The triangle inequality of the $F$-divergence leads to a decomposition:
    \begin{align}
        D_F\left( \rho \left( d;\pi ^1 \right) ,\rho \left( d;\pi ^2 \right) \right) \le D_F\left( \rho \left( d;\pi ^1 \right) ,\rho \left( d;p \right) \right) +D_F\left( \rho \left( d;p \right) ,\rho \left( d;\pi ^2 \right) \right),  \label{eq:trian_decomp}
    \end{align}
    with $p$ as a mixed policy executing $\pi^2$ at the first time-step of any trajectory and obeying $\pi^1$ for subsequent timesteps. In this way, by definition of $D_F$ and $p$,
    \begin{align}
        &D_F\left( \rho \left( d;\pi ^1 \right) ,\rho \left( d;p \right) \right) =\sum_d{P \left( d;p \right) F\left( \frac{P\left( d;\pi ^1 \right)}{P \left( d;p \right)} \right)} \nonumber
        \\
        =&\sum_d{P \left( d;p \right) F\left( \frac{\bar{\rho}\left( s_0 \right) \pi _{s_0a_0}^{1}\bar{P}\left( s_1|s_0,a_0 \right) \left[ \Pi _{h=1}^{H-2}\bar{P}\left( s_{h+1}|s_h,a_h \right) \pi _{s_ha_h}^{1} \right] \pi _{s_{H-1}a_{H-1}}^{1}}{\bar{\rho}\left( s_0 \right) \pi _{s_0a_0}^{2}\bar{P}\left( s_1|s_0,a_0 \right) \left[ \Pi _{h=1}^{H-2}\bar{P}\left( s_{h+1}|s_h,a_h \right) \pi _{s_ha_h}^{1} \right] \pi _{s_{H-1}a_{H-1}}^{1}} \right)}  \nonumber
        \\
        =&\sum_d{P \left( d;p \right) F\left( \frac{\pi _{s_0a_0}^{1}}{\pi _{s_0a_0}^{2}} \right)}.\label{eq:DF_pi1_p_1}
    \end{align}
    From \eqref{eq:DF_pi1_p_1}, it reveals that for different trajectory tuples, they share the same $F(\cdot)$ term if their initial states-actions pairs $(s_0,a_0)$ are identical, which implies
    \begin{align}
        &D_F\left( \rho \left( d;\pi ^1 \right) ,\rho \left( d;p \right) \right)    \nonumber
        \\
        =&\sum_{s_0,a_0}{\bar{\rho}\left( s_0 \right) \pi _{s_0a_0}^{2}F\left( \frac{\pi _{s_0a_0}^{1}}{\pi _{s_0a_0}^{2}} \right)}   \nonumber
        \\
        =&\sum_{s_0}{\bar{\rho}\left( s_0 \right) \sum_{a_0}{\pi _{s_0a_0}^{2}}}F\left( \frac{\pi _{s_0a_0}^{1}}{\pi _{s_0a_0}^{2}} \right)   \nonumber
        \\
        =&\sum_{s_0}{\bar{\rho}\left( s_0 \right) D_F\left( \pi _{\cdot |s_0}^{1},\pi _{\cdot |s_0}^{2} \right)}.    \label{eq:DF_pi1_p_2}
    \end{align} 
    Then we consider the second term in the triangle inequality decomposition \eqref{eq:trian_decomp}.
    \begin{align}
        &D_F\left( \rho \left( d;p \right) ,\rho \left( d;\pi ^2 \right) \right)    \nonumber
        \\
        =&\sum_d{P\left( d;\pi ^2 \right) F\left( \frac{P\left( d;p \right)}{P\left( d;\pi ^2 \right)} \right)} \nonumber
        \\
        =&\sum_d{P\left( d;\pi ^2 \right) F\left( \frac{\bar{\rho}\left( s_0 \right) \pi _{s_0a_0}^{2}\bar{P}\left( s_1|s_0,a_0 \right) \left[ \Pi _{h=1}^{H-2}\bar{P}\left( s_{h+1}|s_h,a_h \right) \pi _{s_ha_h}^{1} \right] \pi _{s_{H-1}a_{H-1}}^{1}}{\bar{\rho}\left( s_0 \right) \pi _{s_0a_0}^{2}\bar{P}\left( s_1|s_0,a_0 \right) \left[ \Pi _{h=1}^{H-2}\bar{P}\left( s_{h+1}|s_h,a_h \right) \pi _{s_ha_h}^{2} \right] \pi _{s_{H-1}a_{H-1}}^{2}} \right)}    \label{eq:explain_p}
        \\
        =&\sum_{s_0}{\bar{\rho}\left( s_0 \right) \sum_{a_0}{\pi _{s_0a_0}^{2}}}D_F\left( \rho \left( d^{H-1};\pi ^1 \right) ,\rho \left( d^{H-1};\pi ^2 \right) \right)    \nonumber
        \\
        \le& \sum_{s_0}{\bar{\rho}\left( s_0 \right) \sum_{a_0}{\pi _{s_0a_0}^{2}}}D_F\left( \rho \left( d^{H-1};\pi ^1 \right) ,\rho \left( d^{H-1};p^{\prime} \right) \right)     \nonumber
        \\
        &+\sum_{s_0}{\bar{\rho}\left( s_0 \right) \sum_{a_0}{\pi _{s_0a_0}^{2}}}D_F\left( \rho \left( d^{H-1};p^{\prime} \right) ,\rho \left( d^{H-1};\pi ^2 \right) \right) , \label{eq:explain_p_prime}
    \end{align}
    where $d^{H-1}$ denotes the trajectory with $H-1$ horizon, obtained by cutting $s_0$ and $a_0$. One can derive \eqref{eq:explain_p} since $p$ aligns with $\pi^2$ at the first timestep, and with $\pi^1$ at the following steps, and \eqref{eq:explain_p_prime} follows from constructing a new mixed policy $p^\prime$ in a similar way and applying the triangle inequality. A subsequent result is that we reduce the original divergence between $\pi^1$ and $\pi^2$ measured on $H$-length trajectories to divergence measured on one-length trajectories in \eqref{eq:DF_pi1_p_2} and $(H-1)$-length trajectories in \eqref{eq:explain_p_prime}. Repeating the process on the $(H-1)$-length trajectories in \eqref{eq:explain_p_prime} yields
    \begin{align}
        &D_F\left( \rho \left( d;\pi ^1 \right) ,\rho \left( d;\pi ^2 \right) \right) \nonumber
        \\
        \le& \sum_{h=0}^{H-1}{\mathbb{E} _{s^{\prime}\sim \rho _h}\left[ D_F\left( \pi _{\cdot |s^{\prime}}^{1},\pi _{\cdot |s^{\prime}}^{2} \right) \right]}   \nonumber
        \\
        \le&\ HD_F\left( \pi _{\cdot |s^*}^{1},\pi _{\cdot |s^*}^{2} \right) \nonumber
        \\
        =&\frac{H}{2}\left\| \pi _{\cdot |s^*}^{1}-\pi _{\cdot |s^*}^{2} \right\| _1    \nonumber
        \\
        \le&\ \frac{H}{2}\sqrt{\left| \mathcal{A} \right|}\left\| \pi ^1-\pi ^2 \right\| _2, \label{eq:explain_lip_pistar}
    \end{align}
    where $\rho_h$ is a state distribution related to the timestep $h$ and 
    \begin{align*}
        s^*=\argmax_{s^\prime\in\mdps}D_F\left( \pi _{\cdot |s^{\prime}}^{1},\pi _{\cdot |s^{\prime}}^{2} \right) .
    \end{align*}
    Note that $\rho \left( \left( d_1,d_2,\ldots,d_I \right) ;\pi \right) $ is the product measure of $\rho \left( d_i;\pi \right), \,\left( i=1,2,\ldots,I \right) $, i.e., $\rho \left( \left( d_1,d_2,\ldots,d_I \right) ;\pi \right) =\rho \left( d_1;\pi \right) \times \rho \left( d_2;\pi \right) \times \cdots \times \rho \left( d_I;\pi \right) $. By the total variation inequality for the product measure,
    \begin{align*}
        D_F\left( \rho \left( \left( d_1,d_2,\ldots,d_I \right) ;\pi ^1 \right) ,\rho \left( \left( d_1,d_2,\ldots,d_I \right) ;\pi ^2 \right) \right) \le& \sum_{i=1}^I{D_F\left( \rho \left( d_i;\pi ^1 \right) ,\rho \left( d_i;\pi ^2 \right) \right)}
        \\
        \le& \frac{HI}{2}\sqrt{\left| \mathcal{A} \right|}\left\| \pi ^1-\pi ^2 \right\| _2.
    \end{align*}
\end{proof}

A byproduct of \cref{lem:divergence_bound} is the lemma presented below, which measures the difference between the expectations of the same function evaluated on different distributions.
\begin{lemma}\label{lem:byproduct}
    If the vector-valued function $z\kh{d_1,d_2,\ldots,d_I;x}\in\mathbb{R}^n$ is bounded by $C$, i.e., $\norm{z\kh{d_1,d_2,\ldots,d_I;x}}_2\le C$, then for any trajectory tuple $\left( d_1,d_2,\ldots,d_I \right)$, with each trajectory holding a finite horizon $H$, and policies $\pi_1,\pi_2$, it holds
    \begin{align*}
        \norm{\mathbb{E} _{d_i\sim \rho \left( d;\pi_1 \right)}\left[ z\kh{d_1,d_2,\ldots,d_I;x} \right] -\mathbb{E} _{d_i\sim \rho \left( d;\pi_2 \right)}\left[ z\kh{d_1,d_2,\ldots,d_I;x} \right]}_2\le {CHI}\sqrt{\left| \mathcal{A} \right|}\left\| \pi_1-\pi_2 \right\| _2.
    \end{align*}
\end{lemma}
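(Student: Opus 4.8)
The plan is to derive this directly from Lemma~\ref{lem:divergence_bound} by bounding the difference of the two expectations by the total variation between the corresponding trajectory distributions. Write $\mathbf{d}=\kh{d_1,d_2,\ldots,d_I}$, and let $\rho\kh{\mathbf{d};\pi}$ denote the probability mass of the tuple $\mathbf{d}$ under policy $\pi$ (i.e.\ the product measure appearing in Lemma~\ref{lem:divergence_bound}). Since each trajectory has finite horizon $H$ and both $\mdps,\mdpa$ are finite, there are only finitely many tuples $\mathbf{d}$, so each expectation is a finite sum. First I would combine the two expectations into a single sum,
\begin{equation*}
    \mathbb{E} _{d_i\sim \rho \left( d;\pi_1 \right)}\left[ z\kh{\mathbf{d};x} \right] -\mathbb{E} _{d_i\sim \rho \left( d;\pi_2 \right)}\left[ z\kh{\mathbf{d};x} \right] = \sum_{\mathbf{d}} \kh{\rho\kh{\mathbf{d};\pi_1}-\rho\kh{\mathbf{d};\pi_2}} z\kh{\mathbf{d};x}.
\end{equation*}

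Next I would apply the triangle inequality for the Euclidean norm together with the uniform bound $\norm{z\kh{\mathbf{d};x}}_2\le C$ to peel off the function values, obtaining
\begin{equation*}
    \norm{\sum_{\mathbf{d}} \kh{\rho\kh{\mathbf{d};\pi_1}-\rho\kh{\mathbf{d};\pi_2}} z\kh{\mathbf{d};x}}_2 \le C\sum_{\mathbf{d}} \abs{\rho\kh{\mathbf{d};\pi_1}-\rho\kh{\mathbf{d};\pi_2}}.
\end{equation*}
The remaining sum is the $\ell_1$ distance between the two tuple distributions. Recalling from the proof of Lemma~\ref{lem:divergence_bound} that $D_F$ is the $F$-divergence induced by $F(x)=\tfrac{1}{2}\abs{x-1}$, so that $D_F\kh{\rho\kh{\mathbf{d};\pi_1},\rho\kh{\mathbf{d};\pi_2}}=\tfrac{1}{2}\sum_{\mathbf{d}}\abs{\rho\kh{\mathbf{d};\pi_1}-\rho\kh{\mathbf{d};\pi_2}}$, I would identify this sum with $2D_F$ and then invoke the bound~\eqref{eq:bound_TV}:
\begin{equation*}
    C\sum_{\mathbf{d}} \abs{\rho\kh{\mathbf{d};\pi_1}-\rho\kh{\mathbf{d};\pi_2}} = 2C\, D_F\kh{\rho\kh{\mathbf{d};\pi_1},\rho\kh{\mathbf{d};\pi_2}} \le 2C\cdot\frac{HI}{2}\sqrt{\abs{\mdpa}}\norm{\pi_1-\pi_2}_2 = CHI\sqrt{\abs{\mdpa}}\norm{\pi_1-\pi_2}_2,
\end{equation*}
which is precisely the claimed inequality.

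This argument presents no genuine obstacle, as the substantive work is already contained in Lemma~\ref{lem:divergence_bound}; the lemma is indeed a ``byproduct''. The only point requiring care is the constant: the factor of $2$ relating the $\ell_1$ distance to $D_F$ (arising from the normalization $F(x)=\tfrac{1}{2}\abs{x-1}$) exactly cancels the $\tfrac{1}{2}$ in the total-variation bound, yielding the clean constant $CHI\sqrt{\abs{\mdpa}}$. I would also note in passing that the finiteness of the trajectory set justifies interchanging the norm with the summation, so no measure-theoretic subtleties arise.
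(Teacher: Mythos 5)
Your proof is correct, and it follows the same route as the paper: both reduce the claim to the total-variation bound \eqref{eq:bound_TV} of \cref{lem:divergence_bound} via the intermediate inequality that the difference of the two vector expectations is at most $2C\,D_F$. The only point where you diverge is in how that intermediate inequality is justified: the paper invokes the Integral Probability Metric duality for scalar functions bounded by $B$ and then handles the vector-valued $z$ through a linear-functional argument (bounding $\abs{b^\top(\cdot)}$ for arbitrary $b\in\mathbb{R}^n$ and using the dual characterization of the Euclidean norm), whereas you expand the difference as a finite sum over trajectory tuples, apply the triangle inequality with $\norm{z}_2\le C$, and identify the resulting $\ell_1$ distance with $2D_F$ from the definition $F(x)=\tfrac{1}{2}\abs{x-1}$. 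Your version is more elementary and self-contained, exploiting the finiteness of $\mdps$, $\mdpa$, and $H$; the paper's IPM argument is insensitive to discreteness and so would carry over to non-finite trajectory spaces, but in the tabular setting of this paper both yield the identical constant $CHI\sqrt{\abs{\mdpa}}$.
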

\begin{proof}
    We resort to the property of the Integral Probability Metric (IPM) \citep{sriperumbudur2009IPM},
    \begin{align*}
        \mathrm{sup}_{\left| \xi \right|\le B}\mathbb{E} _{d_i\sim \rho \left( d;\pi_1 \right)}\left[ \xi \right] -\mathbb{E} _{d_i\sim \rho \left( d;\pi_2  \right)}\left[ \xi \right] = 2B \times D_{F}\left( \rho \left( d;\pi ^*\left( x_1 \right) \right) ,\rho \left( d;\pi ^*\left( x_2 \right) \right) \right), 
    \end{align*}
    where $D_{F}$ is the total variation between distributions. Subsequently, it yields
    \begin{align}
        &\left| b^{\top}\left( \mathbb{E} _{d_i\sim \rho \left( d;\pi_1 \right)}\left[ z\left( d_1,d_2,\ldots,d_I;x_2 \right) \right] -\mathbb{E} _{d_i\sim \rho \left( d;\pi_2 \right)}\left[ z\left( d_1,d_2,\ldots,d_I;x_2 \right) \right] \right) \right| \nonumber
        \\
        \le&\ 2C\left\| b \right\| _2 D_{F}\left( \rho \left( \left( d_1,d_2,\ldots,d_I \right) ;\pi_1 \right) ,\rho \left( \left( d_1,d_2,\ldots,d_I \right) ;\pi_2 \right) \right) \nonumber
    \end{align}
    for any $b\in\mathbb{R}^n$, which means
    \begin{align}
        &\norm{\mathbb{E} _{d_i\sim \rho \left( d;\pi_1 \right)}\left[ z\left( d_1,d_2,\ldots,d_I;x_2 \right) \right] -\mathbb{E} _{d_i\sim \rho \left( d;\pi_2 \right)}\left[ z\left( d_1,d_2,\ldots,d_I;x_2 \right) \right] }_2 \nonumber
        \\
        \le&\ 2C\times D_{F}\left( \rho \left( \left( d_1,d_2,\ldots,d_I \right) ;\pi_1 \right) ,\rho \left( \left( d_1,d_2,\ldots,d_I \right) ;\pi_2 \right) \right).  \label{eq:combine_1}
    \end{align}
    Combining \eqref{eq:bound_TV} and \eqref{eq:combine_1} completes the proof.
\end{proof}

Drawing from the distributional drift investigated by \cref{lem:byproduct}, we can bound the difference between expectations, where both functions and distributions are associated with the upper-level variable $x$.
\begin{lemma}\label{lem:Zx_lipz}
    Under \cref{assu:r}, if $z\kh{d_1,d_2,\ldots,d_I;x}\in\mathbb{R}^n$ is bounded by $C$, i.e., $\norm{z\kh{d_1,d_2,\ldots,d_I;x}}_2\le C$, and is $L_z$-Lipschitz continuous with respect to $x$, then the function
    \begin{align*}
        Z(x) =& \mathbb{E} _{d_i\sim \rho \left( d;\pi ^*\left( x \right) \right)}\left[ z\kh{d_1,d_2,\ldots,d_I;x} \right]   \nonumber
    \end{align*}
    is $L_Z$-Lipschitz continuous with $L_Z=L_z+CHIL_\pi\sqrt{\left| \mathcal{A} \right|}$, i.e., for any $x_1,x_2\in\mathbb{R}^n$,
    \begin{equation*}
        \norm{Z(x_1) - Z(x_2)}_2\le L_Z\norm{x_1-x_2}_2.
    \end{equation*}
\end{lemma}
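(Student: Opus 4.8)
The plan is to split the difference $Z(x_1)-Z(x_2)$ into two contributions by inserting an intermediate term that freezes the integrand while varying only the sampling distribution. Concretely, I would write
\begin{align*}
    Z(x_1) - Z(x_2) =&\ \mathbb{E}_{d_i\sim\rho\kh{d;\pi^*(x_1)}}\zkh{z\kh{d_1,\ldots,d_I;x_1} - z\kh{d_1,\ldots,d_I;x_2}} \\
    &+ \mathbb{E}_{d_i\sim\rho\kh{d;\pi^*(x_1)}}\zkh{z\kh{d_1,\ldots,d_I;x_2}} - \mathbb{E}_{d_i\sim\rho\kh{d;\pi^*(x_2)}}\zkh{z\kh{d_1,\ldots,d_I;x_2}},
\end{align*}
so that the first bracket isolates the change in the function $z(\cdot\,;x)$ against a fixed distribution, while the second isolates the distributional drift of $\rho\kh{d;\pi^*(x)}$ against a fixed integrand. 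Applying the triangle inequality then reduces the task to bounding the two pieces separately.

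For the first piece, the idea is to move the norm inside the expectation (by the triangle inequality for expectations) and invoke the $L_z$-Lipschitz continuity of $z$, which holds uniformly over all trajectory tuples $\kh{d_1,\ldots,d_I}$; this yields the bound $L_z\norm{x_1-x_2}_2$. For the second piece, I would apply \cref{lem:byproduct} directly: since the frozen integrand $z(\cdot\,;x_2)$ is still bounded by $C$, the lemma produces $C H I\sqrta\,\norm{\pi^*(x_1)-\pi^*(x_2)}_2$. Chaining this with the $L_\pi$-Lipschitz continuity of $\pi^*(x)$ from \cref{lem:pismooth} converts the policy distance into $L_\pi\norm{x_1-x_2}_2$, giving $C H I L_\pi\sqrta\,\norm{x_1-x_2}_2$. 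Summing the two contributions delivers exactly $L_Z = L_z + C H I L_\pi\sqrt{\abs{\mdpa}}$.

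Since every ingredient---the uniform boundedness and Lipschitzness of $z$, together with \cref{lem:byproduct} and \cref{lem:pismooth}---is already established, I do not expect a genuine obstacle here. The only point warranting care is bookkeeping in the decomposition: one must verify that the frozen integrand in the second piece, namely $z$ evaluated at $x_2$, still satisfies the boundedness hypothesis required by \cref{lem:byproduct}, which it does because the bound $\norm{z}_2\le C$ is assumed to hold for every $x$. This confirms that the argument is a clean two-term decomposition rather than anything requiring new estimates.
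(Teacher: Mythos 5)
Your proposal is correct and follows essentially the same route as the paper's proof: the identical two-term decomposition (varying the integrand against a fixed distribution, then the distribution against the frozen integrand $z(\cdot\,;x_2)$), with the first piece bounded by the $L_z$-Lipschitz continuity of $z$ and the second by \cref{lem:byproduct} chained with the $L_\pi$-Lipschitz continuity of $\pi^*(x)$ from \cref{lem:pismooth}. The resulting constant $L_Z = L_z + CHIL_\pi\sqrt{\abs{\mdpa}}$ matches the paper exactly.
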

\begin{proof}
    To begin with, we decompose $Z(x_1)-Z(x_2)$ into two terms,
    \begin{align*}
        Z\left( x_1 \right) -Z\left( x_2 \right) =&\left( \mathbb{E} _{d_i\sim \rho \left( d;\pi ^*\left( x_1 \right) \right)}\left[ z\left( d_1,d_2,\ldots,d_I;x_1 \right) \right] -\mathbb{E} _{d_i\sim \rho \left( d;\pi ^*\left( x_1 \right) \right)}\left[ z\left( d_1,d_2,\ldots,d_I;x_2 \right) \right] \right) 
        \\
        &+\left( \mathbb{E} _{d_i\sim \rho \left( d;\pi ^*\left( x_1 \right) \right)}\left[ z\left( d_1,d_2,\ldots,d_I;x_2 \right) \right] -\mathbb{E} _{d_i\sim \rho \left( d;\pi ^*\left( x_2 \right) \right)}\left[ z\left( d_1,d_2,\ldots,d_I;x_2 \right) \right] \right). 
    \end{align*}
    The first term can be bounded by the Lipschitz continuity of $z$ since the two expectations share the same distribution $\rho(d;\pi^*(x_1)$. To control the second term, applying \cref{lem:byproduct} and considering the Lipschitz continuity of $\pi^*(x)$ lead to
    \begin{align*}
        \left\| \mathbb{E} _{d_i\sim \rho \left( d;\pi ^*\left( x_1 \right) \right)}\left[ z\left( d_1,d_2,\ldots,d_I;x_2 \right) \right] -\mathbb{E} _{d_i\sim \rho \left( d;\pi ^*\left( x_2 \right) \right)}\left[ z\left( d_1,d_2,\ldots,d_I;x_2 \right) \right] \right\| \le CHIL_\pi\sqrt{\abs{\mathcal{A}}} \left\| x_1-x_2 \right\| _2.
    \end{align*}
    It completes the proof by
    \begin{equation*}
        \begin{aligned}
            \left\| Z\left( x_1 \right) -Z\left( x_2 \right) \right\| _2\le L_z\left\| x_1-x_2 \right\| _2+C{HIL_{\pi}}\sqrt{\left| \mathcal{A} \right|}\left\| x_1-x_2 \right\| _2.
        \end{aligned}
    \end{equation*}
\end{proof}

The following proposition measures the quality of the implicit differentiation estimators. A similar result is introduced as an assumption in \citep{shen2024bilevelRL} (Lemma 18, condition (c)). We claim that it is satisfied in our setting.
\begin{proposition}\label{pro:property_VQ_estimator}
    Under \cref{assu:r}, for any upper-level variable $x\in\mathbb{R}^n$ and policies $\pi^1,\pi^2$, we have
    \begin{equation*}
        \begin{aligned}
            \max _{s,a}\left\| \widetilde{\nabla} Q_{sa}\kh{x,\pi^1} -\widetilde{\nabla }Q_{sa}\left( x,\pi^2 \right) \right\| _2&\le \frac{(1+\gamma)C_{rx}\sqrta}{\left( 1-\gamma \right) ^2}\left\| \pi^1 -\pi^2 \right\| _2,
            \\
            \max _s\left\| \widetilde{\nabla} V_{s}\left( x,\pi^1 \right) -\widetilde{\nabla }V_s\left( x,\pi^2 \right) \right\| _2 &\le \frac{(1+\gamma)C_{rx}\sqrta}{\left( 1-\gamma \right) ^2}\left\| \pi^1 -\pi^2 \right\| _2.
        \end{aligned}
    \end{equation*}
     Specifically, taking $\pi^1=\pi^*(x)$ and $\pi^2=\pi$ yields
    \begin{equation*}
        \begin{aligned}
            \max _{s,a}\left\| \nabla Q_{sa}^{*}\left( x \right) -\widetilde{\nabla }Q_{sa}\left( x,\pi \right) \right\| _2&\le \frac{(1+\gamma)C_{rx}\sqrta}{\left( 1-\gamma \right) ^2}\left\| \pi ^*\left( x \right) -\pi \right\| _2
            \\
            \max _s\left\| \nabla V_{s}^{*}\left( x \right) -\widetilde{\nabla }V_s\left( x,\pi \right) \right\| _2 &\le \frac{(1+\gamma)C_{rx}\sqrta}{\left( 1-\gamma \right) ^2}\left\| \pi ^*\left( x \right) -\pi \right\| _2.
        \end{aligned}
    \end{equation*}
\end{proposition}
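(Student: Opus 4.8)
The plan is to exploit the fact that, although the sampling estimators $\widetilde{\nabla} V_s(x,\pi)$ and $\widetilde{\nabla} Q_{sa}(x,\pi)$ are only Lipschitz (not obviously differentiable) in $\pi$, they satisfy exact Bellman-type relations that can be differenced and telescoped. Fixing $x$ throughout, I would first record, directly from the definitions of the two estimators as expected discounted sums of reward gradients and the Markov property (peeling off the first transition, exactly as in \eqref{eq:V_to_Q}), the identities $\widetilde{\nabla} V_s(x,\pi)=\sum_a \pi_{sa}\,\widetilde{\nabla} Q_{sa}(x,\pi)$ and $\widetilde{\nabla} Q_{sa}(x,\pi)=\nabla r_{sa}(x)+\gamma\sum_{s'}P_{sas'}\,\widetilde{\nabla} V_{s'}(x,\pi)$, together with the uniform bound $\norm{\widetilde{\nabla} Q_{sa}(x,\pi)}_2\le C_{rx}/(1-\gamma)$, which follows from $\norm{\nabla r_{sa}(x)}_2\le C_{rx}$ in \cref{assu:r} and $\sum_{t\ge 0}\gamma^t=1/(1-\gamma)$.

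Writing $\Delta Q_{sa}:=\widetilde{\nabla} Q_{sa}(x,\pi^1)-\widetilde{\nabla} Q_{sa}(x,\pi^2)$ and $\Delta V_s:=\widetilde{\nabla} V_s(x,\pi^1)-\widetilde{\nabla} V_s(x,\pi^2)$, I would difference the two identities. The $\nabla r_{sa}(x)$ term is policy-independent and cancels, leaving $\Delta Q_{sa}=\gamma\sum_{s'}P_{sas'}\Delta V_{s'}$, while the first identity gives $\Delta V_s=\sum_a\pi^1_{sa}\Delta Q_{sa}+\epsilon_s$ with $\epsilon_s:=\sum_a(\pi^1_{sa}-\pi^2_{sa})\,\widetilde{\nabla} Q_{sa}(x,\pi^2)$. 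The key quantitative step is to bound the policy-difference term $\epsilon_s$: by the triangle inequality, the uniform bound on $\widetilde{\nabla} Q$, and one application of Cauchy--Schwarz converting $\norm{\pi^1_s-\pi^2_s}_1\le\sqrta\,\norm{\pi^1-\pi^2}_2$, one obtains $\norm{\epsilon_s}_2\le \eta:=\tfrac{C_{rx}\sqrta}{1-\gamma}\norm{\pi^1-\pi^2}_2$ uniformly in $s$.

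Setting $D_V:=\max_s\norm{\Delta V_s}_2$ and $D_Q:=\max_{s,a}\norm{\Delta Q_{sa}}_2$ and taking norms in the two difference relations, I would use that $P$ and $\pi^1$ are row-stochastic, so that averaging against them is non-expansive in the mixed norm given by the maximum over states of the $\ell_2$-norm on $\mathbb{R}^n$; this yields $D_Q\le\gamma D_V$ and $D_V\le D_Q+\eta$. Substituting produces the self-referential inequality $D_V\le\gamma D_V+\eta$, hence $D_V\le\eta/(1-\gamma)=\tfrac{C_{rx}\sqrta}{(1-\gamma)^2}\norm{\pi^1-\pi^2}_2$ and $D_Q\le\gamma D_V$; both are therefore bounded by $\tfrac{(1+\gamma)C_{rx}\sqrta}{(1-\gamma)^2}\norm{\pi^1-\pi^2}_2$, which is the claimed estimate (in fact slightly tighter). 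The \emph{specifically} part is then immediate: taking $\pi^1=\pi^*(x)$, $\pi^2=\pi$ and invoking \cref{pro:first_order_nablaV}, which identifies $\widetilde{\nabla} Q_{sa}(x,\pi^*(x))=\nabla Q^*_{sa}(x)$ and $\widetilde{\nabla} V_s(x,\pi^*(x))=\nabla V^*_s(x)$, converts the bounds into the stated ones for $\nabla Q^*$ and $\nabla V^*$.

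I expect the main obstacle to be bookkeeping rather than any deep idea: one must consistently work in the mixed state-max / $\mathbb{R}^n$-$\ell_2$ norm, verify that the convex combinations induced by $P$ and by $\pi^1$ are non-expansive there (a consequence of convexity of $\norm{\cdot}_2$), and track the single Cauchy--Schwarz step that introduces the $\sqrta$ factor. The only genuinely structural ingredient is recognizing that the sampling-based estimators obey exact Bellman relations even for a suboptimal $\pi$; once the inequality $D_V\le\gamma D_V+\eta$ is established, solving it yields the $1/(1-\gamma)^2$ dependence automatically.
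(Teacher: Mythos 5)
Your proof is correct, and it takes a genuinely different route from the paper's, even though both ultimately telescope a Bellman-type recursion into a contraction inequality in the mixed $\max_s$/$\ell_2$ norm. The paper works with the \emph{closed} one-step recursion on $\widetilde{\nabla}V$ alone,
\begin{equation*}
\widetilde{\nabla}V_s\kh{x,\pi}=\sum_a \pi_{sa}\nabla r_{sa}(x)+\gamma\,\mathbb{E}_{s^{\prime}\sim \rho\kh{s_1;\pi}}\zkh{\widetilde{\nabla}V_{s^{\prime}}\kh{x,\pi}},
\end{equation*}
in which the policy enters \emph{twice}: through the reward average and through the next-state distribution $\rho\kh{s_1;\pi}$. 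Differencing it therefore produces a distribution-shift term, which the paper controls with its total-variation/IPM machinery (\cref{lem:byproduct}, applied with $H=1,I=1$ for the reward term and $H=2,I=1$ for the next-state term), arriving at $\Delta V\le C_{rx}\sqrta\norm{\pi^1-\pi^2}_2+\gamma\kh{\Delta V+\tfrac{2C_{rx}\sqrta}{1-\gamma}\norm{\pi^1-\pi^2}_2}$. You instead keep the two \emph{coupled} identities $\widetilde{\nabla}V_s=\sum_a\pi_{sa}\widetilde{\nabla}Q_{sa}$ and $\widetilde{\nabla}Q_{sa}=\nabla r_{sa}+\gamma\sum_{s^{\prime}}P_{sas^{\prime}}\widetilde{\nabla}V_{s^{\prime}}$; because $P$ is policy-independent, differencing the $Q$-identity is exact ($D_Q\le\gamma D_V$ with no error term), so the entire policy dependence is isolated in the single term $\epsilon_s$, handled by one Cauchy--Schwarz step. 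This buys two things: the argument is more elementary (no appeal to \cref{lem:byproduct} or to total variation at all), and the constants are tighter, $D_V\le\tfrac{C_{rx}\sqrta}{(1-\gamma)^2}\norm{\pi^1-\pi^2}_2$ and $D_Q\le\gamma D_V$, versus the paper's common factor $(1+\gamma)$; both of course imply the stated bound. Two details you should make explicit in a final write-up: the self-referential inequality $D_V\le\gamma D_V+\eta$ can only be solved because $D_V\le 2C_{rx}/(1-\gamma)<\infty$ by the uniform bound (the paper's proof has the same implicit step), and the ``specifically'' part rests on the identifications $\widetilde{\nabla}V_s\kh{x,\pi^*(x)}=\nabla V_s^*(x)$ and $\widetilde{\nabla}Q_{sa}\kh{x,\pi^*(x)}=\nabla Q_{sa}^*(x)$ from \cref{pro:first_order_nablaV}, which you correctly invoke and which the paper itself only uses later, in the proof of \cref{pro:QV_Lipz}.
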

\begin{proof}
    Recalling the expression of $\widetilde{\nabla }V_s\left( x,\pi \right) $
    \begin{equation*}
        \begin{aligned}
            \widetilde{\nabla }V_s(x,\pi )&=\mathbb{E} \left[ \sum_{t=0}^{\infty}{\gamma ^t}\nabla r_{s_t,a_t}(x) | s_0=s,\pi ,\mathcal{M} _{\tau}\left( x \right) \right],
        \end{aligned}
    \end{equation*}
    we obstain the boundedness
    \begin{equation*}
        \begin{aligned}
            \left\| \widetilde{\nabla }V_s\left( x,\pi \right) \right\| _2\le \frac{C_{rx}}{1-\gamma},
        \end{aligned}
    \end{equation*}
    and the recursive rule
    \begin{align}
        \widetilde{\nabla }V_s\left( x,\pi^1 \right) &=\sum_a{\pi^1 _{sa}\left( x \right) \nabla r_{sa}\left( x \right) +\gamma \mathbb{E} _{s^{\prime}\sim \rho \left( s_1;\pi^1 \right)}\left[ \widetilde{\nabla }V_{s^{\prime}}\left( x,\pi^1 \right) \right]}.  \label{eq:nabla_V1}
        \\
        \widetilde{\nabla }V_s\left( x,\pi^2 \right) &=\sum_a{\pi^2 _{sa}\left( x \right) \nabla r_{sa}\left( x \right) +\gamma \mathbb{E} _{s^{\prime}\sim \rho \left( s_1;\pi^2 \right)}\left[ \widetilde{\nabla }V_{s^{\prime}}\left( x,\pi^2 \right) \right]}.  \label{eq:nabla_V2}
    \end{align}
    Subtracting \eqref{eq:nabla_V1} by \eqref{eq:nabla_V2} yields
        \begin{align}
            \widetilde{\nabla }V_s\left( x,\pi^1 \right)  - \widetilde{\nabla }V_{s}\left( x,\pi^2 \right)=& \sum_a{\left( \pi _{sa}^{1}\left( x \right) -\pi _{sa}^2\left( x \right) \right) \nabla r_{sa}\left( x \right)}  \label{eq:diff_tilde_V}
            \\
            &+\gamma \left( \mathbb{E} _{s^{\prime}\sim \rho \left( s_1;\pi ^1 \right)}\left[\widetilde{\nabla }V_{s^{\prime}}\left( x,\pi^1 \right) \right] -\mathbb{E} _{s^{\prime}\sim \rho \left( s_1;\pi^2 \right)}\left[ \widetilde{\nabla }V_{s^{\prime}}\left( x,\pi^2 \right) \right] \right)    \nonumber
        \end{align}
    The first term can be bounded by applying \cref{lem:byproduct} with $I=1,H=1,\norm{\nabla r_{sa}(x)}_2\le C_{rx}$, and the second term can be decomposed into
    \begin{align}
        &\ \mathbb{E} _{s^{\prime}\sim \rho \left( s_1;\pi^1 \right)}\left[ \widetilde{\nabla }V_{s^{\prime}}\left( x,\pi^1 \right) \right] -\mathbb{E} _{s^{\prime}\sim \rho \left( s_1;\pi^2 \right)}\left[ \widetilde{\nabla }V_{s^{\prime}}\left( x,\pi^2 \right) \right] \nonumber
        \\
        =&\ \mathbb{E} _{s^{\prime}\sim \rho \left( s_1;\pi^1 \right)}\left[ \widetilde{\nabla }V_{s^{\prime}}\left( x,\pi^1 \right) -\widetilde{\nabla }V_{s^{\prime}}\left( x,\pi^2 \right) \right]  \nonumber
        \\
        &+\mathbb{E} _{s^{\prime}\sim \rho \left( s_1;\pi^1 \right)}\left[ \widetilde{\nabla }V_{s^{\prime}}\left( x,\pi^2 \right) \right] -\mathbb{E} _{s^{\prime}\sim \rho \left( s_1;\pi^2 \right)}\left[ \widetilde{\nabla }V_{s^{\prime}}\left( x,\pi^2 \right) \right],  \label{eq:bound_1}
    \end{align}
    where the term \eqref{eq:bound_1} can be bounded by applying \cref{lem:byproduct} with $I=1,\ H=2$ and $\norm{\widetilde{\nabla}V_{s^{\prime}} (x,\pi)}_2\le \frac{C_{rx}}{1-\gamma}$. Defining
    \begin{equation*}
        \Delta V:=\max _s\left\| \widetilde{\nabla }V_s\left( x,\pi^1 \right) -\widetilde{\nabla }V_s\left( x,\pi^2 \right) \right\| _2,
    \end{equation*}
    and collecting the observations above into \eqref{eq:diff_tilde_V} lead to
    \begin{equation*}
        \begin{aligned}
            \Delta V\le {C_{rx}}\sqrta\left\| \pi^1-\pi^2 \right\| _2+\gamma \left( \Delta V+\frac{2C_{rx}\sqrta}{1-\gamma}\left\| \pi ^1-\pi^2 \right\| _2 \right),
        \end{aligned}        
    \end{equation*}
    which means
    \begin{align*}
        \Delta V\le \frac{(1+\gamma)C_{rx}\sqrta}{\left( 1-\gamma \right) ^2}\left\| \pi ^1-\pi^2 \right\| _2.
    \end{align*}
    By the recursive rule
    \begin{equation*}
        \begin{aligned}
            \widetilde{\nabla }Q_{sa}\left( x,\pi \right) &=\nabla r_{sa}\left( x \right) +\gamma \mathbb{E} _{s^{\prime}\sim P\left( \cdot |sa \right)}\left[ \widetilde{\nabla }V_{s^{\prime}}\left( x,\pi \right) \right],
        \end{aligned}
    \end{equation*}
    we achieve the similar property of $\widetilde{\nabla}Q\kh{x,\pi}$.
\end{proof}

A byproduct of \cref{pro:property_VQ_estimator} is the Lipschitz smoothness of the implicit differentiations, $V^*(x)$ and $Q^*(x)$.
\begin{proposition}\label{pro:QV_Lipz}
    Under~\cref{assu:r}, given $x_1,x_2\in\mathbb{R}^n$, we have for any $s\in\mdps$, $a\in\mdpa$,
    \begin{align*}
        \left\| \nabla Q_{sa}^{*}(x_1)-\nabla Q_{sa}^{*}(x_2) \right\| _2&\le L_{V1}\left\| x_1-x_2 \right\| _2,
        \\
        \left\| \nabla V_{s}^{*}(x_1)-\nabla V_{s}^{*}(x_2) \right\| _2&\le L_{V1}\left\| x_1-x_2 \right\| _2,
    \end{align*}
    where $L_{V1}>0$ is the smoothness constant for the value functions,
    \begin{equation*}
        \begin{aligned}
        L_{V1}=\frac{(1+\gamma)C_{rx}L_\pi\sqrta}{\left( 1-\gamma \right) ^2}+\frac{L_r}{1-\gamma}.
        \end{aligned}
    \end{equation*}
\end{proposition}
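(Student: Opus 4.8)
The plan is to reduce everything to two already-available estimates: the fully first-order representation $\nabla V_s^*(x) = \widetilde{\nabla} V_s(x, \pi^*(x))$ (and its $Q$-analogue) from \cref{pro:first_order_nablaV}, together with the policy-drift bound of \cref{pro:property_VQ_estimator}. The enabling structural fact is that in $\mathcal{M}_\tau(x)$ only the reward $r(x)$ depends on $x$ while the transition kernel $P$ is fixed; therefore, once the policy $\pi$ is held fixed, the trajectory distribution underlying $\widetilde{\nabla} V_s(x,\pi) = \mathbb{E}[\sum_{t}\gamma^t \nabla r_{s_t a_t}(x)\mid s_0=s,\pi,\mathcal{M}_\tau(x)]$ does not depend on $x$, and $x$ enters solely through the integrand $\nabla r_{s_t a_t}(x)$.

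The first step is to insert the intermediate quantity $\widetilde{\nabla} V_s(x_2,\pi^*(x_1))$ and decompose
\begin{align*}
    \nabla V_s^*(x_1) - \nabla V_s^*(x_2) =&\ \kh{ \widetilde{\nabla} V_s(x_1, \pi^*(x_1)) - \widetilde{\nabla} V_s(x_2, \pi^*(x_1)) } \\
    &+ \kh{ \widetilde{\nabla} V_s(x_2, \pi^*(x_1)) - \widetilde{\nabla} V_s(x_2, \pi^*(x_2)) },
\end{align*}
so that the two brackets isolate, respectively, the variation of the reward gradient and the variation of the policy.

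Next I would bound each bracket. For the first, both terms carry the common policy $\pi^*(x_1)$ and hence the common trajectory distribution; by the observation above, the bracket equals $\sum_t \gamma^t \mathbb{E}[\nabla r_{s_t a_t}(x_1) - \nabla r_{s_t a_t}(x_2)]$, which the $L_r$-smoothness of the reward (\cref{assu:r}), the triangle inequality, and $\sum_t \gamma^t = (1-\gamma)^{-1}$ bound by $\frac{L_r}{1-\gamma}\norm{x_1-x_2}_2$. For the second bracket, both terms share $x_2$ and differ only in policy, so \cref{pro:property_VQ_estimator} with $\pi^1=\pi^*(x_1)$ and $\pi^2=\pi^*(x_2)$ gives $\frac{(1+\gamma)C_{rx}\sqrta}{(1-\gamma)^2}\norm{\pi^*(x_1)-\pi^*(x_2)}_2$, after which the $L_\pi$-Lipschitz continuity of $\pi^*$ from \cref{lem:pismooth} turns the policy gap into $L_\pi\norm{x_1-x_2}_2$. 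Summing the two contributions yields exactly $L_{V1}=\frac{(1+\gamma)C_{rx}L_\pi\sqrta}{(1-\gamma)^2}+\frac{L_r}{1-\gamma}$, and the $Q$-estimate follows from the identical argument with $V$ replaced by $Q$ and the $Q$-version of \cref{pro:property_VQ_estimator}.

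The main obstacle is conceptual rather than computational: one must argue cleanly that in the first bracket the trajectory law is genuinely $x$-independent, i.e. that freezing $\pi$ removes all $x$-dependence except through $\nabla r(\cdot)$, so that the difference collapses onto the reward gradients alone. Once that is granted, the whole estimate is a routine assembly of the previously established Lipschitz bounds, and no new analytic tool is required.
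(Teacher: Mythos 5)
Your proof is correct and takes essentially the same approach as the paper's: the paper splits through the intermediate point $\widetilde{\nabla} V_s(x_1,\pi^*(x_2))$ whereas you split through the mirror-image point $\widetilde{\nabla} V_s(x_2,\pi^*(x_1))$, but both decompositions invoke the same ingredients---\cref{pro:first_order_nablaV}, \cref{pro:property_VQ_estimator}, the $L_\pi$-Lipschitz continuity of $\pi^*$ from \cref{lem:pismooth}, and the $L_r$-smoothness of $r$ from \cref{assu:r}---and yield the identical constant $L_{V1}$. Your explicit remark that the trajectory law becomes $x$-independent once the policy is frozen (since only $r(x)$, not $P$, depends on $x$) is precisely the structural fact the paper uses implicitly when it bounds its reward-variation term by $\frac{L_r}{1-\gamma}\left\| x_1-x_2 \right\|_2$.
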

\begin{proof}
    \cref{pro:first_order_nablaV} and the definition of of the implicit differentiation reveal that 
    \begin{align*}
        \nabla V_{s}^{*}(x)=\widetilde{\nabla }V_s(x,\pi ^*\left( x \right) )=\mathbb{E} \left[ \sum_{t=0}^{\infty}{\gamma ^t}\nabla r_{s_t,a_t}(x) | s_0=s,\pi ^*(x),\mathcal{M} _{\tau}\left( x \right) \right]. 
    \end{align*}
    Through this equality, the following decomposition is derived,
    \begin{align*}
        \nabla V_{s}^{*}(x_1)-\nabla V_{s}^{*}(x_2)=&\left( \widetilde{\nabla }V_s(x_1,\pi ^*\left( x_1 \right) )-\widetilde{\nabla }V_s(x_1,\pi ^*\left( x_2 \right) ) \right) 
        \\
        &+\left( \widetilde{\nabla }V_s(x_1,\pi ^*\left( x_2 \right) )-\widetilde{\nabla }V_s(x_2,\pi ^*\left( x_2 \right) ) \right).
    \end{align*}
    Apply \cref{pro:property_VQ_estimator} and Lipschitz continuity of $\pi^*$,
    \begin{align*}
        \left\| \widetilde{\nabla }V_s(x_1,\pi ^*\left( x_1 \right) )-\widetilde{\nabla }V_s(x_1,\pi ^*\left( x_2 \right) ) \right\| _2\le&\ \frac{(1+\gamma)C_{rx}\sqrta}{\left( 1-\gamma \right) ^2}\left\| \pi ^*\left( x_1 \right) -\pi ^*\left( x_2 \right) \right\| _2
        \\
        \le&\ \frac{(1+\gamma)C_{rx}L_\pi\sqrta}{\left( 1-\gamma \right) ^2}\left\| x_1-x_2 \right\| _2.     
    \end{align*}
    Additionally, Lipschitz smoothness of of $r$ yields
    \begin{align*}
        &\left\| \widetilde{\nabla }V_s(x_1,\pi ^*\left( x_2 \right) )-\widetilde{\nabla }V_s(x_2,\pi ^*\left( x_2 \right) ) \right\| _2
        \\
        \le&\ \mathbb{E} \left[ \sum_{t=0}^{\infty}{\gamma ^t}\left\| \nabla r_{s_t,a_t}(x_1)-\nabla r_{s_t,a_t}(x_2) \right\| _2\,\,| s_0=s,\pi ^*\left( x_2 \right) \right] 
        \\
        \le&\ \frac{L_r}{1-\gamma}\left\| x_1-x_2 \right\| _2.
    \end{align*}
    Consequently,
    \begin{align*}
        \left\| \nabla V_{s}^{*}(x_1)-\nabla V_{s}^{*}(x_2) \right\| _2\le \frac{(1+\gamma)C_{rx}L_\pi\sqrta}{\left( 1-\gamma \right) ^2}\left\| x_1-x_2 \right\| _2+\frac{L_r}{1-\gamma}\left\| x_1-x_2 \right\| _2=L_{V1}\left\| x_1-x_2 \right\| _2.
    \end{align*}
    In a similar fashion, we can also conclude
    \begin{equation*}
        \left\| \nabla Q_{sa}^{*}(x_1)-\nabla Q_{sa}^{*}(x_2) \right\| _2\le L_{V1}\left\| x_1-x_2 \right\| _2.
    \end{equation*}
\end{proof}

In this way, it establishes the Lipschitz smoothness of the hyper-objective $\phi(x)$, which plays a key role in the convergence analysis.
\begin{proposition}\label{pro:hypergrad_Lipz}
    Under the Assumptions~\ref{assu:l} and~\ref{assu:r}, $\nabla \phi(x)$ is $L_{\phi}$-Lipschitz continuous, i.e., for any $x_1,x_2\in\mathbb{R}^n$,
    \begin{equation*}
       \norm{\nabla \phi(x_1) - \nabla \phi(x_2)}_2\le L_{\phi}\norm{x_1-x_2}_2, 
    \end{equation*}
    where $L_{\phi}>0$ is defined by
    \begin{equation*}
        \begin{aligned}
            L_\phi = L_{l1}+L_lHIL_\pi\sqrta+2\tau ^{-1}HI\left( C_lL_{V_1}+\frac{C_{rx}}{1-\gamma}L_l \right) +\frac{2H^2I^2C_lC_{rx}L_{\pi}}{\tau \left( 1-\gamma \right)}\sqrta.
        \end{aligned}
    \end{equation*}
\end{proposition}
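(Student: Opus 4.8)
The plan is to write $\nabla\phi(x)$ via the fully first-order characterization of \cref{pro:modelfree_hypergrad_chara} as a sum of two expectations of the exact form treated by \cref{lem:Zx_lipz}, and then to apply that lemma term by term. Concretely, set
\begin{equation*}
Z_1(x) := \mathbb{E}_{d_i\sim\rho(d;\pi^*(x))}\zkh{\nabla l(d_1,\ldots,d_I;x)},
\end{equation*}
and
\begin{equation*}
Z_2(x) := \tau^{-1}\mathbb{E}_{d_i\sim\rho(d;\pi^*(x))}\zkh{l(d_1,\ldots,d_I;x)\,\sum_i\sum_h \nabla\kh{Q^*_{s_h^i a_h^i}(x) - V^*_{s_h^i}(x)}},
\end{equation*}
so that $\nabla\phi = Z_1 + Z_2$ and $L_\phi$ is obtained as $L_{Z_1} + L_{Z_2}$.

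For $Z_1$, the integrand is $z_1(d;x) = \nabla l(d;x)$, which is bounded by $L_l$ (since $l$ is $L_l$-Lipschitz by \cref{assu:l}, so $\norm{\nabla l}_2\le L_l$) and $L_{l1}$-Lipschitz in $x$. Applying \cref{lem:Zx_lipz} with $C = L_l$ and $L_z = L_{l1}$ gives $L_{Z_1} = L_{l1} + L_l HI L_\pi \sqrta$, which recovers the first two terms of $L_\phi$.

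For $Z_2$, I would treat the integrand $z_2(d;x) = \tau^{-1} l(d;x)\, g(d;x)$ with $g(d;x) := \sum_i\sum_h \nabla\kh{Q^*_{s_h^i a_h^i}(x) - V^*_{s_h^i}(x)}$. First, using $\abs{l}\le C_l$ from \cref{assu:l} together with the uniform bound $\norm{\nabla Q^*_{sa}}_2, \norm{\nabla V^*_s}_2 \le C_{rx}/(1-\gamma)$ obtained from \cref{pro:first_order_nablaV} and $\norm{\nabla r_{sa}}_2\le C_{rx}$, the $HI$ summands give $\norm{g}_2 \le 2HI C_{rx}/(1-\gamma)$, so $z_2$ is bounded by $C := 2HI C_l C_{rx}/(\tau(1-\gamma))$. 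Next, the product decomposition
\begin{equation*}
z_2(d;x_1) - z_2(d;x_2) = \tau^{-1}\zkh{l(d;x_1)\kh{g(d;x_1) - g(d;x_2)} + \kh{l(d;x_1) - l(d;x_2)}g(d;x_2)},
\end{equation*}
combined with the $L_{V1}$-smoothness of $\nabla Q^*$ and $\nabla V^*$ from \cref{pro:QV_Lipz} (yielding $\norm{g(d;x_1) - g(d;x_2)}_2 \le 2HI L_{V1}\norm{x_1-x_2}_2$) and the $L_l$-Lipschitzness of $l$, shows $z_2$ is Lipschitz in $x$ with constant $L_z = 2\tau^{-1}HI\kh{C_l L_{V1} + \tfrac{C_{rx}}{1-\gamma}L_l}$. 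Applying \cref{lem:Zx_lipz} once more with this $C$ and $L_z$ yields
\begin{equation*}
L_{Z_2} = 2\tau^{-1}HI\kh{C_l L_{V1} + \tfrac{C_{rx}}{1-\gamma}L_l} + \frac{2H^2I^2 C_l C_{rx} L_\pi}{\tau(1-\gamma)}\sqrta,
\end{equation*}
which are exactly the last two terms of $L_\phi$; summing $L_{Z_1} + L_{Z_2}$ gives the stated constant.

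The main obstacle is $Z_2$: its integrand is a product of a scalar and a sum of implicit-differentiation gradients, so establishing both its uniform boundedness and its Lipschitz continuity requires combining several estimates at once---the uniform bounds on $l$ and on $\nabla Q^*, \nabla V^*$, together with the smoothness estimate from \cref{pro:QV_Lipz}. The only subtlety is ensuring that these bounds hold uniformly over all trajectory tuples $(d_1,\ldots,d_I)$, so that the hypotheses of \cref{lem:Zx_lipz} are met; this holds because the state--action pairs appearing in $g$ range over the finite tabular spaces and all per-entry bounds are uniform in $(s,a)$.
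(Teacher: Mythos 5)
Your proposal is correct and follows essentially the same route as the paper's proof: the same split of $\nabla\phi$ via \cref{pro:modelfree_hypergrad_chara} into the two expectation terms, the same application of \cref{lem:Zx_lipz} to each, and the same bounding of the product integrand using $\abs{l}\le C_l$, $\norm{\nabla Q^*_{sa}}_2,\norm{\nabla V^*_s}_2\le C_{rx}/(1-\gamma)$, and the $L_{V1}$-smoothness from \cref{pro:QV_Lipz}. The only cosmetic difference is that you absorb the factor $\tau^{-1}$ into the integrand of the second term, whereas the paper keeps it outside and multiplies at the end; the resulting constant is identical.
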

\begin{proof}
    Given
    \begin{align}
        \nabla \phi(x) =&\ \mathbb{E} _{d_i\sim \rho \left( d;\pi ^*\left( x \right) \right)}\left[ \nabla l\left( d_1,d_2,\ldots,d_I;x \right) \right]   \label{eq:decom_hypergrad}
        \\
        &+\tau ^{-1}\mathbb{E} _{d_i\sim \rho \left( d;\pi ^*\left( x \right) \right)}\left[ l\left( d_1,d_2,\ldots,d_I;x \right) \left( \sum_i{\sum_h{\nabla \left( Q_{s_{h}^{i}a_{h}^{i}}^{*}\left( x \right) -V_{s_{h}^{i}}^{*}\left( x \right) \right)}} \right) \right],     \nonumber
    \end{align}
    it is deduced from \cref{lem:Zx_lipz} that the first term in \eqref{eq:decom_hypergrad} is $\kh{L_{l1}+L_lHIL_\pi\sqrta}$-Lipschitz continuous since $\nabla l\kh{d_1,d_2,\ldots,d_I;x}$ is $L_l$-boundedness and $L_{l1}$-Lipschitz continuous, revealed by \cref{assu:l}. Subsequently, we consider the second term in \eqref{eq:decom_hypergrad}. Specifically, assembling the $L_l$-Lipschitz continuity of $l$, the $L_{V1}$-Lipschitz continuity of $\nabla V^*_{s}$, and the $L_{V1}$-Lipschitz continuity of $\nabla Q^*_{sa}$, along with the boundedness of these functions, we conclude the function defined by
    \begin{equation*}
        \begin{aligned}
            z\kh{d_1,d_2,\ldots,d_I;x}:=l\left( d_1,d_2,\ldots,d_I;x \right) \left( \sum_i{\sum_h{\nabla \left( Q_{s_{h}^{i}a_{h}^{i}}^{*}\left( x \right) -V_{s_{h}^{i}}^{*}\left( x \right) \right)}} \right)
        \end{aligned}
    \end{equation*}
    is bounded by 
    \begin{equation*}
        \left\| z\left( d_1,d_2,\ldots,d_I;x \right) \right\| _2\le 2HIC_l\frac{C_{rx}}{1-\gamma},
    \end{equation*}
    and $L_z$-Lipschitz continuous with
    \begin{equation*}
        L_z = 2HI\kh{C_lL_{V_1}+\frac{C_{rx}}{1-\gamma}L_l}.
    \end{equation*}
    Applying \cref{lem:Zx_lipz} on the second term in \eqref{eq:decom_hypergrad} implies it is $\kh{L_z+\frac{2H^2I^2C_lC_{rx}L_{\pi}}{1-\gamma}\sqrt{\left| \mathcal{A} \right|}}$-Lipschitz continuous. Combining the Lipschitz constants of the two terms in \eqref{eq:decom_hypergrad} provides the Lipschitz constant of $\nabla \phi(x)$, i.e.,
    \begin{equation*}
        \begin{aligned}
            L_{\phi}=&\kh{L_{l1}+L_lHIL_\pi\sqrta}  +\tau ^{-1}\left( L_z+\frac{2H^2I^2C_lC_{rx}L_{\pi}}{1-\gamma}\sqrt{\left| \mathcal{A} \right|} \right) 
            \\
            =&\ L_{l1}+L_lHIL_\pi\sqrta+2\tau ^{-1}HI\left( C_lL_{V_1}+\frac{C_{rx}}{1-\gamma}L_l \right) +\frac{2H^2I^2C_lC_{rx}L_{\pi}}{\tau \left( 1-\gamma \right)}\sqrta.
        \end{aligned}
    \end{equation*}
    
\end{proof}

Next, we analyze the error brought by the estimator $\widetilde{\nabla}\phi\kh{x,\pi}$ to approximate the true hyper-gradient $\nabla \phi(x)$ in the following proposition. $\widetilde{\nabla}\phi\kh{x,\pi}$.
\begin{proposition}\label{pro:lipz_hp_estimator}   
    Under Assumptions~\ref{assu:l} and \ref{assu:r}, for any upper-level variable $x\in\mathbb{R}^n$ and policies $\pi^1,\pi^2$, we have
    \begin{equation*}
        \begin{aligned}
            \norm{\widetilde{\nabla} \phi(x,\pi^1)-\widetilde{\nabla} \phi(x,\pi^2)}_2\le L_{\widetilde{\phi }}\norm{\pi^1-\pi^2}_2,
        \end{aligned}
    \end{equation*}
    with $L_{\widetilde{\phi }}={HIL_l\sqrta}+\frac{2\tau^{-1}C_lC_{rx}HI\sqrt{\left| \mathcal{A} \right|}}{1-\gamma}\left( \frac{1+\gamma}{1-\gamma}+HI \right)$.
    Specifically, taking $\pi^1=\pi^*(x)$ and $\pi^2=\pi$ yields
        \begin{equation*}
        \begin{aligned}
            \norm{{\nabla} \phi(x)-\widetilde{\nabla} \phi(x,\pi)}_2\le L_{\widetilde{\phi }}\norm{\pi^*(x)-\pi}_2.
        \end{aligned}
    \end{equation*}
\end{proposition}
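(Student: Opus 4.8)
The plan is to bound $\widetilde{\nabla}\phi(x,\pi^1)-\widetilde{\nabla}\phi(x,\pi^2)$ by leveraging the two-term structure of the estimator \eqref{eq:free_hyper_esti}. Abbreviating the lower-level gradient aggregate as $G(x,\pi):=\sum_i\sum_h\widetilde{\nabla}(Q_{s_h^ia_h^i}-V_{s_h^i})(x,\pi)$, I would first split the difference into the contribution of the $\nabla l$ expectation (Term~I) and that of the $\tau^{-1}l\cdot G$ expectation (Term~II). In Term~I the integrand $\nabla l(d_1,\ldots,d_I;x)$ is policy-independent, so only the sampling law $\rho(d;\pi)$ changes; since $l$ is $L_l$-Lipschitz in $x$ by \cref{assu:l}, we have $\norm{\nabla l}_2\le L_l$, and \cref{lem:byproduct} applied with $z=\nabla l$ and $C=L_l$ yields $\norm{\text{Term I}}_2\le HIL_l\sqrta\,\norm{\pi^1-\pi^2}_2$, which is the first summand of $L_{\widetilde{\phi}}$.

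Term~II is the crux, because both the integrand and the distribution depend on the policy. I would introduce a cross term and write the inner expression as
\begin{equation*}
\mathbb{E}_{\rho(d;\pi^1)}[l\,G(x,\pi^1)]-\mathbb{E}_{\rho(d;\pi^1)}[l\,G(x,\pi^2)]+\mathbb{E}_{\rho(d;\pi^1)}[l\,G(x,\pi^2)]-\mathbb{E}_{\rho(d;\pi^2)}[l\,G(x,\pi^2)],
\end{equation*}
where $\mathbb{E}_{\rho(d;\pi)}$ abbreviates $\mathbb{E}_{d_i\sim\rho(d;\pi)}$. The first, same-distribution piece is governed by the integrand drift: with $\abs{l}\le C_l$ and \cref{pro:property_VQ_estimator}, every summand of $G(x,\pi^1)-G(x,\pi^2)$ is bounded by $\tfrac{2(1+\gamma)C_{rx}\sqrta}{(1-\gamma)^2}\norm{\pi^1-\pi^2}_2$, and summing the $HI$ terms bounds this piece by $\tfrac{2C_lHI(1+\gamma)C_{rx}\sqrta}{(1-\gamma)^2}\norm{\pi^1-\pi^2}_2$. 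The second, distribution-drift piece keeps the fixed integrand $l\,G(x,\pi^2)$; since each $\widetilde{\nabla}Q,\widetilde{\nabla}V$ is bounded by $\tfrac{C_{rx}}{1-\gamma}$ (as established in the proof of \cref{pro:property_VQ_estimator}), the integrand is bounded by $\tfrac{2C_lHIC_{rx}}{1-\gamma}$, so \cref{lem:byproduct} with that constant gives $\tfrac{2C_lH^2I^2C_{rx}\sqrta}{1-\gamma}\norm{\pi^1-\pi^2}_2$. Note that only boundedness of the integrand is needed here, which is exactly what \cref{lem:byproduct} requires.

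Collecting the two pieces of Term~II, multiplying by $\tau^{-1}$, and factoring out $\tfrac{2\tau^{-1}C_lC_{rx}HI\sqrta}{1-\gamma}$ yields exactly $\tfrac{2\tau^{-1}C_lC_{rx}HI\sqrta}{1-\gamma}\big(\tfrac{1+\gamma}{1-\gamma}+HI\big)$; adding the Term~I bound reproduces the stated $L_{\widetilde{\phi}}$. The specialization is then immediate upon setting $\pi^1=\pi^*(x)$ and $\pi^2=\pi$, since \cref{pro:first_order_nablaV} gives $\widetilde{\nabla}V(x,\pi^*(x))=\nabla V^*(x)$ and $\widetilde{\nabla}Q(x,\pi^*(x))=\nabla Q^*(x)$, so by \cref{pro:modelfree_hypergrad_chara} the estimator \eqref{eq:free_hyper_esti} evaluated at $\pi^*(x)$ coincides with the exact hyper-gradient $\nabla\phi(x)$.

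The main obstacle is the bookkeeping of the two distinct sources of policy-dependence in Term~II, which the cross-term decomposition isolates cleanly. The only genuinely analytic input is the integrand-drift estimate, which is precisely \cref{pro:property_VQ_estimator}, while every distribution-drift contribution reduces mechanically to \cref{lem:byproduct}; the remaining work is routine combination of constants.
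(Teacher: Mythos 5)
Your proposal is correct and follows essentially the same route as the paper's proof: the same split into the $\nabla l$ term (handled via \cref{lem:byproduct} with $\norm{\nabla l}_2\le L_l$) and the $\tau^{-1}l\cdot G$ term, with the identical cross-term decomposition separating integrand drift (bounded via \cref{pro:property_VQ_estimator}) from distribution drift (bounded via \cref{lem:byproduct}), yielding the same constants. The specialization argument via $\widetilde{\nabla}\phi(x,\pi^*(x))=\nabla\phi(x)$ also matches the paper's intent.
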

\begin{proof}
    Considering $\norm{\nabla l}_2\le L_l$ and applying \cref{lem:byproduct}, we obtain
    \begin{align*}
        \left\| \mathbb{E} _{d_i\sim \rho \left( d;\pi ^1 \right)}\left[ \nabla l\left( d_1,d_2,\ldots,d_I;x \right) \right] -\mathbb{E} _{d_i\sim \rho \left( d;\pi ^2 \right)}\left[ \nabla l\left( d_1,d_2,\ldots,d_I;x \right) \right] \right\| _2\le {HIL_l\sqrta}\left\| \pi ^1-\pi ^2 \right\| _2.
    \end{align*}
    Denote $z\left( d_1,d_2,\ldots,d_I; x,\pi \right) :=l\left( d_1,d_2,\ldots,d_I;x \right) \left( \sum_i{\sum_h{\widetilde{\nabla }\left( Q_{s_{h}^{i}a_{h}^{i}}-V_{s_{h}^{i}} \right) \left( x,\pi \right)}} \right)$. Taking into account the boundedness and Lipschitz continuity of $l$ in \cref{assu:l}, and of $\widetilde{\nabla}Q,\ \widetilde{\nabla}V$ in~\cref{pro:property_VQ_estimator}, we can establish the boundedness,
    \begin{align}
        \left\| z\left( d_1,d_2,\ldots,d_I;x,\pi \right) \right\| _2\le 2C_lHI\frac{C_{rx}}{1-\gamma}, \label{eq:boundz}
    \end{align}
    and the Lipschitz continuity of $z\left( d_1,d_2,\ldots,d_I;x,\pi \right) $, i.e.,
    \begin{align}
        \left\| z\left( d_1,d_2,\ldots,d_I;x,\pi _1 \right) -z\left( d_1,d_2,\ldots,d_I;x,\pi _2 \right) \right\| _2\le 2C_lHI\frac{(1+\gamma )C_{rx}\sqrt{\left| \mathcal{A} \right|}}{\left( 1-\gamma \right) ^2}\left\| \pi _1-\pi _2 \right\| _2. \label{eq:Lipz}
    \end{align}
    In this way,
    \begin{align}
        &\left\| \mathbb{E} _{d_i\sim \rho \left( d;\pi ^1 \right)}\left[ z\left( x,\pi _1 \right) \right] -\mathbb{E} _{d_i\sim \rho \left( d;\pi ^2 \right)}\left[ z\left( x,\pi _2 \right) \right] \right\| _2 \nonumber
        \\
        \le& \left\| \mathbb{E} _{d_i\sim \rho \left( d;\pi ^1 \right)}\left[ z\left( x,\pi _1 \right) \right] -\mathbb{E} _{d_i\sim \rho \left( d;\pi ^1 \right)}\left[ z\left( x,\pi _2 \right) \right] \right\| _2 \label{eq:boundz1}
        \\
        &+\left\| \mathbb{E} _{d_i\sim \rho \left( d;\pi ^1 \right)}\left[ z\left( x,\pi _2 \right) \right] -\mathbb{E} _{d_i\sim \rho \left( d;\pi ^2 \right)}\left[ z\left( x,\pi _2 \right) \right] \right\| _2 \label{eq:boundz2}
        \\
        \le&\ 2C_lHI\frac{(1+\gamma )C_{rx}\sqrt{\left| \mathcal{A} \right|}}{\left( 1-\gamma \right) ^2}\left\| \pi _1-\pi _2 \right\| _2+ 2C_lH^2I^2\frac{C_{rx}}{1-\gamma}\sqrta\left\| \pi _1-\pi _2 \right\| _2, \nonumber
        \\
        =&\ \frac{2C_lC_{rx}HI\sqrt{\left| \mathcal{A} \right|}}{1-\gamma}\left( \frac{1+\gamma}{1-\gamma}+HI \right)  \left\| \pi _1-\pi _2 \right\| _2, \nonumber
    \end{align}
    where we bound \eqref{eq:boundz1} by the Lipschitz continuity \eqref{eq:Lipz}, and bound \eqref{eq:boundz2} by applying \cref{lem:byproduct} with~\eqref{eq:boundz}.
    Given
    \begin{align}
    \widetilde{\nabla} \phi(x,\pi) =&\ \mathbb{E} _{d_i\sim \rho \left( d;\pi \right)}\left[ \nabla l\left( d_1,d_2,\ldots,d_I;x \right) \right]   \nonumber
    \\
    &+\tau ^{-1}\mathbb{E} _{d_i\sim \rho \left( d;\pi \right)}\left[ l\left( d_1,d_2,\ldots,d_I;x \right) \left( \sum_i{\sum_h{\widetilde{\nabla} \left( Q_{s_{h}^{i}a_{h}^{i}}-V_{s_{h}^{i}}\right)\kh{x,\pi}}} \right) \right],    \nonumber
    \end{align}
    we have
    \begin{equation*}
        \begin{aligned}
            &\left\| \widetilde{\nabla }\phi (x,\pi ^1)-\widetilde{\nabla }\phi (x,\pi ^2) \right\| _2
            \\
            \le& \left\| \mathbb{E} _{d_i\sim \rho \left( d;\pi ^1 \right)}\left[ \nabla l\left( d_1,d_2,\ldots,d_I;x \right) \right] -\mathbb{E} _{d_i\sim \rho \left( d;\pi ^2 \right)}\left[ \nabla l\left( d_1,d_2,\ldots,d_I;x \right) \right] \right\| _2
            \\
            &+\tau ^{-1}\left\| \mathbb{E} _{d_i\sim \rho \left( d;\pi ^1 \right)}\left[ z\left( d_1,d_2,\ldots,d_I;x,\pi ^1 \right) \right] -\mathbb{E} _{d_i\sim \rho \left( d;\pi ^2 \right)}\left[ z\left( d_1,d_2,\ldots,d_I;x,\pi ^2 \right) \right] \right\| _2
            \\
            \le &\ {HIL_l\sqrta}\left\| \pi ^1-\pi ^2 \right\| _2+\frac{2\tau^{-1}C_lC_{rx}HI\sqrt{\left| \mathcal{A} \right|}}{1-\gamma}\left( \frac{1+\gamma}{1-\gamma}+HI \right)  \left\| \pi _1-\pi _2 \right\| _2
            \\
            =&\ L_{\widetilde{\phi }}\left\| \pi _1-\pi _2 \right\| _2.
        \end{aligned}
    \end{equation*}
\end{proof}

With all the lemmas and propositions now proven, we turn our attention to presenting the convergence theorem for the proposed algorithm, SoBiRL.
\begin{theorem}\label{the:convergent_SoBiRL}
    Under Assumptions~\ref{assu:l} and~\ref{assu:r} and given the accuracy in \cref{alg:SoBiRL}, we can set the constant step size~$\beta < \frac{1}{2L_\phi}$, then the iterates $\{x_k\}$~satisfy
    \begin{equation*}
        \frac{1}{K}\sum_{k=1}^K{\left\| \nabla \phi \left( x_k \right) \right\|_2 ^2}\le \frac{\phi \left( x_1 \right) -\phi ^*}{K\left( \frac{\beta}{2}-\beta ^2L_{\phi} \right)}+\frac{1+2\beta L_{\phi}}{1-2\beta L_{\phi}}L^2_{\widetilde{\phi }}\ \epsilon\ ,
    \end{equation*}
    where $\phi^*$ is the minimum of the hyper-objective $\phi(x)$, and
    \begin{equation*}
        \begin{aligned}
             L_\phi =&\  L_{l1}+L_lHIL_\pi\sqrta+2\tau ^{-1}HI\left( C_lL_{V_1}+\frac{C_{rx}}{1-\gamma}L_l \right) +\frac{2H^2I^2C_lC_{rx}L_{\pi}}{\tau \left( 1-\gamma \right)}\sqrta,
            \\
            L_{\widetilde{\phi }}=&\ {HIL_l\sqrta}+\frac{2\tau^{-1}C_lC_{rx}HI\sqrt{\left| \mathcal{A} \right|}}{1-\gamma}\left( \frac{1+\gamma}{1-\gamma}+HI \right).
        \end{aligned}
    \end{equation*}
\end{theorem}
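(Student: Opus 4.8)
The plan is to run the standard inexact gradient-descent (descent lemma) argument, treating $\widetilde{\nabla}\phi(x_k,\pi_k)$ as an approximate hyper-gradient whose deviation from $\nabla\phi(x_k)$ is controlled entirely by the lower-level solve accuracy. First I would record the per-step estimator error. Writing $e_k:=\widetilde{\nabla}\phi(x_k,\pi_k)-\nabla\phi(x_k)$, I apply \cref{pro:lipz_hp_estimator} with $\pi^1=\pi^*(x_k)$ and $\pi^2=\pi_k$ together with the accuracy guarantee $\norm{\pi_k-\pi^*(x_k)}_2^2\le\epsilon$ enforced in line~2 of \cref{alg:SoBiRL}, to obtain
\begin{equation*}
    \norm{e_k}_2^2 = \norm{\widetilde{\nabla}\phi(x_k,\pi_k)-\nabla\phi(x_k)}_2^2 \le L_{\widetilde{\phi}}^2\norm{\pi_k-\pi^*(x_k)}_2^2 \le L_{\widetilde{\phi}}^2\,\epsilon.
\end{equation*}
This is the single place where the inner accuracy $\epsilon$ enters, and it produces the additive error term in the final bound.

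Next, since $\nabla\phi$ is $L_\phi$-Lipschitz by \cref{pro:hypergrad_Lipz}, I would invoke the descent lemma on the update $x_{k+1}=x_k-\beta\widetilde{\nabla}\phi(x_k,\pi_k)$:
\begin{equation*}
    \phi(x_{k+1}) \le \phi(x_k) - \beta\innerp{\nabla\phi(x_k),\widetilde{\nabla}\phi(x_k,\pi_k)} + \tfrac{L_\phi\beta^2}{2}\norm{\widetilde{\nabla}\phi(x_k,\pi_k)}_2^2.
\end{equation*}
Substituting $\widetilde{\nabla}\phi(x_k,\pi_k)=\nabla\phi(x_k)+e_k$, I would bound the quadratic term by $\norm{\widetilde{\nabla}\phi(x_k,\pi_k)}_2^2\le 2\norm{\nabla\phi(x_k)}_2^2+2\norm{e_k}_2^2$ and control the cross term by Young's inequality $-\beta\innerp{\nabla\phi(x_k),e_k}\le\tfrac{\beta}{2}\norm{\nabla\phi(x_k)}_2^2+\tfrac{\beta}{2}\norm{e_k}_2^2$. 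Collecting the coefficients yields the one-step inequality
\begin{equation*}
    \phi(x_{k+1}) \le \phi(x_k) - \kh{\tfrac{\beta}{2}-\beta^2 L_\phi}\norm{\nabla\phi(x_k)}_2^2 + \kh{\tfrac{\beta}{2}+\beta^2 L_\phi}\norm{e_k}_2^2,
\end{equation*}
where the constraint $\beta<\tfrac{1}{2L_\phi}$ guarantees that the leading coefficient $\tfrac{\beta}{2}-\beta^2 L_\phi=\tfrac{\beta}{2}(1-2\beta L_\phi)$ is strictly positive.

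Finally I would rearrange, insert $\norm{e_k}_2^2\le L_{\widetilde{\phi}}^2\epsilon$, sum over $k=1,\dots,K$, and telescope using $\phi(x_{K+1})\ge\phi^*$ to get
\begin{equation*}
    \kh{\tfrac{\beta}{2}-\beta^2 L_\phi}\sum_{k=1}^K\norm{\nabla\phi(x_k)}_2^2 \le \phi(x_1)-\phi^* + \kh{\tfrac{\beta}{2}+\beta^2 L_\phi}K L_{\widetilde{\phi}}^2\epsilon.
\end{equation*}
Dividing through by $K\kh{\tfrac{\beta}{2}-\beta^2 L_\phi}$ and simplifying the ratio $\bigl(\tfrac{\beta}{2}+\beta^2 L_\phi\bigr)/\bigl(\tfrac{\beta}{2}-\beta^2 L_\phi\bigr)=(1+2\beta L_\phi)/(1-2\beta L_\phi)$ reproduces the claimed bound exactly. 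I expect no genuine obstacle in this theorem itself: the conceptual heavy lifting has already been done in \cref{pro:hypergrad_Lipz} (smoothness of $\nabla\phi$) and \cref{pro:lipz_hp_estimator} (the estimator error bound), so the only care required here is selecting the crude norm-splitting and unit-parameter Young step that land the precise constants $\tfrac{\beta}{2}\pm\beta^2 L_\phi$, and tracking the step-size condition that keeps them positive.
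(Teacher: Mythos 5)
Your proposal is correct and follows essentially the same route as the paper's proof: both invoke the $L_\phi$-smoothness from \cref{pro:hypergrad_Lipz} in a descent lemma on the inexact update, split the cross and quadratic terms with the same unit-parameter Young/norm inequalities to obtain the one-step bound $\phi(x_{k+1})\le\phi(x_k)-\bigl(\tfrac{\beta}{2}-\beta^2L_\phi\bigr)\norm{\nabla\phi(x_k)}^2+\bigl(\tfrac{\beta}{2}+\beta^2L_\phi\bigr)\norm{e_k}^2$, control $\norm{e_k}^2\le L_{\widetilde{\phi}}^2\epsilon$ via \cref{pro:lipz_hp_estimator}, and telescope. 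The only difference is cosmetic (naming the error $e_k$ explicitly), so there is nothing to add.
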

\begin{proof}
    In this proof, all the notation $\norm{\cdot}$ without subscripts denotes the 2-norm for simplicity. Based on the $L_\phi$-Lipschitz smoothness of $\phi(x)$ revealed by \cref{pro:hypergrad_Lipz}, a gradient descent step results in the decrease in the hyper-objective $\varphi$:
    \begin{align}
        \phi \left( x_{k+1} \right) \le&\ \phi \left( x_k \right) +\left< \nabla \phi \left( x_k \right) ,x_{k+1}-x_k \right> +\frac{L_{\phi}}{2}\left\| x_{k+1}-x_k \right\| ^2 \nonumber
        \\
        =&\ \phi \left( x_k \right) -\beta \left< \nabla \phi \left( x_k \right) ,\widetilde{\nabla }\phi \left( x_k,\pi _k \right) \right> +\frac{\beta ^2L_{\phi}}{2}\left\| \widetilde{\nabla }\phi \left( x_k,\pi _k \right) \right\| ^2    \label{eq:sobirl_explain_xupdate}
        \\
        =&\ \phi \left( x_k \right) -\beta \left< \nabla \phi \left( x_k \right) ,\widetilde{\nabla }\phi \left( x_k,\pi _k \right) -\nabla \phi \left( x_k \right) +\nabla \phi \left( x_k \right) \right> +\frac{\beta ^2L_{\phi}}{2}\left\| \widetilde{\nabla }\phi \left( x_k,\pi _k \right) \right\| ^2 \nonumber
        \\
        =&\ \phi \left( x_k \right) -\beta \left\| \nabla \phi \left( x_k \right) \right\| ^2-\beta \left< \nabla \phi \left( x_k \right) ,\widetilde{\nabla }\phi \left( x_k,\pi _k \right) -\nabla \phi \left( x_k \right) \right> \nonumber
        \\
        &+\frac{\beta ^2L_{\phi}}{2}\left\| \widetilde{\nabla }\phi \left( x_k,\pi _k \right) -\nabla \phi \left( x_k \right) +\nabla \phi \left( x_k \right) \right\| ^2   \nonumber
        \\
        \le&\ \phi \left( x_k \right) -\beta \left\| \nabla \phi \left( x_k \right) \right\| ^2+\frac{\beta}{2}\left( \left\| \nabla \phi \left( x_k \right) \right\| ^2+\left\| \widetilde{\nabla }\phi \left( x_k,\pi _k \right) -\nabla \phi \left( x_k \right) \right\| ^2 \right) \nonumber
        \\
        &+\beta ^2L_{\phi}\left( \left\| \widetilde{\nabla }\phi \left( x_k,\pi _k \right) -\nabla \phi \left( x_k \right) \right\| ^2+\left\| \nabla \phi \left( x_k \right) \right\| ^2 \right) \label{eq:sobirl_explain_inequ}
        \\
        =&\ \phi \left( x_k \right) -\left( \frac{\beta}{2}-\beta ^2L_{\phi} \right) \left\| \nabla \phi \left( x_k \right) \right\| ^2+\left( \frac{\beta}{2}+\beta ^2L_{\phi} \right) \left\| \nabla \phi \left( x_k \right) -\widetilde{\nabla }\phi \left( x_k,\pi _k \right) \right\| ^2 ,    \label{eq:sobirl_explain_decrease}
    \end{align}
    where the updating rule $x_{k+1}=x_k-\beta \widetilde{\nabla }\phi \left( x_k,\pi _k \right) $ yields \eqref{eq:sobirl_explain_xupdate}, and \eqref{eq:sobirl_explain_inequ} comes from the inequalities $\abs{\innerp{a,b}}\le\frac{1}{2}\kh{\norm{a}^2+\norm{b}^2}$ and $\frac{1}{2}\norm{a+b}^2\le \norm{a}^2+\norm{b}^2$. Drawing from \cref{pro:lipz_hp_estimator} and the approximate lower-level solution $\pi_k$ satisfying $\norm{\pi^*(x_k)-\pi_k}^2 \le \epsilon$, we can incorporate
    \begin{equation*}
        \begin{aligned}
            \norm{{\nabla} \phi(x_k)-\widetilde{\nabla} \phi(x_k,\pi_k)}^2\le L^2_{\widetilde{\phi }}\norm{\pi^*(x_k)-\pi_k}^2 \le L^2_{\widetilde{\phi }}\ \epsilon,
        \end{aligned}
    \end{equation*}
    into \eqref{eq:sobirl_explain_decrease} and gain the estimation
    \begin{equation}
        \phi \left( x_{k+1} \right) \le \phi \left( x_k \right) -\left( \frac{\beta}{2}-\beta ^2L_{\phi} \right) \left\| \nabla \phi \left( x_k \right) \right\| ^2+\left( \frac{\beta}{2}+\beta ^2L_{\phi} \right) L^2_{\widetilde{\phi }}\ \epsilon .  \label{eq:telescope}
    \end{equation}
    Telescoping index from $k=1$ to $k=K$, we find that
    \begin{align*}
        \left( \frac{\beta}{2}-\beta ^2L_{\phi} \right) \sum_{k=1}^K{\left\| \nabla \phi \left( x_k \right) \right\| ^2}\le \phi \left( x_1 \right) -\phi ^*+K\left( \frac{\beta}{2}+\beta ^2L_{\phi} \right) L^2_{\widetilde{\phi }}\ \epsilon,
    \end{align*}
    where we define $\phi^*$ to be minimum of the hyper-objective. By setting the step size $\beta < \frac{1}{2L_\phi}$ and dividing both sides by $ K\left( \frac{\beta}{2}-\beta ^2L_{\phi} \right)$, we arrive at the conclusion.    
\end{proof}

\revise{The proof of \cref{the:convergent_SoBiRL} reveals that $\frac{1}{K}\sum||\nabla\phi(x_k)||^2_2=\mathcal{O}(\frac{1}{K\beta})$, and we can substitute $\beta=\mathcal{O}(1/L_{\phi})$ and $L_{{\phi}}=\mathcal{O}(\sqrt{|\mathcal{A}|})$ to obtain the outer iteration complexity as $\mathcal{O}(\sqrt{|\mathcal{A}|}\epsilon^{-1})$.}

\section{STOCHASTIC MODEL-FREE SOFT BILEVEL RL}\label{sec:stocsobirl}
In this section, the details of the stochastic model-free soft bilevel reinforcement learning algorithm, Stoc-SoBiRL are given. Moreover, we provide the statistical properties (bias and variance of the stochastic hyper-gradient) for the sampling scheme \cref{alg:samplingprocess} in \cref{sec:stoc_estimator}, and the convergence result of Stoc-SoBiRL \cref{alg:StocSoBiRL} in \cref{sec:conver_stocsobirl}.

\subsection{Details of Stoc-SoBiRL}\label{sec:detail_stocsobirl}
\begin{algorithm}[hbp]
    \caption{\texttt{HyperEstimator}($x,\pi,M,J,T$)}
    \label{alg:samplingprocess}
    \begin{algorithmic}[1]
        \REQUIRE reward parameter $x$, policy $\pi$, sampling configurations $M,J,T$
        \STATE Independently sample $M$ trajectory tuples $\bm{d}^m:=(d^m_1,d^m_2,\ldots,d^m_I)$ for $m=1,2,\ldots,M$, with each $H$-length trajectory $d^m_i=\{(s_h^{m,i},a_h^{m,i})\}_{h=0}^{H-1}$, and and denote $\bm{D}:=\{\bm{d}^1,\bm{d}^2,\ldots,\bm{d}^M\}$
        \STATE For $m=1,2,\ldots,M$, $i=1,2,\ldots,I$ and $h=0,1,\ldots,H-1$, sample $2J$ independent trajectories of length $T$, i.e., for $j=1,2,\ldots,J$,
        \begin{align*}
            \xi ^j\left( s_{h}^{m,i},a_{h}^{m,i} \right) :=&\left\{ \left( s_{q,0}^{m,i,h,j}=s_{h}^{m,i},a_{q,0}^{m,i,h,j}=a_{h}^{m,i} \right) ;\left( s_{q,1}^{m,i,h,j},a_{q,1}^{m,i,h,j} \right) ,\ldots,\left( s_{q,T-1}^{m,i,h,j},a_{q,T-1}^{m,i,h,j} \right) \right\}
            \\
            \zeta ^j\left( s_{h}^{m,i} \right) :=&\left\{ \left( s_{v,0}^{m,i,h,j}=s_{h}^{m,i};a_{v,0}^{m,i,h,j} \right) ,\left( s_{v,1}^{m,i,h,j},a_{v,1}^{m,i,h,j} \right) ,\ldots,\left( s_{v,T-1}^{m,i,h,j},a_{v,T-1}^{m,i,h,j} \right) \right\} 
        \end{align*}
        denote $\bm{\xi }:=\left\{ \xi ^j\left( s_{h}^{m,i},a_{h}^{m,i} \right) ,m=1,\ldots,M, j=1,\ldots,J,i=1,\ldots,I,h=0,\ldots,H-1 \right\}$, \\$\bm{\zeta }:=\left\{ \zeta ^j\left( s_{h}^{m,i} \right) ,m=1,\ldots,M, j=1,\ldots,J,i=1,\ldots,I,h=0,\ldots,H-1 \right\}$ and compute
        \begin{align*}
            \bar{\nabla}Q_{s_{h}^{m,i}a_{h}^{m,i}}^{\bm{\xi }}\left( x,\pi \right) :=&\frac{1}{J}\sum_{j=1}^{J}{\sum_{t=0}^{T-1}{\gamma ^t\nabla r_{s_{q,t}^{m,i,h,j}a_{q,t}^{m,i,h,j}}\left( x \right)}}
            \\
            \bar{\nabla}V_{s_{h}^{m,i}}^{\bm{\zeta }}\left( x,\pi \right) :=&\frac{1}{J}\sum_{j=1}^{J}{\sum_{t=0}^{T-1}{\gamma ^t\nabla r_{s_{v,t}^{m,i,h,j}a_{v,t}^{m,i,h,j}}\left( x \right)}}\,\,
        \end{align*}
        \STATE Compute the stochastic hyper-gradient
        \begin{equation*}
            \bar{\nabla}\phi(\bm{D},\bm{\xi},\bm{\zeta};x,\pi) =\frac{1}{M}\sum_{m=1}^M{\nabla l\left( \boldsymbol{d}^m;x \right)}+\tau ^{-1}\frac{1}{M}\sum_{m=1}^M{l\left( \boldsymbol{d}^m;x \right)}\sum_i\sum_h{\bar{\nabla}\left( Q_{s_{h}^{m,i}a_{h}^{m,i}}^{\boldsymbol{\xi }}-V_{s_{h}^{m,i}}^{\boldsymbol{\zeta }} \right)}\left( x,\pi \right) 
        \end{equation*}
        \ENSURE $\bar{\nabla}\phi(\bm{D},\bm{\xi},\bm{\zeta};x,\pi)$, $(\bm{D},\bm{\xi},\bm{\zeta})$
    \end{algorithmic}
\end{algorithm}
The core principle for estimating the hyper-gradient is to sample independent trajectories, evaluate the relevant quantities along each trajectory, and then average these values. Specifically, in the $k$-th outer iteration, to estimate $\nabla\phi$, an expectation with respect to the random variable $\bm{d}$, we can generate $M$ independent tuples $\bm{d}^m:=(d^m_1,d^m_2,\ldots,d^m_I)$ for $m=1,2,\ldots,M$ and denote $\bm{D}:=\{\bm{d}^1,\bm{d}^2,\ldots,\bm{d}^M\}$. The next focus is to tackle the terms $\nabla Q^*$ and $\nabla V^*$ in \eqref{eq:mf_hypergrad2}. Assuming access to a
generative model \citep{kearns1998finite,li2024generativemodel}, from any initial state-action pair $(s,a)$, we sample $J$ independent trajectories of length $T$ by implementing $\pi_k$, i.e., for $j=1,2,\ldots,J$,
\begin{equation}
\xi _{k}^{j}( s,a) :=\{( s_{0}^{j}=s,a_{0}^{j}=a);(s_{1}^{j},a_{1}^{j} ) ,\ldots,( s_{T-1}^{j},a_{T-1}^{j})\}.    
\end{equation}
Collecting all these random variables yields $\bm{\xi}_k:=\{\xi _{k}^{j}\left( s,a \right) , j=1,\ldots,J,s\in \mathcal{S} ,a\in \mathcal{A} \}$, and the estimator for $\nabla Q^*$ can be constructed as 
\begin{equation}\label{eq:sampleQ}
\bar{\nabla}Q_{sa}^{\bm{\xi }_k}\left( x_k,\pi _k \right) :=\frac{1}{J}\sum_{j=1}^{J}{\sum_{t=0}^{T-1}{\gamma ^t\nabla r_{s_{t}^{j}a_{t}^{j}}\left( x_k \right)}}.
\end{equation}
Similarly, the constructions of the random variable $\bm{\zeta}_k$ and the associated estimator $\bar{\nabla}V^{\bm{\zeta }_k}$ for $\nabla V^*$ are detailed in \cref{alg:samplingprocess}. Consequently, we obtain the stochastic hyper-gradient:
\begin{align}\label{eq:stoc_hyper}
    \bar{\nabla}\phi \left( \bm{D}_k,\bm{\xi }_k,\bm{\zeta }_k;x_k,\pi _k \right) =\frac{1}{M}\sum_{m=1}^M{\nabla l\left( \bm{d}_k^m;x_k \right)} +\frac{1}{\tau M}\sum_{m=1}^M{l\left( \bm{d}_k^m;x_k \right)}\sum_i\sum_h{\bar{\nabla}\left( Q_{s_{h}^{m,i}a_{h}^{m,i}}^{\bm{\xi }_k}-V_{s_{h}^{m,i}}^{\bm{\zeta }_k} \right)}\left( x_k,\pi _k \right), 
\end{align}
which is abbreviated as $\bar{\nabla}\phi _k$ in the following discussion. 

Momentum techniques are known to be beneficial for reducing variance and accelerating algorithms \citep{cutkosky2019momentum}. To this end, we maintain a momentum-instructed $h_k$ in the $k$-th outer iteration,
\begin{equation}\label{eq:compute_hk}
    h_k =  \bar{\nabla}\phi_k+ (1-\mu_k)(h_{k-1}-\bar{\nabla}\phi(\bm{D}_k,\bm{\xi}_k,\bm{\zeta}_k;x_{k-1},\pi_{k-1})),
\end{equation}
which tracks $\bar{\nabla}\phi_k$ via current and historical hyper-gradient estimates. Using $h_k$ to update the upper-level $x_k$, we obtain the stochastic \cref{alg:StocSoBiRL}, Stoc-SoBiRL.

\begin{algorithm}[tbp]
    \caption{Stochastic model-free soft bilevel reinforcement learning algorithm (Stoch-SoBiRL)}
    \label{alg:StocSoBiRL}
    \begin{algorithmic}[1]
        \REQUIRE iteration number $K$, step sizes $\{\beta_k,\mu_k\}_{k=1}^K$, initialization $x_1, \pi_0, h_0$, sampling configurations $M,J,T$, lower-level accuracy $\{\epsilon_k\}_{k=1}^K$
        \FOR{$k = 1, \dots, K$}
            \STATE Solve the lower-level problem with the initialization $\pi_{k-1}$ to get ${\pi}_k$ satisfying $\norm{{\pi}_k-\pi^*(x_k)}_2^2\le \epsilon_k$
            \STATE Obtain the stochastic hyper-gradient and samples: $\bar{\nabla}\phi_k,\,(\bm{D}_k,\bm{\xi}_k,\bm{\zeta}_k)=\texttt{HyperEstimator}(x_k,\pi_k,M,J,T)$\!\!\!\!
            \STATE Compute the momentum-instructed gradient estimator $h_k$ via \eqref{eq:compute_hk}
            \STATE Implement a stochastic hyper-gradient step $x_{k+1} = x_k - \beta_k h_k$
        \ENDFOR
        \STATE Sample $\bar{K}$ from the uniform distribution $\mathcal{U}\{1,2,\ldots,K\}$
        \ENSURE $(x_{\bar{K}},\pi_{\bar{K}})$
    \end{algorithmic}
\end{algorithm}

\subsection{Stochasticity of the Estimator $\bar{\nabla}\phi$}\label{sec:stoc_estimator}
This part analyzes the bias and variance introduced by the stochastic hyper-gradient $\bar{\nabla}\phi$, which serve as a cornerstone for the convergence analysis of the developed Stoc-SoBiRL. To begin with, we show the bias of the estimators $\bar{\nabla}Q^{\bm{\xi}}$ and $\bar{\nabla}V^{\bm{\zeta}}$ to $\widetilde{\nabla}Q$ and $\widetilde{\nabla}V$, respectively.
\begin{lemma}
    Under \cref{assu:r}, we have that given any $s\in\mdps$ and $a\in\mdpa$,
    \begin{equation}\label{eq:bias_Q}
    \begin{aligned}
        \left\| \mathbb{E} _{\boldsymbol{\xi }_k}\left[ \bar{\nabla}Q_{sa}^{\boldsymbol{\xi }_k}\left( x_k,\pi _k \right) \right] -\widetilde{\nabla }Q_{sa}\left( x_k,\pi _k \right) \right\| _{2}\le&\ \frac{\gamma ^{T}C_{rx}}{1-\gamma},
        \\
        \left\| \mathbb{E} _{\boldsymbol{\zeta }_k}\left[ \bar{\nabla}V_{s}^{\boldsymbol{\zeta }_k}\left( x_k,\pi _k \right) \right] -\widetilde{\nabla }V_s\left( x_k,\pi _k \right) \right\| _{2}\le&\ \frac{\gamma ^{T}C_{rx}}{1-\gamma}.
    \end{aligned}         
    \end{equation}
\end{lemma}
\begin{proof}
    Recall the expression of $\widetilde{\nabla}Q$ in \cref{sec:SoBiRL},
    \begin{align*}
          \widetilde{\nabla} Q_{sa}(x_k,\pi_k) = \mathbb{E}\left[ \sum_{t=0}^\infty \gamma^t\nabla r_{s_t,a_t}(x_k)\,\Big |\,s_0=s,a_0=a,\pi_k\right].
    \end{align*}  
    By definition of $\bar{\nabla}Q_{sa}^{\boldsymbol{\xi }_k}\left( x_k,\pi _k \right) $ in \eqref{eq:sampleQ}, which is indeed a $T$-depth truncation of $\widetilde{\nabla}Q_{sa}(x_k,\pi_k)$, we obtain
    \begin{align*}
        \mathbb{E} _{\boldsymbol{\xi }_k}\left[ \bar{\nabla}Q_{sa}^{\boldsymbol{\xi }_k}\left( x_k,\pi _k \right) \right] -\widetilde{\nabla }Q_{sa}\left( x_k,\pi _k \right) ={\mathbb{E} \left[ \sum_{t=T}^{\infty}{\gamma ^t\nabla r_{s_t,a_t}\left( x_k \right)}\,\Big |\, s_0=s,a_0=a,\pi _k \right]},
    \end{align*}
    which together with the boundedness of $\nabla r$ implies
    \begin{equation*}
        \left\| \mathbb{E} _{\boldsymbol{\xi }_k}\left[ \bar{\nabla}Q_{sa}^{\boldsymbol{\xi }_k}\left( x_k,\pi _k \right) \right] -\widetilde{\nabla }Q_{sa}\left( x_k,\pi _k \right) \right\| _{2}\le \frac{\gamma ^{T}C_{rx}}{1-\gamma}.
    \end{equation*}
    In a similar fashion, we can also derive the boundedness for the bias of $\bar{\nabla}V_{s}^{\boldsymbol{\zeta }_k}\left( x_k,\pi _k \right) $.
\end{proof}

Now, we can bound the variance of $\bar{\nabla}\phi \left( \boldsymbol{D}_k,\boldsymbol{\xi }_k,\boldsymbol{\zeta }_k;x_k,\pi _k \right)$ and the bias between it and the targeted $\widetilde{\nabla}\phi$. 
\begin{proposition}\label{pro:bias_var}
    Under Assumptions~\ref{assu:l} and~\ref{assu:r}, we have
    \begin{align}
        \left\| \mathbb{E} _{\boldsymbol{D}_k,\boldsymbol{\xi }_k,\boldsymbol{\zeta }_k}\left[ \bar{\nabla}\phi \left( \boldsymbol{D}_k,\boldsymbol{\xi }_k,\boldsymbol{\zeta }_k;x_k,\pi _k \right) \right] -\widetilde{\nabla }\phi \left( x_k,\pi _k \right) \right\| _{2} \le \frac{2\gamma ^{T}HIC_lC_{rx}}{\tau \left( 1-\gamma \right)}. \label{eq:bias_hyper}
    \end{align}
    Additionally, it holds that
    \begin{align}
        &\mathbb{E} _{\boldsymbol{D}_k,\boldsymbol{\xi }_k,\boldsymbol{\zeta }_k}\left[ \left\| \bar{\nabla}\phi \left( \boldsymbol{D}_k,\boldsymbol{\xi }_k,\boldsymbol{\zeta }_k;x_k,\pi _k \right) -\mathbb{E} _{\boldsymbol{D}_k,\boldsymbol{\xi }_k,\boldsymbol{\zeta }_k}\left[ \bar{\nabla}\phi \left( \boldsymbol{D}_k,\boldsymbol{\xi }_k,\boldsymbol{\zeta }_k;x_k,\pi _k \right) \right] \right\|^2 _{2} \right]\le\frac{4}{M}\left( L_l+\frac{2HIC_lC_{rx}\left( 1-\gamma ^{T_k} \right)}{\tau \left( 1-\gamma \right)} \right) ^2
    \end{align}
\end{proposition}
\begin{proof}
    Recaping the formulation of $\widetilde{\nabla}\phi$ in \cref{sec:SoBiRL},
    \begin{equation*}
        \widetilde{\nabla} \phi(x_k,\pi_k) =\ \mathbb{E} _{\bm{d}}\left[ \nabla l\left( \bm{d};x_k \right) \right]  
        +\frac{1}{\tau}\mathbb{E} _{\bm{d}}\left[ l\left( \bm{d};x_k \right)
     \Big( \sum_i{\sum_h{\widetilde{\nabla} \left( Q_{s_{h}^{i}a_{h}^{i}}-V_{s_{h}^{i}}\right)\kh{x_k,\pi_k}}} \Big) \right],
    \end{equation*}
    and taking into account the expression of $\bar{\nabla}\phi_k$ in \eqref{eq:stoc_hyper}, we can compute
    \begin{align*}
        &\mathbb{E} _{\boldsymbol{D}_k,\boldsymbol{\xi }_k,\boldsymbol{\zeta }_k}\left[ \bar{\nabla}\phi \left( \boldsymbol{D}_k,\boldsymbol{\xi }_k,\boldsymbol{\zeta }_k;x_k,\pi _k \right) \right] -\widetilde{\nabla }\phi \left( x_k,\pi _k \right) 
        \\
        =&\frac{1}{\tau M}\sum_{m=1}^M{\mathbb{E} _{\boldsymbol{d}^m}\left[ l\left( \boldsymbol{d}^m;x_k \right) \sum_i{\sum_h{\left\{ \mathbb{E} _{\boldsymbol{\xi }_k,\boldsymbol{\zeta }_k}\left[ \bar{\nabla}\left( Q_{s_{h}^{m,i}a_{h}^{m,i}}^{\boldsymbol{\xi }_k}-V_{s_{h}^{m,i}}^{\boldsymbol{\zeta }_k} \right) \left( x_k,\pi _k \right) \right] -\widetilde{\nabla }\left( Q_{s_{h}^{m,i}a_{h}^{m,i}}-V_{s_{h}^{m,i}} \right) \left( x_k,\pi _k \right) \right\}}} \right]}.
    \end{align*}
    Applying boundedness of $l$ and \eqref{eq:bias_Q} on the above equation yields \eqref{eq:bias_hyper}. The next step is to investigate the variance of the stochastic hyper-gradient. Note that $\bar{\nabla}\phi_k$ is the average of $M$ i.i.d. random variables 
    \begin{equation*}
        Z^m:=\nabla l\left( \boldsymbol{d}^m;x_k \right) +\frac{1}{\tau}l\left( \boldsymbol{d}^m;x_k \right) \sum_i{\sum_h{\bar{\nabla}\left( Q_{s_{h}^{m,i}a_{h}^{m,i}}^{\boldsymbol{\xi }_k}-V_{s_{h}^{m,i}}^{\boldsymbol{\zeta }_k} \right) \left( x_k,\pi _k \right)}}\,\,\text{for}\,\,m=1,2,\ldots,M.
    \end{equation*}
    Considering the boundedness of $\nabla l$, $\bar{\nabla}Q^{\bm{\xi}}$ and $\bar{\nabla}V^{\bm{\zeta}}$, we obtain
    \begin{equation*}
        \left\| Z^m-\mathbb{E} \left[ Z^m \right] \right\| _{2}^{2}\le 4\left( L_l+\frac{2HIC_lC_{rx}\left( 1-\gamma ^{T_k} \right)}{\tau \left( 1-\gamma \right)} \right) ^2.
    \end{equation*}
    This reveals
    \begin{equation*}
        \mathbb{E} \left[ \left\| \frac{1}{M}\sum_{m=1}^M{\left( Z^m-\mathbb{E} \left[ Z^m \right] \right)} \right\| _{2}^{2} \right] =\frac{1}{M^2}\sum_{m=1}^M{\mathbb{E} \left[ \left\| Z^m-\mathbb{E} \left[ Z^m \right] \right\| _{2}^{2} \right]}\le \frac{4}{M}\left( L_l+\frac{2HIC_lC_{rx}\left( 1-\gamma ^{T_k} \right)}{\tau \left( 1-\gamma \right)} \right) ^2.
    \end{equation*}
    Noticing that $\bar{\nabla}\phi \left( \boldsymbol{D}_k,\boldsymbol{\xi }_k,\boldsymbol{\zeta }_k;x_k,\pi _k \right)  = \frac{1}{M}\sum_{m=1}^M Z^m$ completes the proof.
\end{proof}
The above proposition means the bias and variance of the stochastic hyper-gradient can be sufficiently small by adjusting the sampling configurations $M,J,T$.
\subsection{Convergence Analysis of Stoc-SoBiRL}\label{sec:conver_stocsobirl}
In this part, we give a detailed convergence analysis for Stoc-SoBiRL. The Lipschitz properties of $\bar{\nabla}Q^{\bm{\xi}}$, $\bar{\nabla}V$ and $\bar{\nabla}\phi$ developed in the following lemmas serve as a foundation for convergence results of Stoc-SoBiRL.
\begin{lemma}
    Under \cref{assu:r}, for arbitrary upper-level variables $x^1,x^2\in\mathbb{R}^n$, policies $\pi^1,\pi^2$, and $s\in\mdps$, $a\in\mdpa$, we have
    \begin{equation*}
        \begin{aligned}
            \left\| \bar{\nabla} Q^{\bm{\xi}}_{sa}\kh{x^1,\pi^1} -\bar{\nabla}Q^{\bm{\xi}}_{sa}\left( x^2,\pi^2 \right) \right\| _2&\le \frac{\left( 1-\gamma ^T \right) L_r}{1-\gamma}\left\| x^1 -x^2 \right\| _2,
            \\
            \left\| \bar{\nabla} V^{\bm{\zeta}}_{s}\left(x^1,\pi^1 \right) -\bar{\nabla}V^{\bm{\zeta}}_s\left(x^2,\pi^2 \right) \right\| _2 &\le \frac{\left( 1-\gamma ^T \right) L_r}{1-\gamma}\left\| x^1 -x^2 \right\| _2.
        \end{aligned}
    \end{equation*}
\end{lemma}
\begin{proof}
    Given a fixed $\bm{\xi}$, by definition of $\bar{\nabla}Q^{\bm{\xi}}$, we can derive
    \begin{equation*}
        \bar{\nabla}Q_{sa}^{\boldsymbol{\xi }}\left( x^1,\pi ^1 \right) -\bar{\nabla}Q_{sa}^{\boldsymbol{\xi }}\left( x^2,\pi ^2 \right) =\frac{1}{J}\sum_{j=1}^J{\sum_{t=0}^{T-1}{\gamma ^t\left( \nabla r_{s_{t}^{j}a_{t}^{j}}\left( x^1 \right) -\nabla r_{s_{t}^{j}a_{t}^{j}}\left( x^2 \right) \right)}}.
    \end{equation*}
    The lipschitzness of $\nabla r$ leads to the lipschitzness of $\bar{\nabla}Q^{\bm{\xi}}$. The proof for $\bar{\nabla}V^{\bm{\zeta}}$ follows analogously.
\end{proof}

\begin{lemma}[Lipschitzness of the hyper-gradient estimator]\label{lem:lip_barnaphi}
Under Assumptions \ref{assu:l} and \ref{assu:r}, for arbitrary upper-level variables $x^1,x^2\in\mathbb{R}^n$, policies $\pi^1,\pi^2$, we have
\begin{equation}
    \left\| \bar{\nabla}\phi \left( \boldsymbol{D},\boldsymbol{\xi },\boldsymbol{\zeta };x^1,\pi ^1 \right) -\bar{\nabla}\phi \left( \boldsymbol{D},\boldsymbol{\xi },\boldsymbol{\zeta };x^2,\pi ^2 \right) \right\| _2\le L_S \left\| x^1-x^2 \right\| _2,
\end{equation}
with $L_S:=\left( L_{l1}+\frac{2HI}{\tau \left( 1-\gamma \right)}\left( C_lL_r+C_{rx}L_l \right) \right)$.
\end{lemma}
\begin{proof}
    By definition \eqref{eq:stoc_hyper}, it holds that
    \begin{align*}
        &\bar{\nabla}\phi \left( \boldsymbol{D},\boldsymbol{\xi },\boldsymbol{\zeta };x^1,\pi ^1 \right) -\bar{\nabla}\phi \left( \boldsymbol{D},\boldsymbol{\xi },\boldsymbol{\zeta };x^2,\pi ^2 \right) =\frac{1}{M}\sum_{m=1}^M{\left( \nabla l\left( \boldsymbol{d}^m;x^1 \right) -\nabla l\left( \boldsymbol{d}^m;x^2 \right) \right)}
        \\
        &+\frac{1}{\tau M}\sum_{m=1}^M{\left[ l\left( \boldsymbol{d}^m;x^1 \right) \sum_i{\sum_h{\bar{\nabla}\left( Q_{s_{h}^{m,i}a_{h}^{m,i}}^{\boldsymbol{\xi }}-V_{s_{h}^{m,i}}^{\boldsymbol{\zeta }} \right) \left( x^1,\pi ^1 \right)}}-l\left( \boldsymbol{d}^m;x^2 \right) \sum_i{\sum_h{\bar{\nabla}\left( Q_{s_{h}^{m,i}a_{h}^{m,i}}^{\boldsymbol{\xi }}-V_{s_{h}^{m,i}}^{\boldsymbol{\zeta }} \right) \left( x^2,\pi ^2 \right)}} \right]}.
    \end{align*}
    The boundedness and lipschitzness of $l,\nabla l, \bar{\nabla}Q^{\bm{\xi}}$ and $\bar{\nabla}V^{\bm{\zeta}}$ leads to the conclusion.
\end{proof}
Define $p_k:=\mathbb{E} _{\boldsymbol{D}_k,\boldsymbol{\xi }_k,\boldsymbol{\zeta }_k}\left[ \bar{\nabla}\phi \left( \boldsymbol{D}_k,\boldsymbol{\xi }_k,\boldsymbol{\zeta }_k;x_k,\pi _k \right) \right] -\widetilde{\nabla }\phi \left( x_k,\pi _k \right) $ and $u_k:=h_k-\mathbb{E} _{\boldsymbol{D}_k,\boldsymbol{\xi }_k,\boldsymbol{\zeta }_k}\left[ \bar{\nabla}\phi \left( \boldsymbol{D}_k,\boldsymbol{\xi }_k,\boldsymbol{\zeta }_k;x_k,\pi _k \right) \right]$, with $h_k$ obeying the update rule,
\begin{align*}
    h_k &= \bar{\nabla}\phi(\bm{D}_k,\bm{\xi}_k,\bm{\zeta}_k;x_k,\pi_k)  + (1-\mu_k)(h_{k-1}-\bar{\nabla}\phi(\bm{D}_k,\bm{\xi}_k,\bm{\zeta}_k;x_{k-1},\pi_{k-1}))
    \\
    &=\mu_k \bar{\nabla}\phi(\bm{D}_k,\bm{\xi}_k,\bm{\zeta}_k;x_k,\pi_k) + (1-\mu_k)\kh{h_{k-1}+\bar{\nabla}\phi(\bm{D}_k,\bm{\xi}_k,\bm{\zeta}_k;x_k,\pi_k) - \bar{\nabla}\phi(\bm{D}_k,\bm{\xi}_k,\bm{\zeta}_k;x_{k-1},\pi_{k-1})}.
\end{align*}
The subsequent analysis is based on the following results established in \cref{pro:bias_var},
\begin{align}
    \left\| \mathbb{E} _{\boldsymbol{D}_k,\boldsymbol{\xi }_k,\boldsymbol{\zeta }_k}\left[ \bar{\nabla}\phi \left( \boldsymbol{D}_k,\boldsymbol{\xi }_k,\boldsymbol{\zeta }_k;x_k,\pi _k \right) \right] -\widetilde{\nabla }\phi \left( x_k,\pi _k \right) \right\| _{2}&\le b_{\phi },   \label{eq:bias}
    \\
    \mathbb{E} _{\boldsymbol{D}_k,\boldsymbol{\xi }_k,\boldsymbol{\zeta }_k}\left[ \left\| \bar{\nabla}\phi \left( \boldsymbol{D}_k,\boldsymbol{\xi }_k,\boldsymbol{\zeta }_k;x_k,\pi _k \right) -\mathbb{E} _{\boldsymbol{D}_k,\boldsymbol{\xi }_k,\boldsymbol{\zeta }_k}\left[ \bar{\nabla}\phi \left( \boldsymbol{D}_k,\boldsymbol{\xi }_k,\boldsymbol{\zeta }_k;x_k,\pi _k \right) \right] \right\| _{2}^{2} \right] &\le \sigma _{\phi }^{2},    \label{eq:variance}
\end{align}
 where $b_\phi$ and $\sigma_\phi^2$ can be sufficiently small by adjusting the sampling configurations $M,J,T$. All the notation $\norm{\cdot}$ without subscripts denotes the 2-norm for simplicity. 

The next two lemmas characterize the descent properties of $\phi(x_k)$ and $\norm{u_k}$ recursively.
\begin{lemma}[Descent property of the hyper-objective]
Under Assumptions \ref{assu:l} and \ref{assu:r}, the iterates $\{x_k\}_{k=1}^K$ generated by \cref{alg:StocSoBiRL} satisfy
\begin{equation}\label{eq:stoc_hyperdescent}
    \mathbb{E} \left[ \phi \left( x_{k+1} \right) \right] \le \mathbb{E} \left[ \phi \left( x_k \right) -\frac{\beta _k}{2}\left\| \nabla \phi (x_k) \right\| ^2-\frac{\beta _k}{2}\left( 1-\beta _kL_{\phi} \right) \left\| h_k \right\| ^2+\beta _k\left\| u_k \right\| ^2+2\beta _kL_{\widetilde{\phi }}^{2}\epsilon_k ^2+2\beta _k\left\| p_k \right\| ^2 \right].
\end{equation}
where $L_\phi$ and $L_{\widetilde{\phi}}$ are specified in \cref{pro:hypergrad_Lipz} and \cref{pro:lipz_hp_estimator}, respectively, and the expectations are with respect to the stochasticity of the algorithm.
\end{lemma}
\begin{proof}
    Based on the $L_\phi$-Lipschitz smoothness of $\phi(x)$ revealed by \cref{pro:hypergrad_Lipz}, a stochastic gradient step results in the following estimate of the hyper-objective $\varphi$:
    \begin{align}
        \phi \left( x_{k+1} \right) \le&\ \phi \left( x_k \right) +\left< \nabla \phi \left( x_k \right) ,x_{k+1}-x_k \right> +\frac{L_{\phi}}{2}\left\| x_{k+1}-x_k \right\| ^2 \nonumber
        \\
        =&\ \phi \left( x_k \right) -\beta_k \left< \nabla \phi \left( x_k \right) , h_k \right> +\frac{\beta_k ^2L_{\phi}}{2}\left\| h_k \right\| ^2  \nonumber
        \\
        =&\ \phi \left( x_k \right) -\frac{\beta_k}{2}\norm{\nabla \phi(x_k)}^2 -\frac{\beta_k}{2}\kh{1-\beta_kL_\phi}\norm{h_k}^2 + \frac{\beta_k}{2}\norm{\nabla\phi(x_k)-h_k}^2,     \label{eq:descent_estimate1}
    \end{align}
    where the last equality comes from ${\innerp{a,b}}=\frac{1}{2}(\norm{a}^2+\norm{b}^2-\norm{a-b}^2)$. Taking into account the equation $h_k-\widetilde{\nabla }\phi \left( x_k,\pi _k \right) -p_k=u_k$ and \cref{pro:lipz_hp_estimator}, we obtain
    \begin{align*}
        \left\| h_k-\nabla \phi \left( x_k \right) \right\| ^2=&\left\| h_k-\widetilde{\nabla }\phi \left( x_k,\pi _k \right) -p_k+\widetilde{\nabla }\phi \left( x_k,\pi _k \right) -\nabla \phi \left( x_k \right) +p_k \right\| ^2
        \\
        \le&\ 2 \left\| h_k-\widetilde{\nabla }\phi \left( x_k,\pi _k \right) -p_k \right\| ^2+4\left\| \widetilde{\nabla }\phi \left( x_k,\pi _k \right) -\nabla \phi \left( x_k \right) \right\| ^2+4\left\| p_k \right\| ^2
        \\
        \le&\ 2 \left\| u_k \right\| ^2+4L_{\widetilde{\phi }}^{2}\left\| \pi _k-\pi _{k}^{*} \right\| ^2+4\left\| p_k \right\| ^2.
    \end{align*}
    Incorporating this inequality into \eqref{eq:descent_estimate1} and taking expectation with respect to the algorithm lead to the result~\eqref{eq:stoc_hyperdescent}.
\end{proof}

\begin{lemma}[Descent property of the estimation error]
Under Assumptions \ref{assu:l} and \ref{assu:r}, the iterates $\{x_k\}_{k=1}^K$ generated by \cref{alg:StocSoBiRL} satisfy
\begin{equation}\label{eq:stoc_errordescent}
\begin{aligned}
\mathbb{E} \left[ \left\| u_{k+1} \right\| ^2 \right] \le&\ 2\left( 1-\mu _{k+1} \right) ^2\mathbb{E} \left[ \left\| u_k \right\| ^2 \right] +4\mu _{k+1}^{2}\sigma _{\phi}^{2}
\\
&+16\left( 1-\mu _{k+1} \right) ^2\left( \left( L_S+L_{\phi} \right) ^2\beta _{k}^{2}\mathbb{E} \left[ \left\| h_k \right\| ^2 \right] +2L_{\widetilde{\phi }}^{2}(\epsilon _k+\epsilon _{k+1})+\mathbb{E} \left[ \left\| p_{k+1} \right\| ^2 \right] +\mathbb{E} \left[ \left\| p_k \right\| ^2 \right] \right),   
\end{aligned}
\end{equation}
where $L_\phi,L_{\widetilde{\phi}}$, and $L_S$ are specified in \cref{pro:hypergrad_Lipz}, \cref{pro:lipz_hp_estimator}, and \cref{lem:lip_barnaphi}; and the expectations are with respect to the stochasticity of the algorithm.
\end{lemma}
\begin{proof}
    By definitions of $h_k,u_k$ and $p_k$, we have
    \begin{align*}
        u_{k+1} =&\  h_{k+1}-\widetilde{\nabla }\phi \left( x_{k+1},\pi _{k+1} \right) -p_{k+1}
        \\
        =& \left( 1-\mu _{k+1} \right) \left( h_k-\mathbb{E} _{\boldsymbol{D}_k,\boldsymbol{\xi }_k,\boldsymbol{\zeta }_k}\left[ \bar{\nabla}\phi \left( \boldsymbol{D}_k,\boldsymbol{\xi }_k,\boldsymbol{\zeta }_k;x_k,\pi _k \right) \right] \right) +\left( 1-\mu _{k+1} \right) \mathbb{E} _{\boldsymbol{D}_k,\boldsymbol{\xi }_k,\boldsymbol{\zeta }_k}\left[ \bar{\nabla}\phi \left( \boldsymbol{D}_k,\boldsymbol{\xi }_k,\boldsymbol{\zeta }_k;x_k,\pi _k \right) \right] 
        \\
        &-\left( 1-\mu _{k+1} \right) \bar{\nabla}\phi \left( \boldsymbol{D}_{k+1},\boldsymbol{\xi }_{k+1},\boldsymbol{\zeta }_{k+1};x_k,\pi _k \right) +\bar{\nabla}\phi \left( \boldsymbol{D}_{k+1},\boldsymbol{\xi }_{k+1},\boldsymbol{\zeta }_{k+1};x_{k+1},\pi _{k+1} \right) -\widetilde{\nabla }\phi \left( x_{k+1},\pi _{k+1} \right) -p_{k+1}
        \\
        =&\left( 1-\mu _{k+1} \right) u_k+\mu _{k+1}\left( \bar{\nabla}\phi \left( \boldsymbol{D}_{k+1},\boldsymbol{\xi }_{k+1},\boldsymbol{\zeta }_{k+1};x_{k+1},\pi _{k+1} \right) -\widetilde{\nabla }\phi \left( x_{k+1},\pi _{k+1} \right) -p_{k+1} \right) 
        \\
        &+\left( 1-\mu _{k+1} \right) \Big[ \left( \bar{\nabla}\phi \left( \boldsymbol{D}_{k+1},\boldsymbol{\xi }_{k+1},\boldsymbol{\zeta }_{k+1};x_{k+1},\pi _{k+1} \right) -\widetilde{\nabla }\phi \left( x_{k+1},\pi _{k+1} \right) -p_{k+1} \right) 
        \\
        &-\left( \bar{\nabla}\phi \left( \boldsymbol{D}_{k+1},\boldsymbol{\xi }_{k+1},\boldsymbol{\zeta }_{k+1};x_k,\pi _k \right) -\widetilde{\nabla }\phi \left( x_k,\pi _k \right) -p_k \right) \Big].
    \end{align*}
    This equality yields the following estimation
    \begin{align}
        \mathbb{E} \left[ \left\| u_{k+1} \right\| ^2\right]\le&\ 2\left( 1-\mu _{k+1} \right) ^2\mathbb{E} \left[ \left\| u_k \right\| ^2 \right] +4\mu _{k+1}^{2}\mathbb{E} \left[ \left\| \bar{\nabla}\phi \left( \boldsymbol{D}_{k+1},\boldsymbol{\xi }_{k+1},\boldsymbol{\zeta }_{k+1};x_{k+1},\pi _{k+1} \right) -\widetilde{\nabla }\phi \left( x_{k+1},\pi _{k+1} \right) -p_{k+1} \right\| ^2 \right]   \nonumber
        \\
        &+4\left( 1-\mu _{k+1} \right) ^2\mathbb{E} \Big[ \Big\| \left( \bar{\nabla}\phi \left( \boldsymbol{D}_{k+1},\boldsymbol{\xi }_{k+1},\boldsymbol{\zeta }_{k+1};x_{k+1},\pi _{k+1} \right) -\widetilde{\nabla }\phi \left( x_{k+1},\pi _{k+1} \right) -p_{k+1} \right)  \nonumber
        \\
        &-\left( \bar{\nabla}\phi \left( \boldsymbol{D}_{k+1},\boldsymbol{\xi }_{k+1},\boldsymbol{\zeta }_{k+1};x_k,\pi _k \right) -\widetilde{\nabla }\phi \left( x_k,\pi _k \right) -p_k \right) \Big\| ^2 \Big].  \label{eq:u_esti1}
    \end{align}
    Subsequently, notice that
    \begin{align}
        &\Big\| \left( \bar{\nabla}\phi \left( \boldsymbol{D}_{k+1},\boldsymbol{\xi }_{k+1},\boldsymbol{\zeta }_{k+1};x_{k+1},\pi _{k+1} \right) -\widetilde{\nabla }\phi \left( x_{k+1},\pi _{k+1} \right) -p_{k+1} \right) \nonumber
        \\
        &\quad -\left( \bar{\nabla}\phi \left( \boldsymbol{D}_{k+1},\boldsymbol{\xi }_{k+1},\boldsymbol{\zeta }_{k+1};x_k,\pi _k \right) -\widetilde{\nabla }\phi \left( x_k,\pi _k \right) -p_k \right) \Big\|     \nonumber
        \\
        \le& \left\| \bar{\nabla}\phi \left( \boldsymbol{D}_{k+1},\boldsymbol{\xi }_{k+1},\boldsymbol{\zeta }_{k+1};x_{k+1},\pi _{k+1} \right) -\bar{\nabla}\phi \left( \boldsymbol{D}_{k+1},\boldsymbol{\xi }_{k+1},\boldsymbol{\zeta }_{k+1};x_k,\pi _k \right) \right\| +\left\| \nabla \phi \left( x_{k+1} \right) -\widetilde{\nabla }\phi \left( x_{k+1},\pi _{k+1} \right) \right\|   \nonumber
        \\
        &+\left\| \nabla \phi \left( x_k \right) -\widetilde{\nabla }\phi \left( x_k,\pi _k \right) \right\| +\left\| \nabla \phi \left( x_{k+1} \right) -\nabla \phi \left( x_k \right) \right\| +\left\| p_{k+1} \right\| +\left\| p_k \right\|    \nonumber
        \\
        \overset{(i)}{\le}&\ L_S\left\| x_{k+1}-x_k \right\| +L_{\widetilde{\phi }}\left\| \pi _{k+1}-\pi _{k+1}^{*} \right\| +L_{\widetilde{\phi }}\left\| \pi _k-\pi _{k}^{*} \right\| +L_{\phi}\left\| x_{k+1}-x_k \right\| +\left\| p_{k+1} \right\| +\left\| p_k \right\|   \nonumber
        \\
        \le& \left( L_S+L_{\phi} \right) \left\| x_{k+1}-x_k \right\| +L_{\widetilde{\phi }}(\sqrt{\epsilon _k}+\sqrt{\epsilon _{k+1}})+\left\| p_{k+1} \right\| +\left\| p_k \right\| , \label{eq:u_esti2}
    \end{align}
    where $(i)$ comes from the Lipschitz properties of $\nabla\phi$ and $\widetilde{\nabla}\phi$ revealed by \cref{pro:hypergrad_Lipz} and \cref{pro:lipz_hp_estimator}, and Lipschitz property of $\bar{\nabla}\phi$ revealed by \cref{lem:lip_barnaphi}. Incorporating \eqref{eq:u_esti2} into \eqref{eq:u_esti1} and recalling the variance $\sigma_\phi$ in \eqref{eq:variance}, we complete the proof.
\end{proof}

Now, assembling the descent properties of \eqref{eq:stoc_hyperdescent} and \eqref{eq:stoc_errordescent}, we derive the convergence results.
\begin{theorem}\label{the:convergent_StocSoBiRL}
    Under Assumptions~\ref{assu:l} and~\ref{assu:r} and given the maximum iteration number $K$, we can choose appropriate sampling configurations $M\sim\mathcal{O}(K^{4/3}),\,J\sim\mathcal{O}(1),\,T\sim\mathcal{O}(\log K)$, and set the parameters as follows,
    \begin{equation}\label{eq:beta_mu}
        \beta_k = \frac{1}{(n_\beta+k)^{1/3}},\ \ \mu_{k+1}=\max\hkh{\frac{15}{16},1-\beta _{k}^{2}},\ \ \epsilon_k=\beta_k^2\ \ \text{for}\ k=1,2,\ldots,K,
    \end{equation}
    with the integer $n_\beta\ge 3$ and
    \begin{equation}
        \frac{L_{\phi}}{\left( n_{\beta}+1 \right) ^{1/3}}+\frac{2\left( L_S+L_{\phi} \right)}{\left( n_{\beta}+1 \right) ^{2/3}}\le 1.     \label{eq:nbeta}
    \end{equation}
    Then the iterates $\{x_k\}$ generated by \cref{alg:StocSoBiRL} satisfy
    \begin{equation*}
        \frac{1}{K}\sum_{k=1}^K{\mathbb{E} \left[ \left\| \nabla \phi (x_k) \right\| ^2 \right]}=\mathcal{O} \left( \frac{\log (K)}{K^{2/3}} \right) =\widetilde{\mathcal{O} }\left( K^{-\frac{2}{3}} \right),
    \end{equation*}
    where the expectation is with respect to the stochasticity of the algorithm.
\end{theorem}
\begin{proof}
We consider the merit function $\mathcal{S} _k:=\phi \left( x_k \right) +\frac{\left\| u_k \right\| ^2}{\beta _{k-1}}$. Substituting the descent property \eqref{eq:stoc_errordescent} of $\Exp{\norm{u_{k+1}}^2}$ leads to the following inequality.
\begin{align}
    \mathbb{E} \left[ \frac{\left\| u_{k+1} \right\| ^2}{\beta _k}-\frac{\left\| u_k \right\| ^2}{\beta _{k-1}} \right] \le& \left( \frac{2\left( 1-\mu _{k+1} \right) ^2}{\beta _k}-\frac{1}{\beta _{k-1}} \right) \mathbb{E} \left[ \left\| u_k \right\| ^2 \right] +\frac{4\mu _{k+1}^{2}}{\beta _k}\sigma _{\phi}^{2}    \nonumber
    \\
    &+\frac{16\left( 1-\mu _{k+1} \right) ^2}{\beta _k}\left( \left( L_S+L_{\phi} \right) ^2\beta _{k}^{2}\mathbb{E} \left[ \left\| h_k \right\| ^2 \right] +2L_{\widetilde{\phi }}^{2}(\epsilon_k+\epsilon_{k+1}) +\mathbb{E} \left[ \left\| p_{k+1} \right\| ^2 \right] +\mathbb{E} \left[ \left\| p_k \right\| ^2 \right] \right).\label{eq:merit_u1}
\end{align}
By the choice \eqref{eq:beta_mu} of $\beta_k$ and $\mu_k$, we have for $k=1,2,\ldots,K$,
\begin{align*}
    \frac{2\left( 1-\mu _{k+1} \right) ^2}{\beta _k}&\le \frac{16\left( 1-\mu _{k+1} \right) ^2}{\beta _k}\le \frac{1-\mu _{k+1}}{\beta _k}\le \beta _k,
    \\
    \frac{2\left( 1-\mu _{k+1} \right) ^2}{\beta _k}-\frac{1}{\beta _{k-1}}&\le \beta _k-\frac{1}{\beta _{k-1}}=\left( k+n_{\beta} \right) ^{-1/3}-\left( k-1+n_{\beta} \right) ^{1/3}\le -\left( k+n_{\beta} \right) ^{-1/3}=-\beta _k.
\end{align*}
Taking the two estimations into \eqref{eq:merit_u1} shows
\begin{align}
    \mathbb{E} \left[ \frac{\left\| u_{k+1} \right\| ^2}{\beta _k}-\frac{\left\| u_k \right\| ^2}{\beta _{k-1}} \right] \le& -\beta _k\mathbb{E} \left[ \left\| u_k \right\| ^2 \right] +\frac{4\mu _{k+1}^{2}}{\beta _k}\sigma _{\phi}^{2}+\left( L_S+L_{\phi} \right) ^2\beta _{k}^{3}\mathbb{E} \left[ \left\| h_k \right\| ^2 \right]   \nonumber
    \\
    &+2\beta _kL_{\widetilde{\phi }}^{2}(\epsilon_k+\epsilon_{k+1})+\beta _k\left( \mathbb{E} \left[ \left\| p_{k+1} \right\| ^2 \right] +\mathbb{E} \left[ \left\| p_k \right\| ^2 \right] \right).    \label{eq:merit_u2}
\end{align}
Next, we combine \eqref{eq:merit_u2} with \eqref{eq:stoc_hyperdescent} to get the descent property of $\mathcal{S}_k$.
\begin{align}
    \mathbb{E} \left[ \mathcal{S} _{k+1}-\mathcal{S} _k \right] \le& -\frac{\beta _k}{2}\mathbb{E} \left[ \left\| \nabla \phi (x_k) \right\| ^2 \right] -\frac{\beta _k}{2}\left( 1-\beta _kL_{\phi}-2\beta _{k}^{2}\left( L_S+L_{\phi} \right) ^2 \right) \mathbb{E} \left[ \left\| h_k \right\| ^2 \right] \nonumber
    \\
    &+\frac{4\mu _{k+1}^{2}}{\beta _k}\sigma _{\phi}^{2}+2\beta _kL_{\widetilde{\phi }}^{2}(2\epsilon_k+\epsilon_{k+1})+\beta _k\left( \mathbb{E} \left[ \left\| p_{k+1} \right\| ^2 \right] +3\mathbb{E} \left[ \left\| p_k \right\| ^2 \right] \right) \nonumber 
    \\
    \le& -\frac{\beta _k}{2}\mathbb{E} \left[ \left\| \nabla \phi (x_k) \right\| ^2 \right] +\frac{4\mu _{k+1}^{2}}{\beta _k}\sigma _{\phi}^{2}+2\beta _kL_{\widetilde{\phi }}^{2}(2\epsilon_k+\epsilon_{k+1})+\beta _k\left( \mathbb{E} \left[ \left\| p_{k+1} \right\| ^2 \right] +3\mathbb{E} \left[ \left\| p_k \right\| ^2 \right] \right),     \nonumber 
\end{align}
where the last inequality follows from the choice \eqref{eq:nbeta} of $n_\beta$. Telescoping index from $k=1$ to $k=K$ and dividing it by $K$ yield
\begin{align}\label{eq:descent_S1}
    \frac{1}{K}\mathbb{E} \left[ \mathcal{S} _{K+1}-\mathcal{S} _1 \right] \le -\frac{1}{K}\sum_{k=1}^K{\frac{\beta _k}{2}\mathbb{E} \left[ \left\| \nabla \phi (x_k) \right\| ^2 \right]}+\frac{4\sigma _{\phi}^{2}}{K}\sum_{k=1}^K{\frac{1}{\beta _k}}+\frac{6L_{\widetilde{\phi }}^{2}}{K}\sum_{k=1}^{K+1}{\beta _k\epsilon _k}+\frac{4}{K}\sum_{k=1}^{K+1}{\beta _k\left\| p_k \right\| ^2}.
\end{align}
Rearrange \eqref{eq:descent_S1} and employ $\beta_K\le\beta_k$,
\begin{equation*}
    \frac{\beta _K}{2K}\sum_{k=1}^K{\mathbb{E} \left[ \left\| \nabla \phi (x_k) \right\| ^2 \right]}\le \frac{1}{K}\mathbb{E} \left[ \mathcal{S} _1-\mathcal{S} _{K+1} \right] +\frac{4\sigma _{\phi}^{2}}{K}\sum_{k=1}^K{\frac{1}{\beta _k}}+\frac{6L_{\widetilde{\phi }}^{2}}{K}\sum_{k=1}^{K+1}{\beta _k\epsilon _k}+\frac{4}{K}\sum_{k=1}^{K+1}{\beta _k\left\| p_k \right\| ^2}.
\end{equation*}
Dividing $\beta_K/2$ on both sides implies that
\begin{align}
    \frac{1}{K}\sum_{k=1}^K{\mathbb{E} \left[ \left\| \nabla \phi (x_k) \right\| ^2 \right]}\le&\ \frac{2}{\beta _KK}\mathbb{E} \left[ \mathcal{S} _1-\mathcal{S} _{K+1} \right] +\frac{8\sigma _{\phi}^{2}}{\beta _KK}\sum_{k=1}^K{\frac{1}{\beta _k}}+\frac{12L_{\widetilde{\phi }}^{2}}{\beta _KK}\sum_{k=1}^{K+1}{\beta _k\epsilon _k}+\frac{8}{\beta _KK}\sum_{k=1}^{K+1}{\beta _k\left\| p_k \right\| ^2} \nonumber
    \\
    \le&\ \frac{2\left( \mathcal{S} _1-\phi ^* \right)}{\beta _KK}+\frac{8\sigma _{\phi}^{2}}{\beta _KK}\sum_{k=1}^K{\frac{1}{\beta _k}}+\frac{12L_{\widetilde{\phi }}^{2}}{\beta _KK}\sum_{k=1}^{K+1}{\beta _k\epsilon _k}+\frac{8}{\beta _KK}\sum_{k=1}^{K+1}{\beta _k\left\| p_k \right\| ^2},\label{eq:descent_S2}
\end{align}
where $\phi^*$ is the minimum of $\phi(x)$. By \cref{pro:bias_var}, we can choose the appropriate sampling parameters $M\sim\mathcal{O}(K^{4/3}),\,T\sim\mathcal{O}(\log K)$ such that $\norm{p_k}^2\le b^2_\phi =  K^{-2/3}$ and $\sigma_\phi^2=K^{-4/3}$ in \eqref{eq:bias} and $\eqref{eq:variance}$. With the parameters specified in \eqref{eq:beta_mu}, we have $\sigma _{\phi}^{2}\sum_{k=1}^{K+1}{\frac{1}{\beta _k}=}\mathcal{O} \left( 1 \right)$, $\sum_{k=1}^{K+1}{\beta _k\epsilon _k=}\sum_{k=1}^{K+1}{\beta _{k}^{3}=\mathcal{O} \left( \log  K \right)}$, $\sum_{k=1}^{K+1}{\beta _k\left\| p_k \right\| ^2=}\mathcal{O} \left( 1 \right)$, and therefore the conclusion,
\begin{equation*}
    \frac{1}{K}\sum_{k=1}^K{\mathbb{E} \left[ \left\| \nabla \phi (x_k) \right\| ^2 \right]}=\mathcal{O} \left( \frac{\mathcal{S} _1-\phi ^*}{K^{2/3}} \right) +\mathcal{O} \left( \frac{1}{K^{2/3}} \right) +\mathcal{O} \left( \frac{\log K}{K^{2/3}} \right) 
    +\mathcal{O} \left( \frac{1}{K^{2/3}} \right)
    =\widetilde{\mathcal{O} }\left( K^{-\frac{2}{3}} \right).
\end{equation*}
\end{proof}

\revise{The inequality~\eqref{eq:descent_S2} in the proof of \cref{the:convergent_StocSoBiRL} shows $\frac{1}{K}\sum\mathbb{E}[||\nabla\phi(x_k)||^2_2]= \widetilde{\mathcal{O}}(L_{\widetilde{\phi}}^2{K^{-2/3}})$, which means the outer iteration complexity is $\widetilde{\mathcal{O}}(|\mathcal{A}|^{1.5}\epsilon^{-1.5})$ by substituting $L_{\widetilde{\phi}}=\mathcal{O}(\sqrta)$.}

\section{DETAILS ON EXPERIMENTS}\label{sec:details_exp}
This section presents the details of the experiment implementation. We conduct the RLHF task on three Atari games from the Arcade Learning Environment (ALE) \citep{bellemare2013arcade} and three Mujoco environments \citep{todorov2012mujoco} to empirically validate the performance of the model-free algorithm, SoBiRL. The reward provided by the original environments serves as the ground truth, and for each trajectory pair, preference is assigned to the trajectory with higher accumulated ground-truth reward. We compare SoBiRL \revise{with the bilevel algorithms DRLHF \citep{christiano2017rlhf}, PBRL \citep{shen2024bilevelRL}, HPGD \cite{thoma2024HPGD}, and a baseline algorithm SAC \citep{haarnoja2018sac,haarnoja2018sacauto}. All the bilevel solvers harness deep neural networks to predict rewards, while the baseline SAC receives ground-truth rewards for training.} Additionally, we also test on a synthetic bilevel RL experiment to verify the convergence rate of the model-based algorithm, M-SoBiRL. The experiments are produced on a workstation that consists of two Intel® Xeon® Gold 6330 CPUs (total 2$\times$28 cores), 512GB RAM, and one NVIDIA A800 (80GB memory) GPU. We have made the code available on \revise{\href{https://github.com/UCAS-YanYang/SoBiRL}{https://github.com/UCAS-YanYang/SoBiRL}}.

\subsection{Practical SoBiRL}
SoBiRL adopts deep neural networks to parameterize the policy and the reward model. In this way, we supplement \cref{alg:SoBiRL} with details, resulting in the practical version, \cref{alg:PSoBiRL}. The SAC algorithm capable of automating the temperature parameter \citep{haarnoja2018sacauto} is chosen as the lower-level solver for SoBiRL, i.e., for a fixed reward network, \texttt{SAC} is invoked with several timesteps to update the policy network, the Q-value network and the adaptive temperature parameter (line~2 of \cref{alg:PSoBiRL}). Here, to focus more on the implementation of SoBiRL, we stepsize the details of $\texttt{SAC}$, which fully follows the principles in \citep{haarnoja2018sacauto}.

\begin{algorithm}[htbp]
    \caption{Practical SoBiRL}
    \label{alg:PSoBiRL}
    \begin{algorithmic}[1]
        \REQUIRE iteration threshold $K$, inner iteration timesteps $N$, step size $\beta$, buffer $\mathcal{D}$ storing trajectory pairs, initial reward network parameterized by $\theta^r_1$, initial policy network parameterized by $\theta^\pi_0$, initial Q-value network function parameterized by $\theta^Q_0$, initial temperature parameter $\tau_0$
        \FOR{$k = 1, \dots, K$}
            \STATE $\theta^\pi_k, \theta^Q_k$,$\tau_k$ = \texttt{SAC}($\theta^\pi_{k-1}, \theta^Q_{k-1}$,$\tau_{k-1}$,$N$)  \COMMENT{lower-level policy update}
            \STATE Rollout the policy $\pi_k=\pi _{\theta^\pi_k}$ in the uppper-level MDP \COMMENT{collect  trajectories}
            \\ to obtain trajectory pairs    
            \STATE Query preference and update the buffer $\mathcal{D}$ with the labeled pairs 
            \STATE Compute the estimator $\nabla_p\phi \left( \theta^r_k,\pi_k,\tau _k \right)$ using $\mathcal{D}$  \COMMENT{upper-level reward update}
            \STATE Implement an inexact hyper-gradient descent step 
            \\ $\theta^r_{k+1} = \theta^r_k - \beta\nabla_p\phi \left( \theta^r_k,\pi_k,\tau _k \right)$
        \ENDFOR
        \ENSURE $(\theta^r_{K+1},\theta^\pi_{K+1})$
    \end{algorithmic}
\end{algorithm}

Adapting from $\widetilde{\nabla}\phi\kh{x_k,\pi_k}$ in \cref{sec:SoBiRL}, a practical version of hyper-gradient estimator $\nabla_p\phi \left( x_k,\pi _k,\tau _k \right)$ is used,
\begin{align}
    \nabla_p\phi \left( x_k,\pi _k,\tau _k \right) =&\ \mathbb{E} _{d_i\sim \rho \left( d;\pi_k \right)}\left[ \nabla l\left( d_1,d_2,\ldots,d_I;x_k \right) \right]  \label{eq:prac_free_hyper_esti}
    \\
    &+\tau _{k}^{-1}\mathbb{E} _{d_i\sim \rho \left( d;\pi _k \right)}\left[ l\left( d_1,d_2,\ldots,d_I;x_k \right) \left( \sum_i{\sum_h{\nabla \left( r_{s_{h}^{i}a_{h}^{i}}(x_k)-\mathbb{E} _{a^{\prime}\sim \pi _k\left( \cdot |s_{h}^{i} \right)}\left[ r_{s_{h}^{i}a^{\prime}}(x_k) \right] \right)}} \right) \right] .     \nonumber
\end{align}
The main difference between \eqref{eq:prac_free_hyper_esti} and \eqref{eq:free_hyper_esti} lies in two aspects: 1) $\nabla_p\phi \left( x_k,\pi _k,\tau _k \right) $ accommodates the adaptive temperature parameter $\tau_k$; 2) the term $\nabla \left(r_{s_{h}^{i}a_{h}^{i}}(x_k)-\mathbb{E} _{a^{\prime}\sim \pi _k\left( \cdot |s_{h}^{i} \right)}\left[ r_{s_{h}^{i}a^{\prime}}(x_k) \right]\right)$ is employed as an one-depth truncation of $\widetilde{\nabla} \left( Q_{s_{h}^{i}a_{h}^{i}}\kh{x_k,\pi_k}-V_{s_{h}^{i}}\kh{x_k,\pi_k}\right)$ in \eqref{eq:free_hyper_esti}, inspired by the expressions,
\begin{align*}
     \widetilde{\nabla} V_s(x_k,\pi_k) &= \mathbb{E}\left[ \sum_{t=0}^\infty \gamma^t\nabla r_{s_t,a_t}(x_k)\ \Big |\ s_0=s, \pi_k,\mdp_\tau\kh{x_k} \right],
     \\
      \widetilde{\nabla} Q_{sa}(x_k,\pi_k) &= \mathbb{E}\left[ \sum_{t=0}^\infty \gamma^t\nabla r_{s_t,a_t}(x_k)\ \Big |\ s_0=s,a_0=a,\pi_k,\mdp_\tau\kh{x_k} \right].
\end{align*}

\subsection{Experiment Settings for RLHF}
The results on the Atari games---BeamRider, Seaquest, and SpaceInvaders---are shown in Figures~\ref{fig:beamrider} and \ref{fig:rlhf2}. The results on the Mujoco environments---HalfCheetah, Walker2d, and Hopper---are presented in \cref{fig:Mujoco}. The overall performance is summarized in \cref{tab:overall_exp}.

\begin{figure*}[htbp]
\centering
\begin{minipage}{0.49\textwidth}
    \centering
    \includegraphics[width=1\linewidth]{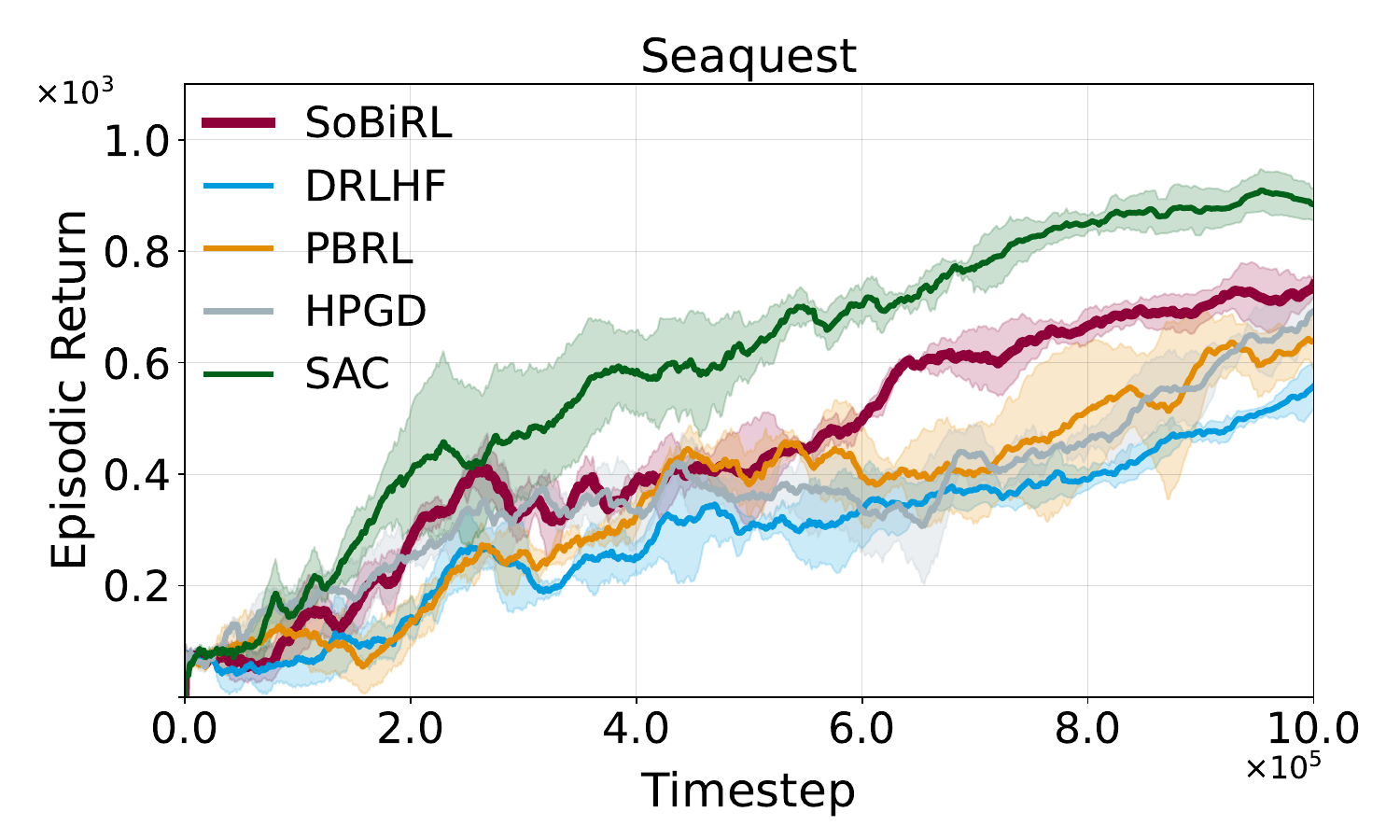}
\end{minipage}
\,
\begin{minipage}{0.49\textwidth}
    \centering
    \includegraphics[width=1\linewidth]{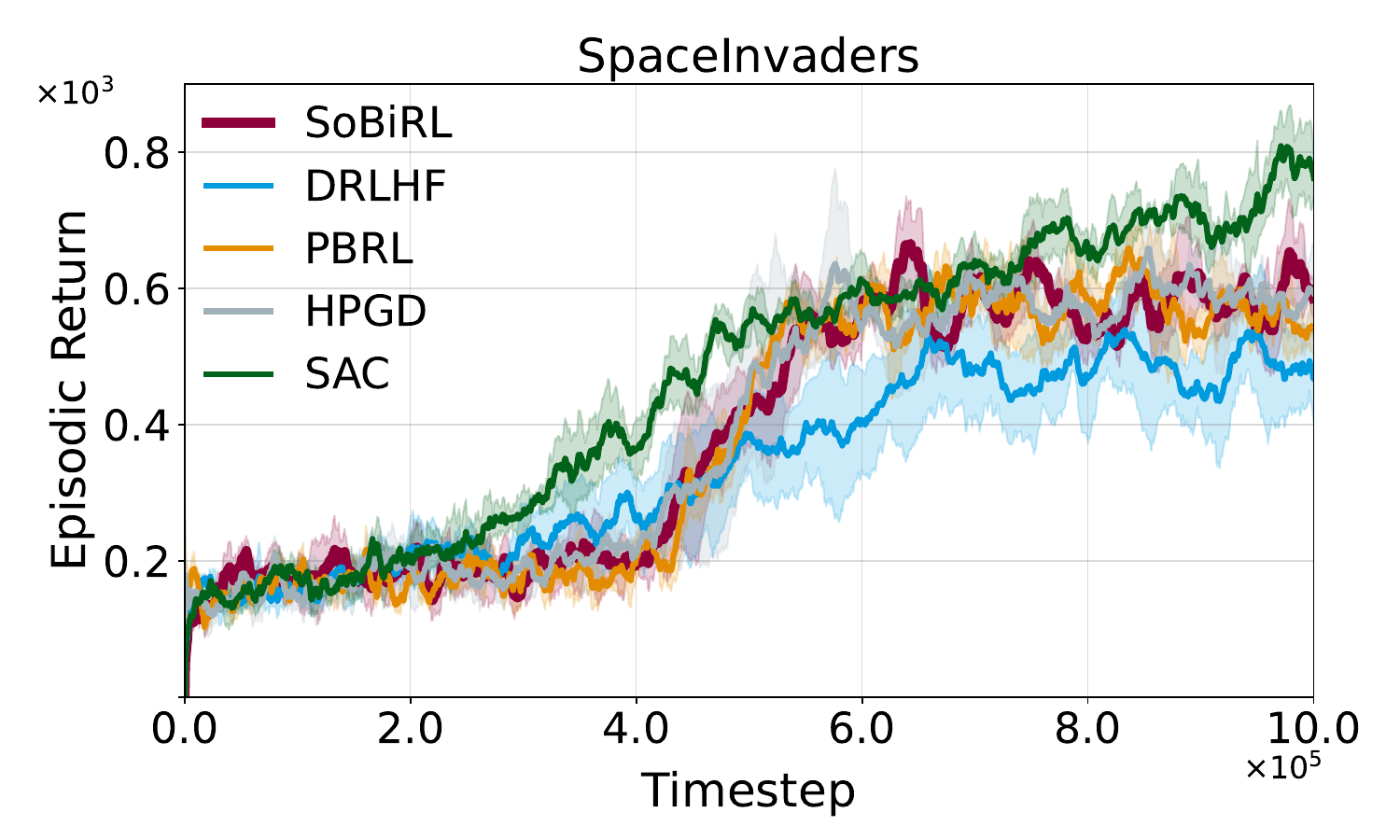}
\end{minipage}
\caption{Comparison on the Atari games---Seaquest and SpaceInvaders---evaluated by the ground-truth reward. Each bilevel algorithm collects a total of $3000$ trajectory pairs. The results are averaged over $5$ seeds.}
\label{fig:rlhf2}
\end{figure*}

\begin{figure*}[tbp]
\centering
\begin{minipage}{0.49\textwidth}
    \centering
    \includegraphics[width=1\linewidth]{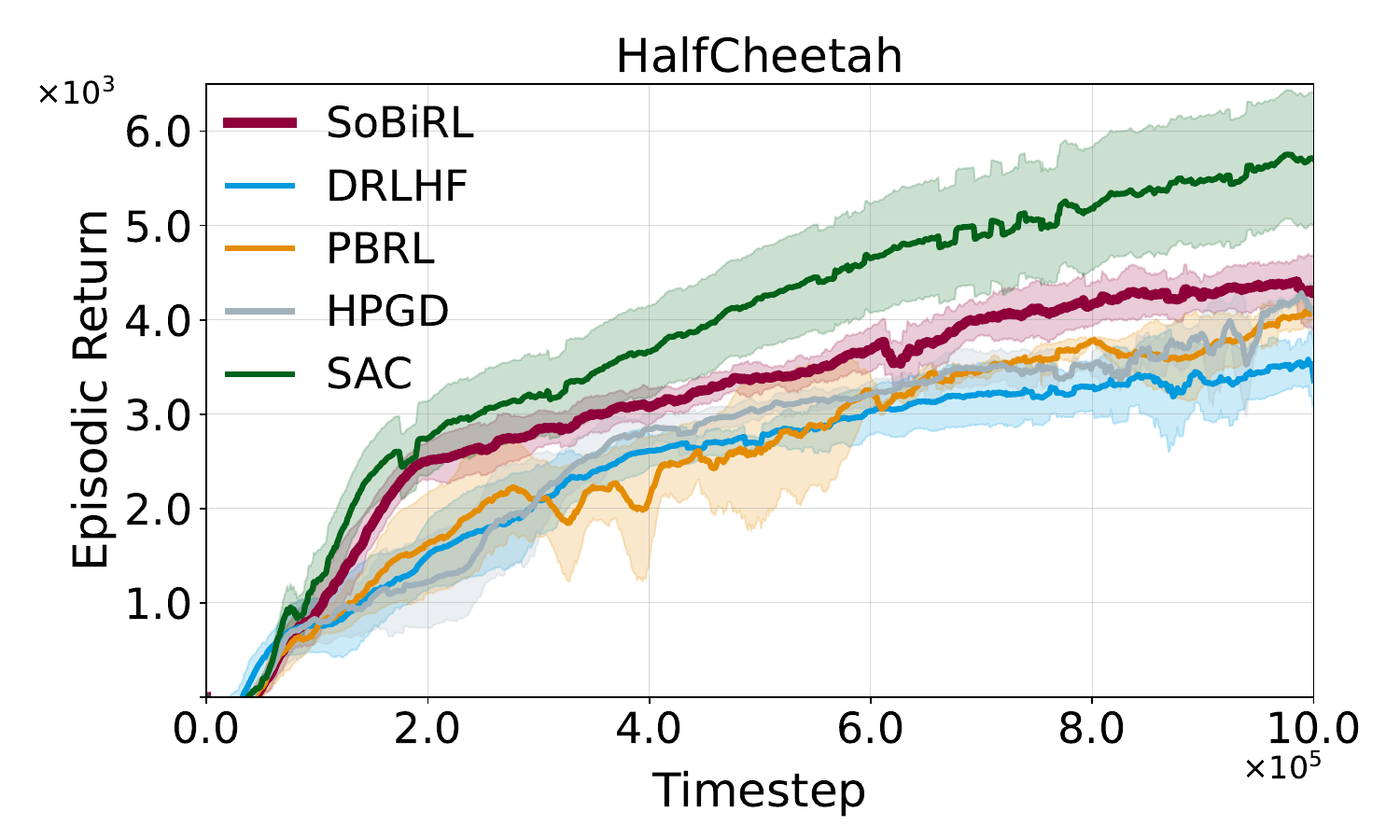}
\end{minipage}
\\
\begin{minipage}{0.49\textwidth}
    \centering
    \includegraphics[width=1\linewidth]{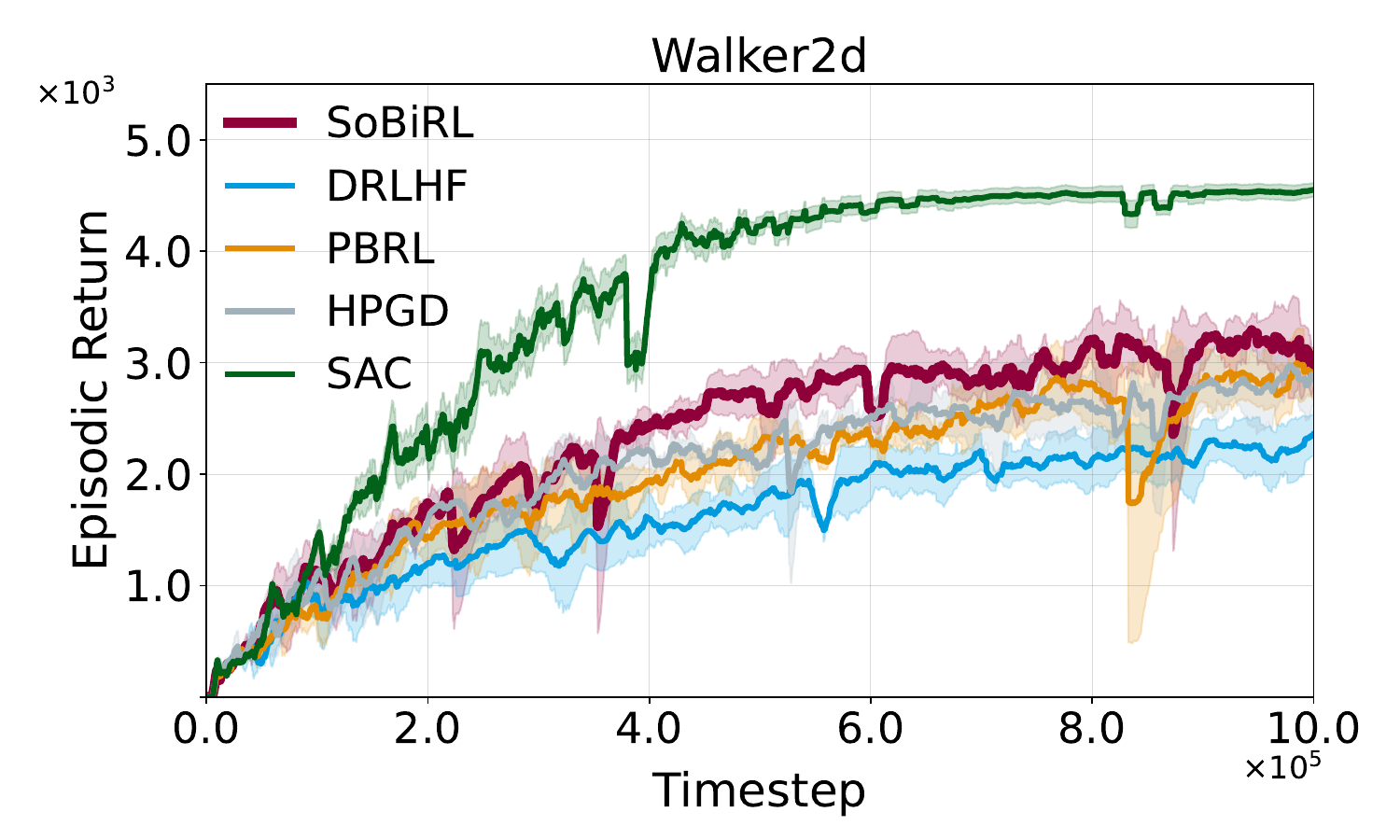}
\end{minipage}
\,
\begin{minipage}{0.49\textwidth}
    \centering
    \includegraphics[width=1\linewidth]{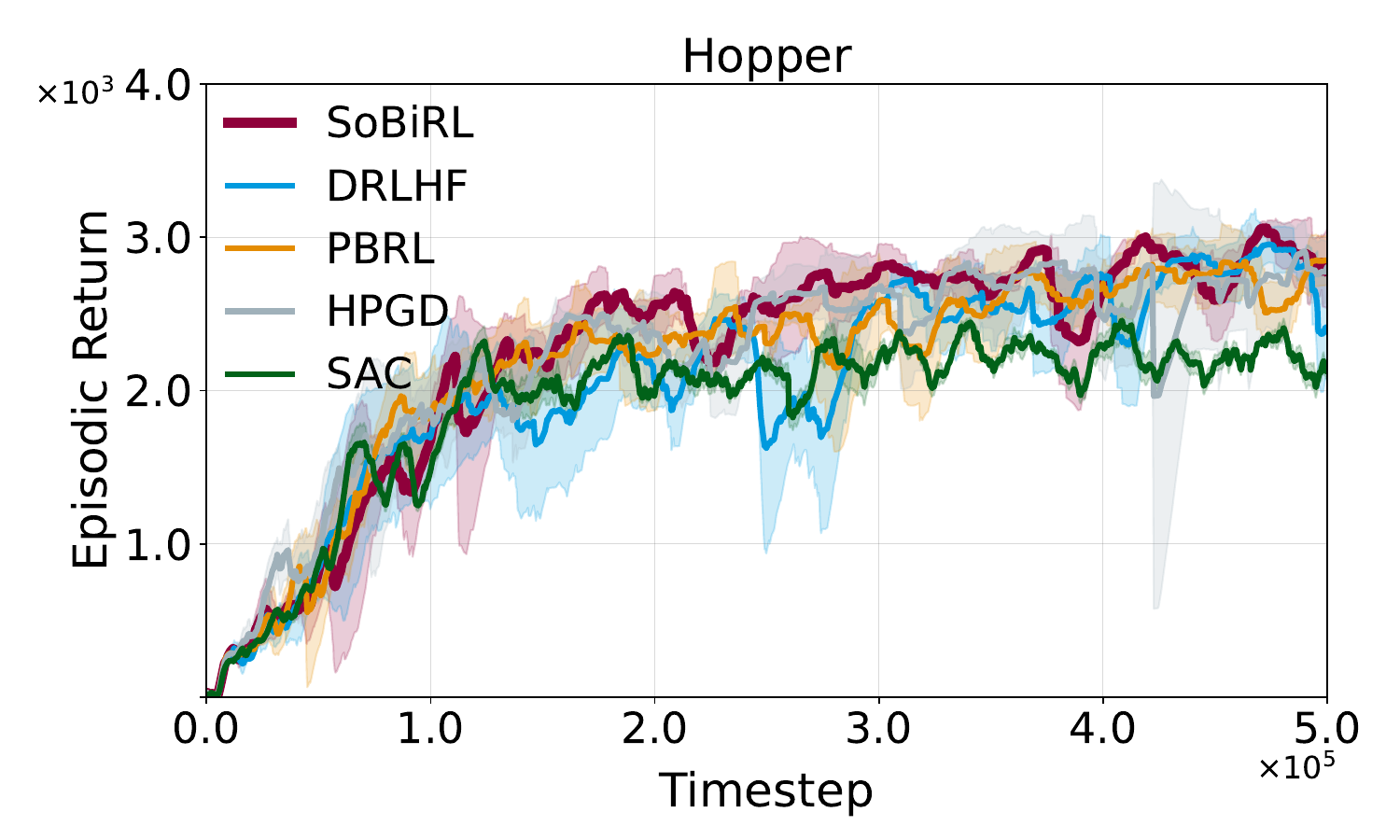}
\end{minipage}
\caption{Comparison of algorithms on the Mujoco simulations---HalfCheetah, Walker2d, and Hopper---evaluated by the ground-truth reward. Each bilevel algorithm collects a total of $3000$ trajectory pairs. The results are averaged over $5$ seeds.}
\label{fig:Mujoco}
\end{figure*}

As mentioned above, we take \texttt{SAC} as the lower-level solver for \revise{the bilevel algorithms}. We set the initial temperature parameter $\tau_0=1$, the inner iteration timesteps $N=10^4$ for Atari games and $N=5\times 10^3$ for Mujoco simulations, and the batch size to $64$. In addition, on Atari games, the actor network, the critic network, and the temperature parameter are updated by the Adam optimizer, with an initial learning rate $1\times 10^{-4}$ for BeamRider and SpaceInvaders, and $3\times 10^{-4}$ for Seaquest, linearly decaying to $0$ after $8\times10^{6}$ time steps (although the runs were actually trained for only $4\times10^{6}$ timesteps). On all Mujoco simulations, the critic network and the temperature parameter share the same initial learning rate $1\times 10^{-3}$, while that of the actor network is $3\times 10^{-4}$. They are updated by Adam optimizers, linearly decaying the learning rates to $0$ after $8\times10^{6}$ time steps (although the runs were actually trained for only $4\times10^{6}$ timesteps).

\revise{Bilevel algorithms} employ deep neural networks to parameterize the reward model. The wrapped environment returns an $84\times84\times4$ tensor as the state information. Therefore, data in the shape of $84\times84\times4$ is fed into the reward network. It undergoes four convolutional layers with kernel sizes of $7\times7,\ 5\times5,\ 3\times3,\ 3\times3$ and stride values of $3,\ 2,\ 1,\ 1$, respectively. Each convolutional layer holds $16$ filters and incorporates leaky ReLU nonlinearities ($\alpha=0.01$). Subsequently, the data passes through a fully connected layer of size $64$ and is then transformed into a scalar. Batch normalization and dropout with a dropout rate of $0.5$ are applied to all convolutional layers to mitigate overfitting. An AdamW \citep{loshchilov2018adamw} optimizer is adopted with the learning rate $3\times10^{-4}$, betas~$(0.9,0.999)$, epsilon $10^{-8}$ and weight decay $10^{-2}$.

Trajectories of $25$ timesteps are collected to construct the comparison pairs. Initially, we warm up the reward model by $500$ epochs with $600$ labeled pairs. In the following training, it collects $6$ new pairs per reward learning epoch based on the current policy, until the buffer is filled with $3000$ pairs. The batch size is set to $32$.

Regarding the Atari games, wrappers are employed, which originate from \citep{mnih2015human}: initial $0$ to $30$ no-ops to inject stochasticity, max-pooling pixel values over the last two frames, an episodic life counter, four-frame skipping to accelerate sampling, four-frame stacking to help infer game dynamics, warping the image to size $84\times84$ and clipping rewards to $[-1,1]$.

\revise{Specifically, after conducting initial tests, we observe that PBRL with the ``value penalty" and the penalty parameter as $2$ provide robust performance, for which it is adopted for comparison.}

\newpage
\subsection{Experiment Settings for the Synthetic Bilevel RL Problem}
To test M-SoBiRL, we build a synthetic bilevel RL problem in the form of \eqref{eq:standar_biRL} with the upper-level $f$ as a quadratic function of $(x,\pi)$ and the dimension of $x$ as $n=100$.  The lower-level problem is adapted from RL problems used in \citep{zhan2023mirror} and \citep{li2024preconditionmdp}. Specifically, we take $|\mdps|=10,|\mdpa|=5$ and $\gamma=0.5,\tau=1$. The transition probabilities are generated randomly, and for all $(s,a)\in\mdps\times\mdpa$, the reward $r_{sa}$ is parameterized by a quadratic form of $x$ with a random perturbation. The step sizes in \cref{alg:M-SoBiRL} are taken as $(\beta,\eta)=(0.008,0.5)$. Results of the synthetic experiment are shown in \cref{fig:test_M}. The curves exhibit the benign convergence property of the proposed algorithm.
\begin{figure*}[htbp]
\centering
\begin{minipage}{0.49\textwidth}
    \centering
    \includegraphics[width=1\linewidth]{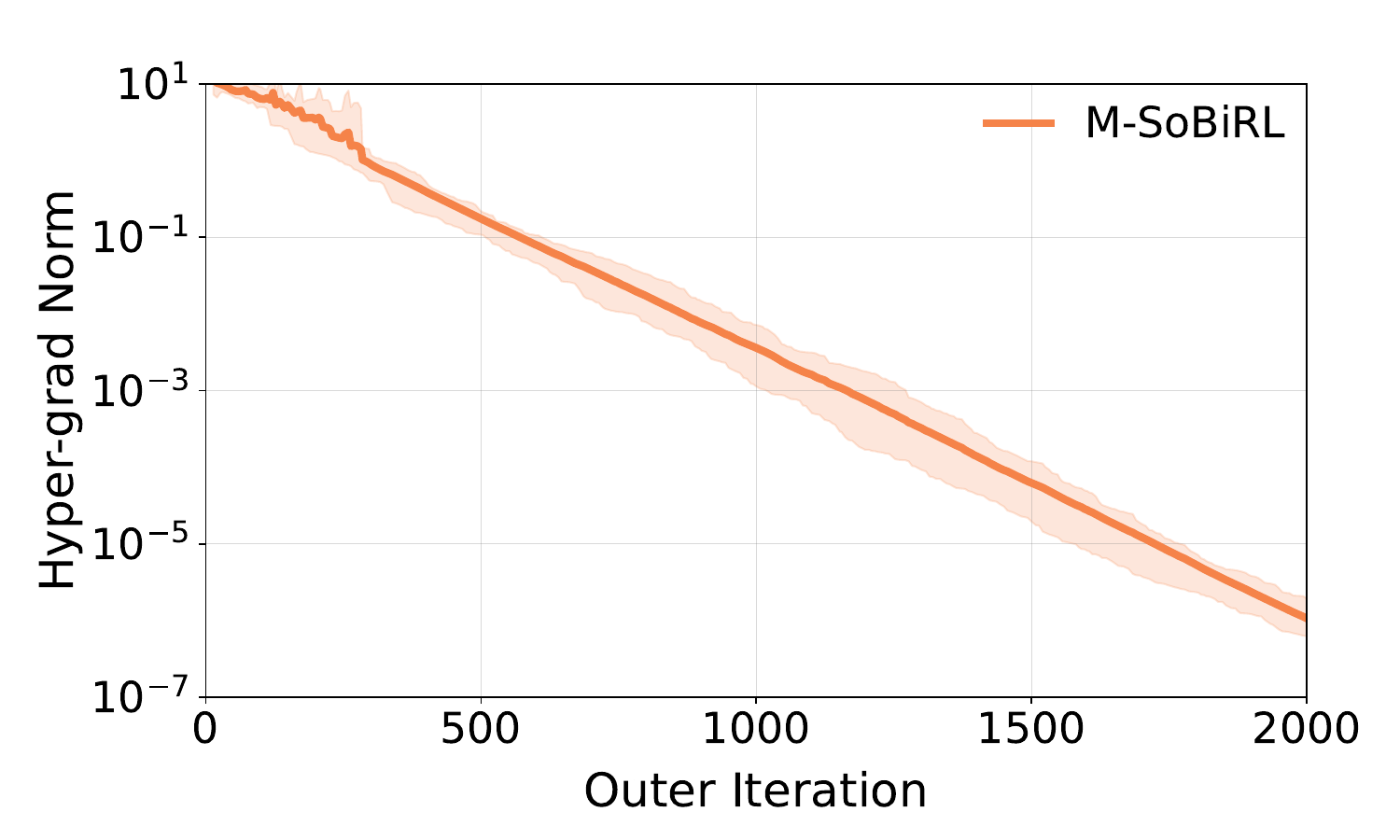}
\end{minipage}
\,
\begin{minipage}{0.49\textwidth}
    \centering
    \includegraphics[width=1\linewidth]{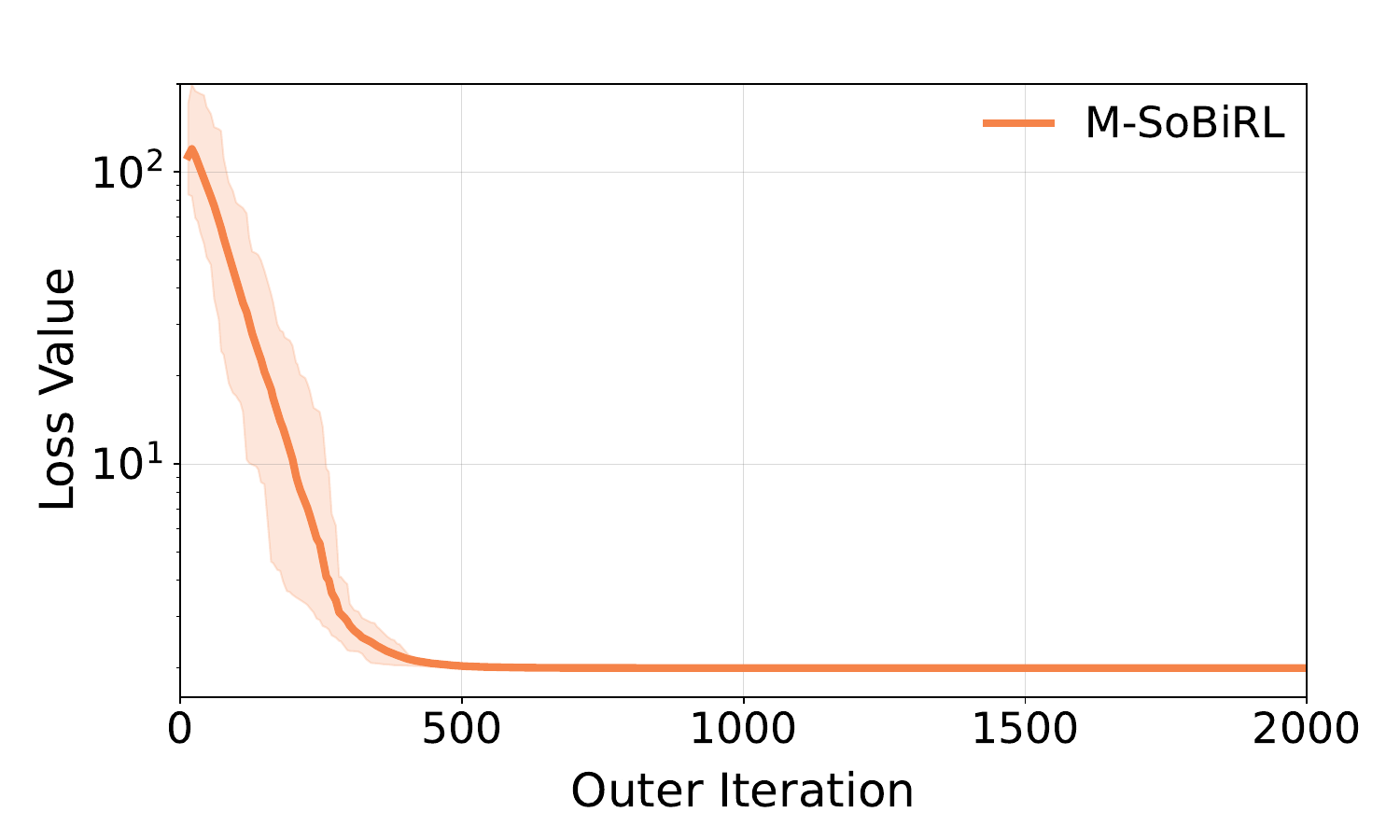}
\end{minipage}
\caption{A synthetic bilevel RL problem to verify the model-based algorithm, M-SoBiRL. Metrics are the hyper-gradient norm $\norm{\nabla \phi(x)}_2$ and the upper-level loss $f(x,\pi)$.}
\label{fig:test_M}
\end{figure*}

\section{\revise{APPLICATIONS OF BILEVEL REINFORCEMENT LEARNING}}\label{sec:append_app}
In this section, we provide a detailed explanation of how the formulation~\eqref{eq:standar_biRL}, with the upper-level function
\begin{equation}\label{eq:upper_f}
    f\left( x,\pi \right) =\mathbb{E} _{d_i\sim \rho \left( d;\pi  \right)}\left[ l\left( d_1,d_2,\ldots,d_I;x \right) \right],
\end{equation}
incorporates bilevel RL applications, using additional examples similar to those in \cref{sec:biRL_app}. We also explain how the boundedness of $l$ assumed in \cref{sec:theory} can be verified individually for each case.

\textbf{Reward Shaping} From the perspective of bilevel RL, we can shape an auxiliary reward function $r(x)$ at the lower level for efficient agent training, while maintaining the original environment at the upper level to align with the initial task evaluation~\citep{hu2020rewardshaping}, which is established as follows,
\begin{equation*}
    \begin{aligned}
        \min _{x\in\mathbb{R}^n}&\ -V_{\bar{\mathcal{M}}_{\bar{\tau}}}^{\pi ^*\left( x \right)}\left( \bar{\rho} \right) = -\mathbb{E}\zkh{Q^{\pi^*(x)}_{\bar{\mdp}_{\bar{\tau}}}(s,a)\,|\,s\sim\bar{\rho},\pi^*(x)}
        \\
        \mathrm{s.\,t.}&\ \ \pi ^*\left( x \right) =\argmin _{\pi \in \Delta^{\abs{\mdpa}}}-V_{\mathcal{M} _{\tau}\left( x \right)}^{\pi}\left( \rho \right).
    \end{aligned}
\end{equation*}
Note that its upper-level function falls into the formulation~\eqref{eq:upper_f} with $I=1$, $H=1$, $l=-Q^{\pi}_{\bar{\mdp}_{\bar{\tau}}}\kh{s_0^1,a_0^1}$. Therefore, the boundedness and smoothness of $l$ can be seen from \cref{pro:QV_Lipz}.

\textbf{Reinforcement Learning from Human Feedback} The target of RLHF is to learn the intrinsic reward function that incorporates expert knowledge, from simple labels only containing human preferences. It optimizes a policy under $r(x)$ at the lower level, and adjusts $x$ to align the preference predicted by the reward model $r(x)$ with the true labels at the upper level.
\begin{equation*}
    \begin{array}{cl}
    \min \limits_{x\in\mathbb{R}^n} & \mathbb{E} _{y,d_1,d_2\sim \rho \left( d;\pi ^*\left( x \right) \right)}\left[ l_h\left( d_1,d_2,y;x \right)  \right]
    \\
    \mathrm{s.\,t.}& \pi ^*\left( x \right) =\argmin\limits_{\pi \in \Delta^{\abs{\mdpa}}}-V_{\mathcal{M} _{\tau}\left( x \right)}^{\pi}\left( \rho \right),  
    \end{array}
\end{equation*}
where each trajectory $d_i=\{\kh{s_h^i,a_h^i}\}_{h=0}^{H-1}$ ($i=1,2$) is sampled from the distribution~$\rho\kh{d;\pi^*(x)}$ generated by the policy $\pi^*(x)$ in the upper-level~$\bar{\mathcal{M}}_{\bar{\tau}}$, i.e., 
\begin{align*}
    P(d_i)=\bar{\rho}\kh{s_0^i}\zkh{\Pi_{h=0}^{H-2}\pi^*(x)\kh{a_h^i|s_h^i}\bar{P}\kh{s_{h+1}^i|s_h^i,a_h^i}}\times\pi^*(x)\kh{a_{H-1}^i|s_{H-1}^i},
\end{align*}
and the preference label $y\in\{0,1\}$, indicating preference for $d_1$ over $d_2$, obeys human feedback distribution $y\sim D_{human}(y| d_1,d_2)$. Moreover, $l_h$ is the binary cross-entropy loss, $l_h\left( d_1,d_2,y;x \right) = -y\log P\left( d_1\succ d_2;x \right)-\left( 1-y \right) \log P\left( d_2\succ d_1 ; x\right)$, with the preference probability $P\kh{d_1\succ d_2;x}$ built by the Bradley--Terry model,
\begin{equation}\label{eq:BT_model}
 P\left( d_1\succ d_2;x \right) =\frac{\exp \left( \sum_{h=0}^{H-1}{r_{s_{h}^{1},a_{h}^{1}}\left( x \right)} \right)}{\exp \left( \sum_{h=0}^{H-1}{r_{s_{h}^{1},a_{h}^{1}}\left( x \right)} \right) +\exp \left( \sum_{h=0}^{H-1}{r_{s_{h}^{2},a_{h}^{2}}\left( x \right)} \right)}. 
\end{equation}   
The upper-level function of RLHF is an instance of \eqref{eq:upper_f} with $I=2$, finite $H$, and $l=\mathbb{E}_y\zkh{l_h\kh{d_1,d_2,y;x}}$. Moreover, the boundedness of the reward $\abs{r_{sa}(x)}\le C_r$ will guarantee the boundedness of $P$, i.e., $\frac{1}{2}{\exp(-2HC_r)}\le{P( d_1\succ d_2;x )}\le 1$, and thus the boundedness of $l$.

\textbf{Contract Design} This problem involves designing payment mechanisms that enable a principal to influence the agent’s decision-making process \citep{wu2024contractual}. Through the state-contingent payment $x\in\mathbb{R}^{\abs{\mdps}\times\abs{\mdps}}_+$, the principal seeks to incentivize the agent to adopt policies that serve the principal’s interests. Specifically, when the agent takes action $a$ at state $s$, it costs $c(s,a)$ in the lower level while the principle only observes the transition $s\to s^\prime$ and receives the reward $R(s,s^\prime)$. To encourage the transition, the principal offers a positive payment $x(s,s^\prime)$, which is thus added to the lower-level objective and subtracted from the upper-level objective.
\begin{equation*}
    \begin{aligned}
        \min _{x\in\mathbb{R}^n}&\ -\mathbb{E}\zkh{\sum_{t=0}^{H} R(s_t,s_{t+1})-x(s_t,s_{t+1})\ |\ s_0\sim\rho,\pi^*(x)}
        \\
        \mathrm{s.\,t.}&\ \ \pi ^*\left( x \right) =\argmin _{\pi \in \Delta^{\abs{\mdpa}}}-\mathbb{E}\zkh{\sum_{t=0}^\infty\gamma^t(x(s_t,s_{t+1})-c(s_t,a_t)+\tau h(\pi_{s_t}))\ |\ s_0\sim\rho,\pi}.
    \end{aligned}
\end{equation*}
Note that its upper-level function is an instance of~\eqref{eq:upper_f} with $I=1$, finite $H$, and $l=\sum_{t=0}^{H} -R(s_t,s_{t+1})+x(s_t,s_{t+1})$. It is reasonable that the reward $R$ and the payment $x$ are bounded in practice, and thus the function $l$ is bounded.

\textbf{Efficient Robot Navigation} As formulated in \citep{chakraborty2024parl}, we consider a maze-world environment of size $N\times N$, and aim to navigate a robot to get close to the destination $(N,N)$ efficiently. To this end, the designer can introduce a modified goal $x\in\mathbb{R}^2$, which induces an associated reward $r_x\in\mathbb{R}^{\abs{\mdps}\times\abs{\mdpa}}$, and then use this goal to train the robot (in the lower level). In this manner, the upper-level objective includes both the received reward and the additional cost of moving the robot from $x$ to $(N,N)$. The task is formulated by the following bilevel problem,
\begin{equation*}
    \begin{aligned}
        \min _{x\in\mathbb{R}^2}&\ \omega\norm{x-(N,N)}^2-\mathbb{E}\zkh{\sum_{t=0}^{H} r_x(s_t,a_t)|\ s_0\sim\rho,\pi^*(x)}
        \\
        \mathrm{s.\,t.}&\ \ \pi ^*\left( x \right) =\argmin _{\pi \in \Delta^{\abs{\mdpa}}}-\mathbb{E}\zkh{\sum_{t=0}^\infty\gamma^t(r_x(s_t,a_t)+\tau h(\pi_{s_t}))\ |\ s_0\sim\rho,\pi},
    \end{aligned}
\end{equation*}
where $\omega>0$ is the weight balancing the deviation from the original destination $(N,N)$ and the accumulative reward measured under the modified goal $x$. In this case, the upper-level function matches~\eqref{eq:upper_f} with $I=1$, finite $H$, and $l=-\sum_{t=0}^{H} \kh{r_x(s_t,a_t)-\omega\norm{x-(N,N)}^2/H}$, which is bounded if the reward is bounded.

\section{\revise{EXTENSION TO ASYNCHRONOUS AND DISTRIBUTED BILEVEL RL SETTINGS}}\label{sec:distributed}
In \cref{alg:SoBiRL}, the task within the $k$-th outer iteration is divided into two components: one is dedicated to approximately solving the lower-level problem such that $\|\pi _k^* - \pi _k\|^2_2\le\epsilon$, and the other is used to collect rollouts via $\pi_k$. We denote the sample complexity as $c _{\mathrm{solver}}$ and $c _{\mathrm{roll}}$, respectively. Employing the asynchronous strategy, existing work~\citep{shen2023asynchro} shows the actor-critic algorithm achieves a sample complexity $c _{\mathrm{solver}}/{N}$ per worker, where $N$ is the number of workers. In a similar spirit, we can also take advantage of multiprocessing to collect rollouts amongst $N$ workers asynchronously and thus enjoy $c _{\mathrm{roll}}/N$ complexity per worker. Finally, aggregate the rollouts from all the workers to estimate the hyper-gradient and update the upper-level variable. In this manner, the error bound for the hyper-gradient estimation in our work still holds, and thus the convergence analysis can be generalized.

In the distributed setting (e.g., see \citep{chen2022distributedRL}), agents are connected via a fully decentralized network and interact with a shared environment. Since the agent can communicate only with its neighbors, estimating the (global) hyper-gradient becomes challenging. Therefore, developing a tailored communication strategy and establishing a convergence analysis is an interesting and meaningful topic with broad applications in multi-agent RL problems. To this end, recent advances in decentralized bilevel optimization offer valuable insights. \citep{kong2024decentralized,he2024distributed,zhu2024sparkle}.

\end{document}